\documentclass[11pt]{article}

\usepackage{amssymb,amsmath,amsthm,enumitem,mathrsfs,mathabx,accents,xy}
\xyoption{all}
\usepackage{tikz}
\usetikzlibrary{cd}

\makeatletter
\renewenvironment{proof}[1][\proofname]{\par
  \pushQED{\qed}%
  \normalfont \topsep-5\p@\@plus6\p@\relax
  \trivlist
  \item[\hskip\labelsep
    #1\@addpunct{.}]\ignorespaces
}{%
  \popQED\endtrivlist\@endpefalse
}
\makeatother

\setlength{\parindent}{0cm}
\usepackage{titlesec}
\titlespacing\section{0pt}{-3pt plus 2pt minus 1pt}{-7pt plus 2pt minus 1pt}
\titlespacing\subsection{0pt}{-3pt plus 2pt minus 1pt}{-7pt plus 2pt minus 1pt}
\titlespacing\subsubsection{0pt}{10pt plus 4pt minus 2pt}{1pt plus 2pt minus 2pt}

\addtolength{\textheight}{1.26in}
\addtolength{\topmargin}{-.6in}
\addtolength{\textwidth}{1.5in}
\addtolength{\oddsidemargin}{-.75in}
\addtolength{\evensidemargin}{-.75in}
 
\setcounter{topnumber}{1}
\numberwithin{equation}{section}

\newtheoremstyle{reduced_space}{}{-0.5\baselineskip}{}{}{\bfseries}{}{.5em}{}
\theoremstyle{reduced_space}

\newtheorem{thm}{Theorem}[section]
\newtheorem{prp}[thm]{Proposition}
\newtheorem{lmm}[thm]{Lemma}
\newtheorem{crl}[thm]{Corollary}

\theoremstyle{definition}

\theoremstyle{remark}
\newtheorem{rmk}[thm]{Remark}

\def\BE#1{\begin{equation}\label{#1}}
\def\EE{\end{equation}}
\def\eref#1{(\ref{#1})}

\def\BEnum#1{\begin{enumerate}[label=#1,leftmargin=*,topsep=-10pt,itemsep=-3pt]}
\def\EEnum{\end{enumerate}}

\def\ov#1{\overline{#1}}
\def\sf#1{\textsf{#1}}
\def\wt#1{\widetilde{#1}}
\def\tn#1{\textnormal{#1}} 
\def\lr#1{\langle{#1}\rangle}
\def\blr#1{\big\langle{#1}\big\rangle}

\def\wch#1{\widecheck{#1}}
\def\wh#1{\widehat{#1}}

\def\sm#1{\begin{small}#1\end{small}}

\def\lra{\longrightarrow}
\def\xlra#1{\xrightarrow{{#1}}}
\def\lhra{\lhook\joinrel\xrightarrow}

\def\cA{\mathcal A}
\def\C{\mathbb C}
\def\cC{\mathcal C}
\def\bfC{\mathbf C}
\def\D{\mathbb D}

\def\cH{\mathcal H}

\def\cJ{\mathcal J}
\def\cL{\mathcal L}
\def\M{\mathfrak M}
\def\cM{\mathcal M}
\def\cN{\mathcal N}

\def\P{\mathbb P}
\def\R{\mathbb R}

\def\cS{\mathcal S}

\def\cZ{\mathcal Z}
\def\Z{\mathbb Z}

\def\al{\alpha}

\def\de{\delta}
\def\ep{\epsilon}
\def\ga{\gamma}
\def\io{\iota}

\def\la{\lambda}
\def\si{\sigma}
\def\om{\omega}
\def\th{\theta}
\def\ve{\varepsilon}
\def\vp{\varpi}
\def\vph{\varphi}
\def\ze{\zeta}

\def\De{\Delta}
\def\Ga{\Gamma}
\def\La{\Lambda}
\def\Om{\Omega}
\def\Si{\Sigma}
\def\Th{\Theta}
\def\Ups{\Upsilon}

\def\fc{\mathfrak c}
\def\fd{\mathfrak d}
\def\ff{\mathfrak f}
\def\fI{\mathfrak i}
\def\fj{\mathfrak j}
\def\fo{\mathfrak o}

\def\fp{\mathfrak p}
\def\fs{\mathfrak s}

\def\bh{\mathbf h}
\def\bp{\mathbf p}
\def\u{\mathbf u}

\def\codim{\tn{codim}}
\def\CR{\tn{CR}}
\def\nd{\tn{d}}
\def\tne{\tn{e}}
\def\dim{\tn{dim}}
\def\ev{\tn{ev}}
\def\id{\tn{id}}

\def\nod{\tn{nd}}
\def\PD{\tn{PD}}
\def\Pin{\tn{Pin}}
\def\pt{\tn{pt}}
\def\PSL{\tn{PSL}}

\def\top{\tn{top}}

\def\node{\tn{node}}

\def\i{\infty}

\def\eset{\emptyset}
\def\prt{\partial}
\def\dbar{\ov\partial}
\def\st{\bigstar}

\def\bu{\bullet}

\def\oGa{\Ga}
\def\OGa{\accentset{\circ}{\Ga}}
\def\oS{S}

\begin{document}

\title{Steenrod Pseudocycles, Lifted Cobordisms, and\\
Solomon's Relations for Welschinger's Invariants}
\author{Xujia Chen\thanks{Supported by NSF grant DMS 1500875}}
\date{\today}

\maketitle

\vspace{-.2in}

\begin{abstract}
\noindent
We establish two WDVV-style relations for the disk invariants
of real symplectic fourfolds by implementing Georgieva's suggestion
to lift homology relations from the Deligne-Mumford moduli spaces of stable real curves.
This is accomplished by lifting judiciously chosen cobordisms realizing
these relations.
The resulting lifted relations lead to the recursions for Welschinger's invariants
announced by Solomon in~2007 and have the same structure as his WDVV-style relations, 
but differ by signs from the latter.
Our topological approach provides a general framework for lifting relations via morphisms
between not necessarily orientable spaces.
\end{abstract}

\tableofcontents
\setlength{\parskip}{\baselineskip}

\section{Introduction}
\label{intro_sec}

The WDVV relation~\cite{KM,RT} for genus~0 Gromov-Witten invariants completely solves 
the classical problem of enumerating complex rational curves in the complex projective space~$\P^n$.
Invariant counts of real rational $J$-holomorphic curves in compact real symplectic fourfolds,
now known as \sf{Welschinger's invariants},
were defined in~\cite{Wel4} and interpreted in terms of counts of $J$-holomorphic maps
from the disk~$\D^2$ in~\cite{Jake}.
J.~Solomon announced two distinct WDVV-type relations for these counts 
in February~2007 and outlined an approach to their proof in the general spirit of
the original proof of the complex WDVV relation in~\cite{RT}.
However, the outline of the proof described in~\cite{Jake2} left 
a number of conceptual points mysterious and 
clearly required a major technical effort to implement.

The proof of the complex WDVV relation in~\cite{RT} involves defining a count of
$J$-holomorphic maps in a symplectic manifold~$(X,\om)$ for every cross-ratio~$\vp$
of four points on~$\P^1$ and showing that this count does not depend on~$\vp$.
This is also the strategy used in the alternative proof of a WDVV-type relation
for counts of real rational curves passing through conjugate pairs of points only
(no real point constraints) in~\cite{RealEnum}.
The complex WDVV relation can alternatively be viewed as a direct consequence
(at least conceptually) of two specific points, $\vp_1$ and~$\vp_2$, 
of the Deligne-Mumford moduli space $\ov\cM_{0,4}\!\approx\!\P^1$ of 
stable complex genus~0 curves with 4~marked points determining the same element of~$H_0(\ov\cM_{0,4})$.
This perspective is suitable for lifting homology relations in any dimension
from moduli spaces of curves to moduli spaces of $J$-holomorphic maps 
and has proved instrumental to studying the structure of complex Gromov-Witten invariants
as in~\cite{Gi01,Teleman}.
This is also the strategy used in the primary proof of a WDVV-type relation
for counts of real rational curves passing through conjugate pairs of points 
in~\cite{RealEnum}.
In all of these settings, the moduli spaces of curves and maps are closed and oriented.

\subsection{Lifting homology relations}
\label{liftrel_subs}

In Spring~2014, P.~Georgieva suggested that WDVV-type recursions for the real genus~0 invariants 
of~\cite{Wel4} might be obtainable by lifting
\BEnum{($\R\arabic*$)}

\item a zero-dimensional homology relation on the moduli space 
$\R\ov\cM_{0,1,2}\!\approx\!\R\P^2$ of stable real
genus~0 curves with 1~real marked point and 2~conjugate pairs of marked points and

\item the one-dimensional homology relation on the moduli space $\R\ov\cM_{0,0,3}$ of stable real
genus~0 curves with 3~conjugate pairs of marked points discovered in~\cite{RealEnum}

\EEnum
to the moduli spaces $\ov\M_{k,l}(B;J)$ of real rational $J$-holomorphic maps 
constructed in~\cite{Penka2}.
Unlike in the complex case and in the real case considered in~\cite{RealEnum}, 
major conceptual issues arise in lifting relations from 
$\R\ov\cM_{0,1,2}$ and $\R\ov\cM_{0,0,3}$ to $\ov\M_{k,l}(B;J)$
and in translating any lifted relations into invariant counts of~curves
because the moduli spaces $\ov\M_{k,l}(B;J)$ are generally not orientable.
The present paper deals with these issues by lifting homology relations {\it along with}
bounding cobordisms for them to cuts of $\ov\M_{k,l}(B;J)$ along certain hypersurfaces.

We first re-interpret the disk counts of~\cite{Jake} in the spirit of 
Steenrod homology~\cite{Steenrod} in terms of counts of real $J$-holomorphic maps
with marked points decorated by signs as in~\cite{Penka2}.
We then lift homology relations, along with {\it suitably chosen} bounding chains~$\Ups$, 
from  $\R\ov\cM_{0,1,2}$ and $\R\ov\cM_{0,0,3}$ to
the {\it bordered} moduli spaces $\wh\M_{k,l;l^*}(B;J)$ obtained by cutting $\ov\M_{k,l}(B;J)$
along hypersurfaces that obstruct the relative orientability of the forgetful morphisms
\BE{ffdfn_e0}\ff_{1,2}\!:\ov\M_{k,l}(B;J)\lra\R\ov\cM_{0,1,2} \quad\hbox{and}\quad  
\ff_{0,3}\!:\ov\M_{k,l}(B;J)\lra\R\ov\cM_{0,0,3}.\EE
The simple topological Lemma~\ref{InterOrient_lmm} expresses the wall-crossing effects 
on the lifted relations in $\ov\M_{k,l}(B;J)$ in terms of the intersections of
the boundary of $\wh\M_{k,l;l^*}(B;J)$ with~$\Ups$.
This allows us to obtain the two WDVV-type relations for the map counts 
depicted in Figure~\ref{LiftedRel_fig} on page~\pageref{LiftedRel_fig}, 
with the left-hand sides representing the initial relations in 
$\R\ov\cM_{0,1,2}$ and $\R\ov\cM_{0,0,3}$
and the right-hand sides representing the wall-crossing corrections.
The first two relations of Theorem~\ref{SolWDVV_thm} are obtained 
by using the two relations of Figure~\ref{LiftedRel_fig} with the divisors~$H_1$ and~$H_2$
as the first two non-real constraints and points as the remaining constraints.
The last relation of Theorem~\ref{SolWDVV_thm} is obtained by using 
the second relation of Figure~\ref{LiftedRel_fig} with the divisors~$H_1$, $H_2$,
and~$H_3$
as the first two non-real constraints and points as the remaining constraints.

By the comparison between the curve counts of~\cite{Wel4}
and the map counts of~\cite{Jake} established in~\cite{SpinPin},
the relations of Theorem~\ref{SolWDVV_thm} are equivalent to 
the relations for the former stated in \cite[Theorem~1]{RealWDVVapp}.
They completely determine Welschinger's invariants of~$\P^2$ and its blowups,
as shown in~\cite{Jake2} and~\cite{HorevSol}, respectively.
For the ease of use, these results are summarized in~\cite{RealWDVVapp};
the low-degree numbers obtained from the three relations for Welschinger's invariants
and listed in~\cite{RealWDVVapp} agree with \cite{ABL,B15,BP13,IKS09,IKS13,IKS13a,IKS13b,Wel07}.

The relations of \cite[Theorem~1]{RealWDVVapp} for Welschinger's invariants are 
the same as implied by the statements of Theorem~8, Proposition~10, and Theorem~11
in~\cite{Jake2}. 
The relations of Theorem~\ref{SolWDVV_thm} for the map invariants in the present paper
involve
the same terms as the difference between equations~(6) and~(7)
in~\cite{Jake2} and the symmetrization of equation~(5) in~\cite{Jake2}, 
but {\it different} signs.
The comparison between the curve counts of~\cite{Wel4} and the disk counts of~\cite{Jake} 
established in~\cite{SpinPin} likewise {\it differs} by sign from the claim of \cite[Thm.~11]{Jake2}.
The two sign discrepancies, which do not appear to be due to the formulations of 
the definitions of the disk invariants in the present paper and \cite{Jake,Jake2}, 
precisely cancel out to yield the same recursions for Welschinger's invariants.

This paper presents a general approach for pulling back a relation by a morphism
\hbox{$\ff\!:\M\!\lra\!\cM$}
between two spaces which is not necessarily relatively orientable.
The lifted relation then acquires a correction which doubly covers 
a ``non-orientability" hypersurface in~$\M$.
This approach should be applicable in many other settings.
It has already been used in~\cite{RealWDVV3} to
obtain WDVV-type relations for real symplectic sixfolds.

\subsection{Main theorem}
\label{SolWDVV_subs}

Let $(X,\om,\phi)$ be a compact \sf{real symplectic manifold}, i.e.~$\om$ is a symplectic form
on~$X$ so that $\phi^*\om\!=\!-\om$.
The fixed locus~$X^{\phi}$ of the anti-symplectic involution~$\phi$ on~$X$
is then a Lagrangian submanifold of~$(X,\om)$.
We denote by $H_2(X)$ the quotient~$H_2(X)$ of $H_2(X;\Z)$ modulo torsion,
by~$\cJ_{\om}$ the space of $\om$-compatible (or -tamed) 
almost complex structures~$J$ on~$X$, 
and by \hbox{$\cJ_{\om}^{\phi}\!\subset\!\cJ_{\om}$} the subspace of almost complex structures~$J$
such that \hbox{$\phi^*J\!=\!-J$}.
Let 
$$c_1(X,\om)\equiv c_1(TX,J)\in H^2(X)$$
be the first Chern class of $TX$ with respect to some $J\!\in\!\cJ_{\om}$;
it is independent of such a choice.
For \hbox{$B\!\in\!H_2(X)$}, define
$$\ell_{\om}(B)=\blr{c_1(X,\om),B}\!-\!1\in\Z, \quad
\lr{B}_l=\begin{cases}1,&\hbox{if}~2l\!=\!\ell_{\om}(B)\!-\!1;\\
0,&\hbox{otherwise}.\end{cases}$$

For $J\!\in\!\cJ_{\om}^{\phi}$ and $B\!\in\!H_2(X)$,
a subset $C\!\subset\!X$ is a \sf{genus~0} (or \sf{rational}) 
\sf{irreducible $J$-holomorphic degree~$B$ curve}
if there exists a simple (not multiply covered) \hbox{$J$-holomorphic} map 
\BE{Cudfn_e}u\!:\P^1\lra X \qquad\hbox{s.t.}\quad  C=u(\P^1),~~u_*[\P^1]=B.\EE
Such a subset~$C$ is called a \sf{real rational irreducible $J$-holomorphic degree~$B$ curve}
if in addition \hbox{$\phi(C)\!=\!C$}. 

From now on, suppose that the (real) dimension of~$X$ is~4.
The (tangent bundle of the) fixed locus~$X^{\phi}$ then admits a $\Pin^-$-structure~$\fp$.
Let $B\!\in\!H_2(X)$ and $l\!\in\!\Z^{\ge0}$ be such~that
\BE{dimcond_e} k\equiv \ell_{\om}(B)\!-\!2l\in\Z^{\ge0}\,.\EE
For a generic $J\!\in\!\cJ_{\om}^{\phi}$, 
there are then only finitely many real rational irreducible  $J$-holomorphic 
degree~$B$ curves $C\!\subset\!X$ intersecting a connected component~$\wch{X}^{\phi}$ of~$X^{\phi}$
and passing through  $k$ points in~$\wch{X}^{\phi}$
and $l$ points in $X\!-\!X^{\phi}$ in general position. 
According to \cite[Thm.~1.3]{Jake}, the number of such curves counted with appropriate signs
determined by~$\fp$ is independent of the choices of~$J$ and the points.
We denote this signed count of genus~0 curves by $N_{B,l}^{\phi,\fp}(\wch{X}^{\phi})$.
If the number~$k$ in~\eref{dimcond_e} is negative, 
we set $N_{B,l}^{\phi,\fp}(\wch{X}^{\phi})\!=\!0$.
We denote by~$N_{B,l}^{\phi,\fp}$ the sum of the numbers $N_{B,l}^{\phi,\fp}(\wch{X}^{\phi})$
over the connected components~$\wch{X}^{\phi}$ of~$X^{\phi}$.

Suppose $B\!\in\!H_2(X)$ and $\ell_{\om}(B)\!\ge\!0$.
For a generic $J\!\in\!\cJ_{\om}$, 
there are then only finitely many rational irreducible $J$-holomorphic 
degree~$B$ curves~$C$ passing through $\ell_{\om}(B)$ points in general position. 
The number of such curves counted with appropriate signs is independent of the choices of~$J$ 
and the points. 
This is the standard (complex) genus~0 degree~$B$ \sf{Gromov-Witten invariant} of~$(X,\om)$
with $\ell_{\om}(B)$ point insertions;
we denote it by~$N_B^X$.
If $\ell_{\om}(B)\!<\!0$, we set $N_B^X\!=\!0$.

For $B,B'\!\in\!H_2(X)$, we denote by $B\!\cdot_X\!\!B'\!\in\!\Z$  
the homology intersection product of~$B$ with~$B'$
and by $B^2\!\in\!\Z$ the self-intersection number of~$B$. 
Define
$$\fd\!:H_2(X)\lra H_2(X),~~\fd(B)=B\!-\!\phi_*(B),
\qquad H^2(X)^{\phi}_-=\big\{H\!\in\!H^2(X)\!:\phi^*H\!=\!-H\big\}.$$

\begin{thm}\label{SolWDVV_thm}
Suppose $(X,\om,\phi)$ is a compact real symplectic fourfold,
$\fp$ is a $\Pin^-$-structure on~$X^{\phi}$,
$\wch{X}^{\phi}$ is a connected component of~$X^{\phi}$, and 
$H_1,H_2,H_3\!\in\!H^2(X)^{\phi}_-$ are such that $\lr{H_1H_2,X}\!=\!1$ and $H_1H_3\!=\!0$.
Let $l\!\in\!\Z^{\ge0}$ and $B\!\in\!H_2(X)$. 
\BEnum{($\R$\!WDVV\arabic*)}

\item\label{Sol12rec_it} If $l\!\ge\!1$ and $\ell_{\om}(B)\!-\!2l\!\ge\!1$ 
(i.e.~$k\!\ge\!1$), then
\begin{equation*}\begin{split}
&\hspace{-1in}
N_{B,l}^{\phi;\fp}(\wch{X}^{\phi})= -2^{l-3}\lr{B}_l\lr{H_1,B}\lr{H_2,B}
\hspace{-.15in}\sum_{\begin{subarray}{c}B'\in H_2(X)\\ \fd(B')=B\end{subarray}}
\hspace{-.18in}N_{B'}^X\\
&\hspace{-.2in}
-\!\!\!\!\sum_{\begin{subarray}{c}B_0,B'\in H_2(X)-\{0\}\\ B_0+\fd(B')=B\end{subarray}}
\hspace{-.4in} 2^{l_{\om}(B')}\!\big(B_0\!\!\cdot_X\!\!B'\big)
\lr{H_1,B'}\lr{H_2,B'}\binom{l\!-\!1}{\ell_{\om}(B')}
N_{B'}^XN_{B_0,l-1-\ell_{\om}(B')}^{\phi;\fp}(\wch{X}^{\phi})\\
&\hspace{.1in}+\!\!\!\!\!
\sum_{\begin{subarray}{c}B_1,B_2\in H_2(X)-\{0\}\\B_1+B_2=B\\  
l_1+l_2=l-1,\,l_1,l_2\ge0\end{subarray}} \hspace{-.4in}
\lr{H_1,B_1}\binom{l\!-\!1}{l_1}\!\!
\Bigg(\!\lr{H_2,B_2}\binom{\ell_{\om}(B)\!-\!2l\!-\!1}{\ell_{\om}(B_1)\!-\!2l_1\!-\!1}\\
&\hspace{1.4in}
-\lr{H_2,B_1}\binom{\ell_{\om}(B)\!-\!2l\!-\!1}{\ell_{\om}(B_1)\!-\!2l_1}\!\!\!\Bigg)\!
N_{B_1,l_1}^{\phi;\fp}(\wch{X}^{\phi})N_{B_2,l_2}^{\phi;\fp}(\wch{X}^{\phi}).
\end{split}\end{equation*}

\item\label{Sol03rec_it} If $l\!\ge\!2$, then
\begin{equation*}\begin{split}
&\hspace{-.7in}
N_{B,l}^{\phi;\fp}(\wch{X}^{\phi})=
\sum_{\begin{subarray}{c}B_0,B'\in H_2(X)-\{0\}\\ B_0+\fd(B')=B\end{subarray}}
\hspace{-.4in} 2^{l_{\om}(B')-1}\!\big(B_0\!\!\cdot_X\!\!B'\big)\lr{H_1,B'}\\
&\hspace{.3in}\times\!\Bigg(\!\!\lr{H_2,B_0}\binom{l\!-\!2}{\ell_{\om}(B')\!-\!1}
-2\lr{H_2,B'}\binom{l\!-\!2}{\ell_{\om}(B')}\!\!\Bigg)
N_{B'}^XN_{B_0,l-1-\ell_{\om}(B')}^{\phi;\fp}(\wch{X}^{\phi})\\
&\hspace{.1in}+\!\!\!\!\!
\sum_{\begin{subarray}{c}B_1,B_2\in H_2(X)-\{0\}\\ B_1+B_2=B\\ 
l_1+l_2=l-2,\,l_1,l_2\ge0\end{subarray}} \hspace{-.4in}
\lr{H_2,B_1}\binom{l\!-\!2}{l_1}\!\!
\Bigg(\!\lr{H_1,B_2}\binom{\ell_{\om}(B)\!-\!2l}{\ell_{\om}(B_1)\!-\!2l_1\!-\!1}\\
&\hspace{1.4in}
-\lr{H_1,B_1}\binom{\ell_{\om}(B)\!-\!2l}{\ell_{\om}(B_1)\!-\!2l_1}\!\!\!\Bigg)\!
N_{B_1,l_1}^{\phi;\fp}(\wch{X}^{\phi})N_{B_2,l_2+1}^{\phi;\fp}(\wch{X}^{\phi}).
\end{split}\end{equation*}

\item\label{Sol03rec2_it} If $l\!\ge\!1$, then
\begin{equation*}\begin{split}
&\hspace{-.7in}
\lr{H_3,B}N_{B,l}^{\phi;\fp}(\wch{X}^{\phi})=
\sum_{\begin{subarray}{c}B_0,B'\in H_2(X)-\{0\}\\ B_0+\fd(B')=B\end{subarray}}
\hspace{-.4in} 2^{l_{\om}(B')}\!\big(B_0\!\!\cdot_X\!\!B'\big)
\lr{H_1,B'}\binom{l\!-\!1}{l_\om(B')}\\
&\hspace{.8in}\times\!\bigg(\!\!\lr{H_2,B_0}\lr{H_3,B'}\!-\!\lr{H_3,B_0}\lr{H_2,B'}\!\!\bigg)
N_{B'}^XN_{B_0,l-1-\ell_{\om}(B')}^{\phi;\fp}(\wch{X}^{\phi})\\
&\hspace{.4in}+\!\!\!\!\!
\sum_{\begin{subarray}{c}B_1,B_2\in H_2(X)-\{0\}\\ B_1+B_2=B\\ 
l_1+l_2=l-1,\,l_1,l_2\ge0\end{subarray}} \hspace{-.4in}
\lr{H_1,B_1}\big(\lr{H_3,B_1}\lr{H_2,B_2}\!-\!\lr{H_2,B_1}\lr{H_3,B_2}\big)\\
&\hspace{1.5in}
\times\!\binom{l\!-\!1}{l_1}\binom{l_\om(B)\!-\!2l}{l_\om(B_1)\!-\!2l_1}
N_{B_1,l_1}^{\phi;\fp}(\wch{X}^{\phi})N_{B_2,l_2}^{\phi;\fp}(\wch{X}^{\phi}).
\end{split}\end{equation*}

\EEnum
\end{thm}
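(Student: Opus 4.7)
The plan is to apply the two lifted WDVV-style relations depicted in Figure~\ref{LiftedRel_fig} to the moduli spaces $\ov\M_{k,l}(B;J)$ of real rational $J$-holomorphic maps, with divisor and point insertions chosen to reduce each to one of the three statements of the theorem. For \ref{Sol12rec_it} and \ref{Sol03rec_it} I would use the first and second relation of Figure~\ref{LiftedRel_fig}, respectively, constraining the first two conjugate-pair marked points to lie on the divisors $H_1,H_2$, the remaining conjugate-pair marked points to lie on generic points of $X$, and the $k$ real marked points to lie on generic points of $\wch X^{\phi}$. For \ref{Sol03rec2_it} I would use the second relation of Figure~\ref{LiftedRel_fig} with $H_1,H_2,H_3$ as the first three non-real constraints; since $\lr{H_1H_2,X}=1$, the combined $H_1,H_2$ insertion on one pair specifies a single point of $X$ generically, and the remaining divisor $H_3$ factors out of the left-hand side as~$\lr{H_3,B}$.

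The left-hand side of each Figure~\ref{LiftedRel_fig} relation pulls back, via the forgetful morphism $\ff_{1,2}$ or $\ff_{0,3}$ from~\eref{ffdfn_e0}, to a signed count of constrained real maps meeting the preimage of a generic reference cycle on $\R\ov\cM_{0,1,2}$ (resp.\ $\R\ov\cM_{0,0,3}$). Since the two reference cycles on each $\R\ov\cM$ represent the same homology class, the pulled-back counts agree modulo wall-crossing, and reduce to $N_{B,l}^{\phi;\fp}(\wch X^{\phi})$, multiplied by $\lr{H_3,B}$ in case~\ref{Sol03rec2_it}. The right-hand side of Figure~\ref{LiftedRel_fig} records two distinct contributions to the wall-crossing: the codimension-one boundary strata of the bordered moduli space $\wh\M_{k,l;l^*}(B;J)$, and a correction obtained by pairing $\prt\wh\M_{k,l;l^*}(B;J)$ against the bounding chain $\Ups$ via Lemma~\ref{InterOrient_lmm}.

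The next step is to enumerate the codimension-one strata and compute each contribution. Two types of real nodal curves appear: type~(a), a real disk component of class $B_0$ carrying a conjugate pair of sphere bubbles of classes $B'$ and $\phi_*B'$, with $B_0+\fd(B')=B$; and type~(b), a pair of real components of classes $B_1,B_2$ with $B_1+B_2=B$ joined at a real node. Type~(a) produces the first sum in each of~\ref{Sol12rec_it}--\ref{Sol03rec2_it}: the sphere bubble contributes the complex Gromov--Witten count $N_{B'}^X$, meets the disk component in $B_0\cdot_X\!B'$ nodes, and the power of~$2$ accounts for symmetrizing the conjugate pairs of marked points that fall on the bubble; the binomial distributes the remaining conjugate-pair constraints between the disk and sphere components. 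Type~(b) produces the second sum: the binomials distribute the real and conjugate-pair constraints between the two real disk components, and the two terms inside each large parenthesis correspond to the two ways of attaching the divisor $H_2$ (or $H_3$) to the two sides of the splitting.

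The hard part will be the sign and orientation bookkeeping. Three independent sign conventions must be reconciled: the Steenrod-style signs on decorated marked points used in the re-interpretation of the disk counts of~\cite{Jake}, the wall-crossing sign produced by Lemma~\ref{InterOrient_lmm}, and the orientations of the fiber products defining each boundary contribution. Because $\ov\M_{k,l}(B;J)$ is generically non-orientable, these signs depend on the $\Pin^-$-structure~$\fp$, and one must verify that the contributions assemble consistently across all splittings to produce the recursions in the form stated. I expect this to be the dominant technical effort, and in particular to be responsible for the sign discrepancies with~\cite{Jake2} mentioned in the introduction.
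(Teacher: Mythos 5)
Your proposal follows the paper's overall strategy — lift the cobordism relations of Figure~\ref{LiftedRel_fig} by $\ff_{1,2}$ and $\ff_{0,3}$, specialize to divisor and point constraints, and equate the two boundary contributions of a one-dimensional fiber product via Lemma~\ref{InterOrient_lmm} — but the structural description of which strata contribute and why has genuine gaps that would derail the argument as written.

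The first problem is that you conflate the two pieces of the boundary of the one-dimensional manifold $M_{(\ev_{\bh},\ff),f_{\bp;\Ups}}$. The three-component curves (your ``type (a)'') are codimension-\emph{two} strata of $\ov\M_{k,l}(B;J)$; they enter because the interior of the cut moduli space pairs against $\prt\Ups$, and are decomposed by Proposition~\ref{Cdecomp_prp}. The two-component curves (your ``type (b)'') are codimension-one strata and enter through $\prt\wh\M_{k,l;l^*}(B;J)$ paired against~$\Ups$, decomposed by Proposition~\ref{Rdecomp_prp}. Listing ``the codimension-one boundary strata of $\wh\M$'' and ``a correction from pairing $\prt\wh\M$ against $\Ups$'' as two distinct contributions is a double-count of the same thing, and folding the three-component curves under ``codimension-one strata'' misplaces where they come from. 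Related to this, you never explain where the term $N_{B,l}^{\phi;\fp}$ on the left side of the recursions comes from: it arises as the degenerate term $\lr{\bh}_{l^*;\Ga}$ in Proposition~\ref{Cdecomp_prp} when the conjugate pair of bubbles is constant (giving $h_1\!\cdot_X\!h_2\!=\!1$), which is why the hypothesis $H_1H_3\!=\!0$ matters in~\ref{Sol03rec2_it} (it kills the corresponding term from~$\Ga_2^+$).

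The second problem is the missing selection mechanism. Not every codimension-one stratum contributes to the $\prt\wh\M$ pairing: only the disk-bubbling strata $\cS$ with $\ep_{l^*}(\cS)\!=\!2$ (these are the ones along which $\ov\M$ is cut to form $\wh\M$ and which $\ev\!\times\!\ff_{k',l'}$ does not contract), and only those whose image under $\ff_{k',l'}$ avoids $\ov{S}_1$, because $\Ups$ is chosen \emph{regular} and in particular disjoint from $\ov{S}_1$. Without the $\ep_{l^*}$ criterion you would pick up sphere-bubbling strata and strata that get contracted, and without $\Ups\cap\ov{S}_1\!=\!\eset$ the sums would not restrict to $B_1,B_2\!\neq\!0$. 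These restrictions are exactly the content of Corollary~\ref{mainsetup_crl} and are load-bearing; the recursions come out wrong without them. Finally, the sign bookkeeping you defer is not a uniform nuisance but the whole point where this theorem differs from Solomon's; it requires the machinery of Lemmas~\ref{DMboundary_lmm}, \ref{DorientComp_lmm}, and~\ref{TMcomp_lmm}, and cannot be ``reconciled'' after the fact without a precise normalization of all the orientations involved.
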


\vspace{.1in}

Taking the difference between the relations of Theorem~\ref{SolWDVV_thm}
for $N_{B+B_{\bu},l}^{\phi}(X^{\phi})$ with $\ell_{\om}(B_{\bu})\!>\!0$ small 
yields relations involving 
the invariants $N_{B+B_{\bu},0}^{\phi}(X^{\phi})$ without conjugate pairs 
of marked points.
In some cases, the resulting relations determine these numbers; 
see~\cite{Jake2,HorevSol}.

\begin{rmk}\label{SolWDVV_rmk}
We define the invariants $N_{B,l}^{\phi;\fp}(X^{\phi})$ via the moduli spaces
of real maps constructed in~\cite{Penka2}.
This definition of $N_{B,l}^{\phi;\fp}(X^{\phi})$ differs by a power of~2
from the definitions in~\cite{Jake,Jake2}, but agrees in absolute value
with the invariants $N_{B,l}^{\phi}(X^{\phi})$ of~\cite{Wel4}.
\end{rmk}

\begin{rmk}\label{Erwan_rmk}
Welschinger's invariants of real symplectic fourfolds $(X,\om,\phi)$
with disconnected fixed loci~$X^{\phi}$ often vanish;
see Theorem~1.3 and Remark~1.1 in~\cite{B18}.
As suggested by E.~Brugall\'e, Theorem~\ref{SolWDVV_thm} and its proof readily adapt 
to Welschinger's invariants with finer notions of the curve degree~$B$ such as those taken~in
$$\wt{H}_2(X,\wch{X}^{\phi})\equiv H_2(X,\wch{X}^{\phi};\Z)\big/
\big\{b\!+\!\phi_*(b)\!:b\!\in\!H_2(X,\wch{X}^{\phi};\Z)\big\}\,.$$
As explained in \cite[Section~2.2]{Penka2}, there is a well-defined doubling map
$$\fd_{\wch{X}^{\phi}}\!:\wt{H}_2(X,\wch{X}^{\phi})\lra H_2(X;\Z)^{\phi}_-\,.$$
If the curve degrees~$B$ are taken in $\wt{H}_2(X,\wch{X}^{\phi})$,
the sums in Theorem~\ref{SolWDVV_thm} should then be taken over $B'$ and $B_i$
in $H_2(X;\Z)\!-\!\{0\}$ and $\wt{H}_2(X,\wch{X}^{\phi})$, respectively.
The doubling map~$\fd$ should be replaced by the composition
$$H_2(X;\Z)\lra H_2(X,\wch{X}^{\phi};\Z)\lra \wt{H}_2(X,\wch{X}^{\phi}).$$
All appearances of~$B$ and~$B_i$ (but not $B'$) inside~$\lr{\cdot}$ and~$(\cdot)$ 
should become~$\fd_{\wch{X}^{\phi}}(B)$ and~$\fd_{\wch{X}^{\phi}}(B_i)$, respectively. 
The resulting formulas yield relations in particular for the modification of 
Welschinger's invariants which originates in \cite[Prop.~1]{IKS13a}.
\end{rmk}

\vspace{-.1in}

The relations of Theorem~\ref{SolWDVV_thm} correspond to 
the partial differential equations~(4.82) and~(4.76) in~\cite{Adam}
for the generating functions given by~(4.68) and~(4.72), respectively,
if $H^2(X)^{\phi}_-$ is all of~$H^2(X)$.
These two PDEs are the same as the differential equations~(3) and~(4) in~\cite{Jake2}.
The first generating function in~\cite{Adam} is essentially the same as 
the generating function~$\Phi$ in~\cite{Jake2} if $H^2(X)$ is one-dimensional
(the latter does not distinguish between curve classes~$B$ with the same~$\ell_{\om}(B)$).
However, the coefficients in the second generating function differ 
from the coefficients in the generating function~$\Om$ in~\cite{Jake2} by factors of~2 and~$\fI$,
even if $H^2(X)$ is one-dimensional.
The former is due to the scaling discrepancy in the definitions of the real invariants
indicated in Remark~\ref{SolWDVV_rmk},
while the latter reflects the sign difference between the relations of Theorem~\ref{SolWDVV_thm}
and their analogues in~\cite{Jake2} mentioned in Section~\ref{liftrel_subs}.

\subsection{Outline of the proof}
\label{outline_subs}

Theorem~\ref{SolWDVV_thm} follows from 
the two relations for nodal map counts represented by
Figure~\ref{LiftedRel_fig} and from Propositions~\ref{Cdecomp_prp} and~\ref{Rdecomp_prp}.
In order to simplify the notation  for the remainder for the paper, 
we denote $\wch{X}^{\phi}$ by $X^{\phi}$,
$N_{B,l}^{\phi,\fp}(\wch{X}^{\phi})$ by $N_{B,l}^{\phi,\fp}$,
and the moduli space of real rational degree~$B$ $J$-holomorphic maps with 
$k$~real marked points and $l$~conjugate pairs of marked points 
that send the fixed locus of the domain to~$\wch{X}^{\phi}$
by $\ov\M_{k,l}(B;J)$.
If the fixed locus of~$\phi$ is connected, there is no clash with the notation~above.

We denote by $\tau$ the standard conjugation on~$\P^1$, i.e.
$$\tau\!:\P^1\lra\P^1, \qquad \tau\big([z_0,z_1]\big)=\big[\ov{z_1},\ov{z_0}].$$
For every real rational irreducible $J$-holomorphic degree~$B$ curve
contributing to~$N_{B,l}^{\phi,\fp}$, 
there exists a $J$-holomorphic map \hbox{$u\!:\P^1\!\lra\!X$} as in~\eref{Cudfn_e}
such that \hbox{$u\!\circ\!\tau\!=\!\phi\!\circ\!u$}.
Thus, the number~$N_{B,l}^{\phi,\fp}$ is a signed cardinality 
of the subset of the moduli space $\ov\M_{k,l}(B;J)$ of
real rational degree~$B$ $J$-holomorphic maps sending the $k$~real marked points
and the first points in the $l$~conjugate pairs of marked points 
to generic points in~$X^{\phi}$ and~$X$, respectively.

The domain and target of the evaluation morphism 
\BE{evtotmap_e0}\ev\!: \ov\M_{k,l}(B;J)\lra X_{k,l}\!\equiv\!(X^{\phi})^k\!\times\!X^l\EE
may not be relatively orientable, but it becomes relatively orientable 
after removing certain codimension~1 strata from the domain
(i.e.~the pull-back of 
the first Stiefel-Whitney class~$w_1$ of the target is the $w_1$ of the domain). 
We cut $\M_{k,l}(B;J)$ along these codimension~1 strata to obtain 
a bordered manifold $\wh\M_{k,l}(B;J)$ and~give 
$$\ev\!:\wh\M_{k,l}(B;J)\lra X_{k,l}$$
a relative orientation. 
The codimension~1 strata of $\M_{k,l}(B;J)$
consist of curves with two components and one real~node.

The forgetful morphisms~\eref{ffdfn_e0} we encounter take values
in the subspaces $\ov\cM_{k',l'}^{\tau}$ of $\R\ov\cM_{0,k',l'}$ of 
real curves with non-empty fixed locus;
$\ov\cM_{k',l'}^{\tau}$ is a proper subspace of $\R\ov\cM_{0,k',l'}$ if and only
if $k'\!=\!0$.
We choose a bordered hypersurface $\Ups$ in $\ov\cM_{k',l'}^{\tau}$ 
whose boundary consists of curves with three components and a conjugate pair of nodes
and a relative orientation on the inclusion of $\Ups$ into~$\ov\cM_{k',l'}^{\tau}$. 
Let $\bfC\!\subset\!X_{k,l}$ be a generic constraint consisting of divisors and points
so that the~maps
$$\ev\!\times\!\ff_{k',l'}:\wh\M_{k,l}(B;J)\lra X_{k,l}\!\times\!\ov\cM_{k',l'}^{\tau}
\quad\hbox{and}\quad
\io_{\bfC;\Ups}\!:\bfC\!\times\!\Ups\lhra{~~~} X_{k,l}\times\!\ov\cM_{k',l'}^{\tau}$$
are transverse and
$$\dim\,\wh\M_{k,l}(B;J)+\dim\big(\bfC\!\times\!\Ups\big)=
\dim\big(X_{k,l}\!\times\!\ov\cM_{k',l'}^{\tau}\big)+1\,.$$
With the relative orientations above, the signed counts of the intersection points~of 
\BEnum{(C$\arabic*$)}

\item\label{LHScnt_it} $\ev\!\times\!\ff_{k',l'}$ with the boundary of $\io_{\bfC;\Ups}$ and
\item\label{RHScnt_it} the boundary of $\ev\!\times\!\ff_{k',l'}$ with $\io_{\bfC;\Ups}$ 

\EEnum
are well-defined and equal. 

The first count above decomposes into curve-counting invariants similarly to the complex case. 
The second count can also be decomposed, based on the following observations:
\BEnum{$\bu$}

\item most boundary strata of~$\wh\M_{k,l}(B;J)$ get contracted by 
$\ev\!\times\!\ff_{k',l'}$ and thus do not contribute to~\ref{RHScnt_it};

\item some boundary strata that do not get contracted do not intersect~$\Ups$ via $\ff_{k',l'}$ 
due to our choice of $\Ups\!\subset\!\ov\cM_{k',l'}^{\tau}$
and thus do not contribute to~\ref{RHScnt_it} either;

\item intersecting the remaining boundary strata with~$\Ups$ via $\ff_{k',l'}$ 
has the effect of specifying the position of the node (relative to the marked points)
on the component of the curve carrying the first conjugate pair of marked points.

\EEnum
These statements are explained in the proof of Corollary~\ref{mainsetup_crl} at 
the end of Section~\ref{SolWDVVpf_subs} and 
in the proof of Proposition~\ref{Rdecomp_prp} in Section~\ref{Rdecomp_subs}.
The equality of the counts~\ref{LHScnt_it} and~\ref{RHScnt_it} then
translates into~\ref{Sol12rec_it} in the case $(k',l')\!=\!(1,2)$ 
and into~\ref{Sol03rec_it} and\ref{Sol03rec2_it} in the case \hbox{$(k',l')\!=\!(0,3)$}. 

The paper is organized as follows. 
Section~\ref{summary_sec} is a detailed version of the above outline of the proof 
of Theorem~\ref{SolWDVV_thm}. 
The notions of relative orientations, 
pseudocycles with relative orientations (called \sf{Steenrod pseudocycles}), and 
intersection signs between them are defined in Section~\ref{TopolPrelim_sec};
this section also contains all relevant observations concerning these notions.
Section~\ref{DM_sec} describes in detail the hypersurfaces~$\Ups$ 
in the Deligne-Mumford spaces $\ov\cM_{1,2}^{\tau}$ and $\ov\cM_{0,3}^{\tau}$
used in the proof of Theorem~\ref{SolWDVV_thm}.
Section~\ref{RealGWs_sec} sets up the notation relevant to the map spaces $\M_{k,l}(B;J)$, 
states the propositions that are among the main steps in the proof of Theorem~\ref{SolWDVV_thm},
and deduces this theorem from them and the lemmas of Section~\ref{cMbordism_subs}.
The (somewhat technical) proofs of these propositions are deferred to Section~\ref{proofs_sec}. 

\textit{Acknowledgments.} 
I would like to thank my thesis advisor Aleksey Zinger for introducing me to
this subject and background material, suggesting the topic, a lot of guidance and discussions throughout the process of the work, very detailed help with exposition, and constant encouragements.

\section{Summary of the proof of Theorem~\ref{SolWDVV_thm}}
\label{summary_sec}

The numbers $N_{B,l}^{\phi,\fp}\!\equiv\!N_{B,l}^{\phi,\fp}(\wch{X}^{\phi})$
appearing in Theorem~\ref{SolWDVV_thm} 
arise from the moduli space $\ov\M_{k,l}(B;J)$ of genus~0 real degree~$B$ $J$-holomorphic
maps to~$X$ that take the fixed locus of the domain to 
the chosen topological component~$X^{\phi}$ of the fixed locus of~$\phi$.
This moduli space has no boundary if 
\BE{BKcond_e} k\!+\!2\Z\neq \blr{w_2(X),B}\in\Z_2\,.\EE
By~\cite{Jake}, a $\Pin^-$-structure~$\fp$ on~$X^{\phi}$ can be used to specify
a \sf{relative orientation} of the restriction of the total evaluation morphism~\eref{evtotmap_e0}
to the main stratum $\M_{k,l}(B;J)$ of $\ov\M_{k,l}(B;J)$ if \eref{BKcond_e} holds.
Since $\M_{k,l}(B;J)$ is generally disconnected, there are a number of systematic ways
of doing so, some of which we index by $l^*\!\in\!\Z^{\ge0}$ with $l^*\!\le\!l$.
By~\cite{Jake} again, these orientations extend across some codimension~1 boundary
strata~$\cS$, but not others.
In our setup, the ``$l^*$-orientation" $\fo_{\fp;l^*}$ on the restriction of~\eref{evtotmap_e0} 
to $\M_{k,l}(B;J)$
extends over a such stratum~$\cS$ if and only if a certain $\Z$-valued invariant 
$\ep_{l^*}(\cS)$ of~$\cS$ is congruent to~0 or~1 mod~4; see Lemma~\ref{orient_lmm}.

If $k,l\!\in\!\Z^{\ge0}$ and $B\!\in\!H_2(X)$ satisfy~\eref{dimcond_e}, 
the path in $\ov\M_{k,l}(B;J)$ determined by a generic path of 
collections of $k$~points in~$X^{\phi}$ and $l$~points in~$X\!-\!X^{\phi}$ and
of almost complex structures $J_t\!\in\!\cJ_{\om}^{\phi}$ does not cross 
the codimension~1 boundary strata~$\cS$ with $\ep_0(\cS)$ congruent to~2 or~3 mod~4.
This fundamental insight, formulated in terms of moduli spaces of disk maps in~\cite{Jake},
along with the above orientation statements established
the invariance of the counts $N_{B,l}^{\phi,\fp}$ and has since been
used to construct numerical invariants in some other settings.

The image of each codimension~1 stratum~$\cS$ with $\ep_0(\cS)$ congruent to~2 or~3 mod~4
under~\eref{evtotmap_e0} is of smaller dimension than~$\cS$.
Along with the orientation statements above, this implies that the restriction of~\eref{evtotmap_e0}
to the complement $\ov\M_{k,l;0}^{\st}(B;J)$ of the closures~$\ov\cS$ of these strata is
a codimension~0 \sf{Steenrod pseudocycle} with respect to the orientation~$\fo_{\fp;0}$;
see Proposition~\ref{JakePseudo_prp}.
The number $N_{B,l}^{\phi,\fp}$ is the \sf{degree} $\deg(\ev,\fo_{\fp;0})$
of this pseudocycle.

The orientations on the restriction of~\eref{evtotmap_e0} to $\M_{k,l}(B;J)$
relevant to lifting relations from $\ov\cM_{1,2}^{\tau}$ and $\ov\cM_{0,3}^{\tau}$ 
to $\ov\M_{k,l}(B;J)$ are the orientations~$\fo_{\fp;l^*}$ of Lemma~\ref{orient_lmm}
with $l^*\!=\!2,3$, 
as we would like to apply the lifted relations with two and three divisor insertions.
The relevant restriction of~\eref{evtotmap_e0} shrinks the codimension~1 strata~$\cS$
with $\ep_{l^*}(\cS)$ congruent to~2 or~3 mod~4, but {\it not} with $\ep_{l^*}(\cS)\!=\!2$.
In order to deal with this issue, we cut $\ov\M_{k,l}(B;J)$ along 
the closures~$\ov\cS$ of the strata~$\cS$ with $\ep_{l^*}(\cS)$ congruent to~2 or~3 mod~4.
We obtain a moduli space $\wh\M_{k,l;l^*}(B;J)$ with boundary consisting of 
double covers~$\wh\cS$ of these strata.
The relative orientation~$\fo_{\fp;l^*}$ extends to 
a relative orientation~$\wh\fo_{\fp;l^*}$ of the total evaluation morphism
\BE{evtotmap_e0b}\ev\!:\wh\M_{k,l;l^*}(B;J)\lra X_{k,l}\EE
induced by~\eref{evtotmap_e0}.

Suppose $k,l\!\in\!\Z^{\ge0}$ and $B\!\in\!H_2(X)$ are as in~\eref{dimcond_e},
$k'\!\le\!k$, and $l^*\!\le\!l'\!\le\!l\!+\!l^*\!-\!1$ so that
there are well-defined forgetful morphisms
\BE{Rffdfn_e}\ff_{k',l'}\!:\ov\M_{k,l+l^*-1;l^*}(B;J)\lra\ov\cM_{k',l'}^{\tau}
\quad\hbox{and}\quad
\ff_{k',l'}\!:\wh\M_{k,l+l^*-1;l^*}(B;J)\lra\ov\cM_{k',l'}^{\tau}\,.\EE
An $l^*$-tuple $\bh\!\equiv\!(H_1,\ldots,H_{l^*})$ of divisors in~$X$ cuts 
out the subspace
$$\wh\cZ_{k,l+l^*-1;\bh}^{\st}(B;J)\subset \wh\M_{k,l+l^*-1;l^*}(B;J)\!\times\!
H_1\!\times\!\ldots\!\times\!H_{l^*}$$
of maps with the first $l^*$ non-real marked points lying on~$H_1,\ldots,H_{l^*}$.
The relative orientation~$\wh\fo_{\fp;l^*}$ of~\eref{evtotmap_e0b} and 
the orientation~$\fo_{\bh}$ on $H_1\!\times\!\ldots\!\times\!H_{l^*}$ 
induce a relative orientation~$\wh\fo_{\fp;\bh}$
of the evaluation morphism
$$\ev_{\bh}\!:\wh\cZ_{k,l+l^*-1;\bh}^{\st}(B;J)\lra X_{k,l-1}$$
at the remaining marked points.
A tuple~$\bp$ of points in~$X_{k,l-1}$ and a bordered compact real hypersurface 
$\Ups\!\subset\!\ov\cM_{k',l'}^{\tau}$ determine an embedding
$$f_{\bp;\Ups}\!:\Ups\lra X_{k,l-1}\!\times\!\ov\cM_{k',l'}^{\tau}\,.$$
Under appropriate regularity assumptions, the fiber product 
$M_{(\ev_{\bh},\ff_{k',l'}),f_{\bp;\Ups}}$ of 
$$\big(\ev_{\bh},\ff_{k',l'}\big)\!:\wh\cZ_{k,l+l^*-1;\bh}^{\st}(B;J)
\lra X_{k,l-1}\!\times\!\ov\cM_{k',l'}^{\tau}$$
with $f_{\bp;\Ups}$ is a compact one-dimensional manifold with the boundary
\BE{bndsplit_e0}\begin{split}
\prt M_{(\ev_{\bh},\ff_{k',l'}),f_{\bp;\Ups}}
&=\wh\cZ_{k,l+l^*-1;\bh}^{\st}(B;J)\,_{(\ev_{\bh},\ff_{k',l'})}\!\times_{f_{\bp;\Ups}}\!\prt \Ups\\
&\hspace{.5in}
\sqcup \big(\prt\wh\cZ_{k,l+l^*-1;\bh}^{\st}(B;J)\!\big)\,_{(\ev_{\bh},\ff_{k',l'})}\!\times_{f_{\bp;\Ups}}\!\Ups\,.
\end{split}\EE
The relative orientation~$\wh\fo_{\fp;\bh}$ and a co-orientation~$\fo_\Ups^c$ on~$\Ups$ determine 
signs of the points on the right-hand side of~\eref{bndsplit_e0} so~that 
\BE{bndsplit_e}\begin{split}
&\big|\wh\cZ_{k,l+l^*-1;\bh}^{\st}(B;J)\,_{(\ev_{\bh},\ff_{k',l'})}\!\!\times_{f_{\bp;\Ups}}\!
\prt \Ups\big|_{\wh\fo_{\fp;\bh},\prt\fo_\Ups^c}^{\pm}\\
&\hspace{1in}
=(-1)^{\dim\,\Ups}
\big|\big(\prt\wh\cZ_{k,l+l^*-1;\bh}^{\st}(B;J)\!\big)\,_{(\ev_{\bh},\ff_{k',l'})}
\!\!\times_{f_{\bp;\Ups}}\!\Ups\big|_{\prt\fo_{\fp;\bh},\fo_\Ups^c}^{\pm}\,,
\end{split}\EE
where $|\cdot|^{\pm}$ denotes the signed cardinality; see Lemma~\ref{InterOrient_lmm}.

Since only the strata~$\wh\cS$ of $\prt\wh\M_{k,l+l^*-1;l^*}(B;J)$
with $\ep_{l^*}(\wh\cS)\!=\!2$ are not shrunk by~\eref{evtotmap_e0b},
only the~strata
$$\wh\cS_{\bh}\equiv \big(\wh\cS\!\times\!H_1\!\times\!\ldots\!\times\!H_{l^*}\big)\cap
\wh\cZ_{k,l+1;\bh}^{\st}(B;J)$$
of $\wh\cZ_{k,l+l^*-1;\bh}^{\st}(B;J)$ with $\ep_{l^*}(\wh\cS)\!=\!2$
contribute to the right-hand side of~\eref{bndsplit_e}.
Since $\wh\cS_{\bh}$ is a double cover~of the subspace
$$\cS_{\bh}\subset \cS\!\times\!H_1\!\times\!\ldots\!\times\!H_{l^*}$$
of maps with the first $l^*$ non-real marked points lying on $H_1,\ldots,H_{l^*}$, 
we conclude~that
\begin{equation*}\begin{split}
&\big|\big(\prt\wh\cZ_{k,l+l^*-1;\bh}^{\st}(B;J)\!\big)\,_{(\ev_{\bh},\ff_{k',l'})}
\!\!\times_{f_{\bp;\Ups}}\!\Ups\big|_{\prt\wh\fo_{\fp;\bh},\fo_\Ups^c}^{\pm}
=2\!\!\!\!\!\sum_{\ep_{l^*}(\cS)=2}\!\!\!\!\!\!\big|(\cS_{\bh})\,_{(\ev_{\bh},\ff_{k',l'})}
\!\!\times_{f_{\bp;\Ups}}\!\Ups\big|_{\prt\wh\fo_{\fp;\bh},\fo_\Ups^c}^{\pm}\,.
\end{split}\end{equation*}

The moduli space $\ov\cM_{k',l'}^{\tau}$ contains codimension~1 strata~$\oS_i$
with $i\!\le\!l'$ parametrizing marked curves with two real components so that 
one of the components carries only the $i$-th conjugate pair of marked points.
We establish Theorem~\ref{SolWDVV_thm} by applying~\eref{bndsplit_e} 
with certain bordered compact hypersurfaces~$\Ups$ in \hbox{$\ov\cM_{1,2}^{\tau}\!\approx\!\R\P^2$}
and in the three-dimensional orientable manifold $\ov\cM_{0,3}^{\tau}$
so that~$\Ups$ is {\it disjoint} from the closure~$\ov{S}_1$ of~$\oS_1$. 

The moduli space $\ov\cM_{1,2}^{\tau}$  contains two points~$P^{\pm}$ 
corresponding to the two marked curves
consisting of one real component and one conjugate pair of components;
see the diagrams on the left-hand side of the first row in Figure~\ref{LiftedRel_fig}.
We take~$\Ups$ in $\ov\cM_{1,2}^{\tau}\!-\!\ov{S}_1$ to be a path from~$P^-$ to~$P^+$
as in Lemma~\ref{M12rel_lmm}.
In this case, \eref{bndsplit_e} is represented by 
the first row in Figure~\ref{LiftedRel_fig}.
The labels $\ep_{l^*}(\cS)\!=\!2$ and $\cap\!\Ups$
under the diagrams on the right-hand side indicate that
only ``intersections"
of some strata of two-component maps with~$\Ups$ contribute to this relation. 
These intersections arise from the last part of the boundary in~\eref{bndsplit_e0} and thus 
contribute twice each (with the same sign).
The strata of two-component maps whose contributions are described as being {\it insignificant due to 
sign cancellations} in \cite[p10]{Jake2} do not appear in our approach at~all.

\begin{figure}\begin{center}
\begin{tikzpicture}
\draw (0,0) circle [radius=0.5];
\draw (-0.5,0) arc [start angle=180, end angle=360, x radius=0.5, y radius=0.2];
\draw [dashed] (-0.5,0) arc [start angle=180, end angle=0, x radius=0.5, y radius=0.1];
\draw [fill] (-0.5,0) circle [radius=0.04];
\node [left] at (-0.5,0) {$x_1$};
\draw (0,1) circle [radius=0.5]; 
\draw (0,-1) circle [radius=0.5];
\draw [fill] (-0.2,1.25) circle [radius=0.02];
\node [above] at (-0.3,1.3) {$z_1^+$};
\draw [fill] (-0.2,-1.25) circle [radius=0.02];
\node [below] at (-0.3,-1.3) {$z_1^-$};
\draw [fill] (0.25,1.25) circle [radius=0.02];
\node [above] at (0.33,1.3) {$z_2^+$};
\draw [fill] (0.25,-1.25) circle [radius=0.02];
\node [below] at (0.33,-1.3) {$z_2^-$};
\node at (1,0) {$+$};
\end{tikzpicture}\hspace{-0.15cm}
\begin{tikzpicture}
\draw (0,0) circle [radius=0.5];
\draw (-0.5,0) arc [start angle=180, end angle=360, x radius=0.5, y radius=0.2];
\draw [dashed] (-0.5,0) arc [start angle=180, end angle=0, x radius=0.5, y radius=0.1];
\draw [fill] (-0.5,0) circle [radius=0.04];
\node [left] at (-0.5,0) {$x_1$};
\draw (0,1) circle [radius=0.5]; 
\draw (0,-1) circle [radius=0.5];
\draw [fill] (-0.2,1.25) circle [radius=0.02];
\node [above] at (-0.3,1.3) {$z_1^+$};
\draw [fill] (-0.2,-1.25) circle [radius=0.02];
\node [below] at (-0.3,-1.3) {$z_1^-$};
\draw [fill] (0.25,1.25) circle [radius=0.02];
\node [above] at (0.33,1.3) {$z_2^-$};
\draw [fill] (0.25,-1.25) circle [radius=0.02];
\node [below] at (0.33,-1.3) {$z_2^+$};
\node at (1.25,0) {=$~-2$};
\end{tikzpicture}\hspace{-0.3cm}
\begin{tikzpicture}
\draw (-0.5,0) circle [radius=0.5];
\draw (0.5,0) circle [radius=0.5];
\draw (0,0) arc [start angle=180, end angle=360, x radius=0.5, y radius=0.2];
\draw [dashed] (0,0) arc [start angle=180, end angle=0, x radius=0.5, y radius=0.1];
\draw (-1,0) arc [start angle=180, end angle=360, x radius=0.5, y radius=0.2];
\draw [dashed] (-1,0) arc [start angle=180, end angle=0, x radius=0.5, y radius=0.1];
\draw [fill] (-0.75,0.3) circle [radius=0.02];
\node [above left] at (-0.7,0.3) {$z_1^+$}; 
\draw [fill] (-0.75,-0.3) circle [radius=0.02];
\node [below left] at (-0.7,-0.3) {$z_1^-$}; 
\draw [fill] (-0.3,0.3) circle [radius=0.02];
\node [above] at (-0.3,0.4) {$z_2^\pm$}; 
\draw [fill] (-0.3,-0.3) circle [radius=0.02];
\node [below] at (-0.3,-0.4) {$z_2^\mp$};
\draw [fill] (1,0) circle [radius=0.04];
\node [right] at (1,0) {$x_1$};
\node [below] at (0.33,-1.7) { };
\node at (2,0) {$-$ 2};
\node at (-.2,-1.4){\sm{$\ep_{l^*}(\cS)\!=\!2,~\cap\!\Ups$}};
\end{tikzpicture}\hspace{-0.15cm}
\begin{tikzpicture}
\draw (-0.5,0) circle [radius=0.5];
\draw (0.5,0) circle [radius=0.5];
\draw (0,0) arc [start angle=180, end angle=360, x radius=0.5, y radius=0.2];
\draw [dashed] (0,0) arc [start angle=180, end angle=0, x radius=0.5, y radius=0.1];
\draw (-1,0) arc [start angle=180, end angle=360, x radius=0.5, y radius=0.2];
\draw [dashed] (-1,0) arc [start angle=180, end angle=0, x radius=0.5, y radius=0.1];
\draw [fill] (-1,0) circle [radius=0.04];
\node [left] at (-0.9,0) {$x_1$}; 
\draw [fill] (-0.3,0.3) circle [radius=0.02];
\node [above] at (-0.3,0.4) {$z_1^+$}; 
\draw [fill] (-0.3,-0.3) circle [radius=0.02];
\node [below] at (-0.3,-0.4) {$z_1^-$};
\draw [fill] (0.65,0.28) circle [radius=0.02];
\node [above right] at (0.6,0.28) {$z_2^+$};
\draw [fill] (0.65,-0.28) circle [radius=0.02];
\node [below right] at (0.6,-0.28) {$z_2^-$};
\node [below] at (0.33,-1.7) { };
\node at (-.2,-1.4){\sm{$\ep_{l^*}(\cS)\!=\!2,~\cap\!\Ups$}};
\end{tikzpicture}

\begin{tikzpicture}
\draw (0,0) circle [radius=0.5];
\draw (-0.5,0) arc [start angle=180, end angle=360, x radius=0.5, y radius=0.2];
\draw [dashed] (-0.5,0) arc [start angle=180, end angle=0, x radius=0.5, y radius=0.1];
\draw [fill] (-0.2,0.3) circle [radius=0.02];
\node [left] at (-0.25,0.3) {$z_2^\pm$};
\draw [fill] (-0.2,-0.3) circle [radius=0.02];
\node [left] at (-0.25,-0.4) {$z_2^\mp$};
\draw (0,1) circle [radius=0.5]; 
\draw (0,-1) circle [radius=0.5];
\draw [fill] (-0.2,1.25) circle [radius=0.02];
\node [above] at (-0.3,1.3) {$z_1^+$};
\draw [fill] (-0.2,-1.25) circle [radius=0.02];
\node [below] at (-0.3,-1.3) {$z_1^-$};
\draw [fill] (0.25,1.25) circle [radius=0.02];
\node [above] at (0.33,1.3) {$z_3^+$};
\draw [fill] (0.25,-1.25) circle [radius=0.02];
\node [below] at (0.33,-1.3) {$z_3^-$};
\node at (1,0) {+};
\end{tikzpicture}\hspace{-0.3cm}
\begin{tikzpicture}
\draw (0,0) circle [radius=0.5];
\draw (-0.5,0) arc [start angle=180, end angle=360, x radius=0.5, y radius=0.2];
\draw [dashed] (-0.5,0) arc [start angle=180, end angle=0, x radius=0.5, y radius=0.1];
\draw [fill] (-0.2,0.3) circle [radius=0.02];
\node [left] at (-0.25,0.3) {$z_2^\pm$};
\draw [fill] (-0.2,-0.3) circle [radius=0.02];
\node [left] at (-0.25,-0.4) {$z_2^\mp$};
\draw (0,1) circle [radius=0.5]; 
\draw (0,-1) circle [radius=0.5];
\draw [fill] (-0.2,1.25) circle [radius=0.02];
\node [above] at (-0.3,1.3) {$z_1^+$};
\draw [fill] (-0.2,-1.25) circle [radius=0.02];
\node [below] at (-0.3,-1.3) {$z_1^-$};
\draw [fill] (0.25,1.25) circle [radius=0.02];
\node [above] at (0.33,1.3) {$z_3^-$};
\draw [fill] (0.25,-1.25) circle [radius=0.02];
\node [below] at (0.33,-1.3) {$z_3^+$};
\node at (1,0) {$-$};
\end{tikzpicture}\hspace{-0.3cm}
\begin{tikzpicture}
\draw (0,0) circle [radius=0.5];
\draw (-0.5,0) arc [start angle=180, end angle=360, x radius=0.5, y radius=0.2];
\draw [dashed] (-0.5,0) arc [start angle=180, end angle=0, x radius=0.5, y radius=0.1];
\draw [fill] (-0.2,0.3) circle [radius=0.02];
\node [left] at (-0.25,0.3) {$z_3^\pm$};
\draw [fill] (-0.2,-0.3) circle [radius=0.02];
\node [left] at (-0.25,-0.4) {$z_3^\mp$};
\draw (0,1) circle [radius=0.5]; 
\draw (0,-1) circle [radius=0.5];
\draw [fill] (-0.2,1.25) circle [radius=0.02];
\node [above] at (-0.3,1.3) {$z_1^+$};
\draw [fill] (-0.2,-1.25) circle [radius=0.02];
\node [below] at (-0.3,-1.3) {$z_1^-$};
\draw [fill] (0.25,1.25) circle [radius=0.02];
\node [above] at (0.33,1.3) {$z_2^+$};
\draw [fill] (0.25,-1.25) circle [radius=0.02];
\node [below] at (0.33,-1.3) {$z_2^-$};
\node at (1,0) {$-$};
\end{tikzpicture}\hspace{-0.3cm}
\begin{tikzpicture}
\draw (0,0) circle [radius=0.5];
\draw (-0.5,0) arc [start angle=180, end angle=360, x radius=0.5, y radius=0.2];
\draw [dashed] (-0.5,0) arc [start angle=180, end angle=0, x radius=0.5, y radius=0.1];
\draw [fill] (-0.2,0.3) circle [radius=0.02];
\node [left] at (-0.25,0.3) {$z_3^\pm$};
\draw [fill] (-0.2,-0.3) circle [radius=0.02];
\node [left] at (-0.25,-0.4) {$z_3^\mp$};
\draw (0,1) circle [radius=0.5]; 
\draw (0,-1) circle [radius=0.5];
\draw [fill] (-0.2,1.25) circle [radius=0.02];
\node [above] at (-0.3,1.3) {$z_1^+$};
\draw [fill] (-0.2,-1.25) circle [radius=0.02];
\node [below] at (-0.3,-1.3) {$z_1^-$};
\draw [fill] (0.25,1.25) circle [radius=0.02];
\node [above] at (0.33,1.3) {$z_2^-$};
\draw [fill] (0.25,-1.25) circle [radius=0.02];
\node [below] at (0.33,-1.3) {$z_2^+$};
\node at (1.2,0) {=  2};
\end{tikzpicture}\hspace{-0.3cm}
\begin{tikzpicture}
\draw (-0.5,0) circle [radius=0.5];
\draw (0.5,0) circle [radius=0.5];
\draw (0,0) arc [start angle=180, end angle=360, x radius=0.5, y radius=0.2];
\draw [dashed] (0,0) arc [start angle=180, end angle=0, x radius=0.5, y radius=0.1];
\draw (-1,0) arc [start angle=180, end angle=360, x radius=0.5, y radius=0.2];
\draw [dashed] (-1,0) arc [start angle=180, end angle=0, x radius=0.5, y radius=0.1];
\draw [fill] (-0.75,0.3) circle [radius=0.02];
\node [above left] at (-0.7,0.3) {$z_1^+$}; 
\draw [fill] (-0.75,-0.3) circle [radius=0.02];
\node [below left] at (-0.7,-0.3) {$z_1^-$}; 
\draw [fill] (-0.3,0.3) circle [radius=0.02];
\node [above] at (-0.3,0.4) {$z_3^\pm$}; 
\draw [fill] (-0.3,-0.3) circle [radius=0.02];
\node [below] at (-0.3,-0.4) {$z_3^\mp$};
\draw [fill] (0.65,0.28) circle [radius=0.02];
\node [above right] at (0.6,0.28) {$z_2^+$};
\draw [fill] (0.65,-0.28) circle [radius=0.02];
\node [below right] at (0.6,-0.28) {$z_2^-$};
\node [below] at (0.33,-1.7) { };
\node at (1.5,0) {$+$ 2};
\node at (-.2,-1.4){\sm{$\ep_{l^*}(\cS)\!=\!2,~\cap\!\Ups$}};
\end{tikzpicture}\hspace{-0.3cm}
\begin{tikzpicture}
\draw (-0.5,0) circle [radius=0.5];
\draw (0.5,0) circle [radius=0.5];
\draw (0,0) arc [start angle=180, end angle=360, x radius=0.5, y radius=0.2];
\draw [dashed] (0,0) arc [start angle=180, end angle=0, x radius=0.5, y radius=0.1];
\draw (-1,0) arc [start angle=180, end angle=360, x radius=0.5, y radius=0.2];
\draw [dashed] (-1,0) arc [start angle=180, end angle=0, x radius=0.5, y radius=0.1];
\draw [fill] (-0.75,0.3) circle [radius=0.02];
\node [above left] at (-0.7,0.3) {$z_1^+$}; 
\draw [fill] (-0.75,-0.3) circle [radius=0.02];
\node [below left] at (-0.7,-0.3) {$z_1^-$}; 
\draw [fill] (-0.3,0.3) circle [radius=0.02];
\node [above] at (-0.3,0.4) {$z_2^\pm$}; 
\draw [fill] (-0.3,-0.3) circle [radius=0.02];
\node [below] at (-0.3,-0.4) {$z_2^\mp$};
\draw [fill] (0.65,0.28) circle [radius=0.02];
\node [above right] at (0.6,0.28) {$z_3^+$};
\draw [fill] (0.65,-0.28) circle [radius=0.02];
\node [below right] at (0.6,-0.28) {$z_3^-$};
\node [below] at (0.33,-1.7) { };
\node at (-.2,-1.4){\sm{$\ep_{l^*}(\cS)\!=\!2,~\cap\!\Ups$}};
\end{tikzpicture}
\end{center}
\caption{The relations on stable maps induced via~\eref{bndsplit_e} by lifting 
codimension~2 relations from $\ov\cM_{1,2}^{\tau}$ and~$\ov\cM_{0,3}^{\tau}$;
the curves on the right-hand sides of the two relations are constrained
by the hypersurfaces $\Ups$ in $\ov\cM_{1,2}^{\tau}$ and~$\ov\cM_{0,3}^{\tau}$.}
\label{LiftedRel_fig}\end{figure}

The one-dimensional strata of $\ov\cM_{0,3}^{\tau}$ that parametrize marked curves
consisting of one real component and one conjugate pair of components come
in three pairs $\oGa_i^{\pm}$ with $i\!=\!1,2,3$;
see the diagrams on the left-hand side of the second row in Figure~\ref{LiftedRel_fig}.
The closures $\ov\Ga_i^{\pm}$ of these strata with $i\!=\!2,3$
bound a compact oriented surface~$\Ups$ in $\ov\cM_{0,3}^{\tau}\!-\!\ov{S}_1$
as in Lemma~\ref{M03rel_lmm}.
In this case, \eref{bndsplit_e}  is represented by 
the second row in Figure~\ref{LiftedRel_fig}.
The curves represented by the diagrams on the right-hand side
in this relation again arise from the last part of the boundary in~\eref{bndsplit_e0}.

We apply the relations represented by Figure~\ref{LiftedRel_fig} 
with the divisors~$H_1,H_2$ as the first two non-real insertions
and points as the remaining insertions;
we also apply the second relation with the divisors $H_1,H_2,H_3$
as the first three non-real insertions.
The normal bundle to the strata of maps represented by the three-component curves
in this figure is canonically oriented.
Thus, the restriction of the total evaluation map~\eref{evtotmap_e0}
to these strata inherits a relative orientation from its restriction to $\M_{k,l}(B;J)$.
The proof of \cite[Prop.~4.2]{RealEnum} readily applies to express the associated
counts of nodal maps in terms of the real map counts~$N_{B,l}^{\phi,\fp}$
and the complex map counts~$N_B^X$; see Proposition~\ref{Cdecomp_prp}.

The map counts represented by the  two-component curves in Figure~\ref{LiftedRel_fig}
are more elaborate.
Each stratum~$\cS_{\bh}$ of such maps is the fiber product of the evaluation morphisms
$$\ev_{\nod}\!:\cZ_1\!\equiv\!\cZ_{k_1+1,l_1;\bh_1}(B_1;J)\lra X^{\phi}
\quad\hbox{and}\quad
\ev_{\nod}\!:\cZ_2\!\equiv\!\cZ_{k_2+1,l_2;\bh_2}(B_2;J)\lra X^{\phi}$$
 at the nodal points from moduli spaces associated with the two components,
for a split of~$\bh$ into an $l_1^*$-tuple~$\bh_1$ and an $l_2^*$-tuple~$\bh_2$.
The condition $\ep_{l^*}(\cS)\!=\!2$ implies~that each of the total evaluation morphisms
\begin{gather}\label{evbh1pr_e}
\ev_{\bh_1}'\!:\cZ_1'\!\equiv\!\cZ_{k_1,l_1;\bh_1}(B_1;J)\lra X_{k_1,l_1-l_1^*}
\qquad\hbox{and}\\
\notag
\ev_{\bh_2}\!\!\equiv\!\big(\ev_{\bh_2}',\ev_{\nod}\big)\!:
\cZ_2\lra X_{k_2+1,l_2-l_2^*}\!\equiv\!X_{k_2,l_2-l_2^*}\!\times\!X^{\phi}
\end{gather}
is a map between spaces of the same dimensions.
The latter implies that~\eref{BKcond_e} with $(k,B)$ replaced by either $(k_1,B_1)$
and $(k_2\!+\!1,B_2)$ holds.
Thus, the~maps $\ev_{\bh_1}'$ and $\ev_{\bh_2}$ have well-defined degrees
$\deg(\ev_{\bh_1}',\fo_{\fp;\bh_1})$ and $\deg(\ev_{\bh_2},\fo_{\fp;\bh_2})$
with respect to the relative orientations induced by 
the $\Pin^-$-structure~$\fp$ and the orientations of $H_1,\ldots,H_{l^*}$.
These degrees are related to the map counts $N^{\phi,\fp}_{B_1,l_1-l_1^*}$
and $N^{\phi,\fp}_{B_2,l_2-l_2^*}$ (with $k_1$ and $k_2\!+\!1$ real point insertions, 
respectively) via the divisor relation~\eref{RdivRel_e}.

A crucial consequence of our choices of the hypersurfaces $\Ups\!\subset\!\ov\cM_{0,3}^{\tau}$
is that the restriction of the first morphism in~\eref{Rffdfn_e} 
to~$\cS_{\bh}$ factors through a morphism
$$\ff_1\!: \cZ_1\lra\ov\cM_{k',l'}^{\tau}$$
if $\ep_{l^*}(\cS)\!=\!2$ and $\cS\!\cap\!\ff_{k',l'}^{-1}(\Ups)\!\neq\!\eset$;
see Corollary~\ref{mainsetup_crl}.
Thus,
\BE{dropfactor3_e}(\cS_{\bh})\,_{(\ev_{\bh},\ff_{k',l'})}\!\!\times_{f_{\bp;\Ups}}\!\Ups
= \big((\cZ_1)\,_{(\ev_{\bh_1}',\ff_1)}\!\!\times_{f_{\bp_1;\Ups}}\!\Ups\big)\,
_{\ev_{\nod}}\!\times_{\ev_{\nod}}\!\ev_{\bh_2}^{-1}(\bp_2),\EE
for a split of~$\bp$ into a $k_1$-tuple~$\bp_1$ and a $k_2$-tuple~$\bp_2$.
The equality above holds set-theoretically; 
Lemma~\ref{dropfactor_lmm3} compares the signs on the two sides.
The morphism~$\ev_{\bh_1}'$ on the right-hand side of this equality denotes the composition
of~\eref{evbh1pr_e} with the natural projection
$$\ff\!:\cZ_{\bh_1}\lra\cZ_{\bh_1}'$$
dropping the real marked point corresponding to the node.
Thus,
$$(\cZ_1)\,_{(\ev_{\bh_1}',\ff_1)}\!\!\times_{f_{\bp_1;\Ups}}\!\Ups 
=\big\{u_1\!\in\!\cZ_1|_{\ev_{\bh_1}'^{-1}(\bp_1)}\!:\ff_1(u_1)\!\in\!\Ups\big\};$$
Lemma~\ref{dropfactor_lmm} compares the signs on the two sides.
Since this set is finite, \eref{dropfactor3_e} implies~that 
$$\big|(\cS_{\bh})\,_{(\ev_{\bh},\ff_{k',l'})}
\!\!\times_{f_{\bp;\Ups}}\!\Ups\big|_{\prt\wh\fo_{\fp;\bh},\fo_\Ups^c}^{\pm}
=\al(\cS,\Ups)\deg(\ev_{\bh_1}',\fo_{\fp;\bh_1})\deg(\ev_{\bh_2},\fo_{\fp;\bh_2})$$
for some $\al(\cS,\Ups)\!\in\!\Z$ determined by~$\cS$ and~$\Ups$.
This leads to a decomposition of the nodal map counts associated with 
the two-component diagrams in Figure~\ref{LiftedRel_fig} into sums of pairwise products
of the real map counts~$N_{B,l}^{\phi,\fp}$; see Proposition~\ref{Rdecomp_prp}.

\section{Topological preliminaries}
\label{TopolPrelim_sec}

\subsection{Relative orientations}
\label{SignConv_subs}

For a real vector space or vector bundle~$V$, let
$\la(V)\!\equiv\!\La_{\R}^{\top}V$ be its top exterior power.
For a manifold $M$, possibly with nonempty boundary~$\prt M$, we denote by 
$$\la(M)\equiv\la(TM)\equiv\La^{\top}_{\R}TM\lra M$$
its \sf{orientation line bundle}.
An \sf{orientation} of~$M$ is a homotopy class of trivializations of~$\la(M)$.  
By definition, $\la(\pt)\!=\!\R$.
We identify the two orientations of any point with $\pm1$ in the obvious way.

For submanifolds $S'\!\subset\!S\!\subset\!M$, the short exact sequences
\begin{gather*}
0\lra TS\lra TM|_S\lra \cN S\!\equiv\!\frac{TM|_S}{TS}\lra 0  \qquad\hbox{and}\\
0\lra \cN_SS'\!\equiv\!\frac{TS|_{S'}}{TS'}\lra \cN S'\!\equiv\!\frac{TM|_{S'}}{TS'}
\lra \cN S|_{S'}\!\equiv\!\frac{TM|_{S'}}{TS|_{S'}} \lra0 
\end{gather*}
of vector spaces determine isomorphisms
\BE{lasplits_e} \la(M)\big|_S\approx \la(S)\!\otimes\!\la(\cN S)
\quad\hbox{and}\quad
\la(\cN S')\approx \la(\cN_SS')\!\otimes\!\la(\cN S)\big|_{S'}\EE
of line bundles over~$S$ and~$S'$, respectively.
A \sf{co-orientation} of $S$ in $M$ is an orientation of~$\cN S$.
We define the canonical co-orientation~$\fo_{\prt M}^c$ of~$\prt M$ in~$M$
to be given by the outer normal direction.

For a fiber bundle $\ff_{\cM}\!:\cM\!\lra\!\cM'$, we denote by 
$T\cM^v\!\equiv\!\ker\nd\ff_{\cM}$ 
its vertical tangent bundle.
The short exact sequence
\BE{finses_e}0\lra T\cM^v\lra 
T\cM \xlra{\nd\ff_{\cM}} \ff_{\cM}^*T\cM'\lra0\EE
of vector bundles determines an isomorphism 
\BE{lasplits_e2} \la(\cM)\approx \ff_{\cM}^*\la(\cM')\!\otimes\!\la(T\cM^v)\EE
of line bundles over~$\cM$.
The switch of the ordering of the factors in~\eref{lasplits_e2} from~\eref{finses_e}
is motivated by Lemma~\ref{fibrasign_lmm1a}\ref{fibisom_it} below and by the inductive construction
of the orientations~$\fo_{k,l}$ on the real Deligne-Mumford moduli spaces $\ov\cM_{k,l}^{\tau}$
in Section~\ref{cMstrata_subs}.

If $f\!:\cZ\!\lra\!Y$ is a continuous map between two smooth manifolds, 
possibly with boundary, let 
$$\la(f)\!\equiv f^*\la(Y)^*\!\otimes\!\la(\cZ)\lra \cZ\,.$$
A \sf{relative orientation} of~$f$ is an orientation on the line bundle~$\la(f)$.
For a relative orientation~$\fo$ of~$f$ and $u\!\in\!\cZ$, we denote by $\fo_u$ 
the associated homotopy class of trivializations of the fiber~$\la_u(f)$ over~$u$
and the associated homotopy class of isomorphisms \hbox{$\la_u(\cZ)\!\lra\!\la_{f(u)}(Y)$}.
If in addition $\fo'$ is a relative orientation of another continuous map 
\hbox{$g\!:Y\!\lra\!Z$},
we denote~by $\fo\fo'$ the relative orientation of $g\!\circ\!f$ corresponding to
the homotopy class of the compositions
$$\la_u(\cZ)\lra \la_{f(u)}(Y)\lra \la_{g(f(u))}(Z)$$
of isomorphisms in the homotopy classes~$\fo_u$ and~$\fo_{f(u)}'$ for each $u\!\in\!\cZ$.

We identify an orientation~$\fo$ on a manifold $\cZ$ with a relative orientation
of \hbox{$\cZ\!\lra\!\pt$} in the obvious way.
For a submanifold $\cS\!\subset\!\cZ$, we identify 
a co-orientation $\fo_{\cS}^c$ on~$\cS$
with a relative orientation of the inclusion \hbox{$\io_{\cS}\!:\cS\!\lra\!\cZ$}
via the first isomorphism in~\eref{lasplits_e}.
If $\cS'\!\subset\!\cS$ is also a submanifold with a co-orientation $\fo_{\cS'}^c$ in~$\cS$,
then the relative orientation $\fo_{\cS'}^c\fo_{\cS}^c$ of the inclusion
$$\io_{\cS'}\!: \cS'\lra \cS\lra \cZ$$
corresponds to the co-orientation of $\cS'$ in $\cZ$ induced by the co-orientations
 $\fo_{\cS}^c$ and  $\fo_{\cS'}^c$ via the second isomorphism in~\eref{lasplits_e}.
If  $\ff_{\cM}\!:\cM\!\lra\!\cM'$ is a fiber bundle, we similarly identify
an orientation~$\fo_{\cM}^v$ of~$T\cM^v$ with a relative orientation of~$\ff_{\cM}$
via~\eref{lasplits_e2}.
 
If $f$, $\fo$, $\cS$, and $\fo_{\cS}^c$ are as above, we denote by $\fo|_{\cS}$ 
the restriction of the trivialization of~$\la(f)$ determined by~$\fo$ to~$\cS$
and define 
\BE{prtcoorient_e0}\fo_{\cS}\equiv \fo_{\cS}^c\fo\EE
to be \sf{the relative orientation~of $\la(f|_{\cS})$ induced by $\fo$ and~$\fo_{\cS}^c$}.
If $\cZ$ is a manifold with boundary, let
\BE{prtcoorient_e}\prt\big(\cZ,\fo\big)\equiv \big(\prt \cZ,\prt\fo\big)
\equiv \big(\prt\cZ,\fo_{\prt\cZ}^c\fo\big)\,.\EE
If $Y$ is a point (and so $\fo$ and $\prt\fo$ are orientations
on~$\cZ$ and~$\prt\cZ$, respectively),
this convention agrees with \cite[p146]{Warner} if and only if the dimension of~$M$ is odd.
If $\cS\!=\!\{P\}$ is also a point, then the projection isomorphism
\hbox{$T_P\cZ\!\lra\!\cN\cS$} is orientation-preserving with respect to~$\fo$ and~$\fo_{\cS}^c$
if and only~if 
$$\fo_{\cS}^c\fo=+1\,;$$
this is the $\cM',\Ups\!=\!\{\pt\}$ case of Lemma~\ref{fibrasign_lmm1a}\ref{fibisom_it} below.

If $\fo$ is a relative orientation of $f\!:\cZ\!\lra\!Y$ and $u\!\in\!\cZ$ 
is such that $\nd_uf$ is an isomorphism, we define
$$\fs_u(\fo)=\begin{cases}+1,&\hbox{if}~\nd_uf\!\in\!\fo_u;\\
-1,&\hbox{if}~\nd_uf\!\not\in\!\fo_u.\end{cases}$$
If $g\!:Y\!\lra\!Z$ and $\fo'$ are also as above and $\nd_{f(u)}g$ is an isomorphism as well,
then
\BE{fsfoprod_e}\fs_u(\fo\fo')=\fs_u(\fo)\fs_{f(u)}(\fo')\,.\EE
If $y\!\in\!Y$ is a regular value of~$f$ and the set $f^{-1}(y)$ is finite, we define
$$\big|f^{-1}(y)\big|_{\fo}^{\pm}=\sum_{u\in f^{-1}(y)}\!\!\!\!\!\!\fs_u(\fo) \,.$$

Let $\ff_{\cM}\!:\cM\!\lra\!\cM'$ be a fiber bundle. 
If $\Ups\!\subset\!\cM$ is a submanifold and $P\!\in\!\Ups$, then
the differential $\nd_P(\ff_{\cM}|_\Ups)$ is an isomorphism if and only~if
the composition
\BE{fibisom_e} T_P\cM^v\!\equiv\!\ker\nd_P\ff_{\cM}\lra 
T_P\cM\lra \frac{T_P\cM}{T_P\Ups}\!\equiv\!\cN_P\Ups\EE
is.
If $\cM_2$ is another manifold, then 
$$\ff_{\cM}\!\times\!\id_{\cM_2}\!:\cM\!\times\!\cM_2\lra \cM'\!\times\!\cM_2$$
is also a fiber bundle and $\Ups\!\times\!\cM_2\!\subset\!\cM_1\!\times\!\cM_2$ is a submanifold;
see the first diagram in Figure~\ref{fibrasign_fig}.
The differential of 
\BE{pi1dfn_e}\pi_1\!:\cM\!\times\!\cM_2\lra \cM\EE
induces a commutative diagram 
$$\xymatrix{ \ker\!\big\{\ff_{\cM}\!\times\!\id_{\cM_2}\!\big\}^v 
\ar[rr]\ar[d]_{\nd\pi_1}&& \cN\big(\Ups\!\times\!\cM_2\big)\ar[d]^{\nd\pi_1}\\
\pi_1^*T\cM^v\ar[rr]&& \pi_1^*\cN\Ups}$$
of vector bundle homomorphisms.
Since the vertical arrows above are isomorphisms,
they pull back a vertical orientation~$\fo_{\cM}^v$ of~$\ff_{\cM}$
to a vertical orientation~$\pi_1^*\fo_{\cM}^v$ of~$\ff_{\cM}\!\times\!\id_{\cM_2}$
and a co-orientation~$\fo_{\Ups}^c$ of~$\Ups$ to 
a co-orientation~$\pi_1^*\fo_{\Ups}^c$ of~$\Ups\!\times\!\cM_2$.
We note the following.

\begin{lmm}\label{fibrasign_lmm1a}
Suppose $\ff_{\cM}\!:\cM\!\lra\!\cM'$ is a fiber bundle with an orientation~$\fo_{\cM}^v$ on~$T\cM^v$,
$\Ups\!\subset\!\cM$ is a submanifold with a co-orientation~$\fo_\Ups^c$, and 
$P\!\in\!\Ups$ is such that $\nd_P(\ff_{\cM}|_\Ups)$ is an isomorphism.
\BEnum{(\arabic*)}

\item\label{fibisom_it} 
The isomorphism~\eref{fibisom_e} is orientation-preserving with respect to~$\fo_{\cM}^v$
and~$\fo_\Ups^c$ if and only if \hbox{$\fs_P(\fo_\Ups^c\fo_{\cM}^v)\!=\!+1$}.

\item\label{fibisom_it2} 
If $\cM_2$ is another manifold, $\pi_1$ is as in~\eref{pi1dfn_e}, and $P_2\!\in\!\cM_2$, then
$$\fs_{(P,P_2)}\big((\pi_1^*\fo_\Ups^c)(\pi_1^*\fo_{\cM}^v)\big)
=\fs_P\big(\fo_\Ups^c\fo_{\cM}^v\big).$$

\EEnum
\end{lmm}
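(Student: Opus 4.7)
The plan is to verify both parts by unwinding the definitions of the relative orientations $\fo_{\cM}^v$, $\fo_\Ups^c$, and the composite $\fo_\Ups^c\fo_{\cM}^v$ at~$P$, tracking the sign conventions of~\eref{lasplits_e} and~\eref{lasplits_e2} in a single direct computation with a compatible basis. Part~\ref{fibisom_it} carries the content; part~\ref{fibisom_it2} will be a routine reduction to the base computation.

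For part~\ref{fibisom_it}, I would first observe that the hypothesis that $\nd_P(\ff_{\cM}|_\Ups)$ is an isomorphism is equivalent to the splitting $T_P\cM\!=\!T_P\cM^v\!\oplus\!T_P\Ups$, and hence to~\eref{fibisom_e} being an isomorphism. I would then fix a $\fo_{\cM}^v$-positive ordered basis $v_1,\ldots,v_n$ of $T_P\cM^v$ and a $\fo_\Ups^c$-positive ordered basis $w_1,\ldots,w_n$ of $\cN_P\Ups$; let $\ep\!\in\!\{\pm1\}$ be the sign with which~\eref{fibisom_e} compares these orientations, so the claim becomes $\ep\!=\!\fs_P(\fo_\Ups^c\fo_{\cM}^v)$. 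Taking lifts $\tilde w_j\!\in\!T_P\cM^v$ of~$w_j$ via~\eref{fibisom_e} gives $\tilde w_1\!\wedge\!\cdots\!\wedge\!\tilde w_n = \ep\cdot v_1\!\wedge\!\cdots\!\wedge\!v_n$. Next, I would pick any ordered basis $u_1,\ldots,u_m$ of $T_P\Ups$ and set $a_i\!=\!\nd_P(\ff_{\cM}|_\Ups)(u_i)\!\in\!T_{\ff_{\cM}(P)}\cM'$; by transversality the $a_i$ form a basis. The convention of~\eref{lasplits_e2} (quotient first, sub second) then shows that the isomorphism $\la_P(\cM)\!\lra\!\la_{\ff_{\cM}(P)}(\cM')$ induced by~$\fo_{\cM}^v$ sends $u_1\!\wedge\!\cdots\!\wedge\!u_m\!\wedge\!v_1\!\wedge\!\cdots\!\wedge\!v_n\!\mapsto\!a_1\!\wedge\!\cdots\!\wedge\!a_m$, while the convention of~\eref{lasplits_e} (sub first, quotient second) shows that the isomorphism $\la_P(\Ups)\!\lra\!\la_P(\cM)$ induced by~$\fo_\Ups^c$ sends $u_1\!\wedge\!\cdots\!\wedge\!u_m\!\mapsto\!u_1\!\wedge\!\cdots\!\wedge\!u_m\!\wedge\!\tilde w_1\!\wedge\!\cdots\!\wedge\!\tilde w_n$. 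Since the orderings $(u_i,v_j)$ and $(u_i,\tilde w_j)$ of a basis of $T_P\cM$ share the same $T_P\Ups$-block, composing yields that $\fo_\Ups^c\fo_{\cM}^v$ sends $u_1\!\wedge\!\cdots\!\wedge\!u_m\!\mapsto\!\ep\cdot a_1\!\wedge\!\cdots\!\wedge\!a_m$, which is $\ep$ times the isomorphism induced by $\nd_P(\ff_{\cM}|_\Ups)$; hence $\fs_P(\fo_\Ups^c\fo_{\cM}^v)\!=\!\ep$, as claimed.

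For part~\ref{fibisom_it2}, the plan is to identify all the relevant structures on the product at $(P,P_2)$ with their counterparts on the base at~$P$ and then invoke part~\ref{fibisom_it} on both sides. Under the canonical splitting $T_{(P,P_2)}(\cM\!\times\!\cM_2)\!=\!T_P\cM\!\oplus\!T_{P_2}\cM_2$, the vertical tangent space of $\ff_{\cM}\!\times\!\id_{\cM_2}$ at $(P,P_2)$ is canonically $T_P\cM^v$ and the normal space of $\Ups\!\times\!\cM_2$ at $(P,P_2)$ is canonically $\cN_P\Ups$. By the construction of $\pi_1^*\fo_{\cM}^v$ and $\pi_1^*\fo_\Ups^c$ recalled in the paragraph preceding the lemma, these pulled-back orientations correspond to $\fo_{\cM}^v$ and $\fo_\Ups^c$ under these identifications, and~\eref{fibisom_e} for the product at $(P,P_2)$ is canonically identified with~\eref{fibisom_e} for the base at~$P$; part~\ref{fibisom_it} then gives the equality.

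The main obstacle I anticipate is the sign bookkeeping in the middle paragraph: one has to keep the sub/quotient orderings of~\eref{lasplits_e} and~\eref{lasplits_e2} straight and ensure that the single factor of~$\ep$ produced by passing between the two natural bases $(u_i,v_j)$ and $(u_i,\tilde w_j)$ of $T_P\cM$ appears exactly once, at the correct stage of the composition. Once this is arranged cleanly, both parts of the lemma fall out by inspection.
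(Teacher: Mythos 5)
The paper states Lemma~\ref{fibrasign_lmm1a} without proof (it is one of the ``straightforward'' observations recorded in Section~\ref{TopolPrelim_sec}), so there is no proof in the paper to compare against; your argument is the expected direct verification and it is correct. Your key step -- reading~\eref{lasplits_e2} with the ordering ``base factor first, vertical factor second'' when writing a basis of~$T_P\cM$, so that $u_1\wedge\cdots\wedge u_m\wedge v_1\wedge\cdots\wedge v_n$ maps to $a_1\wedge\cdots\wedge a_m\otimes v_1\wedge\cdots\wedge v_n$ -- is precisely the convention the paper commits to when it says that ``the switch of the ordering of the factors in~\eref{lasplits_e2} from~\eref{finses_e} is motivated by Lemma~\ref{fibrasign_lmm1a}\ref{fibisom_it}''; with any other reading the lemma would acquire a $(-1)^{(\dim\cM')(\codim\Ups)}$ factor, so your sign bookkeeping is exactly the point being pinned down. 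The reduction in part~\ref{fibisom_it2} via the canonical identifications $T_{(P,P_2)}(\cM\!\times\!\cM_2)^v\!\cong\!T_P\cM^v$ and $\cN_{(P,P_2)}(\Ups\!\times\!\cM_2)\!\cong\!\cN_P\Ups$ is also correct, as these identifications carry~\eref{fibisom_e} for the product to~\eref{fibisom_e} for~$\cM$ and the pulled-back (co)orientations to the original ones by construction.
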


\begin{figure}
$$\xymatrix{\Ups\!\times\!\cM_2\ar@{^(->}[r] \ar[d]^{\pi_1}&
\cM\!\times\!\cM_2\ar[rr]^{\ff_{\cM}\times\id_{\cM_2}}\ar[d]^{\pi_1}&& 
\cM'\!\times\!\cM_2\ar[d]^{\pi_1}&&
f^{-1}(\Ups)\ar@{^(->}[r]\ar[d]
&\cZ\ar[r]^{\ff_{\cZ}} \ar[d]^f&  \cZ'\ar[d]^{f'}\\
\Ups\ar@{^(->}[r] &\cM\ar[rr]^{\ff_{\cM}}&& \cM'&&
\Ups\ar@{^(->}[r] &\cM\ar[r]^{\ff_{\cM}}& \cM' }$$
\caption{The maps of Lemmas~\ref{fibrasign_lmm1a} and~\ref{fibrasign_lmm1b}.}
\label{fibrasign_fig}
\end{figure}

\vspace{.15in}

Suppose that $\ff_{\cZ}\!:\cZ\!\lra\!\cZ'$ is another fiber bundle, 
$f,f'$ are maps as in the second diagram in Figure~\ref{fibrasign_fig} 
so that it commutes,
and $\fo_{\cZ}^v$ and $\fo_{\cM}^v$ are orientations on~$T\cZ^v$ and~$T\cM^v$,
respectively.
If $u\!\in\!\cZ$ is such that the restriction
\BE{fibisom_e2a}\nd_uf\!:T_u\cZ^v\!\equiv\!\ker\nd_u\ff_{\cZ} \lra T_{f(u)}\cM^v\EE
is an isomorphism, we define $\fs_u(f,\fo_{\cZ}^v,\fo_{\cM}^v)$ to be $+1$ 
if this isomorphism is orientation-preserving with respect to the orientations~$\fo_{\cZ}^v$
and~$\fo_{\cM}^v$ and to be $-1$ otherwise.
If $P\!\in\!\Ups$ are as above, $u\!\in\!f^{-1}(P)$, and
the homomorphisms~\eref{fibisom_e} and~\eref{fibisom_e2a} are isomorphisms, 
then $f$ is transverse to~$\Ups$ at~$u$, 
\hbox{$f^{-1}(\Ups)\!\subset\!\cZ$} is a smooth submanifold near~$u$,
the composition
$$T_u\cZ^v\!\equiv\!\ker\nd_u\ff_{\cZ}\lra 
T_u\cZ\lra \frac{T_u\cZ}{T_uf^{-1}(\Ups)}\!\equiv\!\cN_uf^{-1}(\Ups)$$
is an isomorphism,
and $\nd_ug$ descends to an isomorphism
$$\nd_ug\!: \cN_uf^{-1}(\Ups)\!\equiv\!\frac{T_u\cZ}{T_uf^{-1}(\Ups)} \lra 
\frac{T_P\cM}{T_P\Ups}\!\equiv\!\cN_P\Ups$$
and thus pulls back a co-orientation~$\fo_{\Ups}^c$ on $\Ups\!\subset\!\cM$ 
to a co-orientation $f^*\fo_{\Ups}^c$ on \hbox{$f^{-1}(\Ups)\!\subset\!\cZ$} near~$u$.

\begin{lmm}\label{fibrasign_lmm1b}
Let $\ff_{\cM}$, $\Ups$, $P$, $\fo_{\cM}^v$, and $\fo_\Ups^c$ be as in Lemma~\ref{fibrasign_lmm1a}.
Suppose in addition that $\ff_{\cZ}\!:\cZ\!\lra\!\cZ'$ is another fiber bundle with 
an orientation~$\fo_{\cZ}^v$ on~$T\cZ^v$, 
$f,f'$ are maps so that the second diagram in Figure~\ref{fibrasign_fig} commutes, and
$u\!\in\!f^{-1}(P)$. 
If the homomorphism~\eref{fibisom_e2a} is an isomorphism, then 
$$\fs_u\big((f^*\fo_\Ups^c)\fo_{\cZ}^v\big)=\fs_u(f,\fo_{\cZ}^v,\fo_{\cM}^v)
\fs_P\big(\fo_\Ups^c\fo_{\cM}^v\big)\,.$$
\end{lmm}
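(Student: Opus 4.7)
The plan is to apply Lemma~\ref{fibrasign_lmm1a}\ref{fibisom_it} twice and read off the identity from the commutative square induced by~$\nd_uf$.

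First, I would apply Lemma~\ref{fibrasign_lmm1a}\ref{fibisom_it} to the submanifold $\Ups\!\subset\!\cM$ at~$P$: the isomorphism $T_P\cM^v\!\to\!\cN_P\Ups$ of~\eref{fibisom_e} is orientation-preserving with respect to~$\fo_\cM^v$ and~$\fo_\Ups^c$ if and only if $\fs_P(\fo_\Ups^c\fo_\cM^v)\!=\!+1$. The discussion preceding the lemma already supplies $f^{-1}(\Ups)\!\subset\!\cZ$ as a smooth submanifold near~$u$ and shows that the analogous composition $T_u\cZ^v\!\to\!\cN_u f^{-1}(\Ups)$ and the quotient map $\nd_uf\!:\!\cN_u f^{-1}(\Ups)\!\to\!\cN_P\Ups$ are both isomorphisms. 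In particular, $\nd_u(\ff_\cZ|_{f^{-1}(\Ups)})$ is an isomorphism, so Lemma~\ref{fibrasign_lmm1a}\ref{fibisom_it} applies also to $f^{-1}(\Ups)\!\subset\!\cZ$ at~$u$ with the pulled-back co-orientation~$f^*\fo_\Ups^c$ and the vertical orientation~$\fo_\cZ^v$, giving that $T_u\cZ^v\!\to\!\cN_u f^{-1}(\Ups)$ is orientation-preserving if and only if $\fs_u((f^*\fo_\Ups^c)\fo_\cZ^v)\!=\!+1$.

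Next, the differentials $\nd_uf$ fit into a commutative square
$$\xymatrix{
T_u\cZ^v \ar[r] \ar[d]_{\nd_uf} & \cN_u f^{-1}(\Ups) \ar[d]^{\nd_uf} \\
T_P\cM^v \ar[r] & \cN_P\Ups
}$$
of isomorphisms. The left vertical arrow contributes the sign $\fs_u(f,\fo_\cZ^v,\fo_\cM^v)$ by definition, while the right vertical arrow is orientation-preserving with respect to $f^*\fo_\Ups^c$ and~$\fo_\Ups^c$ by the very construction of the pullback co-orientation. Combining this with the two identifications from the previous paragraph and multiplying the four signs around the square (which compose to~$+1$ by commutativity) yields
$$\fs_u\big((f^*\fo_\Ups^c)\fo_\cZ^v\big) = \fs_u(f,\fo_\cZ^v,\fo_\cM^v)\,\fs_P(\fo_\Ups^c\fo_\cM^v),$$
which is the desired identity.

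The only real obstacle is a careful check that the orientation conventions of~\eref{lasplits_e} and~\eref{lasplits_e2} are threaded consistently, so that when the two applications of Lemma~\ref{fibrasign_lmm1a}\ref{fibisom_it} and the tautology for the right vertical arrow are combined, they indeed assemble into the claimed sign identity; once this bookkeeping is in place, no further input is required.
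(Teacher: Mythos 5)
Your argument is correct, and it is the natural one: the paper states Lemma~\ref{fibrasign_lmm1b} without proof, treating it as routine, so there is no "paper proof" to compare against, but your route via two applications of Lemma~\ref{fibrasign_lmm1a}\ref{fibisom_it} together with the commutative square
$$\xymatrix{
T_u\cZ^v \ar[r] \ar[d]_{\nd_uf} & \cN_u f^{-1}(\Ups) \ar[d]^{\nd_uf} \\
T_P\cM^v \ar[r] & \cN_P\Ups
}$$
is exactly how one would reconstruct it. One small remark: your closing caveat about still needing to "thread the conventions of~\eref{lasplits_e} and~\eref{lasplits_e2}" is not a real remaining gap. Once the square above is in place, the sign identity follows purely from the multiplicativity of signs of orientation-comparing isomorphisms around a commutative square of vector-space isomorphisms: the product of the signs of the top and right arrows equals the product of the signs of the left and bottom arrows, the right arrow has sign~$+1$ by the definition of the pullback co-orientation, and the remaining three signs are, by Lemma~\ref{fibrasign_lmm1a}\ref{fibisom_it} applied twice and the definition of $\fs_u(f,\fo_\cZ^v,\fo_\cM^v)$, precisely $\fs_u((f^*\fo_\Ups^c)\fo_\cZ^v)$, $\fs_P(\fo_\Ups^c\fo_\cM^v)$, and $\fs_u(f,\fo_\cZ^v,\fo_\cM^v)$. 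No further convention-checking is required; the bookkeeping is already complete.
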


\subsection{Intersection signs}
\label{InterSigns_subs}

For continuous maps $f\!:\cZ\!\lra\!Y$ and $g\!:\Ups\!\lra\!Y$ between manifolds with boundary, define 
\begin{gather*}
M_{f,g}\equiv \cZ_f\!\!\times_g\!\Ups
=\big\{(u,P)\!\in\!\cZ\!\!\times\!\!\Ups\!-\!(\prt\cZ)\!\!\times\!\!(\prt \Ups)\!: 
f(u)\!=\!g(P)\big\}, \\
f\!\times_Y\!g\!:M_{f,g}\lra Y,\qquad f\!\times_Y\!g(u,P)=f(u)\!=\!g(P).
\end{gather*}
We call two such maps $f$ and $g$ \sf{strongly transverse} if  they are smooth and
the maps~$f$ and~$f|_{\prt\cZ}$ are transverse to the maps~$g$ and~$g|_{\prt \Ups}$.
The space $M_{f,g}$ is then a smooth manifold and 
\begin{gather}
\notag \dim\, M_{f,g}+\dim\,Y=\dim\,\cZ+\dim\,\Ups\,,\\
\label{prtMf1f2_e}
\prt M_{f,g}=\big(\cZ\!-\!\prt\cZ\big)\,_f\!\!\times_g\!(\prt \Ups)\sqcup 
(\prt \cZ)\,_f\!\!\times_g\!\big(\Ups\!-\!\prt \Ups\big)\,.
\end{gather}

Suppose in addition that 
\BE{finter_e} f\!=\!(f_1,f_2)\!:\cZ\lra Y\!\equiv\!X\!\times\!\cM,\quad
g\!=\!(g_1,g_2)\!:\Ups\lra X\!\times\!\cM,\EE
$\fo_1$ is a relative orientation of~$f_1$, and
$\fo_2$ is a relative orientation of~$g_2$;
see the second rows in the diagrams of Figure~\ref{dropfactor_fig}.
For $(u,P)\!\in\!M_{f,g}$ such~that the homomorphism
\BE{isominter_e}T_u\cZ\!\oplus\!T_P\Ups\lra T_{f(u)}Y\!=\!T_{g(P)}Y, \quad
(v,w)\lra \nd_uf(v)\!+\!\nd_Pg(w),\EE
is an isomorphism, we define $(f,\fo_1)_u\!\cdot_P\!\!(g,\fo_2)$ to be $+1$ 
if  the top exterior power~$\La^{\top}_{\R}$ of this  isomorphism lies in the homotopy class 
determined by $(\fo_1)_u$ and~$(\fo_2)_P$ and to be $-1$ otherwise.
If $M_{f,g}$ is also finite, let
$$\big|M_{f,g}\big|_{\fo_1,\fo_2}^{\pm}
=\sum_{(u,P)\in M_{f,g}}\!\!\!\!\!\!\!\!(f,\fo_1)\,_u\!\cdot_P\!(g,\fo_2)\,.$$

Suppose $\Ups\!\subset\!\cM$ is a bordered submanifold with co-orientation~$\fo_\Ups^c$, 
$g_1\!:\Ups\!\lra\!X$ is a constant map, and $g_2\!:\Ups\!\lra\!\cM$ is the inclusion.
Then,
$$M_{f,g}=\big\{\big(u,f_2(u)\big)\!:
u\!\in\!f_2^{-1}(\Ups)\!\cap\!f_1^{-1}(g_1(\Ups))\!-\!
(\prt\cZ)\!\cap\!f_2^{-1}(\prt \Ups)\big\}.$$
If in addition $f$ is strongly transverse to~$g$, then 
$f_2^{-1}(\Ups)\!\subset\!\cZ$ is a smooth submanifold with co-orientation~$f^*\fo_\Ups^c$
and the restriction
$$f_1\!: f_2^{-1}(\Ups)\lra X$$
is a submersion.
Along with the relative orientation~$\fo_1$ on~$f_1$, 
$f_2^*\fo_\Ups^c$ induces a relative orientation~$(f_2^*\fo_{\Ups}^c)\fo_1$ 
on this restriction as in~\eref{prtcoorient_e0}.
If in addition $S\!\subset\!\cM$ is another submanifold strongly transverse to~$\Ups$,
then the homomorphism
$$ \cN_S(\Ups\!\cap\!S)\!\equiv\!\frac{TS|_{\Ups\cap S}}{T(\Ups\!\cap\!S)}
\lra \frac{T\cM|_{\Ups\cap S}}{T\Ups|_{\Ups\cap S}}\!\equiv\!\cN\Ups|_{\Ups\cap S}$$
of vector bundles is an isomorphism.
The co-orientation $\fo_{\Ups}^c$ of~$\Ups$ in~$\cM$ then restricts 
to a co-orientation $\fo_{\Ups}^c|_{\Ups\cap S}$.
In such a case, 
$$M_{f,g}=M_{f,g|_{\Ups\cap S}}\,.$$
The following observations are straightforward.

\begin{figure}
$$\xymatrix{ f_2^{-1}(\Ups)\ar@{_(->}[d]\ar[r]^{f_1}& X &&
\cZ\ar@{=}[d]\ar[r]^>>>>>{\wt{f}}&  X\!\times\!\cM\!\times\!\cM_2\ar[d] && 
\Ups\!\times\!\cM_2 \ar@{_(->}[ll]_>>>>>>>>>>>{g\times\id_{\cM_2}} \ar[d]^{\pi_1}\\
\cZ\ar[r]^f\ar[dr]&  X\!\times\!\cM \ar[u]& \Ups \ar@{_(->}[l]_g &
\cZ\ar[r]^f&  X\!\times\!\cM && \Ups \ar@{_(->}[ll]_g\\
& X\!\times\!S \ar@{^(->}[u]& 
\Ups\!\cap\!S\ar@{_(->}[l]_g\ar@{^(->}[u]}$$
\caption{The maps of Lemma~\ref{dropfactor_lmm} with $g\!=\!(x,g_2)$ 
for some  $x\!\in\!X$.}
\label{dropfactor_fig}
\end{figure}

\begin{lmm}\label{dropfactor_lmm}
Suppose $\cZ,\Ups,X,\cM,f,g,f_i,g_i,\fo_1,\fo_\Ups^c$ are as above (with $\Ups\!\subset\!\cM$ and $g_1$ constant), 
$$\dim\,\cZ\!+\!\dim\,\Ups=\dim\,X\!+\!\dim\,\cM\,,$$
and $f$~is strongly transverse to $g$.
\BEnum{(\arabic*)}

\item\label{dropfactor_it1} For every $u\!\in\!f_2^{-1}(\Ups)\!\cap\!f_1^{-1}(g_1(\Ups))$,
$$(f,\fo_1)\,_u\!\cdot_{f_2(u)}\!\big(g,\fo_\Ups^c\big) =
(-1)^{(\dim\,\Ups)(\codim\,\Ups)}\fs_u\big((f_2^*\fo_{\Ups}^c)\fo_1\big).$$

\item\label{dropfactor_it2} 
Suppose $S\!\subset\!\cM$ is another submanifold strongly transverse to~$\Ups$
and $f_2(\cZ)\!\subset\!S$.
For every $(u,P)\!\in\!M_{f,g|_{\Ups\cap S}}$,
$$(f,\fo_1)\,_u\!\cdot_P\!\big(g,\fo_\Ups^c\big) 
=(-1)^{(\codim\,S)(\codim\,\Ups)}
(f,\fo_1)\,_u\!\cdot_P\!\big(g|_{\Ups\cap S},\fo_\Ups^c|_{\Ups\cap S}\big)$$
with the intersection on the right-hand side above taken in~$X\!\times\!S$.

\item\label{dropfactor_it3} 
Suppose $\cM_2$, $\pi_1$, and $\pi_1^*\fo_{\Ups}^c$ are
as in Lemma~\ref{fibrasign_lmm1a}\ref{fibisom_it2} and the second diagram 
in Figure~\ref{dropfactor_fig} commutes.
For every $(u,\wt{P})\!\in\!M_{\wt{f},g\times\id_{\cM_2}}$,
$$(\wt{f},\fo_1)\,_u\!\cdot_{\wt{P}}\!\big(g\!\times\!\id_{\cM_2},\pi_1^*\fo_\Ups^c\big) 
=(-1)^{(\dim\,\cM_2)(\codim\,\Ups)}
(f,\fo_1)\,_u\!\cdot_{\pi_1(\wt{P})}\!\big(g,\fo_\Ups^c\big).$$ 

\EEnum
\end{lmm}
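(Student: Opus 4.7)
All three parts are sign-bookkeeping assertions reducible to the definition of $(f,\fo_1)\,_u\!\cdot_P\!(g,\fo_2)$ as the sign of the top exterior power of~\eref{isominter_e} relative to the given orientations, together with the canonical splittings~\eref{lasplits_e} and~\eref{lasplits_e2}. My plan for each part is to make the linear-algebraic structure of~\eref{isominter_e} explicit using the strong transversality hypothesis, and then to compare orientations on source and target by permuting line-bundle factors.

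For part~\ref{dropfactor_it1}, I would choose a complement $C\!\subset\!T_u\cZ$ to $T_u f_2^{-1}(\Ups)$ so that strong transversality gives an isomorphism $\nd_u f_2|_C\!:C\!\to\!\cN_{f_2(u)}\Ups$. Writing $T_u\cZ\!\oplus\!T_P\Ups=T_u f_2^{-1}(\Ups)\!\oplus\!C\!\oplus\!T_P\Ups$ and $T_{f(u)}Y=T_{f_1(u)}X\!\oplus\!T_P\Ups\!\oplus\!\cN_P\Ups$, the homomorphism~\eref{isominter_e} becomes block-triangular with diagonal blocks $\nd_u(f_1|_{f_2^{-1}(\Ups)})$, $\nd_u f_2|_C$, and $\id_{T_P\Ups}$. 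The sign of the total isomorphism with respect to $\fo_1$ and $\fo_\Ups^c$ then equals $\fs_u((f_2^*\fo_\Ups^c)\fo_1)$ up to the permutation required to rewrite $\la(T_P\Ups)\!\otimes\!\la(\cN_P\Ups)$ as $\la(\cN_P\Ups)\!\otimes\!\la(T_P\Ups)$ on the target side, producing the stated factor $(-1)^{(\dim\,\Ups)(\codim\,\Ups)}$.

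For part~\ref{dropfactor_it2}, strong transversality of $S$ and~$\Ups$ in~$\cM$, combined with $f_2(\cZ)\!\subset\!S$, gives splittings $T_P\Ups=T_P(\Ups\!\cap\!S)\!\oplus\!\cN_{\Ups,P}(\Ups\!\cap\!S)$ and $T_P\cM=T_PS\!\oplus\!\cN_PS$, together with the canonical isomorphism $\cN_\Ups(\Ups\!\cap\!S)\!\cong\!\cN S|_{\Ups\cap S}$ that is the counterpart of the one used in the text to define~$\fo_\Ups^c|_{\Ups\cap S}$. Under these splittings the map~\eref{isominter_e} for the intersection in $X\!\times\!\cM$ is block-diagonal, with one block the analogous map for the intersection in $X\!\times\!S$ and the other block $\id_{\cN_P S}$. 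Passing to top exterior powers, the two intersection signs differ only by the swap of $\la(\cN\Ups)$ and $\la(\cN S)$ needed to reconcile the two factorizations $\la(\cM)=\la(S)\!\otimes\!\la(\cN S)=\la(\Ups)\!\otimes\!\la(\cN\Ups)$, producing $(-1)^{(\codim\,S)(\codim\,\Ups)}$.

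For part~\ref{dropfactor_it3}, commutativity of the second diagram of Figure~\ref{dropfactor_fig} forces $\wt f=(f,h)$ with $h$ the composition $\cZ\!\to\!X\!\times\!\cM\!\times\!\cM_2\!\to\!\cM_2$, so~\eref{isominter_e} for $\wt f$ and $g\!\times\!\id_{\cM_2}$ is block-triangular with the original isomorphism for $f$ and~$g$ as one diagonal block and $\id_{T_{P_2}\cM_2}$ as the other. The construction of $\pi_1^*\fo_\Ups^c$ in Lemma~\ref{fibrasign_lmm1a} identifies $\la(\cN(\Ups\!\times\!\cM_2))$ with $\la(\cN\Ups)$ directly, but in expressing the orientations induced by $\pi_1^*\fo_\Ups^c$ and $\fo_\Ups^c$ as top exterior powers one must commute $\la(\cM_2)$ past $\la(\cN\Ups)$ in the expansion $\la(\cM\!\times\!\cM_2)=\la(\Ups)\!\otimes\!\la(\cN\Ups)\!\otimes\!\la(\cM_2)=\la(\Ups\!\times\!\cM_2)\!\otimes\!\la(\cN\Ups)$, producing exactly the sign $(-1)^{(\dim\,\cM_2)(\codim\,\Ups)}$. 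The chief obstacle throughout is consistent tracking of the orderings of line-bundle factors imposed by the conventions~\eref{lasplits_e} and~\eref{lasplits_e2}; once those orderings are pinned down, each sign in the lemma is the sign of an explicit transposition and the geometric input reduces to the transversality-induced block structure of~\eref{isominter_e}.
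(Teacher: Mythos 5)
The paper gives no proof of Lemma~3.3 (it is declared ``straightforward''), so there is no argument to compare against; your outline is the natural verification. In each part, strong transversality makes the homomorphism~\eref{isominter_e} block-triangular with one diagonal block equal to the reduced isomorphism and the other equal to the identity or a canonical orientation-preserving map, and the stated sign is precisely the transposition cost of reconciling the line-bundle factor orderings forced by the conventions~\eref{lasplits_e} and~\eref{lasplits_e2}; I verified that all three sign factors come out as you describe.
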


\vspace{.1in}

Let $e_1\!:\cZ_1\!\lra\!X'$ and $e_2\!:\cZ_2\!\lra\!X'$ be strongly transverse maps so that
$$\cZ\equiv M_{e_1,e_2}\equiv\big\{
(u_1,u_2)\!\in\!\cZ_1\!\times\!\cZ_2\!-\!(\prt\cZ_1)\!\times\!(\prt \cZ_2)\!: e_1(u_1)\!=e_2(u_2)\big\}
\subset \cZ_1\!\times\!\cZ_2$$
is a smooth submanifold.
For each $u\!\equiv\!(u_1,u_2)\!\in\!\cZ$, the short exact sequence
\begin{gather*}
0\lra T_u\cZ\lra T_{u_1}\cZ_1\!\oplus\!T_{u_2}\cZ_2\lra T_{e_1(u_1)}X'\!=\!T_{e_2(u_2)}X'\lra0,\\
(v_1,v_2)\lra \nd_{u_2}e_2(v_2)\!-\!\nd_{u_1}e_1(v_1),
\end{gather*}
of vector spaces induces an isomorphism
$$\la_u(\cZ)\!\otimes\!\la\big(T_{e_2(u_2)}X'\big)\approx 
\la_{u_1}(\cZ_1)\!\otimes\!\la_{u_2}(\cZ_2)\,.$$
Combined with relative orientations~$\fo_{11}$ and $\fo_{12}$ of
$$f_{11}\!:\cZ_1\lra X_1 \qquad\hbox{and}\qquad (e_2,f_{12})\!:\cZ_2\lra X'\!\times\!X_2,$$
this isomorphism determines a homotopy class of isomorphisms
\BE{diagorient_e}\begin{split}
\la_u(\cZ)\!\otimes\la_{e_2(u_2)}\big(TX'\big) &\lra 
\la_{f_{11}(u_1)}(X_1)\!\otimes\!
\la_{(e_2(u_2),f_{12}(u_2))}(X'\!\times\!X_2)\\
&\lra
\la_{f_{11}(u_1)}(X_1)\!\otimes\!\la_{f_{12}(u_2)}(X_2)\!\otimes\!\la\big(T_{e_2(u_2)}X'\big)
\end{split}\EE
The homotopy class of trivializations in~\eref{diagorient_e} corresponds
to a relative orientation $(\fo_{11})\,_{e_1}\!\!\times_{e_2}\!\fo_{12}$ of the restriction
$$f_1\!\equiv\!(f_{11}\!\times\!f_{12})\big|_{\cZ}\!:\cZ\lra X_1\!\times\!X_2\,.$$
For a map $f_2\!:\cZ_1\!\lra\!\cM$, let $f_2\!:\cZ\!\lra\!\cM$ also denote 
the composition of~$f_2$ with the projection to~$\cZ_1$.
If in addition
\hbox{$g_2\!:\Ups\!\lra\!\cM$} is the embedding of a (possibly bordered) submanifold,
\hbox{$g_{1i}\!:\Ups\!\lra\!X_i$} are constant~maps with values $x_1$ and $x_2$, respectively,
$$f\!\equiv\!(f_{11},f_{12},f_2)\!:\cZ\lra X_1\!\times\!X_2\!\times\!\cM, \quad\hbox{and}\quad
g\!\equiv\!(g_{11},g_{12},g_2)\!:\Ups\lra X_1\!\times\!X_2\!\times\!\cM,$$
then
$$M_{f,g}=\big\{(u_1,u_2,P)\!:
((u_1,P),u_2)\!\in\!M_{e_1|_{M_{(f_{11},f_2),(g_{11},g_2)}},e_2|_{f_{12}^{-1}(x_2)}}
\big\};$$
see the diagram in Figure~\ref{dropfactor3_fig}.

\begin{figure}
$$\xymatrix{ &\cZ_1 \ar[r]_>>>>>>>>{(f_{11},f_2)} \ar[ld]|{e_1}& X_1\!\times\!\cM & 
(\cZ_1)\,_{(f_{11},f_2)}\!\!\times_{(x_1,g_2)}\!\Ups
\ar@/_.7pc/@{^(->}[ll] \ar[d]\\
X' &\cZ\ar[r]^>>>>f \ar[u]\ar[d]& X_1\!\times\!X_2\!\times\!\cM\ar[d]\ar[u] &
\Ups \ar[l]_g \ar[lu]|{(x_1,g_2)} \ar[ld]|{x_2}\\
& \cZ_2 \ar[r]^{f_{12}}\ar[lu]|{e_2}  & X_2 & \ar@/^.7pc/@{_(->}[ll] \ar[u] f_{12}^{-1}(x_2)
}$$
\caption{The maps of Lemma~\ref{dropfactor_lmm3} with $x_i\!\in\!X_i$
corresponding to the constant map~$g_{1i}$.}
\label{dropfactor3_fig}
\end{figure}

\begin{lmm}\label{dropfactor_lmm3}
Suppose $\cZ_1,\cZ_2,\cZ,X',X_1,X_2,\cM,\Ups,e_i,f_{1i},f_2,f,g_{1i},g_2,g,\fo_{1i}$ are as above,
$\fo_\Ups^c$ is a co-orientation on~$\Ups$,
$$\dim\,\cZ_1\!+\!\dim\,\Ups=\dim\,X_1\!+\!\dim\,\cM, 
\qquad \dim\,\cZ_2=\dim\,X'\!+\!\dim\,X_2,$$
the maps $e_1$ and $e_2$ are strongly transverse, and
the maps~$f$ and~$g$ are strongly transverse.
If
$$\big((u_1,P),u_2\big)\in 
 \big((\cZ_1)\,_{(f_{11},f_2)}\!\!\times_{(g_{11},g_2)}\!\Ups\big)\,
_{e_1}\!\!\times_{e_2}\!f_{12}^{-1}\big(g_{12}(\Ups)\big),$$
then
\begin{equation*}\begin{split}
&\big(f,(\fo_{11})\,_{e_1}\!\!\times_{e_2}\!\fo_{12}\big)\,
_{(u_1,u_2)}\!\cdot_{f_2(u_1)}\!\!\big(g,\fo_\Ups^c\big)\\ 
&\hspace{.6in} =(-1)^{(\dim\,X_2)(\codim\,\Ups+\dim\,X')}
\big(((f_{11},f_2),\fo_{11})\,_{u_1}\!\!\cdot_{f_2(u_1)}\!((g_{11},g_2),\fo_\Ups^c)\big)
\fs_{u_2}\big(\fo_{12}\big).
\end{split}\end{equation*}
\end{lmm}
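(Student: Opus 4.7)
The identity is a sign-comparison in linear algebra at the single point $(u_1,u_2,P)\!\in\!M_{f,g}$, so the plan is to unwind each intersection sign into the sign of a top-exterior-power isomorphism of oriented vector spaces and track the reordering cost when passing from one side of the equality to the other. First, I would extract the isomorphisms provided by the various strong-transversality hypotheses. Strong transversality of $f$ and $g$ yields
\[ T_{(u_1,u_2)}\cZ\oplus T_P\Ups\longrightarrow T_{x_1}X_1\oplus T_{x_2}X_2\oplus T_{f_2(u_1)}\cM, \]
whose sign relative to $(\fo_{11})_{e_1}\!\times_{e_2}\!\fo_{12}$ and $\fo_\Ups^c$ is by definition the left-hand side. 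Strong transversality of $e_1,e_2$ identifies $T_{(u_1,u_2)}\cZ\oplus T_{e_1(u_1)}X'$ with $T_{u_1}\cZ_1\oplus T_{u_2}\cZ_2$, and by construction of $(\fo_{11})_{e_1}\!\times_{e_2}\!\fo_{12}$ via~\eref{diagorient_e} this identification is orientation-compatible with the tensor product of $\fo_{11}$ and $\fo_{12}$ on the target.

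Next, I would unwind the right-hand side analogously. The factor $((f_{11},f_2),\fo_{11})_{u_1}\!\cdot_{f_2(u_1)}\!((g_{11},g_2),\fo_\Ups^c)$ is the sign of the strong-transversality isomorphism
\[ T_{u_1}\cZ_1\oplus T_P\Ups\longrightarrow T_{x_1}X_1\oplus T_{f_2(u_1)}\cM \]
on $\cZ_1$ relative to $\fo_{11}$ and $\fo_\Ups^c$, while $\fs_{u_2}(\fo_{12})$ is the sign of $\nd_{u_2}(e_2,f_{12})\!:T_{u_2}\cZ_2\!\to\!T_{e_1(u_1)}X'\!\oplus\!T_{x_2}X_2$ relative to $\fo_{12}$. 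Stacking the two produces a block-diagonal isomorphism
\[ T_{u_1}\cZ_1\oplus T_{u_2}\cZ_2\oplus T_P\Ups\longrightarrow T_{x_1}X_1\oplus T_{f_2(u_1)}\cM\oplus T_{e_1(u_1)}X'\oplus T_{x_2}X_2, \]
whose sign in the assembled orientations is precisely the right-hand side product, before the $(-1)^{(\dim X_2)(\codim\Ups+\dim X')}$ correction.

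Finally, I would compare the two isomorphisms. Inserting the $(e_1,e_2)$-identification into the source of the left-hand-side isomorphism and moving $T_{e_1(u_1)}X'$ to the target on both sides makes the domains and codomains coincide as unordered direct sums. The two isomorphisms therefore differ only by a reordering of target factors: on the left the target is arranged as $(X_1,X_2,\cM,X')$, while on the right it is $(X_1,\cM,X',X_2)$. Using $\fo_\Ups^c$ to split off the common $T_P\Ups$ summand from the $\cM$-factor on both sides via~\eref{prtcoorient_e0} and Lemma~\ref{fibrasign_lmm1a}\ref{fibisom_it}, transposing $T_{x_2}X_2$ past the remaining normal space $\cN\Ups$ contributes $(-1)^{(\dim X_2)(\codim\Ups)}$ and past $T_{e_1(u_1)}X'$ contributes $(-1)^{(\dim X_2)(\dim X')}$, giving the claimed factor.

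The main obstacle will be the bookkeeping for the interplay between the co-orientation $\fo_\Ups^c$ and the relative orientations it determines via~\eref{prtcoorient_e0}: one needs to verify that the $T_P\Ups$ summand cancels cleanly on \emph{both} sides with no residual sign, so that the only remaining discrepancy is the transposition of $T_{x_2}X_2$ past $\cN\Ups\oplus T_{e_1(u_1)}X'$. Once this alignment is pinned down, everything else is a routine application of the compatibility statements of Section~\ref{SignConv_subs}.
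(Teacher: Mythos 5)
The paper gives no proof of this lemma---it appears under the heading ``The following observations are straightforward''---so there is nothing internal to compare against; your plan is essentially the intended argument, and the final sign is correct. The structure (unwind each intersection sign into the sign of a top-exterior-power isomorphism, insert the $(e_1,e_2)$-identification, and track reorderings) is the right one, and the two transpositions you identify---$T_{x_2}X_2$ past $\cN\Ups$ and past $T_{e_1(u_1)}X'$---do produce exactly $(-1)^{(\dim X_2)(\codim\Ups+\dim X')}$.

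One caution about the step you flag as the main obstacle. As written, you take the ``stacked'' right-hand-side isomorphism to have source $T_{u_1}\cZ_1\oplus T_{u_2}\cZ_2\oplus T_P\Ups$, but the natural block-diagonal stacking of $\Phi_R^{(1)}\colon T\cZ_1\oplus T\Ups\to TX_1\oplus T\cM$ with $\Phi_R^{(2)}\colon T\cZ_2\to TX'\oplus TX_2$ has source $T\cZ_1\oplus T\Ups\oplus T\cZ_2$; the silent reorder costs $(-1)^{(\dim\Ups)(\dim\cZ_2)}$. This does cancel in the end, but not ``for free'': it cancels against the $(-1)^{(\dim\Ups)(\dim X')}$ from slotting $TX'$ into the source of $\Phi_L$ and against the $\dim\Ups$-part, $(-1)^{(\dim X_2)(\dim\Ups)}$, of moving $T_{x_2}X_2$ past the full $T\cM$ rather than only past $\cN\Ups$, and the cancellation uses $\dim\cZ_2=\dim X'+\dim X_2$. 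So ``the $T_P\Ups$ summand cancels cleanly'' is true but not by inspection. A slightly cleaner organization that makes this manifest: first eliminate the $T_P\Ups$ summand from $\Phi_L$ and from $\Phi_R^{(1)}$ \emph{separately}, by the same row-reduction/reordering as in Lemma~\ref{dropfactor_lmm}\ref{dropfactor_it1}; each elimination costs the same $(-1)^{(\dim\Ups)(\codim\Ups)}$, which cancel immediately in the ratio, reducing the comparison to the isomorphism $T\cZ_1\oplus T\cZ_2\to TX_1\oplus TX_2\oplus\cN\Ups\oplus TX'$ versus its block form $(TX_1\oplus\cN\Ups)\oplus(TX'\oplus TX_2)$. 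After a row operation killing $-\nd e_1$ in the last block, the only discrepancy is the target reordering moving $TX_2$ past $\cN\Ups\oplus TX'$, which is precisely $(-1)^{(\dim X_2)(\codim\Ups+\dim X')}$.
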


\subsection{Steenrod pseudocycles}
\label{Steenrod_subs}

Let $Y$ be a smooth manifold, possibly with boundary.
For a continuous map $f\!:\cZ\!\lra\!Y$,  let 
$$\Om(f)=\bigcap_{K\subset \cZ\text{~cmpt}}\!\!\!\!\!\!\!\!\ov{f(\cZ\!-\!K)}$$
be \sf{the limit set of~$f$}.
A \sf{$\Z_2$-pseudocycle} into~$Y$ is a continuous map \hbox{$f\!:\cZ\!\lra\!Y$} 
from a manifold, possibly with boundary, so that the closure of $f(\cZ)$ in~$Y$ is compact and
there exists a smooth map $h\!:\cZ'\!\lra\!Y$ such~that 
$$\dim\,\cZ'\le \dim\,\cZ\!-\!2, \qquad \Om(f)\subset h(\cZ'), \qquad 
f(\prt\cZ)\subset(\prt Y)\!\cup\!h(\cZ')\,.$$
The \sf{codimension} of such a $\Z_2$-pseudocycle is \hbox{$\dim\,Y\!-\!\dim\,\cZ$}.
A continuous map \hbox{$\wt{f}\!:\wt\cZ\!\lra\!Y$} from a manifold, possibly with boundary, is 
a \sf{bordered $\Z_2$-pseudocycle with boundary} $f\!:\cZ\!\lra\!Y$ if
the closure of $\wt{f}(\wt\cZ)$ in~$Y$ is compact, 
$$\cZ\subset\prt\wt\cZ, \qquad \wt{f}|_{\cZ}=f,$$
and there exists a smooth map $\wt{h}\!:\wt\cZ'\!\lra\!Y$ such~that 
$$\dim\,\wt\cZ'\le \dim\,\wt\cZ\!-\!2,\qquad
\Om(\wt{f})\subset\wt{h}(\wt\cZ'), \qquad 
\wt{f}\big(\prt\wt\cZ\!-\!\cZ\big)\subset(\prt Y)\!\cup\!\wt{h}(\wt\cZ') \,.$$
Given $\wt{f}$ as above, the choice of $\cZ\!\subset\!\prt\wt\cZ$ is generally not unique,
and the restriction of~$\wt{f}$ to any such~$\cZ$ need not be a $\Z_2$-pseudocycle.
If $\wt\cZ$ is one-dimensional, then $\wt\cZ$ is compact and 
$\wt{f}(\prt\wt\cZ\!-\!\cZ)\!\subset\!\prt Y$.

Two bordered $\Z_2$-pseudocycles 
\hbox{$\wt{f}_1\!:\wt\cZ_1\!\lra\!Y$} and \hbox{$\wt{f}_2\!:\wt\cZ_2\!\lra\!Y$} 
as above are \sf{transverse}~if 
\BEnum{$\bu$}

\item the maps $\wt{f}_1$ and $\wt{f}_2$ are strongly transverse and

\item there exist smooth maps $\wt{h}_1\!:\wt\cZ_1'\!\lra\!Y$ and 
$\wt{h}_2\!:\wt\cZ_2'\!\lra\!Y$
such that $\wt{h}_1$ is transverse to~$\wt{f}_2$ and~$\wt{f}_2|_{\prt\wt\cZ_2}$, 
$\wt{h}_2$ is transverse to~$\wt{f}_1$ and~$\wt{f}_1|_{\prt\wt\cZ_1}$, and
$$\dim\,\wt\cZ_1'\le\dim\,\wt\cZ_1\!-\!2, \quad 
\dim\,\wt\cZ_2'\le\dim\,\wt\cZ_2\!-\!2, \quad 
\Om(\wt{f}_1)\subset\wt{h}_1(\wt\cZ_1'), \quad 
\Om(\wt{f}_2)\!\subset\!\wt{h}_2(\wt\cZ_2').$$

\EEnum
In such a case, 
$$\wt{f}_1\!\times_Y\!\!\wt{f}_2\!:M_{\wt{f}_1,\wt{f}_2}\lra Y$$ 
is a bordered $\Z_2$-pseudocycle with boundary~\eref{prtMf1f2_e}.

A \sf{Steenrod pseudocycle} into~$Y$ is a $\Z_2$-pseudocycle \hbox{$f\!:\cZ\!\lra\!Y$} 
along with a relative orientation~$\fo$ of~$f$.
A bordered $\Z_2$-pseudocycle \hbox{$\wt{f}\!:\wt\cZ\!\lra\!Y$} with boundary~$f$
and a relative orientation~$\wt\fo$ of~$\wt{f}$ is
a \sf{bordered Steenrod pseudocycle} with boundary~$(f,\fo)$ if $\prt\wt\fo\!=\!\fo$.
If $(f,\fo)$ is a codimension~0 Steenrod pseudocycle, then the number
\BE{SteenDeg_e}\deg(f,\fo)\equiv \sum_{u\in f^{-1}(y)}\!\!\!\!\!\!\fs_u(\fo)\in\Z\EE
is well-defined for a generic choice of $y\!\in\!Y$ and is independent of such a choice.
We call this number the \sf{degree} of~$(f,\fo)$.
It vanishes if $(f,\fo)$ bounds a bordered Steenrod pseudocycle~$(\wt{f},\wt\fo)$. 

\begin{lmm}\label{InterOrient_lmm}
Suppose $\cZ,\Ups,X,\cM,Y,f,g,f_i,g_i,\fo_1,\fo_2$ are as in~\eref{finter_e} and just below
and such~that
$$\dim\,\cZ+\dim\,\Ups=\dim\,Y+1\,.$$
If $f$ and $g$ are transverse bordered $\Z_2$-pseudocycles, then
$\cZ_f\!\times_g\!(\prt \Ups)$ and \hbox{$(\prt\cZ)_f\!\times_g\!\Ups$} are finite sets and
\BE{InterOrient_e}
\big|\cZ_f\!\!\times_g\!(\prt \Ups)\big|_{\fo_1,\prt\fo_2}^{\pm}
=(-1)^{\dim\,\Ups}
\big|(\prt\cZ)_f\!\!\times_g\!\Ups\big|_{\prt\fo_1,\fo_2}^{\pm}\,.\EE
\end{lmm}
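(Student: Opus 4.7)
The plan is to realize $M_{f,g}\!\equiv\!\cZ_f\!\times_g\!\Ups$ as a compact oriented $1$-manifold with boundary and exploit the vanishing of the signed count of boundary points of such a manifold.

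First, I would verify that $M_{f,g}$ is a compact smooth $1$-manifold whose boundary decomposes as in~\eqref{prtMf1f2_e}. Strong transversality of $f$ and $g$ yields smoothness and the boundary description, so each of the two boundary pieces is a smooth $0$-manifold. Compactness follows from the pseudocycle hypotheses: the closures of $f(\cZ)$ and $g(\Ups)$ in~$Y$ are compact, while the transverse-pseudocycle data provides smooth maps $\wt{h}_1\!:\wt\cZ_1'\!\lra\!Y$ and $\wt{h}_2\!:\wt\cZ_2'\!\lra\!Y$ with domains of dimensions at most $\dim\cZ\!-\!2$ and $\dim\Ups\!-\!2$, transverse to $g,g|_{\prt\Ups}$ and $f,f|_{\prt\cZ}$ respectively. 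A standard dimension count rules out any sequence in $M_{f,g}$ escaping every compact subset of $\cZ\!\times\!\Ups$, so $M_{f,g}$ is compact and each boundary piece is finite.

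Next, I would endow $M_{f,g}$ with an orientation~$\fo_M$ built from~$\fo_1$ and~$\fo_2$ through the short exact sequence
\begin{equation*}
0\lra TM_{f,g}\lra T\cZ\!\oplus\!T\Ups\lra TY\lra 0
\end{equation*}
associated with the fiber product and the splittings $\la(\cZ)\!\cong\!f_1^*\la(X)\!\otimes\!\la(f_1)$ and $\la(\Ups)\!\cong\!g_2^*\la(\cM)\!\otimes\!\la(g_2)$ from~\eqref{lasplits_e2}, combined with $\la(Y)\!=\!\la(X)\!\otimes\!\la(\cM)$. Since $M_{f,g}$ is a compact oriented $1$-manifold,
\begin{equation*}
\sum_{q\in\prt M_{f,g}}\!\!\!\!\!\!\fs_q(\prt\fo_M)=0.
\end{equation*}

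The remaining step is a sign comparison: for each boundary point, express $\fs_q(\prt\fo_M)$ in terms of the fiber-product intersection sign from Section~\ref{InterSigns_subs}. For $q\!=\!(u,P)\!\in\!(\prt\cZ)_f\!\times_g\!\Ups$, the outward normal of $M_{f,g}$ is inherited from the outward normal of $\prt\cZ$ in~$\cZ$, lives in the first factor of $T\cZ\!\oplus\!T\Ups$, and produces no sign correction; consequently $\fs_q(\prt\fo_M)=(f|_{\prt\cZ},\prt\fo_1)_u\!\cdot_P\!(g,\fo_2)$. For $q\!=\!(u,P)\!\in\!\cZ_f\!\times_g\!(\prt\Ups)$, the outward normal sits in the second factor, and commuting it past the $T\cZ$-block in the wedge identification $\la(M_{f,g})\!\otimes\!\la(Y)\!\cong\!\la(\cZ)\!\otimes\!\la(\Ups)$ introduces a sign of $(-1)^{\dim\Ups-1}$ relative to the intersection sign $(f,\fo_1)_u\!\cdot_P\!(g|_{\prt\Ups},\prt\fo_2)$. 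Substituting these identities into the vanishing boundary sum and rearranging yields~\eqref{InterOrient_e}.

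The main obstacle is the sign bookkeeping in the second piece. It requires careful tracking of the orderings in~\eqref{lasplits_e2} and of the $\nd f\!+\!\nd g$ convention used for intersection signs just after~\eqref{finter_e}, both of which enter when comparing the outward normal of $\prt\Ups$ in~$\Ups$ to the outward normal of $M_{f,g}$. Once these conventions are pinned down, the $(-1)^{\dim\Ups-1}$ factor is forced by a short linear-algebra calculation with the short exact sequence of the fiber product, and the claimed $(-1)^{\dim\Ups}$ in~\eqref{InterOrient_e} emerges when one term is transposed across the vanishing total.
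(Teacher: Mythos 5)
Your proposal follows essentially the same route as the paper's primary argument: realize $M_{f,g}$ as a compact oriented one-manifold with boundary~\eref{prtMf1f2_e}, then read off the identity from the vanishing of the signed boundary count. The paper states the result of this computation tersely (``by a direct computation, this equality respects the orientations \dots with the orientation on the last fiber product modified by $(-1)^{\codim\,\cZ}$''), and your write-up supplies the missing compactness discussion and makes the orientation on $M_{f,g}$ explicit via the fiber-product exact sequence. Note, however, that your intermediate sign claim is stated loosely: moving a single outward-normal vector past the $T\cZ$-block in $\la(\cZ)\!\otimes\!\la(\Ups)$ would, by itself, cost $(-1)^{\dim\,\cZ}$ rather than $(-1)^{\dim\,\Ups-1}$; these differ by $(-1)^{\dim\,Y}$, so the extra parity must come from how the intersection-sign convention $(v,w)\mapsto\nd_uf(v)\!+\!\nd_Pg(w)$ and the boundary-orientation convention~\eref{prtcoorient_e} interact with the fiber-product identification. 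You acknowledge this as ``the main obstacle,'' but the way it is phrased does not yet show that the bookkeeping closes up. This is precisely why the paper offers a second argument you do not use: rewrite~\eref{InterOrient_e} in the symmetric form
\begin{equation*}
\big|\cZ_f\!\!\times_g\!(\prt \Ups)\big|_{\fo_1,\prt\fo_2}^{\pm}
=(-1)^{(\dim\,\cZ)(\dim\,\Ups)}
\big|\Ups_g\!\!\times_f\!(\prt\cZ)\big|_{\fo_2,\prt\fo_1}^{\pm}
\end{equation*}
and then pin down the sign by its forced properties (symmetry in $\dim\,\cZ$ and $\dim\,\Ups$, dependence only on parities, the value $+1$ when both dimensions or both codimensions are even, and the value $-1$ for the elementary example of linear maps from intervals to~$\R$). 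That characterization sidesteps the wedge-product chase entirely and is the cleaner way to nail the sign; you may want to fold it in, or else carry out the linear algebra in full to justify the intermediate $(-1)^{\dim\,\Ups-1}$.
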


\begin{proof}
By the transversality and dimension assumptions,  $M_{f,g}$ is a compact one-dimensional manifold 
and
$$\prt M_{f,g}=\cZ\,_f\!\!\times_g\!(\prt \Ups)\sqcup (\prt \cZ)\,_f\!\!\times_g\!\Ups.$$
In particular, the two sets on the right-hand side above are finite.
By a direct computation, this equality respects the orientations with the orientation
on the last fiber product modified by $(-1)^{\codim\,\cZ}$ 
and for a suitably chosen orientation on the left-hand side.
Alternatively, \eref{InterOrient_e} is equivalent~to
$$\big|\cZ_f\!\!\times_g\!(\prt \Ups)\big|_{\fo_1,\prt\fo_2}^{\pm}
=(-1)^{(\dim\,\cZ)(\dim\,\Ups)}
\big|\Ups_g\!\!\times_f\!(\prt\cZ)\big|_{\fo_2,\prt\fo_1}^{\pm}\,.$$
The sign in this statement must be symmetric in $\dim\,\cZ$ and $\dim\,\Ups$, 
depend only on their parity, be $+1$ if both dimensions or codimensions are even,
and be $-1$ for linear maps from intervals to~$\R$.
\end{proof}

\section{Moduli spaces of stable curves}
\label{DM_sec}

\subsection{Main stratum and orientations}
\label{cMstrata_subs}

For $k\!\in\!\Z^{\ge0}$, let $[k]\!=\!\{1,\ldots,k\}$.
If in addition $k\!\ge\!3$, we denote by $\ov\cM_{0,k}$ 
the Deligne-Mumford moduli space of stable rational curves with $k$~marked points.
For $k,l\!\in\!\Z^{\ge0}$ with $k\!+\!2l\!\ge\!3$, 
we denote by $\ov\cM_{k,l}^{\tau}$ the Deligne-Mumford moduli space of 
stable real genus~0 curves
\BE{cCdfn0_e}\cC\equiv \big(\Si,(x_i)_{i\in[k]},(z_i^+,z_i^-)_{i\in[l]},\si\big)\EE
with $k$~real marked points, $l$ conjugate pairs of marked points, and 
an anti-holomorphic involution~$\si$ with separating fixed locus.
This space is a smooth manifold of dimension $k\!+\!2l\!-\!3$,
without boundary if $k\!\ge\!1$ and with boundary if $k\!=\!0$.
The boundary of $\ov\cM_{0,l}^{\tau}$ parametrizes the curves with no irreducible 
component fixed by the involution;
the fixed locus of the involution on a curve in $\prt\ov\cM_{0,l}^{\tau}$ is a single node.
The strata of $\ov\cM_{0,l}^{\tau}$ parametrizing curves with 
two invariant irreducible components sharing a real node are of codimension~1, 
but are not part of~$\prt\ov\cM_{0,l}^{\tau}$.
The moduli space $\ov\cM_{k,l}^{\tau}$ is orientable if and only if $k\!=\!0$
or $k\!+\!2l\!\le\!4$; see \cite[Prop.~1.5]{Penka2}.

The \sf{main stratum}~$\cM_{k,l}^{\tau}$ of~$\ov\cM_{k,l}^{\tau}$ is the quotient~of
\begin{equation*}\begin{split}
\big\{\!\big((x_i)_{i\in[k]},(z_i^+,z_i^-)_{i\in[l]}\big)\!:\,
x_i\!\in\!S^1,\,z_i^{\pm}\!\in\!\P^1\!-\!S^1,\,z_i^+\!=\!\tau(z_i^-),&\\
x_i\!\neq\!x_j,\,z_i^+\!\neq\!z_j^+,z_j^-~\forall\,i\!\neq\!j&\big\}
\end{split}\end{equation*}
by the natural action of the subgroup $\PSL_2^{\tau}\C\!\subset\!\PSL_2\C$ 
of automorphisms of~$\P^1$ commuting with~$\tau$.
The topological components of $\cM_{k,l}^{\tau}$ 
are indexed by the possible distributions of the points~$z_i^+$ between the interiors 
of the two disks cut out by the fixed locus~$S^1$ of 
the standard involution~$\tau$ on~$\P^1$ and by the orderings of 
the real marked points~$x_i$ on~$S^1$.

If $k\!+\!2l\!\ge\!4$ and $i\!\in\![k]$, let
\BE{ff1dfn_e}\ff_{k,l;i}^{\R}\!:\ov\cM_{k,l}^{\tau}\lra\ov\cM_{k-1,l}^{\tau}\EE
be the forgetful morphism dropping the $i$-th real marked point.
The restriction of~$\ff_{k,l;i}^{\R}$ to the preimage of $\cM_{k-1,l}^{\tau}$
is an $S^1$-fiber bundle.
The associated short exact sequence~\eref{finses_e} induces an isomorphism
\BE{cMorientR_e}
\la\big(\cM^\tau_{k,l}\big)\approx 
\ff_{k,l;i}^{\R\,*}\la\big(\cM^\tau_{k-1,l}\big)\big|_{\cM^\tau_{k,l}}
\!\otimes\!\big(\!\ker\nd\ff_{k,l;i}^{\R}\big)\big|_{\cM^\tau_{k,l}}\,.\EE
If $k\!+\!2l\!\ge\!5$ and $i\!\in\![l]$, we similarly denote~by
\BE{ff2dfn_e}\ff_{k,l;i}\!:\ov\cM_{k,l}^{\tau}\lra\ov\cM_{k,l-1}^{\tau}\EE
the forgetful morphism dropping the $i$-th conjugate pair of marked points.
The restriction of~$\ff_{k,l;i}$ to $\cM_{k,l}^{\tau}$
is a dense open subset of a $\P^1$-fiber bundle and thus induces an isomorphism
\BE{cMorientC_e}
\la\big(\cM^\tau_{k,l}\big)\approx 
\ff_{k,l;i}^{\,*}\la\big(\cM^\tau_{k,l-1}\big)\big|_{\cM^\tau_{k,l}}
\!\otimes\!\la\big(\!\ker\nd\ff_{k,l;i}\big)\big|_{\cM^\tau_{k,l}}\,.\EE
For each $\cC\!\in\!\cM^\tau_{k,l}$ as in~\eref{cCdfn0_e},
$$\ker\nd_{\cC}\ff_{k,l;i}\approx T_{z_i^+}\P^1 $$
is canonically oriented by the complex orientation of the fiber~$\P^1$ at~$z_i^+$.
We denote the resulting orientation of the last factor in~\eref{cMorientC_e} by~$\fo_i^+$.

Suppose $l\!\in\!\Z^+$ and $\cC\!\in\!\cM_{k,l}^{\tau}$ is as in~\eref{cCdfn0_e} with $\Si\!=\!\P^1$.
Let $\D^2_+\!\subset\!\C\!\subset\!\P^1$ be the disk cut out by 
the fixed locus~$S^1$ of~$\tau$ which contains~$z_1^+$. 
We orient \hbox{$S^1\!\subset\!\D_+^2\!\subset\!\C$} in the standard way
(this is the opposite of the boundary orientation of~$\D_+^2$ as defined in Section~\ref{SignConv_subs}).
If $k\!+\!2l\!\ge\!4$ and $i\!\in\![k]$,
this determines an orientation~$\fo_i^{\R}$ of the fiber
$$\ker\nd_{\cC}\ff_{k,l;i}^{\R}\approx T_{x_i}S^1 $$
of the last factor in~\eref{cMorientR_e} over~$\ff_{k,l;i}^{\R}(\cC)$.
This orientation extends over the subspace 
$$\ov\cM_{k,l;i}^{\tau;\st}\subset \ov\cM_{k,l}^{\tau}$$ 
consisting of curves~$\cC$ as in~\eref{cCdfn0_e} such that the real marked point $x_i$ of~$\cC$ 
lies on the same irreducible component of~$\Si$ as the marked point~$z_1^+$.

Let $(x_1,x_{j_2(\cC)},\ldots,x_{j_k(\cC)})$ be the ordering of the real marked points of~$\cC$
starting with~$x_1$ and going in the direction of the standard orientation of~$S^1$.
We denote by $\de_{\R}(\cC)\!\in\!\Z_2$ the sign of the permutation sending
$$\vp_{\cC}\!:\big\{2,\ldots,k\big\}\lra \big\{2,\ldots,k\big\}, \quad
\vp_{\cC}(i)=j_i(\cC)\,.$$
If $k\!=\!0$, we take $\de_{\R}(\cC)\!=\!0$.
For $l^*\!\in\![l]$, let 
$$\de_{l^*}^c(\cC)=
\big|\big\{i\!\in\![l]\!-\![l^*]\!:z_i^+\!\not\in\!\D_+^2\big\}\big|+2\Z\in\Z_2.$$
In particular, $\de_{\R}(\cC)\!=\!0$ if $k\!\le\!2$ and $\de_l^c(\cC)\!=\!0$.
The functions~$\de_{\R}$ and~$\de_{l^*}$ are locally constant on~$\cM_{k,l}^{\tau}$.

The space $\cM_{1,1}^{\tau}\!=\!\ov\cM_{1,1}^{\tau}$ is a single point;
we take $\fo_{1,1}\!\equiv\!+1$ to be its orientation as a plus point.
We identify the one-dimensional space 
$\ov\cM_{0,2}^{\tau}$ with $[0,\i]$ via the cross ratio 
\BE{cM02ident_e}
\vph_{0,2}\!:\ov\cM_{0,2}^{\tau}\lra[0,\i], \quad 
\vph\big([(z_1^+,z_1^-),(z_2^+,z_2^-)]\big)= 
\frac{z_2^+\!-\!z_1^-}{z_2^-\!-\!z_1^-}:\frac{z_2^+\!-\!z_1^+}{z_2^-\!-\!z_1^+}
=\frac{|1\!-\!z_1^+/z_2^-|^2}{|z_1^+\!-\!z_2^+|^2}\,;\EE
see Figure~\ref{fig_M_0,2}.
This identification, which is the {\it opposite} of \cite[(3.1)]{RealEnum} and 
\cite[(1.12)]{RealGWsII}, determines an orientation~$\fo_{0,2}$ on~$\ov\cM_{0,2}^{\tau}$.

\begin{figure}\begin{center}\begin{tikzpicture}
\draw (-4.5,0.5) circle [radius=0.5]; 
\draw (-4.5,-0.5) circle [radius=0.5];
\draw [fill] (-4.75,0.7) circle [radius=0.02];
\node [above left] at (-4.75,0.7) {$z_1^+$}; 
\draw [fill] (-4.36,0.7) circle [radius=0.02];
\node [above right] at (-4.36,0.7) {$z_2^-$};
\draw [fill] (-4.75,-0.7) circle [radius=0.02];
\node [below left] at (-4.75,-0.7) {$z_1^-$}; 
\draw [fill] (-4.36,-0.7) circle [radius=0.02];
\node [below right] at (-4.36,-0.7) {$z_2^+$};
\draw (-2.5,0) circle [radius=0.5];
\draw (-3,0) arc [start angle=180, end angle=360, x radius=0.5, y radius=0.2];
\draw [dashed] (-3,0) arc [start angle=180, end angle=0, x radius=0.5, y radius=0.1];
\draw [fill] (-2.7,0.3) circle [radius=0.02];
\node [above left] at (-2.7,0.3) {$z_1^+$}; 
\draw [fill] (-2.7,-0.3) circle [radius=0.02];
\node [below left] at (-2.7,-0.3) {$z_1^-$}; 
\draw [fill] (-2.3,0.3) circle [radius=0.02];
\node [above right] at (-2.3,0.3) {$z_2^-$}; 
\draw [fill] (-2.3,-0.3) circle [radius=0.02];
\node [below right] at (-2.3,-0.3) {$z_2^+$}; 
\draw (-0.5,0) circle [radius=0.5];
\draw (0.5,0) circle [radius=0.5];
\draw (0,0) arc [start angle=180, end angle=360, x radius=0.5, y radius=0.2];
\draw [dashed] (0,0) arc [start angle=180, end angle=0, x radius=0.5, y radius=0.1];
\draw (-1,0) arc [start angle=180, end angle=360, x radius=0.5, y radius=0.2];
\draw [dashed] (-1,0) arc [start angle=180, end angle=0, x radius=0.5, y radius=0.1];
\draw [fill] (-0.75,0.3) circle [radius=0.02];
\node [above left] at (-0.75,0.3) {$z_1^+$}; 
\draw [fill] (-0.75,-0.3) circle [radius=0.02];
\node [below left] at (-0.75,-0.3) {$z_1^-$}; 
\draw [fill] (0.6,0.28) circle [radius=0.02];
\node [above right] at (0.6,0.28) {$z_2^+$};
\draw [fill] (0.6,-0.28) circle [radius=0.02];
\node [below right] at (0.6,-0.28) {$z_2^-$};
\draw (2.5,0) circle [radius=0.5];
\draw (2,0) arc [start angle=180, end angle=360, x radius=0.5, y radius=0.2];
\draw [dashed] (2,0) arc [start angle=180, end angle=0, x radius=0.5, y radius=0.1];
\draw [fill] (2.3,0.3) circle [radius=0.02];
\node [above left] at (2.3,0.3) {$z_1^+$}; 
\draw [fill] (2.3,-0.3) circle [radius=0.02];
\node [below left] at (2.3,-0.3) {$z_1^-$}; 
\draw [fill] (2.7,0.3) circle [radius=0.02];
\node [above right] at (2.7,0.3) {$z_2^+$}; 
\draw [fill] (2.7,-0.3) circle [radius=0.02];
\node [below right] at (2.7,-0.3) {$z_2^-$}; 
\draw (4.5,0.5) circle [radius=0.5]; 
\draw (4.5,-0.5) circle [radius=0.5];
\draw [fill] (4.75,0.7) circle [radius=0.02];
\node [above right] at (4.75,0.7) {$z_2^+$}; 
\draw [fill] (4.36,0.7) circle [radius=0.02];
\node [above left] at (4.36,0.7) {$z_1^+$};
\draw [fill] (4.75,-0.7) circle [radius=0.02];
\node [below right] at (4.75,-0.7) {$z_2^-$}; 
\draw [fill] (4.36,-0.7) circle [radius=0.02];
\node [below left] at (4.36,-0.7) {$z_1^-$};
\draw [fill] (-4.5,-2) circle [radius=0.05];
\draw [fill] (4.5,-2) circle [radius=0.05];
\draw [fill] (0,-2) circle [radius=0.05];
\draw (-4.5,-2) to (4.5,-2); 
\node [below] at (-4.5,-2) {0}; 
\node [below] at (4.5,-2) {$\infty$};
\node [below] at (0,-2) {1};  
\node at (-3.5,0) {$\cdots$}; 
\node at (-1.5,0) {$\cdots$}; 
\node at (1.5,0) {$\cdots$}; 
\node at (3.5,0) {$\cdots$}; 
\end{tikzpicture}\end{center}
\caption{The structure of $\ov\cM_{0,2}^{\tau}$}
\label{fig_M_0,2}\end{figure}

We now define an orientation~$\fo_{k,l}$ on $\cM_{k,l}^{\tau}$ for $l\!\in\!\Z^+$ and
$k\!+\!l\!\ge\!3$ inductively.
If $k\!\ge\!1$, we take~$\fo_{k,l}$ to be so that the $i\!=\!k$ case 
of the isomorphism~\eref{cMorientR_e} is compatible with the orientations~$\fo_{k,l}$,
$\fo_{k-1,l}$, and~$\fo_k^{\R}$ on the three line bundles involved.
If $l\!\ge\!2$, we take~$\fo_{k,l}$ to be so that the $i\!=\!l$ case 
of the isomorphism~\eref{cMorientC_e} is compatible with the orientations~$\fo_{k,l}$,
$\fo_{k,l-1}$, and~$\fo_l^+$.
By a direct check, the orientations on~$\cM_{1,2}^{\tau}$ induced
from~$\cM_{0,2}^{\tau}$ via~\eref{cMorientR_e} and~$\cM_{1,1}^{\tau}$ via~\eref{cMorientC_e} are the same.
Since the fibers of $\ff_{k,l;l}|_{\cM_{k,l}^{\tau}}$ are even-dimensional,
it follows that the orientation~$\fo_{k,l}$ on~$\cM_{k,l}^{\tau}$ is well-defined
for all $l\!\in\!\Z^+$ and $k\!\in\!\Z^{\ge0}$ with $k\!+\!2l\!\ge\!3$.
This orientation is as above \cite[Lemma~5.4]{Penka2}.

For $l^*\!\in\![l]$, we denote by $\fo_{k,l;l^*}$ the orientation on 
$\cM_{k,l}^{\tau}$ which equals~$\fo_{k,l}$ at $\cC$ if and only if 
$\de_{\R}(\cC)\!=\!\de_{l^*}^c(\cC)$.
The next statement is straightforward.

\begin{lmm}\label{cMorient_lmm}
The orientations $\fo_{k,l;l^*}$ on $\cM_{k,l}^{\tau}$ with	
$k,l\!\in\!\Z^{\ge0}$ and $l^*\!\in\![l]$ such that \hbox{$k\!+\!2l\!\ge\!3$}
satisfy the following properties:
\BEnum{($\fo_{\cM}\arabic*$)}

\item\label{cMorientR_it} if $\cC\!\in\!\cM_{k+1,l}^{\tau}$,
the isomorphism~\eref{cMorientR_e} with $(k,i)$ replaced by $(k\!+\!1,j_{k+1}(\cC))$ 
respects the orientations $\fo_{k+1,l;l^*}$, $\fo_{k,l;l^*}$, and~$\fo_{k+1}^{\R}$
at~$\cC$; 

\item\label{cMorientC_it} 
the isomorphism~\eref{cMorientC_e} with $(l,i)$ replaced by $(l\!+\!1,l^*\!+\!1)$
respects the orientations $\fo_{k,l+1;l^*+1}$, $\fo_{k,l;l^*}$, and~$\fo_{l^*+1}^+$; 

\item the interchange of two real points $x_i$ and $x_j$ with $2\!\le\!i,j\!\le\!k$
preserves~$\fo_{k,l;l^*}$;

\item\label{cM1Rch_it} if $\cC\!\in\!\cM_{k,l}^{\tau}$,
the interchange of the real points $x_1$ and $x_{j_i(\cC)}$ with $2\!\le\!i\!\le\!k$
preserves~$\fo_{k,l;l^*}$ at~$\cC$ 
if and only if $(k\!-\!1)(i\!-\!1)\!\in\!2\Z$;

\item\label{Cijinter_it} if $\cC\!\in\!\cM_{k,l}^{\tau}$ and the marked points $z_i^+$ and $z_j^+$ 
are not separated by the fixed locus~$S^1$ of~$\cC$, then
the interchange of the conjugate pairs $(z_i^+,z_i^-)$ and $(z_j^+,z_j^-)$
preserves~$\fo_{k,l;l^*}$ at~$\cC$;  

\item the interchange of the points in a conjugate pair $(z_i^+,z_i^-)$
with $l^*\!<\!i\!\le\!l$ preserves~$\fo_{k,l;l^*}$;

\item the interchange of the points in a conjugate pair $(z_i^+,z_i^-)$ with $1^*\!<\!i\!\le\!l^*$
reverses~$\fo_{k,l;l^*}$;

\item  the interchange of the points in the conjugate pair $(z_1^+,z_1^-)$ preserves
$\fo_{k,l;l^*}$ if and only~if 
$$k\neq0~~\hbox{and}~~l\!-\!l^*\cong\binom{k}{2}~\tn{mod}~2 
\quad\hbox{or}\quad
k=0~~\hbox{and}~~l\!-\!l^*\cong1~\tn{mod}~2.$$ 

\EEnum
\end{lmm}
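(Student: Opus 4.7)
The plan is to argue by induction on $k+2l$, taking as base cases $\ov\cM_{1,1}^\tau$ (a single $+$ point) and $\ov\cM_{0,2}^\tau$ (the interval oriented via \eqref{cM02ident_e}), on which every listed claim either is vacuous or is immediate from the explicit description. For the inductive step, I would unwind the twist $\fo_{k,l;l^*}=(-1)^{\de_\R+\de_{l^*}^c}\fo_{k,l}$ and, for each symmetry or alternative forgetful morphism in the statement, separately track the sign change on $\fo_{k,l}$ (via the inductive isomorphisms \eqref{cMorientR_e} and \eqref{cMorientC_e}) and the individual sign changes in $\de_\R$ and $\de_{l^*}^c$, then verify that the total parity matches the claim.

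I would first handle the conjugate-pair properties \ref{cMorientC_it}, \ref{Cijinter_it}, and the two within-pair interchanges. The fibers $\ker\nd\ff_{k,l;i}$ for $i\le l$ carry the canonical complex orientations $\fo_i^+$, so any permutation of the conjugate pairs acts on $\fo_{k,l}$ only through permutations of complex-oriented factors. This immediately yields \ref{Cijinter_it} (since a swap of two pairs on the same side preserves $\de_{l^*}^c$) and reduces \ref{cMorientC_it} to checking that the permutation converting ``drop the $l$-th pair'' into ``drop the $(l^*{+}1)$-th pair'' is orientation-preserving, which it is by the even-dimensional nature of the intermediate fibers. The interchange $z_i^+\!\leftrightarrow\!z_i^-$ reverses the complex orientation of a single fiber, and this reversal is absorbed by the flip of $\de_{l^*}^c$ exactly when $i\le l^*$, giving the two within-pair statements. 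For \ref{cMorientR_it}, the third listed property, and \ref{cM1Rch_it}, the analogous analysis is via the permutation $\vp_\cC$ built into $\de_\R$: converting ``drop $x_{k+1}$'' to ``drop $x_{j_{k+1}(\cC)}$'' adds the transposition exchanging $k{+}1$ and $j_{k+1}(\cC)$ to $\vp_\cC$, contributing a sign that exactly matches the change in $\de_\R$ under the associated forgetful map; the factor $(-1)^{(k-1)(i-1)}$ in \ref{cM1Rch_it} records the reversal of $k{-}1$ fiber orientations $\fo_m^\R$ whose cyclic ordering is re-anchored when $x_1$ is swapped.

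The main obstacle is the last property, the interchange $z_1^+\!\leftrightarrow\!z_1^-$. This swap exchanges the two disks cut out by $S^1$, flipping the label $\D_+^2$, and this triggers three separate sign contributions. First, the standard orientation of $S^1\!\subset\!\D_+^2$ reverses, reversing each of the $k$ fiber orientations $\fo_i^\R$ in the iterated \eqref{cMorientR_e} and yielding a global factor $(-1)^{\binom{k}{2}}$ after cyclic reindexing relative to $x_1$. Second, every $z_i^+$ with $i>1$ flips sides, shifting $\de_{l^*}^c$ by $l-l^*$. Third, the fiber $\fo_1^+$ itself reverses. Demanding that the product of these be trivial gives the congruence $l-l^*\equiv\binom{k}{2}\bmod 2$ when $k\neq0$. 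When $k=0$ the cyclic ordering has no anchor, $\D_+^2$ is designated only by the property of containing $z_1^+$, the $(-1)^{\binom{k}{2}}$ factor is absent, and only the $l-l^*$ contribution remains, giving $l-l^*\equiv1\bmod 2$. The delicacy lies entirely in the first contribution: it requires unwinding $k-1$ successive applications of \eqref{cMorientR_e} and checking that the $S^1$-reversal propagates through them to produce exactly the claimed $\binom{k}{2}$ sign.
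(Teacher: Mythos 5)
The paper gives no proof of this lemma --- it marks it as ``straightforward'' just above --- so there is no written argument to compare against. Your inductive unwind-and-tally strategy is the natural one and works, and the analyses of the easier properties are roughly right, though there is a sign slip in the within-pair interchanges: $\de_{l^*}^c$ counts indices $i$ with $l^*\!<\!i\!\le\!l$, so its flip absorbs the reversal of $\fo_i^+$ precisely when $i\!>\!l^*$, not when $i\!\le\!l^*$ as you wrote; you merely mislabelled which case is absorbed, so the conclusion is unaffected, but as stated it contradicts the lemma.

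The tally for the $z_1^\pm$ interchange, the step you rightly single out as delicate, does not balance as written. Building $\fo_{k,l}$ with $k\!\ge\!1$ from $\fo_{1,1}$ via the chain of real forgetful maps produces $k\!-\!1$ fiber reversals from $\fo_2^\R,\ldots,\fo_k^\R$ and a factor $(-1)^{\binom{k-1}{2}}$ from the re-indexing of $\vp_\cC$; since $(k\!-\!1)+\binom{k-1}{2}\equiv\binom{k}{2}$, this chain already yields $(-1)^{\binom{k}{2}}$, and there is no $\fo_1^+$ factor anywhere in it, so your ``third contribution'' has no source. Building instead from $\fo_{0,2}$ (necessary for $k\!=\!0$, also valid for $l\!\ge\!2$) gives $k$ fiber reversals together with the reversal of $\fo_{0,2}$ itself, since $\vph_{0,2}$ transforms by $t\mapsto1/t$ under the swap; with the re-indexing this is $k+\binom{k-1}{2}+1\equiv\binom{k}{2}$ again, so the base-case reversal is what you are calling ``$\fo_1^+$ reverses'', and it must be folded in rather than added as an independent third term. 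As you have written it, the three contributions multiply to $(-1)^{\binom{k}{2}+(l-l^*)+1}$, which would give $l\!-\!l^*\equiv\binom{k}{2}+1$, not $\binom{k}{2}$; and your $k\!=\!0$ branch is internally inconsistent, since ``only the $l-l^*$ contribution remains'' cannot by itself produce $l\!-\!l^*\equiv1$ --- that parity shift is exactly the $\fo_{0,2}$ reversal you just discarded. The final congruences you quote are correct, but the accounting that is supposed to produce them is off by one in a way that survives into the sample case $k\!=\!3$, $l\!=\!l^*\!=\!1$ (where the swap must reverse, yet your three terms give $1+0+1\equiv0$).
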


\subsection{Codimension 1 strata and degrees}
\label{cMorient_subs}

The (open) \sf{codimension~1 strata} of $\ov\cM_{k,l}^{\tau}\!-\!\prt\ov\cM_{k,l}^{\tau}$ correspond 
to the sets $\{(K_1,L_1),(K_2,L_2)\}$ such that 
$$[k]=K_1\!\sqcup\!K_2, \quad  [l]=L_1\!\sqcup\!L_2, \quad
|K_1|\!+\!2|L_1|\ge2, \quad |K_2|\!+\!2|L_2|\ge2\,.$$
The stratum $\oS$ corresponding to such a set  parametrizes marked curves~$\cC$ as in~\eref{cCdfn0_e} 
so that the underlying surface~$\Si$ consists of two real irreducible components
with one of them carrying the real marked points~$x_i$ with $i\!\in\!K_1$ and
the conjugate pairs of marked points $(z_i^+,z_i^-)$ with $i\!\in\!L_1$ and
the other component carrying the other marked points.
A \sf{closed codimension~1} stratum~$\ov{S}$ is the closure of such an open stratum~$\oS$.
Thus,
\BE{Ssplit_e0} \oS\approx\cM_{|K_1|+1,|L_1|}^{\tau}\!\times\!\cM_{|K_2|+1,|L_2|}^{\tau},
\quad \ov{S}\approx\ov\cM_{|K_1|+1,|L_1|}^{\tau}\!\times\!\ov\cM_{|K_2|+1,|L_2|}^{\tau}.\EE

Let $l\!\in\!\Z^+$. If $\oS$ is a codimension~1 stratum of $\ov\cM_{k,l}^{\tau}\!-\!\prt\ov\cM_{k,l}^{\tau}$ 
and 
$\cC\!\in\!\oS$, we denote by $\P^1_1$ the irreducible component of~$\cC$ containing
the marked points~$z_1^{\pm}$, by $\P^1_2$ the other irreducible component,
and by $S^1_1\!\subset\!\P^1_1$ and $S^1_2\!\subset\!\P^1_2$ the fixed loci of the involutions
on these components.
For $r\!=\!1,2$, we then take 
$K_r(S)$ and $L_r(S)$ to be the set of real marked points 
and the set of conjugate pairs of marked points, respectively, carried by~$\P^1_r$
and define 
$$k_r(S)= \big|K_r(S)\big|  \qquad\hbox{and}\qquad l_r(S)=\big|L_r(S)\big|.$$
For $i\!\in\![l]$, we denote by 
$$\oS_i\subset\ov\cM_{k,l}^{\tau} \qquad\hbox{and}\qquad \ov{S}_i\subset \ov\cM_{k,l}^{\tau} $$
the open codimension~1 stratum parametrizing marked curves consisting of two real spheres 
with the marked points~$z_i^{\pm}$ on one of them and all other marked points on the other sphere
and its closure, respectively.

If $\ov{S}\!\subset\!\ov\cM_{k,l}^{\tau}\!-\!\prt\ov\cM_{k,l}^{\tau}$ 
is a closed codimension~1 stratum different from~$\ov{S}_1$, let
\BE{ff1dfn_e2b}\ff_{S;1}\!:\ov{S}\lra\ov\cM_{k_1(S),l_1(S)}^{\tau}\!\times\!\ov\cM_{k_2(S)+1,l_2(S)}^{\tau}\EE
denote the composition of the second identification in~\eref{Ssplit_e0}
with the forgetful morphism
$$\ff_{\nod}^{\R}\!:\ov\cM_{k_1(S)+1,l_1(S)}^{\tau}\lra\ov\cM_{k_1(S),l_1(S)}^{\tau}$$  
as in~\eref{ff1dfn_e} dropping the marked point~$\nod$ corresponding to the node.
The vertical tangent bundle of $\ff_{S;1}|_{\oS}$ is a pullback
of the vertical tangent bundle of $\ff_{\nod}^{\R}|_{\cM_{k_1(S)+1,l_1(S)}^{\tau}}$ 
and thus inherits an orientation from
the orientation~$\fo_{\nod}^{\R}$ of the latter specified in Section~\ref{cMstrata_subs};
we denote the induced orientation also by~$\fo_{\nod}^{\R}$.
It extends over the subspace 
$$S^{\st}\subset \ov{S}\subset \ov\cM_{k,l}^{\tau}$$
of curves~$\cC$ so that the marked point $\nod$ of the first component of the image of~$\cC$
under~\eref{Ssplit_e0} lies on the same irreducible component of the domain
as the marked point corresponding to~$z_1^+$.

Let $\Ups\!\subset\!\ov\cM_{k,l}^{\tau}$ be a bordered hypersurface.
If $k\!+\!2l\!\ge\!4$ and $i\!\in\![k]$, we call~$\Ups$ 
\sf{regular with respect to~}$\ff_{k,l;i}^{\R}$ if 
$\Ups\!\subset\!\ov\cM_{k,l;i}^{\tau;\st}$,
$\ff_{k,l;i}^{\R}(\ov\Ups\!-\!\Ups)$ is contained in the strata of codimension at least~2, 
i.e.~the subspace of $\ov\cM_{k-1,l}^{\tau}$ parametrizing curves with at least two nodes, and 
$\ff_{k,l;i}^{\R}(\prt\Ups)$ is contained in the union of $\prt\ov\cM_{k-1,l}^{\tau}$ 
and the strata of codimension at least~2.
By the last two assumptions, $\ff_{k,l;i}^{\R}|_\Ups$ is a $\Z_2$-pseudocycle of codimension~0;
see Section~\ref{Steenrod_subs}.
By the first assumption, the orientation~$\fo_i^{\R}$ of the last factor in~\eref{cMorientR_e}
and a co-orientation~$\fo_\Ups^c$ on~$\Ups$ induce a relative orientation~$\fo_\Ups^c\fo_i^{\R}$
of~$\ff_{k,l;i}^{\R}|_\Ups$; see the paragraph above Lemma~\ref{fibrasign_lmm1a}. 
Let
$$\deg_i^{\R}\!\big(\Ups,\fo_\Ups^c\big)\equiv
\deg\!\big(\ff_{k,l;i}^{\R}|_\Ups,\fo_\Ups^c\fo_i^{\R}\big)$$
be the degree of the Steenrod pseudocycle $(\ff_{k,l;i}^{\R}|_\Ups,\fo_\Ups^c\fo_i^{\R})$;
see~\eref{SteenDeg_e}.

Suppose in addition that $S\!\subset\!\ov\cM_{k,l}^{\tau}\!-\!\prt\ov\cM_{k,l}^{\tau}$ 
is a codimension~1 stratum.
We call~$\Ups$ \sf{regular with respect to~$S$}
if $\Ups$ and $\prt\Ups$ are transverse to~$\ov{S}$ in $\ov\cM_{k,l}^{\tau}$,
$$\Ups\!\cap\!\ov{S}\approx\Ups_1\!\times\!\ov\cM_{k_2(S)+1,l_2(S)}^{\tau}$$
under the second identification in~\eref{Ssplit_e0} for some 
$\Ups_1\!\subset\!\ov\cM_{k_1(S)+1,l_1(S);\nod}^{\tau;\st}$,
$\ff_{S;1}((\ov\Ups\!-\!\Ups)\!\cap\!\ov{S})$ is contained in
the strata of codimension at least~2 of the target of~$\ff_{S;1}$, 
and $\ff_{S;1}(\prt\Ups\!\cap\!\ov{S})$ is contained in the union of 
the boundary and the strata of codimension at least~2 of the target of~$\ff_{S;1}$.
By the first and the last two assumptions, 
$\ff_{S;1}|_{\Ups\cap \ov{S}}$  is a $\Z_2$-pseudocycle of codimension~0. 
By the first assumption, a co-orientation~$\fo_{\Ups}^c$ on~$\Ups$ in~$\ov\cM_{k,l}^{\tau}$ 
determines a co-orientation
$$\fo_{\Ups\cap S}^c\equiv\fo_{\Ups}^c\big|_{\Ups\cap \ov{S}}$$
on $\Ups\!\cap\!\ov{S}$ in~$\ov{S}$.
By the second assumption, $\Ups\!\cap\!\ov{S}\!\subset\!S^{\st}$.
By the first two assumptions, $S\!\neq\!S_1$ if $\Ups\!\cap\!\ov{S}\!\neq\!\eset$
and that $\fo_{\Ups}^c$ and
the orientation~$\fo_{\nod}^{\R}$ of the fibers of the restriction of~\eref{ff1dfn_e2b} to~$\oS$
specified above induce a
relative orientation $\fo_{\Ups\cap S}^c\fo_{\nod}^{\R}$ of~$\ff_{S;1}|_{\Ups\cap \ov{S}}$.
Let
$$\deg_S\!\big(\Ups,\fo_{\Ups}^c\big)\equiv
\deg\!\big(\ff_{S;1}|_{\Ups\cap \ov{S}},\fo_{\Ups}^c\fo_{\nod}^{\R}\big)
\equiv\deg\!\big(\ff_{S;1}|_{\Ups\cap \ov{S}},\fo^c_{\Ups\cap S}\fo_{\nod}^{\R}\big).$$

We call a bordered hypersurface $\Ups\!\subset\!\ov\cM_{k,l}^{\tau}$ \sf{regular}
if $\ov\Ups\!-\!\Ups$ is contained in the strata of codimension at least~2 and
$\Ups$ is regular with respect to the forgetful morphism~$\ff_{k,l;i}^{\R}$ for every $i\!\in\![k]$
and with respect to every codimension~1 stratum 
\hbox{$S\!\subset\!\ov\cM_{k,l}^{\tau}\!-\!\prt\ov\cM_{k,l}^{\tau}$}.
For such a hypersurface, \hbox{$\Ups\!\cap\!\ov{S}_1\!=\!\eset$}.

\subsection{Strata orientations}
\label{NBstrata_subs}

Suppose $l\!\ge\!2$ and $k\!+\!2l\!\ge\!5$.
The moduli space~$\ov\cM_{k,l}^{\tau}$ contains codimension~2 strata~$\oGa$
that parametrize marked curves~$\cC$ as in~\eref{cCdfn0_e} so that 
the underlying surface~$\Si$ consists of one real component~$\P^1_0$ and 
one pair~$\P^1_{\pm}$ of conjugate components; see Figure~\ref{LiftedRel_fig}.
We do not distinguish these strata based on the ordering of the marked points on 
the fixed locus $S^1_1\!\subset\!\P^1_0$ of the involution.
For such a stratum~$\oGa$, let $l_0(\Ga),l_{\C}(\Ga)\!\in\!\Z^{\ge0}$ be the number 
of conjugate pairs of marked points carried by~$\P^1_0$ and $\P^1_-\!\cup\!\P^1_+$,
respectively.
In particular, 
$$l_{\C}(\Ga)\ge2 \qquad\hbox{and}\qquad l_0(\Ga)\!+\!l_{\C}(\Ga)=l.$$
The closure~$\ov\Ga$ of~$\oGa$ decomposes~as 
\BE{Gasplit_e} \ov\Ga\approx \ov\cM_{k,l_0(\Ga)+1}^{\tau}\!\times\!\ov\cM_{0,l_{\C}(\Ga)+1}\,.\EE
We call a codimension~2 stratum as above \sf{primary} if the marked point~$z_1^+$ of 
the curves~$\cC$ in~$\oGa$ is carried by $\P^1_-\!\cup\!\P^1_+$.

For a primary codimension~2 stratum $\oGa$ and $\cC\!\in\!\oGa$, 
we denote by $\P^1_+$ the irreducible component of~$\cC$ carrying the marked point~$z_1^+$.
In this case, we choose the identification~\eref{Gasplit_e} so~that
\BEnum{($\fo_{\Ga}\arabic*$)}

\item\label{GasplitC_it} 
the second factor on the right-hand side parametrizes the irreducible component~$\P^1_+$
with its marked points so that the node~$z_{\C}$ separating it from~$\P^1_0$
is the {\it first} marked point,

\item the node $z_{\R}^+$ separating $\P^1_0$ from~$\P^1_+$ 
is the {\it first} marked point in the {\it first} conjugate pair of marked points in
the corresponding element in the first factor on the right-hand side, and

\item\label{GasplitIndex_it} the remaining conjugate pairs of points and the real points
in the first factor 
on the right-hand side are numbered in the same order as on the left-hand side.

\EEnum
If in addition $l^*\!\in\![l]$, let $l_0^*(\Ga)$ (resp.~$l_-^*(\Ga)$)
be the number of marked points $z_i^-$ with $i\!\in\![l^*]$
carried by~$\P^1_0$ (resp.~$\P^1_+$).
The second factor in~\eref{Gasplit_e} is canonically oriented (being a complex manifold).
We denote by $\fo_{\Ga;l^*}$ the orientation on~$\oGa$ obtained via 
the identification~\eref{Gasplit_e} from 
the orientation $\fo_{k,l_0(\Ga)+1;l_0^*(\Ga)+1}$ on $\cM_{k,l_0(\Ga)+1}^{\tau}$
times $(-1)^{l_-^*(\Ga)}$.

With the identification as above, let 
$$\pi_1,\pi_2\!:\ov\Ga\lra \ov\cM_{k,l_0(\Ga)+1}^{\tau},\ov\cM_{0,l_{\C}(\Ga)+1}$$
be the projections to the two factors.
Denote~by
$$\cL_{\Ga}^{\R}\lra \ov\cM_{k,l_0(\Ga)+1}^{\tau} \qquad\hbox{and}\qquad
\cL_{\Ga}^{\C}\lra \ov\cM_{0,l_{\C}(\Ga)+1}$$
the universal tangent line bundles at the first point of the first conjugate pair
of marked points and at the first marked point, respectively.
The normal bundle~$\cN\Ga$ consists of conjugate smoothings of the two nodes 
of the curves in~$\oGa$.
Thus, it is canonically isomorphic to the complex line bundle
$$\cL_{\Ga}\equiv \pi_1^*\cL_{\Ga}^{\R}\!\otimes_{\C}\!\pi_2\cL_{\Ga}^{\C}\lra\Ga\,.$$
The next observation is straightforward.

\begin{lmm}\label{cNGa2_lmm}
Suppose $k,l\!\in\!\Z^{\ge0}$ and $l^*\!\in\![l]$ are 
such that \hbox{$k\!+\!2l\!\ge\!3$}.
Let \hbox{$\oGa\!\subset\!\ov\cM_{k,l}^{\tau}$} be a primary codimension~2 stratum.
The orientation~$\fo_{\Ga}^c$ on~$\cN\Ga$ induced by 
the orientations~$\fo_{k,l;l^*}$ on~$\cM_{k,l}^{\tau}$
and~$\fo_{\Ga;l^*}$ on~$\oGa$ agrees with the complex orientation of~$\cL_{\Ga}$.
\end{lmm}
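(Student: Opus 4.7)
My proof plan is as follows.  A point $\cC\!\in\!\oGa$ consists of the real component $\P^1_0$ glued to the conjugate pair $\P^1_\pm$ along the conjugate pair of nodes $z_\R^\pm$.  The standard gluing construction identifies a normal slice to $\oGa$ in $\ov\cM^\tau_{k,l}$ with a neighborhood of the origin in the complex line
$$\cL_\Ga\big|_\cC = T_{z_\R^+}\P^1_0\!\otimes_\C\!T_{z_\R^+}\P^1_+\,,$$
via the complex smoothing parameter~$t$ of the node~$z_\R^+$; the reality condition forces the smoothing parameter of~$z_\R^-$ to be~$\ov{t}$, so only one complex parameter is free.  This identifies $\cN\Ga$ as a real vector bundle with the underlying real bundle of $\cL_\Ga$, and the complex orientation of $\cL_\Ga$ is precisely the one determined by the free parameter~$t$.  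The claim to verify is that the co-orientation of~$\oGa$ in~$\ov\cM^\tau_{k,l}$ determined by the orientations $\fo_{k,l;l^*}$ on~$\cM^\tau_{k,l}$ and $\fo_{\Ga;l^*}$ on~$\oGa$ via the first isomorphism in \eref{lasplits_e} agrees with this complex orientation.

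The argument proceeds by induction using the properties of Lemma~\ref{cMorient_lmm}.  If $k\!\ge\!1$, pick a real marked point $x_i$ and consider the forgetful morphism $\ff^{\R}_{k,l;i}\!:\ov\cM^\tau_{k,l}\!\lra\!\ov\cM^\tau_{k-1,l}$.  It sends~$\oGa$ to a primary codimension-2 stratum~$\oGa'$ of~$\ov\cM^\tau_{k-1,l}$ with the same invariants $l_0(\Ga'),l_\C(\Ga'),l_0^*(\Ga'),l_-^*(\Ga')$, and identifies $\cN\Ga$ with the pullback of $\cN\Ga'$ as a complex line bundle.  By property~$(\fo_{\cM}1)$ applied both to $\ov\cM^\tau_{k,l}$ and to the first factor of the decomposition~\eref{Gasplit_e}, both $\fo_{k,l;l^*}$ and $\fo_{\Ga;l^*}$ are obtained from $\fo_{k-1,l;l^*}$ and $\fo_{\Ga';l^*}$ by tensoring with the same orientation $\fo_i^\R$ on the kernel of~$\ff^\R_{k,l;i}$.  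Therefore, the induced co-orientation on~$\cN\Ga$ equals that on~$\cN\Ga'$ under the identification, and the claim for~$\oGa$ reduces to the claim for~$\oGa'$.  An entirely parallel argument using property~$(\fo_{\cM}2)$ reduces over conjugate pairs $(z_i^+,z_i^-)$ on~$\P^1_0$ with $i\!>\!l^*$ via~$\ff_{k,l;i}$; a similar reduction on the complex second factor $\ov\cM_{0,l_\C+1}$ of~\eref{Gasplit_e} handles conjugate pairs on $\P^1_\pm$ beyond the required minimum $l_\C(\Ga)\!=\!2$, using the complex orientations on both the moduli factor and the forgetful fiber.

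After these reductions, it suffices to verify the claim in one minimal base configuration, which one can take to be $(k,l,l_0(\Ga),l_\C(\Ga))\!=\!(0,3,1,2)$ with $z_1^+\!\in\!\P^1_+$, $z_1^-\!\in\!\P^1_-$, and $l^*\!=\!1$ (so that $l_-^*(\Ga)\!=\!0$ and the sign $(-1)^{l_-^*(\Ga)}$ in the definition of~$\fo_{\Ga;l^*}$ is $+1$).  In this case $\oGa\!\approx\!\cM^\tau_{0,2}\!\times\!\{\tn{pt}\}$ sits inside the three-dimensional $\ov\cM^\tau_{0,3}$, the stratum orientation $\fo_{\Ga;1}$ reduces to $\fo_{0,2;1}$, and the ambient orientation $\fo_{0,3;1}$ is built from $\fo_{0,2;1}$ via $(\fo_{\cM}2)$ applied to $\ff_{0,3;2}$.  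Using the cross-ratio identification~$\vph_{0,2}$ of $\ov\cM^\tau_{0,2}$ with $[0,\i]$ and the standard complex coordinate on the bubble~$\P^1_+$ orienting the fiber of~$\ff_{0,3;2}$, one compares the approach to~$\oGa$ in both coordinates against the smoothing parameter~$t$ and verifies by direct computation that the induced co-orientation on~$\cN\Ga$ agrees with the complex orientation of~$\cL_\Ga$.  The main obstacle is sign bookkeeping in this final comparison — matching the direction in which~$\vph$ approaches the endpoint of~$[0,\i]$ corresponding to~$\oGa$ with the phase of the smoothing parameter~$t$ — but this is routine and requires no new ideas.
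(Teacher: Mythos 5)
Since the paper flags this lemma as ``straightforward'' and provides no proof, I am assessing your sketch on its own terms. Your strategy --- gluing-parameter identification of $\cN\Ga$ with $\cL_\Ga$, stripping marked points via forgetful morphisms using Lemma~\ref{cMorient_lmm}, then direct computation in a base case --- is the natural one, but the middle step is where essentially all the sign content of the lemma resides, and your justification for it is misstated in one place. When you strip a conjugate pair $(z_i^+,z_i^-)$ with $i>l^*$ carried by $\P^1_\pm$, it is not true that ``the complex orientations on both the moduli factor and the forgetful fiber'' match on the nose: from the definition of $\fo_{k,l;l^*}$ via $\de^c_{l^*}$, the ambient orientation satisfies $\fo_{k,l;l^*}=(-1)^{\ep_i}\fo_{k,l-1;l^*}\!\otimes\!\fo_i^+$ with $\ep_i\!\in\!\{0,1\}$ recording whether $z_i^+\!\notin\!\D_+^2$, while the factor $\ov\cM_{0,l_\C(\Ga)+1}$ in~\eref{Gasplit_e} parametrizes the position of $z_i^-$ (not $z_i^+$) on $\P^1_+$ whenever $z_i^+\!\in\!\P^1_-$, so the two forgetful fibers carry opposite complex orientations via the anti-holomorphic involution. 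These two discrepancies cancel, and that cancellation is precisely what the lemma asserts; absorbing it into ``routine sign bookkeeping'' hides the content.

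There are also two structural gaps. First, your claim of a single base configuration is not correct: the first factor $\ov\cM^\tau_{k,l_0(\Ga)+1}$ of~\eref{Gasplit_e} must remain stable, so when $l_0(\Ga)=0$ you can only reduce $k$ down to~$1$, leaving the inequivalent base case $(k,l,l_0(\Ga),l_\C(\Ga))=(1,2,0,2)$ --- the configuration of the points $P^\pm$ in Lemma~\ref{M12rel_lmm} --- which must be checked directly. Second, the reductions you describe do not establish independence of $l^*$: property~($\fo_{\cM}2$) only drops the pair at index $l^*\!+\!1$ while reducing $l^*$, and in the configuration $(1,2,0,2)$ you cannot forget pair~$2$ (it would leave $l_\C\!<\!2$), so if $l^*\!=\!2$ there is no further reduction available. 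The cleanest fix is a separate comparison showing the co-orientation is independent of $l^*$: passing from $l^*$ to $l^*\!+\!1$ changes $\fo_{k,l;l^*}$ by a sign recording whether $z^+_{l^*+1}\!\notin\!\D_+^2$, while $\fo_{\Ga;l^*}$ picks up the same sign either from $\fo_{k,l_0(\Ga)+1;l_0^*(\Ga)+2}$ on the first factor when $z^+_{l^*+1}\!\in\!\P^1_0$, or from the prefactor $(-1)^{l_-^*(\Ga)}$ when $z^+_{l^*+1}\!\in\!\P^1_\pm$. With that in hand you may set $l^*\!=\!1$ and your reductions then close the argument, modulo verifying both base cases.
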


Suppose now that $l\!\in\!\Z^+$ and
$\oS$ is a codimension~1 stratum of $\ov\cM_{k,l}^{\tau}\!-\!\prt\ov\cM_{k,l}^{\tau}$. 
For $r\!=\!1,2$, let 
$$K_r(S)\subset[k], \qquad L_r(S)\subset[l], \qquad\hbox{and}\quad 
k_r(S),l_r(S)\!\in\!\Z^{\ge0}$$
be as in Section~\ref{cMorient_subs}. Define
$$r(S)=\begin{cases}1,&\hbox{if}~k\!=\!0~\hbox{or}~1\!\in\!K_1(S);\\
2,&\hbox{if}~1\!\in\!K_2(S);\end{cases}$$
thus, the real marked point~$x_1$ lies on~$S^1_{r(S)}$ if $k\!\ge\!1$.

For $l^*\!\in\![l]$ and $r\!=\!1,2$, define
$$L_r^*(S)=L_r(S)\!\cap\![l^*], \qquad l_r^*(S)\equiv \big|L_r^*(S)\big|.$$ 
An orientation~$\fo_{S;\cC}^c$ of the normal bundle~$\cN_{\cC}S$ of~$S$ 
in $\ov\cM_{k,l}^{\tau}$ at $\cC\!\in\!S$ determines a direction of degeneration of elements of 
$\cM_{k,l}^{\tau}$ to~$\cC$.
The orientation~$\fo_{k,l;l^*}$ on~$\cM_{k,l}^{\tau}$ limits to 
an orientation~$\fo_{k,l;l^*;\cC}$ of $\la_{\cC}(\ov\cM_{k,l}^{\tau})$
obtained by approaching~$\cC$ from this direction.
Along with~$\fo_{S;\cC}^c$, $\fo_{k,l;l^*;\cC}$ determines 
an orientation $\prt_{\fo_{S;\cC}^c}\fo_{k,l;l^*;\cC}$ of~$\la_{\cC}(S)$
via the first isomorphism in~\eref{lasplits_e}.
If in addition $l_2^*(S)\!\ge\!1$,  let $i^*\!\in\!L_2^*(S)$ be the smallest element.
The two directions of degeneration of elements of $\cM_{k,l}^{\tau}$ to~$\cC$
are then distinguished by whether the marked points $z_1^+$ and $z_{i^*}^+$ of
the degenerating elements lie on the same disk~$\D^2_+$ or not.
We denote by~$\fo_{S;\cC}^{c;+}$ the orientation of~$\cN_{\cC}S$ 
which corresponds to the direction of degeneration for which $z_1^+,z_{i^*}^+\!\in\!\D^2_+$
and by~$\fo_{S;\cC}^{c;-}$ the opposite orientation.
Let $\fo_{k,l;l^*;\cC}^{\pm}$ and $\fo_{S;l^*;\cC}^{\pm}$ 
be the orientations of $\la_{\cC}(\ov\cM_{k,l}^{\tau})$ and~$\la_{\cC}(S)$,
respectively, induced by~$\fo_{S;\cC}^{c;\pm}$ as above. 

A topological component~$\oS_*$ of~$\oS$ is characterized by the distribution of 
the points~$z_i^+$ with $i\!\in\!L_r(S)$ between the interiors of the two disks 
cut out by the fixed locus~$S^1_r$ in each component~$\P^1_r$ of the domain of 
the curves in~$S$ and by the orderings of 
the real marked points~$x_i$ with $i\!\in\!K_r(S)$ on~$S^1_r$.
Thus, 
\BE{Ssplit_e} \oS_*\approx \cM_1\!\times\!\cM_2
\subset\cM_{k_1(S)+1,l_1(S)}^{\tau}\!\times\!
\cM_{k_2(S)+1,l_2(S)}^{\tau}\EE
for some topological components $\cM_1$ and $\cM_2$ of the moduli spaces
on the right-hand side above.
We choose this identification so~that
\BEnum{($\fo_S\arabic*$)}

\item\label{SsplitC_it} 
the orderings of the conjugate pairs of marked points on the two sides are consistent,

\item\label{SsplitR_it}  
the nodal point on each of the irreducible components on the left-hand side
corresponds to the {\it first} real marked point in the associated factor on
the right-hand side.

\EEnum

\vspace{.15in}

If in addition $l_2^*(S)\!\ge\!1$ and  $i^*\!\in\!L_2^*(S)$ is the smallest element as before,
we denote by~$\fo_{S;l^*}$ the orientation on~$\oS$ obtained via 
the identification~\eref{Ssplit_e} from the orientations 
$\fo_{k_1(S)+1,l_1(S);l_1^*(S)}$ on $\cM_{k_1(S)+1,l_1(S)}^{\tau}$
and $\fo_{k_2(S)+1,l_2(S);l_2^*(S)}$ on $\cM_{k_2(S)+1,l_2(S)}^{\tau}$.
The orientation~$\fo_{S;l^*}$ does not depend on the orderings of the real points
on~$S^1_1$ and~$S^1_2$.
In this case, both fixed loci $S^1_r\!\subset\!\P_r^1$ are canonically oriented.
For a topological component~$\oS_*$ of~$\oS$, 
let $j_1'(\oS_*)\!\in\!\Z^{\ge0}$ be the number of real marked points that lie 
on the oriented arc of~$S_{r(S)}^1$ between the nodal point of~$\P^1_{r(S)}$ 
and the real marked point~$x_1$ of any $\cC\!\in\!\oS_*$; if $k\!=\!0$, we take $j_1'(\oS_*)\!=\!0$.
Define
\begin{alignat*}{2}
\de_{\C;l^*}^+(S)&=1, &\quad 
\de_{\R}^+(\oS_*)&=(k\!-\!1)j_1'(\oS_*)\!+\!\big(r(S)\!-\!1\big)k_1(S)k_2(S),\\
\de_{\C;l^*}^-(S)&=l_2(S)\!-\!l_2^*(S),&\quad
\de_{\R}^-(\oS_*)&=(k\!-\!1)j_1'(\oS_*)\!+\!
\binom{k_2(S)\!+\!1}{2}\!+\!\big(r(S)\!-\!1\big)(k\!-\!1).
\end{alignat*}

\begin{lmm}\label{DMboundary_lmm} 
Suppose $k,l\!\in\!\Z^{\ge0}$ and $l^*\!\in\![l]$ are 
such that \hbox{$k\!+\!2l\!\ge\!3$}.
Let \hbox{$\oS_*\!\subset\!\ov\cM_{k,l}^{\tau}\!-\!\prt\ov\cM_{k,l}^{\tau}$} be a topological component
of codimension~1 stratum such that $l_2^*(S)\!\ge\!1$.
The orientations~$\fo_{S;l^*}^{\pm}$ and~$\fo_{S;l^*}$ on $\la(S)|_{\oS_*}$  are the same 
if and only if $\de_{\C;l^*}^{\pm}(S)\!\cong\!k\!+\!\de_{\R}^{\pm}(\oS_*)$ mod~2.
\end{lmm}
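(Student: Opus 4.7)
The plan is to verify this parity comparison by induction on $k+2l$, using the inductive construction of $\fo_{k,l;l^*}$ via the forgetful maps~\eref{ff1dfn_e} and~\eref{ff2dfn_e}. First I would fix a representative $\cC\!\in\!\oS_*$ and set up a local smoothing chart for $\cN_\cC S$: a real parameter $t$ with $t\!>\!0$ corresponding to $\fo_{S;\cC}^{c;+}$, under which $z_1^+$ and $z_{i^*}^+$ end up on the same disk $\D^2_+$ of the smoothed curve, while $t\!<\!0$ interchanges the roles of $\D^2_+$ and $\D^2_-$ on $\P^1_{3-r(S)}$. The sign comparison then splits into two independent pieces matching the decomposition $\fo_{k,l;l^*}\!=\!(-1)^{\de_\R+\de_{l^*}^c}\fo_{k,l}$ and its analogues on each factor of $\oS_*$ under the splitting~\eref{Ssplit_e}.

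The $\de_{\C;l^*}^\pm$ contribution arises from comparing $\de_{l^*}^c$ on the smoothed curve with the sum of the $\de_{l^*}^c$ values on the two components. In the $+$ direction, disk assignments on both components agree with those on the smoothing, producing only the constant contribution $\de_{\C;l^*}^+\!=\!1$ coming from conventions \ref{GasplitC_it}--\ref{GasplitIndex_it} (specifically, the designation of the first conjugate pair on the component-2 factor). In the $-$ direction, the disk-swap on $\P^1_{3-r(S)}$ flips disk assignments for every conjugate pair on that component indexed by $i\!\in\![l]\!\setminus\![l^*]$, contributing exactly $l_2(S)\!-\!l_2^*(S)$ to $\de_{l^*}^c$ of the smoothed curve.

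The $\de_\R^\pm$ contribution tracks how the cyclic order of real marked points on the smoothed $S^1$ differs from the independent cyclic orders on $S^1_1$ and $S^1_2$. Traversing $S^1$ from $x_1$ in the standard direction, one first encounters $j_1'(\oS_*)$ points on $S^1_{r(S)}$ before reaching the node, then crosses to $S^1_{3-r(S)}$. In the $+$ direction this traversal respects the standard orientation of $S^1_{3-r(S)}$, so the permutation sign $\de_\R$ picks up only the parity factor $(k\!-\!1)j_1'(\oS_*)$ from the cyclic shift and, when $r(S)\!=\!2$, the additional $k_1(S)k_2(S)$ from the labeling convention that puts $x_1$ first on its factor per \ref{SsplitR_it}. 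In the $-$ direction, the disk swap reverses the circle orientation on $S^1_{3-r(S)}$, reversing its $k_{3-r(S)}(S)$ real points and contributing $\binom{k_2(S)+1}{2}$; the $(r(S)\!-\!1)(k\!-\!1)$ term similarly records the positioning of~$x_1$ after the swap.

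Finally I would close the induction via Lemma~\ref{cMorient_lmm}\ref{cMorientR_it}--\ref{cMorientC_it}: dropping a real marked point on either component (other than $x_1$ or the nodal point) changes $j_1'(\oS_*)$ and the relevant factors in $\de_\R^\pm$ in a way compatible with the fiber orientation $\fo_i^\R$, while dropping a conjugate pair on either component leaves $l_2(S)\!-\!l_2^*(S)$ either unchanged or shifted by one and is compatible with $\fo_i^+$. This reduces to minimal stable configurations on each side of the node, which I would verify directly from the definitions of $\fo_{0,2}$ and $\fo_{1,1}$ in Section~\ref{cMstrata_subs}. The hard part will be the bookkeeping in the real-point analysis: separately handling $r(S)\!=\!1$ versus $r(S)\!=\!2$ and tracking how the convention~\ref{SsplitR_it} of placing the nodal marking first on each factor interacts with the renumbering under $\vp_\cC$, since it is there that the asymmetry producing $k_1(S)k_2(S)$ versus $\binom{k_2(S)+1}{2}$ first appears.
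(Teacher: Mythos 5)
Your overall strategy — induction on stability using the forgetful maps, splitting the sign comparison into a conjugate-pair piece $\de_{\C;l^*}^\pm$ and a real-point permutation piece $\de_\R^\pm$, and verifying base cases on $\ov\cM_{0,2}^\tau$ and $\ov\cM_{1,1}^\tau$ — is close in spirit to the paper's proof, though the paper organizes the reduction differently (first settling $k=0$ with $l^*=l$, then general $l^*$, then all real marked points at once via the single forgetful map to $\cM_{0,l}^\tau$ and a permutation parity computation, rather than peeling off one point at a time). The one-point-at-a-time induction you propose is plausible but you don't work out how $j_1'(\oS_*)$, $r(S)$, $k_1(S)k_2(S)$ and the binomial term transform under dropping a single real marked point, and this bookkeeping is nontrivial precisely because the formulas are not additive in any obvious way.

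More concretely, there is a genuine error in the mechanism you describe for the $-$ direction. The two degeneration directions $\fo_{S;\cC}^{c;\pm}$ are distinguished by whether $z_1^+$ and $z_{i^*}^+$ (with $i^*\!\in\!L_2^*(S)$, the smallest such index) end up on the same disk of the smoothing. Since $z_1^+$ always sits on $\P^1_1$ and $z_{i^*}^+$ always sits on $\P^1_2$, passing from $+$ to $-$ reverses the disk/circle structure on $\P^1_2$ — the component \emph{not} carrying $z_1^\pm$ — irrespective of $r(S)$. You instead place the swap on $\P^1_{3-r(S)}$, i.e.\ the component not carrying $x_1$, conflating the roles of $z_1^+$ and $x_1$. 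For $r(S)\!=\!1$ this happens to coincide, but for $r(S)\!=\!2$ you would be reversing the $k_1(S)$ real points on $S^1_1$, which would produce $\binom{k_1(S)+1}{2}$ rather than the required $\binom{k_2(S)+1}{2}$; likewise your $\de_{\C}^-$ count would become $l_1(S)\!-\!l_1^*(S)$ rather than $l_2(S)\!-\!l_2^*(S)$. The fact that you nevertheless wrote down the correct target quantities $\binom{k_2(S)+1}{2}$ and $l_2(S)\!-\!l_2^*(S)$ shows you copied them from the lemma statement, but your stated derivation cannot produce them when $r(S)\!=\!2$. Fixing this requires tying the $\pm$ distinction to $z_{i^*}^+$ on $\P^1_2$ throughout, exactly as the paper's reversal \eref{DMboundary_e15} of the $j$'s (the real points on $S^1_2$) encodes, together with the extra $k_2$ coming from the reversed tangent spaces $T_{x_{j_m}}S^1_2$.
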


\begin{proof} For $r\!=\!1,2$, let 
$$l_r=l_r(S), \qquad l_r^*=l_r^*(S), \qquad  k_r=k_r(S), \qquad j_1'=j_1'(\oS_*).$$
If $l^*\!=\!l\!=\!2$ and $k\!=\!0$, 
$S\!=\!S_1\!=\!S_2$ is a point and~$\fo_{S;l^*}\!=\!+1$.
The claim in this case thus holds by the definition of the orientations 
$\fo_{0,2;2}\!=\!\fo_{0,2}$ on~$\cM_{0,2}^{\tau}$ and $\fo_{S;\cC}^{c;\pm}$ on~$\cN S$.
Since the orientation \hbox{$\fo_{0,l;l}\!\equiv\!\fo_{0,l}$} with $l\!\ge\!3$ 
(resp.~$\fo_{1,l;l}\!\equiv\!\fo_{1,l}$ with $l\!\ge\!2$) is obtained
from the orientations~$\fo_{0,l-1;l-1}$ (resp.~$\fo_{1,l-1;l-1}$) and~$\fo_l^+$,
it follows that the claim holds whenever $l^*\!=\!l$ and $k\!=\!0$.

Let $\cC\!\in\!\oS_*$ be as in~\eref{cCdfn0_e}.
Suppose $l^*\!<\!l$ and $k\!=\!0$.  
Let $l_1^c$ and~$l_2^c$ be the numbers of the marked points $z_i^-$ of~$\cC$ with 
\hbox{$i\!\in\![l]\!-\![l^*]$}  on the same disk as~$z_1^+$ and on the same disk as~$z_{i^*}^+$, 
respectively.
By definition,
\begin{alignat*}{2}
\fo_{1,l_1;l_1^*}\big|_{\cM_1}
&=(-1)^{l_1^c}\fo_{1,l_1;l_1}\big|_{\cM_1}\,, &\quad
\fo_{S;l^*}^+&=(-1)^{l_1^c+l_2^c}\fo_{S;l}^+,\\
\fo_{1,l_2;l_2^*}\big|_{\cM_2}
&=(-1)^{l_2^c}\fo_{1,l_2;l_2}\big|_{\cM_2}\,,&\quad
\fo_{S;l^*}^-&=(-1)^{l_1^c+(l_2-l_2^*-l_2^c)}\fo_{S;l}^-\,.
\end{alignat*}
Thus, the claim in this case follows from the $l^*\!=\!l$ case above.

Suppose $k\!>\!0$, $S'\!\subset\!\cM_{0,l}^{\tau}$ is the image
of~$S$ under the forgetful morphism
$$\ff\!:\cM_{k,l}^{\tau}\lra\cM_{0,l}^{\tau}$$ 
dropping all real marked points, $\cC'\!=\!\ff(\cC)$, and 
$(\cC_1',\cC_2')\!\in\!\cM_1'\!\times\!\cM_2'$
is the corresponding pair of marked irreducible components
(with 1~real marked point each).
Let $(x_{i_1},\ldots,x_{i_{k_1}})$ be the ordering of the real marked points 
on~$S^1_1$ along its canonical direction starting from the first point after the node
and $(x_{j_1},\ldots,x_{j_{k_2}})$ be the analogous ordering of the real marked points
on~$S^1_2$.
The orientation~$\fo_{S;l^*}$ on~$T_{\cC}\oS$ is obtained via isomorphisms
\BE{DMboundary_e5}\begin{split}
\big(T_{\cC}\oS,\fo_{S;l^*}\big)
&\approx \big(T_{\cC_1'}\cM_1',\fo_{1,l_1;l_1^*}\big)
\!\oplus\!\bigoplus_{m=1}^{k_1}\!\!T_{x_{i_m}}\!S^1_1
\oplus \big(T_{\cC_2'}\cM_2',\fo_{1,l_2;l_2^*}\big)
\!\oplus\!\bigoplus_{m=1}^{k_2}\!\!T_{x_{j_m}}\!S^1_2\\
&\approx \big(T_{\cC_1'}\cM_1',\fo_{1,l_1;l_1^*}\big)\!\oplus\!
\big(T_{\cC_2'}\cM_2',\fo_{1,l_2;l_2^*}\big)\oplus
\bigoplus_{m=1}^{k_1}\!\!T_{x_{i_m}}\!S^1_1\!\oplus\!\bigoplus_{m=1}^{k_2}\!\!T_{x_{j_m}}\!S^1_2\\
&\approx \big(T_{\cC'}S',\fo_{S';l^*}\big)\oplus
\bigoplus_{m=1}^{k_1}\!\!T_{x_{i_m}}\!S^1_1\!\oplus\!\bigoplus_{m=1}^{k_2}\!\!T_{x_{j_m}}\!S^1_2
\end{split}\EE
from  the standard orientations on $S^1_1$ and $S^1_2$
determined by the marked points~$z_1^+$ and~$z_{i^*}^+$.
The second isomorphism above is orientation-preserving because 
the dimension of $T_{\cC_2'}\cM_2'$ is even.

Let $\wt\cC\!\in\!\cM_{k,l}^{\tau}$ be a smooth marked curve 
close to~$\cC$ from the direction of degeneration
determined by~$\fo_S^{c;\pm}$ and $\wt\cC'\!=\!\ff(\wt\cC)$.
Let $(x_1,x_{i_2^{\pm}},\ldots,x_{i_k^{\pm}})$ be the ordering 
of the real marked points of~$\wt\cC$ along the standard direction of~$S^1$ determined 
by~$z_1^+(\wt\cC)$.
The orientation~$\fo_{S;l^*}^{\pm}$ at~$\cC$ is obtained via isomorphisms
\BE{DMboundary_e9}\begin{split}
\big(T_{\cC}\oS,\fo_{S;l^*}^{\pm}\big)
\!\oplus\!\big(\cN_{\cC}S,\fo_{S}^{c;\pm}\big)
&\approx\big(T_{\wt\cC}\cM_{k,l}^{\tau},\fo_{k,l;l^*}\big)
\approx  \big(T_{\wt\cC'}\cM_{0,l}^{\tau},\fo_{0,l;l^*}\big)\!\oplus\!
\bigoplus_{m=1}^k\!\!T_{x_{i_m^{\pm}}}\!S^1\\
&\approx \big(T_{\cC'}\oS',\fo_{S';l^*}^{\pm}\big)\!\oplus\!
\big(\cN_{\cC'}S',\fo_{S'}^{c;\pm}\big)\!\oplus\!
\bigoplus_{m=1}^k\!\!T_{x_{i_m^{\pm}}}\!S^1\\
&\approx(-1)^k\big(T_{\cC'}\oS',\fo_{S';l^*}^{\pm}\big)\!\oplus\!
\bigoplus_{m=1}^k\!\!T_{x_{i_m^{\pm}}}\!S^1\!\oplus\!
\big(\cN_{\cC}S,\fo_{S}^{c;\pm}\big).
\end{split}\EE
By \eref{DMboundary_e5}, \eref{DMboundary_e9}, and the $k\!=\!0$ case above,
the claim in the general case holds if $\de_{\R}^{\pm}(\oS_*)$ has the same parity
as the parity of the permutation
\BE{DMboundary_e11}
\big(i_1,\ldots,i_{k_1},j_1,\ldots,j_{k_2}\big) \lra 
\big(i_1^{\pm}\!=\!1,i_2^{\pm},\ldots,i_k^{\pm}\big)\EE
plus the parity of $k_2$ in the minus case, since the tangent spaces $T_{x_{j_m}}S^1_2$
then enter with the reversed orientations.

Suppose $r(S)\!=\!1$. 
The plus case of~\eref{DMboundary_e11} then moves the indices $(i_1,\ldots,i_{j_1'})$
to the end preserving their order.
The parity of this permutation~is 
$$\de_{\R}^+(\oS_*)=j_1'\big(k\!-\!j_1'\big)\cong (k\!-\!1)j_1' \mod2\,.$$
The minus case of~\eref{DMboundary_e11} is the composition of the permutation
\BE{DMboundary_e15}\big(j_1,\ldots,j_{k_2}\big)\lra\big(j_{k_2},\ldots,j_1\big) \EE
with the transposition in the plus case.
This adds an extra $k_2(k_2\!-\!1)/2$ to the parity.

Suppose $r(S)\!=\!2$. 
The plus case of~\eref{DMboundary_e11} then moves $(j_{j_1'+1}\!=\!1,\ldots,j_{k_2})$
to the front preserving their order.
The parity of this permutation~is 
$$\de_{\R}^+(\oS_*)=\big(k_2\!-\!j_1'\big)\big(k_1\!+\!j_1'\big)
\cong (k\!-\!1)j_1'\!+\!k_1k_2 \mod2\,.$$
The minus case of~\eref{DMboundary_e11} consists of the permutation~\eref{DMboundary_e15}
followed by moving $(j_{j_1'+1},\ldots,j_1)$ to the front of the entire $k$~tuple.
The parity of this permutation plus $k_2$ is 
$$\de_{\R}^-(\oS_*)=\binom{k_2\!+\!1}{2}+\big(j_1'\!+\!1\big)\big(k\!-\!1\!-\!j_1'\big)
\cong (k\!-\!1)j_1'+k\!-\!1+\binom{k_2\!+\!1}{2}\,.$$
This establishes the claim.
\end{proof}

\subsection{Bordisms in $\ov\cM_{1,2}^{\tau}$ and $\ov\cM_{0,3}^{\tau}$}
\label{cMbordism_subs}

The two relations of Theorem~\ref{SolWDVV_thm} are proved by applying~\eref{bndsplit_e} with 
the hypersurfaces $\Ups\!\subset\!\ov\cM_{1,2}^\tau$ and $\Ups\!\subset\!\ov\cM_{0,3}^{\tau}$
of Lemmas~\ref{M12rel_lmm} and~\ref{M03rel_lmm} below. 
These hypersurfaces are \sf{regular}, in the sense defined at the end of Section~\ref{cMorient_subs},
and in particular are disjoint from the codimension~1 stratum~$S_1$ of the moduli space.
We determine the degrees of these hypersurfaces with respect to the other non-boundary codimension~1 strata
and with respect to the forgetful morphism~$\ff_{1,2;1}^{\R}$ in the first case.
These degrees are essential for computing the right-hand side of~\eref{bndsplit_e};
see Proposition~\ref{Rdecomp_prp}.

Orientations are interpreted below as relative orientations of maps to a point;
see Section~\ref{SignConv_subs}.
All notation for the codimension~1 strata and the degrees is as in Section~\ref{cMorient_subs}.
For a primary codimension~2 stratum~$\Ga$ of $\ov\cM^\tau_{k,l}$, we denote by~$\fo_{\Ga}^c$ 
the canonical orientation on~$\cN\Ga$ as in Lemma~\ref{cNGa2_lmm} and 
by~$\fo_{\Ga;l}$ the orientation on~$\Ga$ as in the first half of Section~\ref{NBstrata_subs}.
Since $\fo_{k,l}\!=\!\fo_{k,l;l}$ for $k\!=\!0,1$,
\BE{M12rel_e0}\fo_{\Ga;l}=\fo_{\Ga}^c\fo_{k,l}\EE
in the cases of Lemmas~\ref{M12rel_lmm} and~\ref{M03rel_lmm}.
Let $P^{\pm}\!\in\!\ov\cM_{1,2}^{\tau}$ be 
the three-component curve so that $z_1^+$ and~$z_2^{\pm}$ lie on the same irreducible
component.

\begin{lmm}\label{M12rel_lmm}
There exists an embedded closed path $\Ups\!\subset\!\ov\cM_{1,2}^\tau$ 
with a co-orientation~$\fo_\Ups^c$ so that $\Ups$ 
is a regular hypersurface and
\BE{M12rel_e}\prt\big(\Ups,\fo_\Ups^c\big)=\big(P^+,\fo_{P^+}^c\big)\!\sqcup\!\big(P^-,\fo_{P^-}^c\big),
\quad \deg_1^{\R}\!\big(\Ups,\fo_{\Ups}^c\big)=1, 
\quad \deg_{S_2}\!\!\big(\Ups,\fo_{\Ups}^c\big)=-1\,.\EE
\end{lmm}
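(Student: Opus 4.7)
The plan is to exhibit $\Upsilon$ explicitly in a coordinate chart on $\ov\cM_{1,2}^\tau$, verify the regularity by inspection, and deduce the three identities in~\eref{M12rel_e} from direct cross-ratio computations combined with a careful sign analysis. After gauge-fixing the $\PSL_2\R$-action on $\P^1$ (with $\tau(z)\!=\!\bar{z}$) so that $(x_1,z_1^+,z_1^-)\!=\!(0,i,-i)$, the main stratum $\cM_{1,2}^\tau$ is parametrized by $z\!:=\!z_2^+\!\in\!\P^1\!\setminus\!(\R\!\cup\!\{\infty,\pm i\})$, with $P^{\pm}$ corresponding to the punctures $z\!=\!\pm i$ and $\ov S_2$ to the real axis $\R\!\cup\!\{\infty\}$; the fixing of $z_1^+$ prevents $z_1^\pm$ from colliding, so $\ov S_1$ lies outside this chart entirely. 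I will take $\Upsilon$ to be the semicircle $\{e^{i\theta}:\theta\!\in\![-\pi/2,\pi/2]\}$, a smooth embedded path from $P^-$ to $P^+$ meeting $\ov S_2$ transversally at the single point $z\!=\!1$. Because $z\!=\!1$ is distinct from $x_1\!=\!0$, the bubble attachment point at the $\ov S_2$-intersection is in generic position relative to $x_1$; together with $\ov\Upsilon\!=\!\Upsilon$, the inclusion $\Upsilon\!\subset\!\ov\cM_{1,2;1}^{\tau;\st}$ (since $x_1,z_1^+$ are on the same component throughout), and the fact that $\ff_{1,2;1}^\R(P^\pm)\!\in\!\{0,\infty\}\!=\!\partial\ov\cM_{0,2}^\tau$ (the real base becomes unstable after $x_1$ is dropped and gets contracted), this verifies that $\Upsilon$ is a regular hypersurface in the sense of Section~\ref{cMorient_subs}.

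To fix the co-orientation $\fo_\Upsilon^c$, I will use the orientation $\fo_{1,2}$ on $\cM_{1,2}^\tau$ and the tangent orientation to $\Upsilon$ given by increasing $\theta$; the specification at the endpoints is then matched against the canonical complex orientations $\fo_{P^\pm}^c$ on $\cN\{P^\pm\}\!\approx\!\cL_{\{P^\pm\}}$ from Lemma~\ref{cNGa2_lmm} via the identity~\eref{M12rel_e0}, yielding the first claim of~\eref{M12rel_e}. For $\deg_1^\R$, the cross-ratio formula~\eref{cM02ident_e} specializes on $\Upsilon$ to $\vph_{0,2}(e^{i\theta})\!=\!(1\!+\!\sin\theta)/(1\!-\!\sin\theta)$, a strictly monotone homeomorphism onto $[0,\infty]$; hence $\ff_{1,2;1}^\R|_\Upsilon$ has absolute degree one, and comparing the chosen co-orientation against the fiber orientation $\fo_1^\R$ (the standard direction on $S^1\!\subset\!\partial\D_+^2$ at the unique preimage of a generic point) produces the claimed value $+1$. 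For $\deg_{S_2}$, the single transverse intersection $\Upsilon\!\cap\!\ov S_2\!=\!\{z\!=\!1\}$ contributes a single $\pm 1$, whose sign is determined by comparing $\fo_\Upsilon^c$ at $z\!=\!1$ to the fiber orientation $\fo_{\nod}^\R$ of $\ff_{S_2;1}$ specified in Section~\ref{cMorient_subs}; the resulting sign should be $-1$.

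The main obstacle is the coherent sign matching across all three identities: once $\fo_\Upsilon^c$ is pinned down by the boundary condition at $P^\pm$, the two degrees are forced and must independently agree with $+1$ and $-1$. Verifying this requires carefully unwinding the inductive definitions of $\fo_{k,l}$, $\fo_i^\R$, $\fo_l^+$, and $\fo_{\nod}^\R$ from Section~\ref{cMstrata_subs}, together with the complex orientation on $\cL_\Gamma$ of Lemma~\ref{cNGa2_lmm}, translating them into the explicit chart coordinates, and tracking them at each of the three distinguished points $P^-,\,z\!=\!1,\,P^+$ along $\Upsilon$. If the direct semicircle yields the wrong combination of signs, I would replace $\Upsilon$ with a path homotopic rel endpoints that crosses $\ov S_2$ at a different location or reverses the sense of traversal, which changes $\deg_{S_2}$ and $\deg_1^\R$ in controlled and independent ways until the claimed values are realized.
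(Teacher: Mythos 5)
Your setup is essentially the same as the paper's: you build a meridian from $P^-$ to $P^+$ in the blowdown $\wch\cM_{1,2}^\tau\!\approx\!S^2$ (which is exactly what the $z_2^+$-chart with $(x_1,z_1^+,z_1^-)\!=\!(0,i,-i)$ fixed parametrizes — note $z_2^+\!=\!0$ is the blowdown point~$P_0$, so $\ov S_1$ is not ``outside the chart entirely'' but rather collapsed there, which is why the path must avoid $z_2^+\!=\!0$ as yours does). The identification $P^{\pm}\!\leftrightarrow\!z_2^+\!=\!\pm i$, the identification of $\ov S_2$ with the real axis, and the cross-ratio computation $\vph_{0,2}(e^{i\th})\!=\!(1\!+\!\sin\th)/(1\!-\!\sin\th)$ are all correct and match the paper's meridian picture.

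The genuine gap is that the sign verifications — the actual content of~\eref{M12rel_e} — are asserted rather than carried out. You write ``the resulting sign should be $-1$'' for $\deg_{S_2}$ and ``yielding the first claim'' for the boundary condition without deriving either; you acknowledge this directly (``Verifying this requires carefully unwinding...''). More seriously, the fallback you offer does not exist. Once the co-orientation $\fo_\Ups^c$ is pinned down by the boundary requirement $\prt(\Ups,\fo_\Ups^c)\!=\!(P^+,\fo_{P^+}^c)\!\sqcup\!(P^-,\fo_{P^-}^c)$ (the only residual freedom is the overall sign of $\fo_\Ups^c$, and the boundary condition fixes that too, since $\fo_{P^+}^c\fo_{1,2}\!=\!+1$ while $\fo_{P^-}^c\fo_{1,2}\!=\!-1$), both $\deg_1^{\R}(\Ups,\fo_\Ups^c)$ and $\deg_{S_2}(\Ups,\fo_\Ups^c)$ are forced: they are homotopy invariants of the co-oriented path rel its endpoints, and $\wch\cM_{1,2}^\tau\!\setminus\!\{P_0\}$ is simply connected, so every admissible path is homotopic to your semicircle. ``Crossing $\ov S_2$ at a different location'' changes nothing, and ``reversing the sense of traversal'' flips all three quantities simultaneously, so it cannot repair a mismatch in only one of them. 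The lemma is precisely the nontrivial check that the one legal choice of $(\Ups,\fo_\Ups^c)$ yields the triple $(+,+1,-1)$ rather than, say, $(+,-1,+1)$, and that check has not been done. To close the gap you would need to carry through the computation in the paper's proof: establish $\fo_{1,2}\!=\!\fo_1^{\R}\fo_{0,2}$ from Lemma~\ref{cMorient_lmm}\ref{cMorientR_it}, apply Lemma~\ref{fibrasign_lmm1a}\ref{fibisom_it} at a generic point of $\Ups$ to get $\deg_1^{\R}\!=\!+1$, use Lemma~\ref{cMorient_lmm}\ref{cMorientC_it} to identify $\fo_{1,2}$ with the complex orientation in the $z_2^+$-coordinate and thereby show $\fo_\Ups^c$ is the negative rotation of the node at the $\ov S_2$-crossing, giving $\deg_{S_2}\!=\!-1$.
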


\begin{proof} Since $(P^+,\fo_{P^{\pm};2})$ is a $\pm$-point, \eref{M12rel_e0} gives 
\BE{M12rel_e1}\fo_{P^{\pm}}^c\fo_{1,2}=\pm1.\EE
Let $\wch\cM_{1,2}^{\tau}\!\approx\!S^2$ be the space obtained by contracting $S_1$
to a point~$P_0$.
By \cite[Lemma~5.4]{Penka2}, the orientation~$\fo_{1,2}$ on~$\cM_{1,2}^{\tau}$ extends
over $\wch\cM_{1,2}^{\tau}$; this can also be readily seen from the definitions.
The morphisms~$\ff_{1,2;1}^{\R}$ and~$\ff_{1,2;2}$ descend to smooth maps
$$\ff_{1,2;1}^{\R}\!: \wch\cM_{1,2}^{\tau}\lra\ov\cM_{0,2}^{\tau} \qquad\hbox{and}\qquad
\ff_{1,2;2}\!: \wch\cM_{1,2}^{\tau}\lra\ov\cM_{1,1}^{\tau}\,.$$
We can identify $\wch\cM_{1,2}^{\tau}$ with $S^2\!\subset\!\R^3$ 
and $\ov\cM_{0,2}^{\tau}$ with $[-1,1]$ so that 
$P^{\pm}\!=\!(\pm1,0,0)$ and $\ff_{1,2;1}^{\R}$ is the height function.
The fibers of~$\ff_{1,2;1}^{\R}$ over $\cM_{0,2}^{\tau}$ are then the circles of 
constant latitude.
The orientation~$\fo_1^{\R}$ of the fibers of $\ff_{1,2;1}^{\R}|_{\cM_{1,2}^{\tau}}$
specified in Section~\ref{cMstrata_subs} extends over the equator $\ov{S}_2\!\approx\!S^1$.
By Lemma~\ref{cMorient_lmm}\ref{cMorientR_it}, 
\BE{M12rel_e5}\fo_{1,2}\big|_{\cM_{1,2}^{\tau}}= 
\big(\fo_1^{\R}\fo_{0,2}\big)\big|_{\cM_{1,2}^{\tau}} \,.\EE

Let $\Ups'\!\subset\!\wch\cM_{1,2}^{\tau}$ 
be a meridian running from~$P^-$ to~$P^+$ disjoint from~$P_0$ and
$\fo_{\Ups'}$ be its canonical orientation.
Thus, the restriction
$$\ff_{1,2;1}^{\R}\!: \big(\Ups',\fo_{\Ups'}\big)\lra \big(\ov\cM_{0,2}^{\tau},\fo_{0,2}\big)$$
is an orientation-preserving diffeomorphism.
We take~$\fo_{\Ups'}^c$ to be the orientation of~$\cN \Ups'$ so that the projection
$$\big(\!\ker\nd\ff_{1,2;1}^{\R},\fo_1^{\R}\big)\lra \big(\cN \Ups',\fo_{\Ups'}^c\big)$$
is an orientation-preserving isomorphism.
By~\eref{M12rel_e5} and Lemma~\ref{fibrasign_lmm1a}\ref{fibisom_it},
\BE{M12rel_e7}\fo_{\Ups'}=\fo_{\Ups'}^c\fo_{1,2} \qquad\hbox{and}\qquad
\deg_1^{\R}\!\big(\Ups',\fo_{\Ups'}^c\big)\!\equiv\!
\deg\!\big(\ff_{1,2;1}^{\R}|_{\Ups'},\fo_{\Ups'}^c\fo_1^{\R}\big)=1.\EE
By Lemma~\ref{cMorient_lmm}\ref{cMorientC_it}, 
the orientation~$\fo_{1,2}$ corresponds to the natural orientation of
the complex coordinate~$z_2^+$ with $z_1^+\!=\!0$ and $x_1\!=\!1$ fixed.
Thus, $\fo_{\Ups'}^c$ is the negative rotation in the $z_2^+$-coordinate.
Along~$\ov{S}_2$, it corresponds to the negative rotation of the node.
Thus,  
$$\deg_{S_2}\!\!\big(\Ups',\fo_{\Ups'}^c\big)\!\equiv\!
\deg\!\big(\ff_{0,2;2}|_{\Ups'\cap \ov{S}_2},\fo_{\Ups'}^c\fo_{\nod}^{\R}\big)=-1.$$

Since the outer normal co-orientation $\fo_{\prt \Ups'}^c$ of~$\prt \Ups'$ 
agrees with the restriction of~$\pm\fo_{\Ups'}$ at~$P^{\pm}$,  i.e.
$$\big(\fo_{\prt \Ups'}^c\fo_{\Ups'}\big)\big|_{P^{\pm}}=\pm1,$$
the first statement in~\eref{M12rel_e7} gives 
$$\big(\fo_{\prt \Ups'}^c\fo_{\Ups'}^c\big)\big|_{P^{\pm}}\fo_{1,2}\big|_{P^{\pm}}
=\fo_{\prt \Ups'}^c\big|_{P^{\pm}}\fo_{\Ups'}\big|_{P^{\pm}}=\pm1.$$
Comparing with~\eref{M12rel_e5}, we conclude that 
$$\big(\fo_{\prt \Ups'}^c\fo_{\Ups'}\big)\big|_{P^{\pm}}=\fo_{P^{\pm}}^c\,,$$
i.e.~the first equality in~\eref{M12rel_e} with $\Ups$ replaced by~$\Ups'$ holds as well.

We take $\Ups\!\subset\!\ov\cM_{1,2}^{\tau}$ to be the preimage of~$\Ups'$ under the blowdown map
(which is a diffeomorphism on a neighborhood of~$\Ups$) 
and $\fo_\Ups^c$ to be the pullback of~$\fo_{\Ups'}^c$.
\end{proof}

The moduli space $\ov\cM_{0,3}^{\tau}$ is a 3-manifold with the~boundary
$$\prt\ov\cM_{0,3}^{\tau} =\ov{S}_{23}^{++}\sqcup \ov{S}_{23}^{+-}\sqcup \ov{S}_{23}^{-+}\sqcup \ov{S}_{23}^{--}\,,$$
where 
$$S_{ij}^{\pm\pm}\approx\ov\cM_{0,4}\approx S^2$$ 
is the closure of the open codimension~1 stratum  $\oS_{ij}^{\pm\pm}$ 
of curves consisting of a pair of conjugate spheres with the marked points
$z_i^{\pm}$ and $z_j^{\pm}$  on the same sphere as~$z_1^+$;
see \cite[Fig.~4]{RealEnum}
and the first diagram in Figure~\ref{fig_curveshapes}.
There are four primary codimension~2 strata $\Ga_i^{\pm}$, with $i\!=\!2,3$,
in $\ov\cM_{0,3}^{\tau}$.
The closed interval~$\ov\Ga_i^+$ (resp.~$\ov\Ga_i^-$) is the closure of the open codimension~2 
stratum~$\oGa_i^+$ (resp.~$\oGa_i^-$) of curves consisting of one real sphere and 
a conjugate pair of spheres so that the real sphere carries the marked points~$z_i^{\pm}$
and the decorations~$^{\pm}$ of the marked points on each of the conjugate spheres
are the same (resp.~different); 
see the last pair of diagrams in Figure~\ref{fig_curveshapes}.
Let 
$$\OGa_i^+\!=\!\Ga_i^+\!\cup\!\big(\ov\Ga_i^+\!\cap\!\ov{S}_i) \subset \ov\Ga_i^+$$
be the complement of the endpoints of~$\ov\Ga_i^+$.

\begin{figure}
\begin{center}
\begin{tikzpicture}
\draw (0,0.5) circle [radius=0.5]; 
\draw (0,-0.5) circle [radius=0.5];
\draw [fill] (-0.25,0.7) circle [radius=0.02];
\node [above left] at (-0.2,0.7) {$z_1^+$}; 
\draw [fill] (0.1,0.8) circle [radius=0.02];
\node [above] at (0.1,0.9) {$z_i^\pm$}; 
\draw [fill] (0.36,0.7) circle [radius=0.02];
\node [above right] at (0.3,0.6) {$z_j^\pm$};
\draw [fill] (-0.25,-0.7) circle [radius=0.02];
\node [below left] at (-0.2,-0.7) {$z_1^-$}; 
\draw [fill] (0.1,-0.8) circle [radius=0.02];
\node [below] at (0.1,-0.9) {$z_i^\mp$}; 
\draw [fill] (0.36,-0.7) circle [radius=0.02];
\node [below right] at (0.3,-0.6) {$z_j^\mp$};
\node at (0,-2.4) {$\oS_{ij}^{\pm\pm}$}; 
\end{tikzpicture} \hspace{1cm}
\begin{tikzpicture}
\draw (-0.5,0) circle [radius=0.5];
\draw (0.5,0) circle [radius=0.5];
\draw (0,0) arc [start angle=180, end angle=360, x radius=0.5, y radius=0.2];
\draw [dashed] (0,0) arc [start angle=180, end angle=0, x radius=0.5, y radius=0.1];
\draw (-1,0) arc [start angle=180, end angle=360, x radius=0.5, y radius=0.2];
\draw [dashed] (-1,0) arc [start angle=180, end angle=0, x radius=0.5, y radius=0.1];
\draw [fill] (-0.75,0.3) circle [radius=0.02];
\node [above left] at (-0.7,0.3) {$z_j^+$}; 
\draw [fill] (-0.75,-0.3) circle [radius=0.02];
\node [below left] at (-0.7,-0.3) {$z_j^-$}; 
\draw [fill] (-0.3,0.3) circle [radius=0.02];
\node [above] at (-0.3,0.4) {$z_k^\pm$}; 
\draw [fill] (-0.3,-0.3) circle [radius=0.02];
\node [below] at (-0.3,-0.4) {$z_k^\mp$};
\draw [fill] (0.65,0.28) circle [radius=0.02];
\node [above right] at (0.6,0.28) {$z_i^+$};
\draw [fill] (0.65,-0.28) circle [radius=0.02];
\node [below right] at (0.6,-0.28) {$z_i^-$};
\node at (0,-2.4) {$\oS_i$};
\end{tikzpicture} \hspace{1cm}
\begin{tikzpicture}
\draw (0,0) circle [radius=0.5];
\draw (-0.5,0) arc [start angle=180, end angle=360, x radius=0.5, y radius=0.2];
\draw [dashed] (-0.5,0) arc [start angle=180, end angle=0, x radius=0.5, y radius=0.1];
\draw [fill] (-0.2,0.3) circle [radius=0.02];
\node [left] at (-0.25,0.3) {$z_i^\pm$};
\draw [fill] (-0.2,-0.3) circle [radius=0.02];
\node [left] at (-0.25,-0.4) {$z_i^\mp$};
\draw (0,1) circle [radius=0.5]; 
\draw (0,-1) circle [radius=0.5];
\draw [fill] (-0.2,1.25) circle [radius=0.02];
\node [above] at (-0.3,1.3) {$z_j^+$};
\draw [fill] (-0.2,-1.25) circle [radius=0.02];
\node [below] at (-0.3,-1.3) {$z_j^-$};
\draw [fill] (0.25,1.25) circle [radius=0.02];
\node [above] at (0.33,1.3) {$z_k^+$};
\draw [fill] (0.25,-1.25) circle [radius=0.02];
\node [below] at (0.33,-1.3) {$z_k^-$};
\node at (0, -2.4) {$\oGa_i^+$};
\end{tikzpicture} \hspace{1cm}
\begin{tikzpicture}
\draw (0,0) circle [radius=0.5];
\draw (-0.5,0) arc [start angle=180, end angle=360, x radius=0.5, y radius=0.2];
\draw [dashed] (-0.5,0) arc [start angle=180, end angle=0, x radius=0.5, y radius=0.1];
\draw [fill] (-0.2,0.3) circle [radius=0.02];
\node [left] at (-0.25,0.3) {$z_i^\pm$};
\draw [fill] (-0.2,-0.3) circle [radius=0.02];
\node [left] at (-0.25,-0.4) {$z_i^\mp$};
\draw (0,1) circle [radius=0.5]; 
\draw (0,-1) circle [radius=0.5];
\draw [fill] (-0.2,1.25) circle [radius=0.02];
\node [above] at (-0.3,1.3) {$z_j^+$};
\draw [fill] (-0.2,-1.25) circle [radius=0.02];
\node [below] at (-0.3,-1.3) {$z_j^-$};
\draw [fill] (0.25,1.25) circle [radius=0.02];
\node [above] at (0.33,1.3) {$z_k^-$};
\draw [fill] (0.25,-1.25) circle [radius=0.02];
\node [below] at (0.33,-1.3) {$z_k^+$};
\node at (0, -2.4) {$\oGa_i^-$};
\end{tikzpicture}
\end{center}
\caption{Elements of open codimension 1 and 2 strata of $\ov\cM_{0,3}^\tau$,
with $\{i,j\}\!=\!\{2,3\}$ in the first diagram and 
$\{i,j,k\}\!=\!\{1,2,3\}$ in the other four.}
\label{fig_curveshapes}
\end{figure}

\begin{lmm}\label{M03rel_lmm}
There exist a bordered surface $\Ups\!\subset\!\ov\cM_{0,3}^{\tau}$ 
with a co-orientation~$\fo_\Ups^c$ and a one-dimensional manifold 
$\ga'\!\subset\!\ov\cM_{0,3}^{\tau}$ with a co-orientation~$\fo_{\ga'}^c$
so~that $\Ups$ is transverse to all open strata of~$\ov\cM_{0,3}^{\tau}$
not contained in any~$\ov\Ga_i^{\pm}$ with $i\!=\!2,3$,
$\Ups$ is a regular hypersurface, and
\begin{gather}\label{M03rel_e0}
\prt\big(\Ups,\fo_\Ups^c\big)=\big(\OGa_2^+,\fo_{\Ga_2^+}^c\big)\!\cup\!
\big(\OGa_3^+,-\fo_{\Ga_3^+}^c\big)\!\cup\!
\big(\OGa_2^-,\fo_{\Ga_2^-}^c\big)\!\cup\!
\big(\OGa_3^-,-\fo_{\Ga_3^-}^c\big)\!\cup\!
\big(\ga',\fo_{\ga'}^c\big),\\
\notag
\ga'\subset\prt\ov\cM_{0,3}^{\tau},  
\qquad \deg_{S_2}\!\!\big(\Ups,\fo_{\Ups}^c\big)=1, \qquad 
\deg_{S_3}\!\!\big(\Ups,\fo_{\Ups}^c\big)=-1.
\end{gather}
\end{lmm}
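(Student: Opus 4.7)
\emph{Proof plan.} The argument follows the pattern of Lemma~\ref{M12rel_lmm} but in the three-dimensional setting. Since $k\!=\!0$, by Proposition~1.5 of~\cite{Penka2} the space $\ov\cM_{0,3}^{\tau}$ is an orientable $3$-manifold with boundary $\sqcup_{\pm\pm}\ov{S}_{23}^{\pm\pm}$; I fix on it the orientation~$\fo_{0,3}$. The interior strata of positive codimension are $\oS_1,\oS_2,\oS_3$ (codimension~$1$) and the four intervals $\oGa_i^\pm$ with $i\!=\!2,3$ (codimension~$2$). I construct $\Ups$ in an explicit slice model for the $\PSL_2^\tau$-action. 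Fixing $z_1^+\!=\!0$, $z_1^-\!=\!\infty$, $z_2^+\!\in\!\R_{>0}$ leaves coordinates $r\!\equiv\!z_2^+\!\in\!\R_{>0}$ and $w\!\equiv\!z_3^+\!\in\!\C\setminus(S^1\cup\{0,r,1/r\})$ on the main stratum, with $\oS_2\!=\!\{r\!=\!1\}$, $\oS_3\!=\!\{|w|\!=\!1\}$, and $\oS_1$ off the slice.

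Take $\Ups$ to be the closure in $\ov\cM_{0,3}^{\tau}$ of the $2$-dimensional submanifold $\{(r,w):r\!>\!0,\,w\!\in\!i\R_{>0}\}$, and let $\fo_\Ups^c$ be the co-orientation pointing in the direction of increasing $\tn{Re}(w)$. A rescaling analysis at each of the four ends identifies the limits: as $r\!\to\!0$ (resp.\ $r\!\to\!\i$) the points $z_1^+,z_2^+$ (resp.\ $z_1^+,z_2^-$) collide and bubble off, producing a curve in $\oGa_3^+$ (resp.\ $\oGa_3^-$) parametrized smoothly by~$|w|$; symmetrically, as $w\!\to\!0$ (resp.\ $w\!\to\!i\i$) the pair $z_1^+,z_3^+$ (resp.\ $z_1^-,z_3^+$) collides, producing a curve in $\oGa_2^+$ (resp.\ $\oGa_2^-$) parametrized smoothly by~$r$. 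At each of the four corners, a finer analysis in the scaling ratio $w/r$ shows that the corner blows up into a one-parameter arc on the corresponding boundary component $\ov{S}_{23}^{\pm\pm}$; the union of these four arcs is $\ga'$, with $\fo_{\ga'}^c$ inherited from $\fo_\Ups^c$. That $\Ups$ is regular in the sense of Section~\ref{cMorient_subs} is immediate from the slice being in generic position: $\Ups\cap\ov{S}_1\!=\!\eset$, and $\Ups$ is transverse to $\ov{S}_2$ along $\{r\!=\!1\}$ and to $\ov{S}_3$ along $\{|w|\!=\!1\}$, with $\prt\Ups$ meeting these strata only in codimension~$2$ subsets.

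The signs in~\eref{M03rel_e0} are obtained by comparing, at a generic point of each edge $\OGa_i^\pm$, the boundary co-orientation $\prt\fo_\Ups^c$ with the canonical co-orientation $\fo_{\Ga_i^\pm}^c$ of Lemma~\ref{cNGa2_lmm}. Via~\eref{M12rel_e0} and Lemma~\ref{cMorient_lmm}\ref{cMorientC_it} this reduces to identifying which complex rotation of the pair of smoothing parameters corresponds to the outward normal of the slice at each end. The relative sign between $\Ga_2^\pm$ and $\Ga_3^\pm$ arises because at the $\Ga_3^\pm$-edges the smoothing parameter lies along the real direction parametrized by~$r$, while at the $\Ga_2^\pm$-edges it lies along the imaginary direction parametrized by~$w$, contributing an extra factor of~$-1$; this yields the pattern $+,-,+,-$ asserted in~\eref{M03rel_e0}. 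The degrees are then computed by applying Lemma~\ref{fibrasign_lmm1a}\ref{fibisom_it} to the restriction of~$\ff_{S_i;1}$ to the one-dimensional intersection $\Ups\cap\ov{S}_i$, in direct analogy with the final step of the proof of Lemma~\ref{M12rel_lmm}: the slice crosses $\{r\!=\!1\}$ with the emerging node rotating in the positive complex sense (giving $\deg_{S_2}\!=\!+1$) and crosses $\{|w|\!=\!1\}$ in the opposite sense (giving $\deg_{S_3}\!=\!-1$).

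The principal obstacle is the sign bookkeeping: while the surface~$\Ups$ is easily described and the four arcs of~$\ga'$ follow from a routine gluing analysis, matching the four boundary co-orientations with the canonical $\fo_{\Ga_i^\pm}^c$ requires carefully tracking the twist $(-1)^{l_-^*(\Ga)}$ in $\fo_{\Ga;l^*}$ together with the parity factors of Lemma~\ref{cMorient_lmm}, so that the contributions combine to yield exactly the asserted signs.
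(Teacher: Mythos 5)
Your construction follows the same overall strategy as the paper — an explicit slice model for $\ov\cM_{0,3}^{\tau}$, an explicit bordered surface, an analysis of its boundary components, and an orientation comparison — and the combinatorial limiting analysis (which $\OGa_i^{\pm}$ appears at each edge, and that the four corners blow up to arcs $\ga'$ on the boundary spheres) is correct. Your choice of $\Ups$, cut out by $w\!\equiv\!z_3^+\!\in\!i\R^+$ rather than $w\!\in\!\R^+$, is actually a pleasant minor variation: it is the image of the paper's $\wh\Ups^+$ under the ambient orientation-preserving rotation $z_3\!\mapsto\!iz_3$ of both charts~\eref{M03rel_e7}, which preserves $\wch{S}_1,\wch{S}_2,\wch{S}_3$ and commutes with $\ff_{0,3;2}$ and $\ff_{0,3;3}$, but moves the surface off the blowup center $P_0\!=\!(1,1)$. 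This eliminates the small deformation $\Ups'\!\leadsto\!\Ups''$ the paper performs at the end, and is in the spirit of Remark~\ref{M03rel_rmk1}. If you observed this explicitly, you could transport all the paper's orientation computations along that diffeomorphism and be done.

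However, you do not do this, and as written the proof has a genuine gap precisely where you flag ``the principal obstacle": the sign computations are asserted, not carried out. Three specific issues. First, the orientation $\wch\fo_{0,3}$ that Lemma~\ref{cNGa2_lmm} and~\eref{M12rel_e0} are referenced against is the \emph{opposite} of the product chart orientation $\fo_{\R^+}\!\times\!\fo_{\P^1}$; this overall minus sign is essential and never appears in your argument. Second, your explanation of the relative sign between $\Ga_2^{\pm}$ and $\Ga_3^{\pm}$ --- that at the $\Ga_3^{\pm}$-edges the smoothing parameter ``lies along the real direction parametrized by $r$" versus the imaginary direction for $\Ga_2^{\pm}$ --- does not actually identify a sign; the true source of the asymmetry is the overlap map between the two charts~\eref{M03rel_e7}, which sends $(r_2,r_3e^{\fI\th})\!\mapsto\!(r_2e^{-\fI\th},r_3)$ and thereby reverses the angular coordinate, so that the co-orientation on $\cN\Ups$ is given by the \emph{negative} rotation of $z_3$ in the first chart but the \emph{positive} rotation of $z_2$ in the second. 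Third, the degree claims $\deg_{S_2}\!=\!+1$ and $\deg_{S_3}\!=\!-1$ are quoted, not derived: the computation requires comparing the vertical orientation $\fo_{\nod}^{\R}$ on $\ker\nd\ff_{0,3;i}|_{\wch S_i}$ against $\fo_\Ups^c|_{\wch\Ga_i}$ via Lemma~\ref{fibrasign_lmm1a}\ref{fibisom_it}, which produces the asymmetric $(-1)^i$ precisely because of the chart-overlap reversal. Without carrying out at least one of these orientation comparisons (or invoking the rotation-equivariance argument above), the claimed signs in~\eref{M03rel_e0} and the two degrees remain unproved.
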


\begin{proof}
For $i\!=\!2,3$, $z_i^+$ moves in $(\oGa_i^+,\fo_{\Ga_i^+;3})$ (resp.~$(\oGa_i^-,\fo_{\Ga_i^-;3})$) 
from the node separating the sphere carrying~$z_1^-$ (resp.~$z_1^+$) to the other node. 
Each closed interval~$\ov\Ga_i^{\pm}$ intersects~$\ov{S}_i$ transversally 
at one~point~$P_i^{\pm}$ and does not intersect~$\ov{S}_j$ for $j\!=\!1,2,3$ with $j\!\neq\!i$.
It intersects $\prt\ov\cM_{0,3}^\tau$ at its endpoints;
we denote the starting point by~$P_i^{\pm-}$ and the ending point by~$P_i^{\pm+}$.
By \cite[Section~3]{RealEnum}, the orientation $\fo_{0,3}\!=\!\fo_{0,3;3}$ 
on $\cM_{0,3}^{\tau}$ extends over~$\ov\cM_{0,3}^{\tau}$.

By \cite[Remark~3.5]{RealEnum}, $\ov\cM_{0,3}^{\tau}$ is the blowup of
a bordered manifold $\wch\cM_{0,3}^{\tau}$ at a point~$P_0$ with the exceptional 
divisor~$\ov{S}_1$. 
Denote~by 
$$p:\ov\cM_{0,3}^{\tau}\lra \wch\cM_{0,3}^{\tau}$$ 
the blowdown map. 
The morphisms~$\ff_{0,3;2}$ and~$\ff_{0,3;3}$ descend to smooth maps
\BE{M03rel_e3}\ff_{0,3;2}\!: \wch\cM_{0,3}^{\tau}\lra\ov\cM_{0,2}^{\tau} 
\qquad\hbox{and}\qquad
\ff_{0,3;3}\!: \wch\cM_{0,3}^{\tau}\lra\ov\cM_{0,2}^{\tau}\,.\EE
Since $\ov{S}_1$ is disjoint from the four spheres of~$\prt\ov\cM_{0,3}^{\tau}$ and
the four intervals~$\ov\Ga_i^{\pm}$ with $i\!=\!2,3$,  $p$ is a diffeomorphism on 
neighborhoods of these spaces.
We denote the images of these intervals and the twelve points $P_i^{\pm},P_i^{\pm\pm}$
on them under~$p$ in the same~way.
The spaces 
$$\wch{S}_2\!\equiv\!p(\ov{S}_2)\approx\wch\cM_{1,2}^{\tau}\!\approx\!S^2
\qquad\hbox{and}\qquad
\wch{S}_3\!\equiv\!p(\ov{S}_3)\approx\wch\cM_{1,2}^{\tau}$$
are the fibers of~$\ff_{0,3;3}$ and~$\ff_{0,3;2}$, respectively, 
over the curve consisting of two real
components, which corresponds to $1\!\in\![0,\i]$ 
under the identification~$\vph_{0,2}$ in~\eref{cM02ident_e}.

Setting $(z_1^+,z_1^-)\!=\!(0,\i)$, we obtain a natural identification
\begin{gather*}
\wch\cM_{0,3}^{\tau}\!-\!\prt\wch\cM_{0,3}^{\tau}
\approx\big\{\big((z_2^+,z_2^-),(z_3^+,z_3^-)\big)\!\in\!(\P^1)^4\!:
z_i^+\!=\!\tau(z_i^-),\,(z_2^{\pm},z_3^{\pm})\!\neq\!(0,0)\big\}\!\big/\!\!\sim,\\
\quad \big((z_2^+,z_2^-),(z_3^+,z_3^-)\big)\!\sim\!\big((zz_2^+,zz_2^-),(zz_3^+,zz_3^-)\big)
\quad\forall\,z\!\in\!S^1.
\end{gather*}
The condition $z_i^+\!=\!\tau(z_i^-)$ implies that the points~$z_i^+$ and~$z_i^-$
lie on a great arc through the poles $z_1^+\!\equiv\!0$ and $z_1^-\!\equiv\!\i$
(or~lie at~$z_1^{\pm}$).
The blowup point~$P_0$ in this identification corresponds to the point $[(1,1),(1,1)]$.
The projections~\eref{M03rel_e3} in this identification  are given~by 
$$\ff_{0,3;i}\big([(z_2^+,z_2^-),(z_3^+,z_3^-)]\big)=\big[(0,\i),(z_{5-i}^+,z_{5-i}^-)\big].$$
In particular, the fiber of~$\ff_{0,3;i}$ over a point of 
$[(0,\i),(z_{5-i}^+,z_{5-i}^-)]$ of $\ov\cM_{0,2}^{\tau}\!-\!\prt\ov\cM_{0,2}^{\tau}$
can be identified via $z_i^+$ with~$\P^1$ by choosing $z_{5-i}^+\!\in\!\R^+$.

The space $\wch\cM_{0,3}^{\tau}\!-\!\prt\wch\cM_{0,3}^{\tau}$ is covered by two charts
\BE{M03rel_e7}\begin{aligned}
\R^+\!\times\!\P^1&\lra
\wch\cM_{0,3}^{\tau}\!-\!\ff_{0,3;2}^{-1}(\prt\ov\cM_{0,2}^{\tau}), &
(r_2,z_3)&\lra \big[(r_2,1/r_2),(z_3,1/\ov{z_3})\big],\\
\P^1\!\times\!\R^+&\lra
\wch\cM_{0,3}^{\tau}\!-\!\ff_{0,3;3}^{-1}(\prt\ov\cM_{0,2}^{\tau}), &
(z_2,r_3)&\lra \big[(z_2,1/\ov{z_2}),(r_3,1/r_3)\big].
\end{aligned}\EE
In these charts,
\BE{M03rel_e8}\begin{aligned}
\vph_{0,2}\big(\ff_{0,3;3}(r_2,z_3)\big)&=1/r_2^2, &\qquad 
\wch{S}_2&=\big\{(r_2,z_3)\!\in\!\R^+\!\times\!\P^1\!:r_2\!=\!1\big\},\\
\vph_{0,2}\big(\ff_{0,3;2}(z_2,r_3)\big)&=1/r_3^2, &\qquad
\wch{S}_3&=\big\{(z_2,r_3)\!\in\!\P^1\!\times\!\R^+\!:r_3\!=\!1\big\}.
\end{aligned}\EE
The overlap map between the two charts,
$$\R^+\!\times\!\C^*\lra \C^*\!\times\!\R^+, \qquad
\big(r_2,r_3\tne^{\fI\th}\big)\lra \big(r_2\tne^{-\fI\th},r_3\big),$$
is orientation-preserving with respect to the standard orientations 
$\fo_{\R^+}$ on~$\R^+$ and~$\fo_{\P^1}$ on~$\P^1$. 
We take $\wch\fo_{0,3}$ to be the orientation on~$\wch\cM_{0,3}^{\tau}$ 
{\it opposite} to the orientation determined by~$\fo_{\R^+}$ and~$\fo_{\P^1}$
via the two charts in~\eref{M03rel_e7}.
Since the~map
\BE{M03rel_e8a}\big(\R^+,\fo_{\R^+}\big)\lra\big(\R^+,-\fo_{\R^+}\big), \qquad 
r_2\lra \vph_{0,2}\big(\ff_{0,3;3}(r_2,z_3)\big),\EE
is orientation-preserving for each $z_3^+\!\in\!\P^1$ fixed,
Lemma~\ref{cMorient_lmm}\ref{cMorientC_it} and \eref{M12rel_e0} give
\BE{M03rel_e8b} 
\fo_{0,3}\big|_{\ov\cM_{0,3}^{\tau}-\ov{S}_1}=p^*\wch\fo_{0,3}\big|_{\ov\cM_{0,3}^{\tau}-\ov{S}_1},
\qquad
\fo_{\Ga_i^{\pm};3}=\fo_{\Ga_i^{\pm}}^c\wch\fo_{0,3}\,.\EE

The moduli space $\ov\cM_{0,3}^{\tau}$ is a submanifold of $\ov\cM_{0,6}$.
By \cite[Appendix~D.4,5]{MS}, the four cross-ratios
$$\CR_{\pm\pm}^{\tau}\!:\cM_{0,3}^{\tau}\lra\P^1, \quad
\CR_{\pm\pm}^{\tau}\big(\big[(z_i^+,z_i^-)_{i\in[3]}\big]\big)= 
\frac{z_2^{\pm}\!-\!z_1^-}{z_3^{\pm}\!-\!z_1^-}:
\frac{z_2^{\pm}\!-\!z_1^+}{z_3^{\pm}\!-\!z_1^+}\,,$$
extend over $\ov\cM_{0,3}^{\tau}$ and descend to smooth maps from $\wch\cM_{0,3}^{\tau}$.
The subspace 
$$\wt\Ups'\subset\wch\cM_{0,3}^{\tau}\!-\!\big\{P_2^{\pm\pm},P_3^{\pm\pm}\big\}$$ 
where all four cross-ratios  take values~in
\BE{M03rel_e8d} \R\P^1\equiv [-\i,\i]/-\!\i\!\sim\!\i\EE
is an orientable surface, as explained in the next paragraph.
The boundary of~$\wt\Ups'$ consists of the complement of two points
in a circle on each boundary sphere of~$\prt\wch\cM_{0,3}^{\tau}$.

The intersections of~$\wt{\Ups}'$ with the charts~\eref{M03rel_e7} are given~by
\BE{M03rel_e9}\begin{aligned}
\R^+\!\times\!\R\P^1&\lra
\wt{\Ups}'\!-\!\ff_{0,3;2}^{-1}(\prt\ov\cM_{0,2}^{\tau}), &
(r_2,r_3)&\lra \big[(r_2,1/r_2),(r_3,1/r_3)\big],\\
\R\P^1\!\times\!\R^+&\lra
\wt{\Ups}'\!-\!\ff_{0,3;3}^{-1}(\prt\ov\cM_{0,2}^{\tau}), &
(r_2,r_3)&\lra \big[(r_2,1/r_2),(r_3,1/r_3)\big].
\end{aligned}\EE
An element $[(z_2^+,z_2^-),(z_3^+,z_3^-)]$ of
$\wch\cM_{0,3}^{\tau}\!-\!\prt\wch\cM_{0,3}^{\tau}$
belongs to~$\wt{\Ups}'$ if and only if all four points \hbox{$z_i^{\pm}\!\in\!\P^1$} with $i\!=\!2,3$
lie on a great circle through~$z_1^-$ and~$z_1^+$.
The structure of~$\wt{\Ups}'$ along $\prt\wch\cM_{0,3}^{\tau}$ is described by 
the local coordinates of \cite[Remark~3.5]{RealEnum} with $z\!\in\!\R$.
The overlap map between the charts~\eref{M03rel_e9},
$$\R^+\!\times\!\R^*\lra \R^*\!\times\!\R^+, \qquad
(r_2,r_3)\lra \begin{cases} (r_2,r_3),&\hbox{if}~r_3\!\in\!\R^+;\\
(-r_2,-r_3),&\hbox{if}~r_3\!\in\!\R^-;\end{cases}$$
is orientation-preserving with respect to the orientation $\fo_{\R^+}$ on~$\R^+$
and the orientation~$\fo_{\R\P^1}$ on~$\R\P^1$ induced by the standard orientation
of~$[-\i,\i]$ via~\eref{M03rel_e8d}.
We take $\fo_{\wt{\Ups}'}$  to be the orientation on~$\wt{\Ups}'$ 
determined by~$\fo_{\R^+}$ and~$\fo_{\R\P^1}$ via the two charts in~\eref{M03rel_e9}.

The surface~$\wt\Ups'$ contains the four open intervals 
$$\OGa_i^{\pm} \equiv \ov\Ga_i^{\pm}\!-\!\big\{P_i^{\pm+},P_i^{\pm-}\big\}$$ 
with $i\!=\!2,3$;
the closures of these intervals connect the components of the closure of~$\prt\wt{\Ups}'$.
In the two charts~\eref{M03rel_e7}, 
\begin{alignat*}{2}
\OGa_2^+&=\big\{(r_2,z_3)\!\in\!\R^+\!\times\!\P^1\!:z_3\!=\!0\big\},
&\quad 
\OGa_2^-&=\big\{(r_2,z_3)\!\in\!\R^+\!\times\!\P^1\!:z_3\!=\!\i\big\},\\
\OGa_3^+&=\big\{(z_2,r_3)\!\in\!\P^1\!\times\!\R^+\!:z_2\!=\!0\big\},
&\quad 
\OGa_3^-&=\big\{(z_2,r_3)\!\in\!\P^1\!\times\!\R^+\!:z_2\!=\!\i\big\}.
\end{alignat*}
The cut~$\wh{\Ups}'$ of~$\wt{\Ups}'$ along the four open intervals has two components,
$\wh{\Ups}^+$ and~$\wh{\Ups}^-$.
They are distinguished by whether $\CR_{23}^{++}(\cC)$ lies in~$\R^+$ or~$\R^-$
for the elements~$\cC$ of $\wh{\Ups}'\!-\!\prt\wh{\Ups}'$, i.e.~whether 
the points~$z_2^{\pm}$ lie on the same great arc through~$z_1^-$ and~$z_1^+$
as the points~$z_3^{\pm}$ or on the opposite arc.

Let $\Ups'\!=\!\wh{\Ups}^+$ and $\ga'\!=\!\Ups'\!\cap\!\prt\wt{\Ups}'$.
The former is a surface with boundary
$$\prt \Ups'=\OGa_2^+\!\cup\!\OGa_3^+\!\cup\!\OGa_2^-
\!\cup\!\OGa_3^-\!\cup\!\ga'\,.$$
By~\eref{M03rel_e8} and~\eref{M03rel_e9}, 
this surface intersects~$\wch{S}_2$ and~$\wch{S}_3$ transversely along
the closed line segments given~by
\begin{equation*}\begin{split}
\wch\Ga_2&\equiv\!\Ups'\!\cap\!\wch{S}_2
=\big\{(r_2,z_3)\!\in\!\R^+\!\times\!\P^1\!:r_2\!=\!1,\,z_3\!\in\![0,\i]\big\},\\
\wch\Ga_3&\equiv\!\Ups'\!\cap\!\wch{S}_3
=\big\{(z_2,r_3)\!\in\!\P^1\!\times\!\R^+\!:r_3\!=\!1,\,z_2\!\in\![0,\i]\big\}
\end{split}\end{equation*}
in the charts~\eref{M03rel_e7}.
For $i\!=\!2,3$, let $\fo_{\wch\Ga_i}$ be the {\it opposite} of
the orientation on~$\wch\Ga_i$ given by the $r_{5-i}\!\equiv\!|z_{5-i}|$ coordinate.
The restriction 
\BE{M03rel_e15}\ff_{0,3;i}\!:\big(\wch\Ga_i,\fo_{\wch\Ga_i}\big)\lra  
\big(\ov\cM_{0,2}^{\tau},\fo_{0,2}\big)\EE
is then an orientation-preserving diffeomorphism
(because \eref{M03rel_e8a} is orientation-preserving).

\begin{figure}
\begin{center}
\begin{tikzpicture}
\draw [fill] (4,5) circle [radius=0.03];
\draw (4,5) -- (2,5);
\draw [->] (-4,5) -- (2,5);
\draw [fill] (-4,5) circle [radius=0.03];
\draw [dashed] (-4,5) to [out=-50, in=70] (-4,4);
\node at (-4.6,4.5) {$S_{23}^{-+}$};
\draw [fill] (-4,4) circle [radius=0.03];
\draw (-4,4) -- (-4,2);
\draw [->] (-4,0.5) -- (-4,2);
\draw [fill] (-4,0.5) circle [radius=0.03];
\draw [dashed] (-4,0.5) to [out=-50,in=70] (-4,-0.5);
\node at (-4.6,0) {$S_{23}^{--}$};
\draw [fill] (-4,-0.5) circle [radius=0.03];
\draw (-4,-0.5) -- (2,-0.5);
\draw [->] (4,-0.5) -- (2,-0.5); 
\draw [fill] (4,-0.5) circle [radius=0.03];
\draw [dashed] (4,-0.5) to [out=70,in=-50] (4,0.5);
\node at (4.8,0) {$S_{23}^{+-}$};
\draw [fill] (4,0.5) circle [radius=0.03];
\draw (4,0.5) -- (4,2);
\draw [->] (4,4) -- (4,2);
\draw [fill] (4,4) circle [radius=0.03];
\draw [dashed] (4,4) to [out=70,in=-50] (4,5);
\node at (4.8,4.5) {$S_{23}^{++}$};

\draw [fill] (0,5) circle [radius=0.03];
\draw (-1,7) -- (1,5.5) -- (1,-2.5) -- (-1,-1) -- (-1,7);
\draw [fill] (0,-0.5) circle [radius=0.03];
\draw [dashed,->] (0,-0.5) -- (0,2);
\draw [dashed] (0,2) -- (0,5);

\draw [->] (0,5) -- (0,5.5);
\draw [->] (0,5) -- (0.4,4.7);

\node at (-1.9,5.3) {$\Ga_2^+$};
\node at (2,5.3) {$\fo_{\Ga_2^+;3}$};
\node at (-4.3,3.5) {$\Ga_3^-$};
\node at (-4.7,2) {$-\fo_{\Ga_3^-;3}$};
\node at (-2.2,-0.8) {$\Ga_2^-$};
\node at (2,-0.8) {$\fo_{\Ga_2^-;3}$};
\node at (4.4,3.2) {$\Ga_3^+$};
\node at (4.68,2) {$-\fo_{\Ga_3^+;3}$};

\node at (-0.3,1.2) {$\wch\Ga_2$};
\node at (-0.3,5.5) {$\fo_{\wch\Ga_2}$};
\node at (0.5,4.5) {$\fo_{\Ups'}^c$};
\node at (-0.3,4.7) {$P_2^+$};
\node at (-0.3,-0.8) {$P_2^-$};

\node at (-2.5,3) {$\Ups'$};
\node at (0.5,-1.5) {$\wch{S}_2$};

\draw [fill] (4,-3) circle [radius=0.03];
\node [below] at (4,-3) {$\infty$};
\draw [fill] (0,-3) circle [radius=0.03];
\node [below] at (0,-3) {1};
\draw [fill] (-4,-3) circle [radius=0.03];
\node [below] at (-4,-3) {0};
\draw (4,-3) -- (-4,-3);

\node at (-6.5,2.5) {$\wch\cM^\tau_{0,3}$};
\node at (-6.5,-3) {$\ov\cM^\tau_{0,2}$};
\draw [->] (-6.5,1.7) -- (-6.5,-2.2);
\node [left] at (-6.5,0) {$\ff_{0,3;3}$};
\end{tikzpicture}
\end{center}
\caption{The surfaces $\Ups'$ and $\wch{S}_2$ in $\wch\cM^\tau_{0,3}$;
the dotted arcs indicate the four components of $\ga'\!\subset\!\prt\Ups'$.}
\label{UpsS_fig}
\end{figure}

We denote by $\fo_{\Ups'}$ and $\fo_{\Ups'}^c$ the restrictions of the orientation~$\fo_{\wt{\Ups}'}$ 
and the co-orientation~$\fo_{\wt{\Ups}'}^c$ to~$\Ups'$,
by~$\fo_{\ga'}$ the boundary orientation on~$\ga'$ induced by~$\fo_{\Ups'}$,
and by $\fo_{\ga'}^c$ the orientation on $\cN\ga'$ determined by~$\wch\fo_{0,3}$ 
and~$\fo_{\ga'}$.
Thus,
\BE{M03rel_e16} \fo_{\Ups'}=\fo_{\Ups'}^c\wch\fo_{0,3}, \qquad
\fo_{\ga'}=\big(\fo_{\prt\Ups'}^c\fo_{\Ups'}\big)\big|_{\ga'}
=\fo_{\ga'}^c\wch\fo_{0,3}\,.\EE
At the point $P_2^+\!\in\!\ov\Ga_2^+,\wch\Ga_2$, the orientation~$\fo_{\Ga_2^+;3}$ on~$\ov\Ga_2^+$
is the opposite of the orientation given by the $r_2$-coordinate
(because $z_2^+$ moves from $z_1^-\!=\!\i$ to $z_1^+\!=\!0$);
see Figure~\ref{UpsS_fig}.
Since the natural isomorphisms 
$$\big(T_{P_2^+}\wch\Ga_2,\fo_{\wch\Ga_2}\big)\lra 
\big(\cN_{\Ups'}\!\prt\Ups'|_{P_2^+},\fo_{\prt\Ups'}^c\big)
\quad\hbox{and}\quad
\big(T_{P_2^+}\ov\Ga_2^+,\fo_{\Ga_2^+;3}\big)\!\oplus\!\big(T_{P_2^+}\wch\Ga_2,\fo_{\wch\Ga_2}\big)
\lra \big(T_{P_2^+}\Ups',\fo_{\Ups'}\big)$$
are orientation-preserving, 
$$\prt\fo_{\Ups'}|_{P_2^+}\!\equiv\!\big(\fo_{\prt\Ups'}^c\fo_{\Ups'}\big)\big|_{P_2^+}
=\fo_{\Ga_2^+;3}|_{P_2^+}.$$
Since the right-hand side of 
$$\prt\big(\Ups',\fo_{\Ups'}\big)=\big(\OGa_2^+,\fo_{\Ga_2^+;3}\big)\!\cup\!
\big(\OGa_3^+,-\fo_{\Ga_3^+;3}\big)\!\cup\!
\big(\OGa_2^-,\fo_{\Ga_2^-;3}\big)\!\cup\!
\big(\OGa_3^-,-\fo_{\Ga_3^-;3}\big)\!\cup\!\big(\ga',\fo_{\ga'}\big)$$
is an oriented loop and the equality above respects the orientations at~$P_2^+$,
it follows that this equality respects the orientations everywhere.
Combining it with the second equality in~\eref{M03rel_e8b} and 
the first and last equalities in~\eref{M03rel_e16}, 
we obtain~\eref{M03rel_e0} with~$\Ups$ replaced by~$\Ups'$.

We now compute the degree 
\BE{M03rel_e17}  
\deg\!\big(\ff_{0,3;i}|_{\wch\Ga_i},\fo_{\Ups'}^c|_{\wch\Ga_i}\fo_{\nod}^{\R}\big)
\in\Z\EE
of $\ff_{0,3;i}|_{\wch\Ga_i}$ with respect to the co-orientation
$\fo_{\Ups'}^c|_{\wch\Ga_i}$ on~$\wch\Ga_i$ in~$\wch{S}_i$
and the natural orientation~$\fo_{\nod}^{\R}$ of the fibers~of 
$$\ff_{0,3;i}|_{\wch{S}_i}\!\approx\!\ff_{1,2;1}^{\R}\!: 
\wch{S}_i\!\approx\!\wch\cM_{1,2}^{\tau}\lra\ov\cM_{0,2}^{\tau}$$
over~$\cM_{0,2}^{\tau}$ as in the proof of Lemma~\ref{M12rel_lmm}.
By Lemma~\ref{cMorient_lmm}\ref{cMorientC_it},
the orientation~$\fo_{1,2}$ on \hbox{$\wch{S}_i\!\cap\!\ff_{0,3;i}^{-1}(\cM_{0,2}^{\tau})$}
is given by the $z_{5-i}$-coordinate under the corresponding identification in~\eref{M03rel_e8}.
Since the diffeomorphism~\eref{M03rel_e15} is orientation-preserving,
it follows that the vertical orientation~$\fo_{\nod}^{\R}$ on~$\wch{S}_i$ is 
given by the {\it negative} rotation in the $z_{5-i}$-coordinate. 
Since the charts~\eref{M03rel_e7} are orientation-reversing with respect to~$\wch\fo_{0,3}$
and the charts~\eref{M03rel_e9} are orientation-preserving with respect to~$\fo_{\wt{\Ups}'}$,
the orientation~$\fo_{\Ups'}^c$ on $\cN \Ups'$ is given by 
the {\it negative} rotation in the $z_3$-coordinate in the first chart in~\eref{M03rel_e7} and
the {\it positive} rotation in the $z_2$-coordinate in the second chart in~\eref{M03rel_e7}. 
Thus, the projection
$$\big(\!\ker\nd\big\{\ff_{0,3;i}|_{\wch{S}_i}\big\},
\fo_{\nod}^{\R}\big)\big|_{\wch\Ga_i-\{P_i^{\pm}\}} \lra 
\big(\cN \Ups',(-1)^i\fo_{\Ups'}^c\big)\big|_{\wch\Ga_i-\{P_i^{\pm}\}}$$
is an orientation-preserving isomorphism and
the number in~\eref{M03rel_e17} is~$(-1)^i$; see Lemma~\ref{fibrasign_lmm1a}\ref{fibisom_it}.

The surface $\Ups'$ is transverse to $\wch{S}_1\!\cap\!\wch{S}_2$, but passes through $P_0$.
Let $(\Ups'',\fo_{\Ups''}^c)$ be a co-oriented surface in $\wch\cM_{0,3}^{\tau}$
obtained from $(\Ups',\fo_{\Ups'}^c)$ by a small deformation around~$P_0$ so that 
$\Ups''$ is still transverse to  $\wch{S}_1\!\cap\!\wch{S}_2$ and $P_0\!\not\in\!\Ups''$. 
We take $\Ups\!\subset\!\ov\cM_{0,3}^{\tau}$ to be the preimage of~$\Ups''$ under 
the blowdown map~$p$ (which is a diffeomorphism on a neighborhood of~$\Ups$) 
and $\fo_\Ups^c\!=\!p^*\fo_{\Ups''}^c$.
\end{proof}

\begin{rmk}\label{M03rel_rmk1}
We could have taken $\Ups'\!=\!\wh{\Ups}^-$ in the proof of Lemma~\ref{M03rel_lmm}
to avoid~$P_0$, but with $\fo_{\Ups'}\!=\!-\fo_{\wt{\Ups}'}|_{\Ups'}$.
\end{rmk}

\section{Real GW-invariants}
\label{RealGWs_sec}

\subsection{Moduli spaces of stable maps}
\label{MapSpaces_subs}

Let $(X,\om,\phi)$ be a real symplectic manifold and $k,l\!\in\!\Z^{\ge0}$ with $k\!+\!2l\!\ge\!3$.
We denote by $\cH_{k,l}^{\om,\phi}$ the space of pairs $(J,\nu)$ 
consisting of $J\!\in\!\cJ_{\om}^{\phi}$
and a real perturbation~$\nu$ of the $\dbar_J$-equation as in \cite[Section~2]{Penka2}.
For $(J,\nu)\!\in\!\cH_{k,l}^{\om,\phi}$,
a \sf{real genus~0 $(J,\nu)$-map with $k$~real marked points 
and $l$~conjugate pairs of marked points} is a tuple
\BE{udfn_e} \u=\big(u\!:\Si\!\lra\!X,(x_i)_{i\in[k]},(z_i^+,z_i^-)_{i\in[l]},\si\big)\EE
such that 
\BE{cCdfn_e}\cC\equiv \big(\Si,(x_i)_{i\in[k]},(z_i^+,z_i^-)_{i\in[l]},\si\big)\EE
is a real genus~0 nodal curve with complex structure~$\fj$, 
$k$~real marked points, and $l$~conjugate pairs of marked points
and $u$ is a smooth map satisfying
$$u\!\circ\!\si=\phi\!\circ\!u, \qquad
\dbar_Ju|_z\!\equiv\!\frac12\big(\nd_zu\!+\!J\!\circ\!\nd_zu\!\circ\!\fj\big)
=\nu\big(z,u(z)\big)~~\forall~z\!\in\!\Si.$$
Such a map is called \sf{simple} if  the restriction 
of~$u$ to each unstable irreducible component of the domain is simple (i.e.~not multiply covered)
and no two such restrictions have the same image.

For $B\!\in\!H_2(X)$ and $(J,\nu)\!\in\!\cH_{k,l}^{\om,\phi}$,
we denote by $\ov\M_{k,l}(B;J,\nu)$ the moduli space of the equivalence classes
of stable real genus~0 degree~$B$ $(J,\nu)$-maps 
with $k$~real marked points and $l$~conjugate pairs of marked points that
take the fixed locus of the domain to 
the chosen topological component~$X^{\phi}$ of the fixed locus of~$\phi$.
modulo the reparametrizations. 
Let 
$$\ov\M_{k,l}^*(B;J,\nu)\subset\ov\M_{k,l}(B;J,\nu) \quad\hbox{and}\quad 
\M_{k,l}(B;J,\nu)\subset\ov\M_{k,l}^*(B;J,\nu)$$
be the subspace of simple maps and the (\sf{virtually}) \sf{main stratum}, 
i.e.~the subspace consisting of maps as in~\eref{udfn_e} from smooth domains~$\Si$, respectively.

The forgetful morphisms 
$$\ff_{k+1,l;i}^{\R}\!:\ov\cM_{k+1,l}^{\tau}\lra\ov\cM_{k,l}^{\tau},~~i\!\in\![k\!+\!1],
\quad\hbox{and}\quad
\ff_{k,l+1;i}\!:\ov\cM_{k,l+1}^{\tau}\lra\ov\cM_{k,l}^{\tau}, ~~i\!\in\![l\!+\!1],$$
induce maps
$$\ff_{k+1,l;i}^{\R\,*}\!:\cH_{k,l}^{\om,\phi}\!\lra\!\cH_{k+1,l}^{\om,\phi}
\qquad\hbox{and}\qquad
\ff_{k,l+1;i}^{\,*}\!:\cH_{k,l}^{\om,\phi}\!\lra\!\cH_{k,l+1}^{\om,\phi},$$
respectively.
For each $\nu\!\in\!\cH_{k,l}^{\om,\phi}$, we also denote by 
\BE{ffMdfn_e}\begin{split}
\ff_{k+1,l;i}^{\R}\!:\ov\M_{k+1,l}(B;J,\ff_{k+1,l;i}^{\R\,*}\nu)&\lra\ov\M_{k,l}(B;J,\nu),\\
\ff_{k,l+1;i}\!:\ov\M_{k,l+1}(B;J,\ff_{k,l+1;i}^{\,*}\nu)&\lra\ov\M_{k,l}(B;J,\nu)
\end{split}\EE
the forgetful morphisms dropping the $i$-th real marked point and
the $i$-th conjugate pair of marked points, respectively.
The restriction of the second morphism in~\eref{ffMdfn_e} to 
$\M_{k,l+1}(B;J,\ff_{k,l+1;i}^{\,*}\nu)$ is a dense open subset of a $\P^1$-fiber bundle.
We denote by~$\fo_i^+$ the relative orientation of this restriction induced by
the position of the marked point~$z_i^+$.

For $\fc\!\in\!\Z^+$, a (\sf{virtually}) \sf{codimension~$\fc$ stratum}~$\cS$ 
of $\ov\M_{k,l}(B;J,\nu)$ is a subspace of maps from domains~$\Si$ with precisely~$\fc$ 
nodes and thus with $\fc\!+\!1$ irreducible components isomorphic to~$\P^1$. 
It is characterized~by  the distributions~of
\BEnum{$\bu$}

\item the degree~$B$ of the map components~$u$ of its elements~$\u$ 
as in~\eref{udfn_e},

\item the $k$ real marked points, and

\item the $l$ conjugate pairs of marked points 
\EEnum
between the irreducible components of~$\Si$.
There are two types of codimension~1 strata distinguished by whether
the fixed locus~$\Si^{\si}$ of~$(\Si,\si)$ consists of a single point or a wedge of two circles.
These two types are known as \sf{sphere bubbling} and \sf{disk bubbling},
respectively.
If $k$ and $B$ satisfy~\eref{BKcond_e}, as is the case if~\eref{dimcond_e} holds,
then the fixed locus~$\Si^{\si}$ of the domain~$(\Si,\si)$ of every element~\eref{udfn_e} of 
$\ov\M_{k,l}(B;J,\nu)$ is a circle or a tree of two or more circles.
In this case, sphere bubbling does not~occur.

Suppose $l\!\in\!\Z^+$ and $\cS$ is a codimension~1 disk bubbling stratum of 
$\ov\M_{k,l}(B;J,\nu)$.
We~define 
$$K_1(\cS),K_2(\cS)\subset[k], \quad L_1(\cS),L_2(\cS)\subset[l],
\quad k_1(\cS),k_2(\cS),l_1(\cS),l_2(\cS)\in\Z^{\ge0}$$
analogously to $K_r(S),L_r(S),k_r(S),l_r(S)$ in Section~\ref{cMorient_subs}.
We denote by $B_1(\cS)\!\in\!H_2(X)$ the degree of the restriction  
of the map components~$u$ of the elements~$\u$ of~$\cS$ to
the irreducible component~$\P^1_1$ of the domain carrying the marked points~$z_1^{\pm}$
and by $B_2(\cS)\!\in\!H_2(X)$ the degree of the restriction of~$u$ 
to the other irreducible component~$\P^1_2$ of the domain.
Let
$$\ov\cS\subset \ov\M_{k,l}(B;J,\nu)$$
be the \sf{virtual closure} of~$\cS$, i.e.~the subspace of maps~$\u$ as in~\eref{udfn_e}
so that the domain~$\Si$ can be split at a node into two connected (possibly reducible) surfaces,
$\Si_1$ and~$\!\Si_2$, so that
the degree of the restriction of the map component~$u$ of~$\u$ to~$\Si_1$ is~$B_1(\cS)$,
the real marked points $x_i$ with $i\!\in\!K_1(\cS)$ lie on~$\Si_1$,
and so do the conjugate pairs of marked points~$z_i^{\pm}$ with $i\!\in\!L_1(\cS)$.

If in addition $l^*\!\in\![l]$, let 
\begin{gather*}
L_1^*(\cS)=L_1(\cS)\!\cap\![l^*], \quad L_2^*(\cS)=L_2(\cS)\!\cap\![l^*], \quad
l_1^*(\cS)=\big|L_1^*(\cS)\big|, \quad l_2^*(\cS)=\big|L_2^*(\cS)\big|,\\
\ve_{l^*}(\cS)=\blr{c_1(X,\om),B_2(\cS)}
-\big(k_2(\cS)\!+\!2\big(l_2(\cS)\!-\!l_2^*(\cS)\!\big)\big)\,.
\end{gather*} 
In particular,
\begin{gather*}
\ell_{\om}\big(B_1(\cS)\big)\!+\!\ell_{\om}\big(B_2(\cS)\big)=
\ell_{\om}(B)\!-\!1, \quad 
1\le l_1^*(\cS)\le l_1(\cS), \quad l_2^*(\cS)\le l_2(\cS),\\
k_1(\cS)\!+\!k_2(\cS)=k, \quad
l_1(\cS)\!+\!l_2(\cS)=l, \quad l_1^*(\cS)\!+\!l_2^*(\cS)=l^*.
\end{gather*}

We denote by
$$\M_{k,l;l^*}^{\st}(B;J,\nu)\subset\ov\M_{k,l}^*(B;J,\nu)$$
the subspace of simple maps that have no nodes, or
lie in a codimension~1 stratum~$\cS$ with \hbox{$\ve_{l^*}(\cS)\!\cong\!0,1$ mod~4}, or
have only one conjugate pair of nodes.
Let $\wh\M_{k,l;l^*}(B;J,\nu)$ be the space obtained by cutting $\ov\M_{k,l}(B;J,\nu)$ 
along the closures $\ov\cS$ of the codimension~1 strata~$\cS$ with 
\hbox{$\ve_{l^*}(\cS)\!\cong\!2,3$} mod~4.
Thus, $\wh\M_{k,l;l^*}(B;J,\nu)$ contains a double cover of~$\ov\cS$
for each codimension~1 stratum~$\cS$ of $\ov\M_{k,l}(B;J,\nu)$ with $\ve_{l^*}(\cS)\!\cong\!2,3$
mod~4; the union of these covers forms the (virtual) boundary of $\wh\M_{k,l;l^*}(B;J,\nu)$.
Let
\BE{qdfn_e}q\!:\wh\M_{k,l;l^*}(B;J,\nu)\lra \ov\M_{k,l}(B;J,\nu)\EE
be the quotient map.
We denote by
\BE{whMstdfn_e}\wh\M_{k,l;l^*}^{\st}(B;J,\nu)\subset\wh\M_{k,l;l^*}(B;J,\nu)\EE
the subspace of simple maps that 
\BEnum{$\bu$}

\item have no nodes, or

\item have only one real node, or

\item have only one conjugate pair of nodes.

\EEnum
The boundary $\prt\wh\M_{k,l;l^*}^{\st}(B;J,\nu)$ of this subspace consists of 
double covers~$\wh\cS^*$ of the subspaces~$\cS^*$ of simple maps
of the codimension~1 strata~$\cS$ of $\ov\M_{k,l}(B;J,\nu)$
with $\ve_{l^*}(\cS)\!\cong\!2,3$ mod~4.

For each $i\!\in\![k]$, let 
$$\ev_i^{\R}\!:\ov\M_{k,l}(B;J,\nu)\lra X^{\phi},\quad
\ev_i^{\R}\big([u,(x_j)_{j\in[k]},(z_j^+,z_j^-)_{j\in[l]},\si]\big)=u(x_i),$$
be the evaluation morphism for the $i$-th real marked point.
For each $i\!\in\![l]$, let 
$$\ev_i^+\!:\ov\M_{k,l}(B;J,\nu)\lra X,\quad
\ev_i^+\big([u,(x_j)_{j\in[k]},(z_j^+,z_j^-)_{j\in[l]},\si]\big)=u(z_i^+),$$
be the evaluation morphism for the positive point of the $i$-th conjugate pair of marked points.
Let
\BE{fMevdfn_e}\ev\!\equiv\!\prod_{i=1}^k\!\ev_i^{\R}\times\!\prod_{i=1}^l\!\ev_i^+\!:
\ov\M_{k,l}(B;J,\nu)\lra X_{k,l}\!\equiv\!(X^{\phi})^k\!\times\!X^l\EE
be the \sf{total evaluation} map.
We also denote~by
\begin{gather}\notag
\ev_i^{\R}\!:\wh\M_{k,l;l^*}(B;J,\nu)\lra X^{\phi},\qquad
\ev_i^+\!:\wh\M_{k,l;l^*}(B;J,\nu)\lra X, \\
\label{whfMevdfn_e}
\ev\!:\wh\M_{k,l;l^*}(B;J,\nu)\lra X_{k,l}
\end{gather}
the compositions of the evaluation maps above with the quotient map~$q$ in~\eref{qdfn_e}.
We will use the same notation for the compositions of the first three evaluation maps
with all obvious maps to~$\ov\M_{k,l}(B;J,\nu)$.

For $l^*\!\in\![l]$ and a tuple 
\BE{bhdfn_e}\bh\!\equiv\!(h_i\!:H_i\!\lra\!X)_{i\in[l^*]}\EE
of maps, define
\begin{gather}
\label{Mhdfn_e}f_{\bh}\!:M_{\bh}\equiv \prod_{i=1}^{l^*}\!H_i\lra X^{l^*},\qquad
f_{\bh}\big((y_i)_{i\in[l^*]}\big)=\big(h_i(y_i)\!\big)_{i\in[l^*]},\\
\notag
\cZ_{k,l;\bh}^{\st}(B;J,\nu)=\big\{\big(\u,(y_i)_{i\in[l^*]}\big)\!\in\!
\M_{k,l;l^*}^{\st}(B;J,\nu)\!\times\!M_{\bh}\!:
\ev_i^+(\u)\!=\!h_i(y_i)\,\forall\,i\!\in\![l^*]\big\}.
\end{gather}
We denote by
\BE{JakePseudo_e} \ev_{k,l;\bh}\!: \cZ_{k,l;\bh}^{\st}\big(B;J,\nu\big)\lra X_{k,l-l^*}\EE
the map induced by~\eref{fMevdfn_e}.
Orientations on~$H_i$ determine an orientation~$\fo_{\bh}$ on~$M_{\bh}$.
Along with the symplectic orientation~$\fo_{\om}$ of~$X$ and a relative orientation $\fo_{\ev}$ of
\BE{whevdfn_e}\ev\!:\M_{k,l;l^*}^{\st}(B;J,\nu)\lra X_{k,l}\,,\EE
the orientation~$\fo_{\bh}$ determines a relative orientation~$\fo_{\ev}\fo_{\bh}$
of~\eref{JakePseudo_e}.

A dimension~$n$ pseudocycle $h\!:H\!\lra\!X$ in the usual sense
determines an element~$[h]$ of $H_n(X;\Z)$; see~\cite{pseudo}.
If in addition $B$ is a homology class in~$X$ in the complementary dimension, let
$$h\!\cdot_X\!\!B\equiv \blr{\PD_X([h]),B}\in\Z$$
denote the homology intersection product of~$[h]$ with~$B$. 
If $h$ and $B$ are not of complementary dimensions, we set $h\!\cdot_X\!\!B\!=\!0$.
The next two statements follow readily from~\cite{Jake}; see Section~\ref{orient_subs}.

\begin{lmm}\label{orient_lmm}
Suppose $(X,\om,\phi)$ is a real symplectic fourfold, $k,l\!\in\!\Z^{\ge0}$
with \hbox{$k\!+\!2l\!\ge\!3$}, $l^*\!\in\![l]$, $B\!\in\!H_2(X)$, and 
$(J,\nu)\!\in\!\cH_{k,l}^{\om,\phi}$ is generic.
If~$k$ and~$B$ satisfy~\eref{BKcond_e}, then a $\Pin^-$-structure~$\fp$ on~$X^{\phi}$
determines relative orientations~$\fo_{\fp;l^*}$ and~$\wh\fo_{\fp;l^*}$ of the~maps  
\BE{orienlmm_e}\ev\!:\M_{k,l;l^*}^{\st}(B;J,\nu)\lra X_{k,l} \quad\hbox{and}\quad
\ev\!:\wh\M_{k,l;l^*}^{\st}(B;J,\nu)\lra X_{k,l},\EE
respectively, with the following properties:
\BEnum{($\fo_{\fp}\arabic*$)}

\item the restrictions of~$\fo_{\fp;l^*}$ and $\wh\fo_{\fp;l^*}$ to
$\M_{k,l}(B;J,\nu)$ are the same;

\item\label{fforient_it} the restrictions of $\fo_{\fp;l^*+1}\fo_{\om}$ and $\fo_{l^*+1}^+\fo_{\fp;l^*}$ 
to $\M_{k,l+1}(B;J,\ff_{k,l+1;l^*+1}^{\,*}\nu)$ are the same;

\item the interchange of two real points $x_i$ and $x_j$ preserves~$\fo_{\fp;l^*}$;

\item\label{Cijinter2_it} if $\u\!\in\!\M_{k,l}(B;J,\nu;\wch X^\phi)$ and 
the marked points $z_i^+$ and $z_j^+$ are not separated by the fixed locus~$S^1$
of the domain of~$\u$, then
the interchange of the conjugate pairs $(z_i^+,z_i^-)$ and $(z_j^+,z_j^-)$
preserves~$\fo_{\fp;l^*}$  at~$\u$; 

\item the interchange of the points in a conjugate pair $(z_i^+,z_i^-)$
with $l^*\!<\!i\!\le\!l$ preserves~$\fo_{\fp;l^*}$;

\item the interchange of the points in a conjugate pair $(z_i^+,z_i^-)$
with $1^*\!<\!i\!\le\!l^*$ reverses~$\fo_{\fp;l^*}$;

\item\label{orient1pm_it} the interchange of the points in the conjugate pair $(z_1^+,z_1^-)$
reverses~$\fo_{\fp;l^*}$ if and only~if 
$$\ell_{\om}(B)\cong k\!+\!2(l\!-\!l^*) \quad\tn{mod}~4;$$

\item\label{orient0_it} if $k,l,l^*\!=\!1$ and $B\!=\!0$, 
$(\ev_1^{\R}\!\times\!\id_X,\fo_{\fp;l^*}\fo_{\om})$ is a Steenrod pseudocycle of degree~1.

\EEnum

\end{lmm}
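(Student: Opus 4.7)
The plan is to construct $\fo_{\fp;l^*}$ by importing Solomon's relative orientation of the evaluation morphism on the moduli of disk maps from~\cite{Jake}, twisted by a combinatorial sign depending on~$l^*$, and then to induce $\wh\fo_{\fp;l^*}$ via the quotient map $q$ in~\eref{qdfn_e} so that property~(1) holds tautologically. On each topological component of $\M_{k,l;l^*}^{\st}(B;J,\nu)$ (determined by the distribution of the points~$z_i^+$ between the two hemispheres of the fixed locus of~$\si$), I would identify the component with a moduli of disk maps by restricting the map component~$u$ of~$\u$ to the hemisphere~$\D^2_+$ containing $z_1^+$, and then pull back through this identification the relative orientation of the evaluation morphism from the disk moduli determined by~$\fp$ in~\cite{Jake}. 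Multiplying this pullback by $(-1)^{\de_{l^*}^c(\u)}$, where $\de_{l^*}^c(\u)\!\in\!\Z_2$ counts the indices $i\!\in\![l^*]$ with $z_i^+(\u)$ on the hemisphere opposite to $z_1^+(\u)$, produces the desired $\fo_{\fp;l^*}$, in direct analogy with the curve-side definition in Section~\ref{cMstrata_subs}.

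Once $\fo_{\fp;l^*}$ is fixed, properties~(3)--(6) follow directly from the behavior of $\de_{l^*}^c$ under the specified permutations: interchanges preserving the hemisphere assignments leave $\de_{l^*}^c$ unchanged, while swapping the two points of a pair indexed by $1\!<\!i\!\le\!l^*$ flips exactly one hemisphere count. Property~(1) is tautological from the definition of $\wh\fo_{\fp;l^*}$. Property~(2) compares two relative orientations of the composition $\M_{k,l+1}\!\lra\!X_{k,l+1}\!\lra\!X_{k,l}$ obtained by projecting away the coordinate corresponding to~$z_{l^*+1}^+$, and follows once one notes that the fiber of $\ff_{k,l+1;l^*+1}$ at a curve is densely identified with its hemisphere~$\D^2_+$, that $\ev_{l^*+1}^+$ sends the canonical complex orientation $\fo_{l^*+1}^+$ of this fiber to the symplectic orientation~$\fo_\om$ of~$X$, and that the convention used to introduce the new conjugate pair constrained to $\D^2_+$ is exactly consistent with the $\de_{l^*}^c$ sign. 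Property~(8) reduces to a direct model computation: for $k\!=\!l\!=\!l^*\!=\!1$ and $B\!=\!0$, after quotienting by the reparametrization group $\PSL_2^{\tau}\C$, $\M_{1,1}(0;J,\nu)$ is canonically diffeomorphic to~$X^{\phi}$, and $\ev_1^{\R}\!\times\!\id_X$ becomes an open embedding onto a neighborhood of the diagonal in $X^{\phi}\!\times\!X$ whose degree is $+1$ by Solomon's normalization.

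The main obstacle is property~(7), the wall-crossing sign under swapping the distinguished pair $(z_1^+,z_1^-)$. This swap interchanges the two hemispheres of the fixed locus of~$\si$, so $\D^2_+$ is replaced by its complement and Solomon's reference orientation on the disk moduli is transformed through the reflection implementing $\si$ and~$\phi$ together; by~\cite{Jake}, this transformation introduces a sign depending on~$\ell_\om(B)$, $k$, and $\dim_{\R}X^{\phi}\!=\!2$. Simultaneously, the sign $(-1)^{\de_{l^*}^c}$ picks up a factor of $(-1)^{l^*-1}$ from the $l^*\!-\!1$ pairs indexed by $i\!\in\![l^*]\!-\!\{1\}$ switching hemispheres, while the component-level identification with the disk moduli absorbs a further factor of $(-1)^{l-l^*}$ from the remaining $l\!-\!l^*$ unconstrained pairs also switching hemispheres. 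Collecting these contributions carefully via Solomon's wall-crossing formula and the $\Pin^-$-transformation properties of the determinant line bundle yields orientation reversal precisely when $\ell_\om(B)\!\equiv\!k\!+\!2(l\!-\!l^*)~\tn{mod}~4$. This sign bookkeeping, together with the invariance of $\fo_{\fp;l^*}$ under generic deformations of~$(J,\nu)$, is the technical heart of the proof and is deferred to Section~\ref{orient_subs}.
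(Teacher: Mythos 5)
Your construction differs structurally from the paper's: the paper defines $\fo_{\fp;l^*}$ on the main stratum by factoring $\la(\ev)$ through the determinant line $\wt\la(D^{\phi}_{J,\nu},X)$ and the Deligne--Mumford orientation line $\ff_{k,l}^*\la(\cM_{k,l}^{\tau})$ via the exact sequence~\eref{OrientSubs_e3}, orienting the two factors by~$\fo_{\fp}^D$ (Lemma~\ref{Dorient_lmm}) and~$\fo_{k,l;l^*}$ (Section~\ref{cMstrata_subs}) respectively; all eight properties then fall out of the corresponding items in Lemmas~\ref{Dorient_lmm} and~\ref{cMorient_lmm}. You propose instead a direct pullback of the full disk-moduli evaluation orientation, twisted by a combinatorial sign. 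This is spiritually close (both ultimately rest on Solomon's $\Pin^-$-orientation of the determinant line), but the factored form is what lets the paper read off properties~(2)--(7) term by term from separately established lemmas rather than redo Solomon's wall-crossing bookkeeping in one shot. Two concrete issues:

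First, your $\de_{l^*}^c$ counts indices $i\!\in\![l^*]$ with $z_i^+$ in the hemisphere opposite $z_1^+$, but the paper's $\de_{l^*}^c$ (Section~\ref{cMstrata_subs}) counts $i\!\in\![l]\!-\![l^*]$. Under interchange of $z_i^+\!\leftrightarrow\!z_i^-$ for $i\!>\!l^*$, the paper's $\de_{l^*}^c$ flips and cancels the sign change of $\fo_{k,l}$ (giving property~(5), preservation), while for $1\!<\!i\!\le\!l^*$ it does not flip (giving property~(6), reversal). With your index set, the parities are reversed, so either your sign convention conflicts with properties~(5)--(6), or your "Solomon orientation" already carries an implicit hemisphere dependence that exactly compensates --- but that is precisely the thing you would need to pin down and do not.

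Second, and more substantially: you dismiss the construction of~$\wh\fo_{\fp;l^*}$ as tautological via~$q$, and you only discuss the orientation on the main stratum $\M_{k,l}(B;J,\nu)$. But both domains in~\eref{orienlmm_e} contain codimension-1 strata --- $\M_{k,l;l^*}^{\st}$ includes those with $\ve_{l^*}(\cS)\!\cong\!0,1$ mod~4, and $\wh\M_{k,l;l^*}^{\st}$ has boundary consisting of double covers of those with $\ve_{l^*}(\cS)\!\cong\!2,3$ mod~4. Whether the orientation extends (continuously, and in the cut space without ambiguity) across these walls is exactly the content of Lemma~\ref{orient_lmm2}, which in turn rests on Lemmas~\ref{DMboundary_lmm} and~\ref{DorientComp_lmm}. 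This is not a formality; it is the reason the cut $\wh\M_{k,l;l^*}$ is introduced in the first place, and your proposal does not address it.
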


\begin{prp}\label{JakePseudo_prp}
Suppose $(X,\om,\phi)$ is a real symplectic fourfold,
$\fp$ is a $\Pin^-$-structure on~$X^{\phi}$,
$l\!\in\!\Z^+$, $l^*\!\in\![l]$, and $B\!\in\!H_2(X)$ are such~that
\BE{JakePseudo_e0} k\!\equiv\!\ell_{\om}(B)-2(l\!-\!l^*)\ge\max(0,3\!-\!2l)\,.\EE
Let $\bh\!\equiv\!(h_i)_{i\in[l^*]}$ be a tuple of pseudocycles of codimension~2
in general position.
For a generic choice of \hbox{$(J,\nu)\!\in\!\cH_{k,l}^{\om,\phi}$},  the map~\eref{JakePseudo_e}
with the relative orientation~$\fo_{\fp;l^*}\fo_{\bh}$
is a codimension~0 Steenrod pseudocycle.
If $B\!\neq\!0$, then
\BE{RdivRel_e}\deg\!\big(\ev_{k,l;\bh},\fo_{\fp;l^*}\fo_{\bh}\big)=
\bigg(\prod_{i=1}^{l^*}\!h_i\!\cdot_X\!\!B\!\!\bigg)N_{B,l-l^*}^{\phi;\fp}\,.\EE
\end{prp}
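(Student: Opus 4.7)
The plan is to combine Lemma~\ref{orient_lmm}, which provides the relative orientation, with two standard ingredients from real Gromov-Witten theory: a dimension count on the boundary strata of $\ov\M_{k,l}(B;J,\nu)$ going back to~\cite{Jake} (for the Steenrod pseudocycle property), and the divisor equation at the level of relative orientations from Lemma~\ref{orient_lmm}\ref{fforient_it} (for the degree formula). The hypothesis~\eref{JakePseudo_e0} makes $\dim\cZ^{\st}_{k,l;\bh}(B;J,\nu)$ equal to~$\dim X_{k,l-l^*}$ and implies~\eref{BKcond_e}, so Lemma~\ref{orient_lmm} applies; its relative orientation $\fo_{\fp;l^*}$ of~\eref{whevdfn_e}, together with the orientations~$\fo_{\bh}$ of the $H_i$'s and the symplectic orientation of~$X^{l^*}$, yields the claimed relative orientation $\fo_{\fp;l^*}\fo_{\bh}$ via the fiber product defining $\cZ^{\st}_{k,l;\bh}(B;J,\nu)$.

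For the Steenrod pseudocycle property, Gromov compactness for real $J$-holomorphic maps gives that the closure of the image of $\ev_{k,l;\bh}$ in $X_{k,l-l^*}$ is compact. The virtual boundary of $\M^{\st}_{k,l;l^*}(B;J,\nu)$ in $\ov\M_{k,l}(B;J,\nu)$ consists of non-simple maps, virtual-codim-$\ge\!2$ strata other than those with a single conjugate pair of nodes (which are part of $\M^{\st}$), and codim-1 strata~$\cS$ with $\ve_{l^*}(\cS)\!\cong\!2,3$ mod~4. For generic $(J,\nu)$ and $\bh$ in general position, the images under~$\ev$ of the first two types lie in subsets of $X_{k,l}$ of dimension at most $\dim X_{k,l}-2$ by routine stratum-dimension counts. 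For the third, the geometric input from \cite[Sec.~5]{Jake} recorded in Section~\ref{summary_sec} is that the image of such a stratum under~$\ev$ has dimension strictly less than $\dim\cS$. Transverse intersection with~$\bh$ preserves codimensions, so all these images lie in subsets of $X_{k,l-l^*}$ of dimension at most $\dim X_{k,l-l^*}-2$, as required.

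For the degree, the plan is to induct on~$l^*$. The base case $l^*\!=\!0$ gives $\deg(\ev_{k,l;\eset},\fo_{\fp;0})\!=\!N^{\phi;\fp}_{B,l}$ by the definition in Section~\ref{SolWDVV_subs} (via~\cite{Jake}). For the inductive step, write $\bh\!=\!(\bh',h_{l^*})$ with $\bh'\!=\!(h_i)_{i\in[l^*-1]}$ and use the forgetful morphism from~\eref{ffMdfn_e} dropping the $l^*$-th conjugate pair. For a generic point $\bp\!\in\!X_{k,l-l^*}$ and generic~$\bh$, each signed preimage $(\u',y_1,\ldots,y_{l^*-1})\!\in\!\ev_{k,l-1;\bh'}^{-1}(\bp)$ lifts to $h_{l^*}\!\cdot_X\!B$ preimages of~$\bp$ in $\ev_{k,l;\bh}^{-1}(\bp)$, indexed by the transverse intersection points of~$u'(\P^1)$ with the pseudocycle~$h_{l^*}$. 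By Lemma~\ref{orient_lmm}\ref{fforient_it}, which asserts $\fo_{\fp;l^*}\fo_\om\!=\!\fo^+_{l^*}\fo_{\fp;l^*-1}$, each such lift carries the same sign as the original preimage. Hence $\deg(\ev_{k,l;\bh})\!=\!(h_{l^*}\!\cdot_X\!B)\deg(\ev_{k,l-1;\bh'})$, and iteration gives the claimed product formula.

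The main obstacle will be the codim-1 dimension count in the pseudocycle step: Jake's original argument is formulated for moduli of disk maps rather than the real-map setup of~\cite{Penka2} used here, so one needs to identify the parity invariant~$\ve_{l^*}(\cS)$ with its disk-map analogue and to check the rank-drop statement in the various cases corresponding to whether either component of the node-split is a ghost (constant-map) component. The parity condition is subtle precisely because it has to interact consistently both with the Pin$^-$-orientation extension (across strata with $\ve_{l^*}\!\cong\!0,1$ mod~4) and with the rank-dropping behavior of~$\ev$ (which shrinks strata with $\ve_{l^*}\!\cong\!2,3$ mod~4 to codimension $\ge\!2$ images in the target).
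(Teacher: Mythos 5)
Your overall approach matches the paper's: the pseudocycle property via dimension counts on boundary strata, and the degree formula via a divisor relation proved using the forgetful morphism and the orientation compatibility in Lemma~\ref{orient_lmm}\ref{fforient_it}. Two points, however, need tightening.

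First, the sign bookkeeping in the inductive step of the degree formula is mis-stated. You write that ``each such lift carries the same sign as the original preimage''; this is not the case. The lifts of a signed point $(\u',y')\!\in\!\ev_{k,l-1;\bh'}^{-1}(\bp)$ are indexed by the transverse intersections of $u'(\P^1)$ with the pseudocycle~$h_{l^*}$, and the sign of each lift equals the original sign \emph{times the local intersection sign} of $u'(\P^1)$ with~$h_{l^*}$ at that point. These local signs vary from point to point; only after summing over all intersection points does one obtain $(h_{l^*}\!\cdot_X\!B)$ times the original sign. To extract the local intersection sign from the relative orientations, it is not enough to quote $\fo_{\fp;l^*}\fo_\om\!=\!\fo^+_{l^*}\fo_{\fp;l^*-1}$; one must run a diagram chase comparing the orientation of the map $T_{z^+_{l^*}}\P^1\!\oplus\!T_{y'}H'\!\to\!T_{u(z^+_{l^*})}X$ to the relative orientations of the two evaluation maps in the fibration (this is the role of the commutative square and the three-by-three lemma in the paper's argument). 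Without this the sign $(h_{l^*}\!\cdot_X\!B)$ could in principle come out as an unsigned count of intersections.

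Second, the base case $l^*\!=\!0$ is not available through the stated hypotheses: Lemma~\ref{orient_lmm} constructs $\fo_{\fp;l^*}$ only for $l^*\!\in\![l]$, i.e.\ $l^*\!\ge\!1$. (Section~2 refers informally to $\fo_{\fp;0}$, but the formal construction and the proposition both start at $l^*\!=\!1$.) The paper sidesteps this by anchoring the induction at $l^*\!=\!2$: one chooses an $\om$-anti-invariant divisor class~$H$ with $\lr{H,B}\!\neq\!0$ and two pseudocycle representatives $h_1,h_2$, defines $N_{B,l-l^*}^{\phi;\fp}$ via the two-divisor degree (this is the definition implicit in Solomon's thesis), and then applies the divisor relation in both directions to reach the given~$l^*$. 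Your induction down to $l^*\!=\!0$ would require first extending the orientation lemma, which is possible but is extra work not covered by the references you cite.

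The remaining deferred step — the explicit dimension count for the codimension-1 strata with $\ve_{l^*}(\cS)\!\cong\!2,3$ mod~4 — is handled in the paper by direct computation rather than by citation: one checks that for a one-nodal stratum the relevant covering manifolds have dimension $4(l\!-\!l^*)\!+\!2k\!+\!(\ep_{l^*}(\cS)\!-\!1)$ or $4(l\!-\!l^*)\!+\!2k\!-\!\ep_{l^*}(\cS)$, and the minimum of these two is $\le\!4(l\!-\!l^*)\!+\!2k\!-\!2$ precisely when $\ep_{l^*}(\cS)\!\notin\!\{0,1\}$, together with separate arguments (factoring through diagonals or a point) for the exceptional ghost-component configurations $B_2\!=\!0$ with $(l_2,l_2^*,k_2)\!\in\!\{(1,0,0),(0,0,2)\}$ whose $\ep_{l^*}$ nominally equals~$-2$. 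You correctly flag this as the technical work remaining, and these are exactly the cases that need checking.
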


\subsection{Decomposition formulas}
\label{DecompForm_subs}

Let $(X,\om,\phi)$ be a real symplectic fourfold, $\fp$ be a $\Pin^-$-structure on~$X^{\phi}$,
$k,k',l\!\in\!\Z^{\ge0}$, \hbox{$l',l^*\!\in\![l]$} with 
\BE{klcond_e} k'\le k \qquad\hbox{and}\quad  k'\!+\!2l'\ge3,\EE
$B\!\in\!H_2(X)$, and $(J,\nu)\!\in\!\cH_{k,l}^{\om,\phi}$.
If $k$ and~$B$ satisfy~\eref{BKcond_e}, there is a well-defined \sf{forgetful morphism}
\BE{ffdfn_e}\ff_{k',l'}\!:\ov\M_{k,l}(B;J,\nu)\lra\ov\cM_{k',l'}^{\tau}\EE
which drops the last $k\!-\!k'$ real marked points and the last $l\!-\!l'$ conjugate pairs
from the nodal marked curve~\eref{cCdfn_e} associated with each tuple~$\u$ as in~\eref{udfn_e}
and 
contracts the unstable irreducible components of the resulting curve.
Let $\bh$ as in~\eref{bhdfn_e} be a tuple of smooth maps from oriented manifolds
and 
\BE{pHdfn_e}\bp\!\equiv\!\big((p_i^{\R})_{i\in[k]},(p_i^+)_{i\in[l]-[l^*]}\big)\in
X_{k,l-l^*}\!\equiv\!(X^{\phi})^k\!\times\!(X\!-\!X^{\phi})^{l-l^*}.\EE

Let $\oGa\!\subset\!\ov\cM_{k',l'}^{\tau}$ be a primary codimension~2 stratum
and $\fo_{\Ga}^c$ be its canonical co-orientation as in Lemma~\ref{cNGa2_lmm}.
We denote~by 
$$\M_{\Ga;k,l}(B;J,\nu)\subset \ff_{k',l'}^{-1}(\oGa) \subset\ov\M_{k,l}(B;J,\nu)$$
the subspace consisting of maps from three-component domains.
The domain of every element~$\u$ of $\M_{\Ga;k,l}(B;J,\nu)$ is stable 
and thus $\u$ is automatically a simple~map.
Define
$$\cZ_{\Ga;k,l;\bh}^{\st}(B;J,\nu)=\big\{\big(\u,(y_i)_{i\in[l^*]}\big)\!\in\!
\cZ_{k,l;\bh}^{\st}(B;J,\nu)\!: \u\!\in\!\M_{\Ga;k,l}(B;J,\nu)\big\}.$$

For generic choices of~$(J,\nu)$ and $\bh$, 
$$\cZ_{\Ga;k,l;\bh}^{\st}(B;J,\nu)\subset \cZ_{k,l;\bh}^{\st}(B;J,\nu)$$
is a smooth submanifold of a smooth manifold with the normal bundle canonically isomorphic 
to~$\ff_{k',l'}^*\cN\Ga$.
We denote~by
$$\fo_{\Ga;\fp;\bh}\equiv \big(\ff_{k',l'}^*\fo_{\Ga}^c\big)\big(\fo_{\fp;l^*}\fo_{\bh}\big)$$ 
the relative orientation of the restriction
\BE{evGadfn_e}\ev_{\Ga;\bh}\!: \cZ_{\Ga;k,l;\bh}^{\st}(B;J,\nu)\lra X_{k,l-l^*}\EE
of~\eref{JakePseudo_e} determined by~$\ff_{k',l'}^*\fo_{\Ga}^c$ and 
the relative orientation~$\fo_{\fp;l^*}\fo_{\bh}$ of~\eref{JakePseudo_e},
with $\fo_{\fp;l^*}$ as in Lemma~\ref{orient_lmm};
see Section~\ref{SignConv_subs}.

With $l_0(\Ga),l_{\C}(\Ga),l_0^*(\Ga)\!\in\!\Z^{\ge0}$ 
as at the beginning of Section~\ref{NBstrata_subs}
and $B\!\in\!H_2(X)$, define
\begin{gather*}
\lr{l^*}_{\Ga}=l'\!-\!l^*\!-\!(l_0(\Ga)\!-\!l_0^*(\Ga))\in\Z^{\ge0}\,;\quad
\lr{B;k}_{\Ga}=\begin{cases}1,&\hbox{if}~
k'\!=\!k\!=\!1,\,l_0(\Ga)\!=\!0;\\
0,&\hbox{otherwise}.
\end{cases}\end{gather*}
Let $L_0^*(\Ga)\!\subset\![l^*]$ be the subset indexing the conjugate pairs of marked points
$(z_i^+,z_i^-)$ with $i\!\in\![l^*]$
carried by the real component of the curves in~$\oGa$.
Define
$$L_{\C}^*(\Ga)=[l^*]\!-\!L_0^*(\Ga), \quad
\lr{\bh}_{l^*;\Ga}=\begin{cases}h_i\!\cdot_X\!h_j,&\hbox{if}~
|L_{\C}^*(\Ga)|\!=\!l_{\C}(\Ga),\,L_{\C}^*(\Ga)\!=\!\{i,j\};\\
0,&\hbox{otherwise}.
\end{cases}$$

\begin{prp}\label{Cdecomp_prp}
Suppose $(X,\om,\phi)$ is a real symplectic fourfold,
$\fp$ is a $\Pin^-$-structure on~$X^{\phi}$,
$k',l\!\in\!\Z^{\ge0}$, $l'\!\in\![l]$, $l^*\!\in\![l']$, and $B\!\in\!H_2(X)$ are such~that
\BE{Cdecomp_e0}k\!\equiv\!\ell_{\om}(B)-2(l\!-\!l^*)-2 \ge\max(0,3\!-\!2l)\EE
and \eref{klcond_e} holds.
Let $\oGa\!\subset\!\ov\cM_{k',l'}^{\tau}$ be a primary codimension~2 stratum,
$\bh$ as in~\eref{bhdfn_e} be a tuple of pseudocycles of codimension~2 with 
\hbox{$\phi_*[h_i]\!=\!-[h_i]$} for every $i\!\in\![l^*]$, and $\bp$ be as in~\eref{pHdfn_e}.
If the elements of~$\bh$ and~$\bp$ are in general position and 
\hbox{$(J,\nu)\!\in\!\cH_{k,l}^{\om,\phi}$} is generic, then 
$\bp$ is a regular value of~\eref{evGadfn_e} and the set $\ev_{\Ga;\bh}^{-1}(\bp)$ is finite.
Furthermore,
\BE{Cdecomp_e}\begin{split}
&\big|\ev_{\Ga;\bh}^{-1}(\bp)\big|_{\fo_{\Ga;\fp;\bh}}^{\pm}
=2^{\ell_{\om}(B/2)-1-\lr{l^*}_{\Ga}-l^*}\lr{B;k}_{\Ga}
\bigg(\prod_{i\in[l^*]}\!\!\!h_i\!\cdot_X\!\!B\!\bigg)
\!\!\!
\sum_{\begin{subarray}{c}B'\in H_2(X)\\ \fd(B')=B\end{subarray}}\!\!\!\!\!\!\!N_{B'}^X\\
&\hspace{.5in}
+\lr{\bh}_{l^*;\Ga}\bigg(\prod_{i\in L_0^*(\Ga)}\!\!\!\!\!\!h_i\!\cdot_X\!\!B\!\!\bigg)
N_{B,l-l^*+1}^{\phi;\fp}
+\sum_{\begin{subarray}{c}B_0,B'\in H_2(X)-\{0\}\\ B_0+\fd(B')=B\end{subarray}}
\hspace{-.4in} 2^{\ell_{\om}(B')-\lr{l^*}_{\Ga}}\!
\Bigg(\!\!\big(B_0\!\!\cdot_X\!\!B'\big)\\
&\hspace{1.2in}\times
\bigg(\prod_{i\in L_0^*(\Ga)}\!\!\!\!\!\!h_i\!\cdot_X\!\!B_0\!\!\bigg)
\bigg(\prod_{i\in L_{\C}^*(\Ga)}\!\!\!\!\!\!h_i\!\cdot_X\!\!B'\!\bigg)
\!\!\binom{l\!-\!l'}{\ell_{\om}(B')\!-\!\lr{l^*}_{\Ga}}
N_{B'}^XN_{B_0,l-l^*-\ell_{\om}(B')}^{\phi;\fp}\Bigg).
\end{split}\EE
\end{prp}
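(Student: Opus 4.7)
The plan is to decompose the space $\cZ_{\Ga;k,l;\bh}^{\st}(B;J,\nu)$ according to the combinatorial data not fixed by the stratum~$\Ga$, and then identify each piece as a transverse fiber product of a real moduli space (for the real component~$\P^1_0$) with a complex moduli space (for the component~$\P^1_+$ of the conjugate pair). For $\u\in\M_{\Ga;k,l}(B;J,\nu)$ with domain $\Si=\P^1_0\!\cup\!\P^1_+\!\cup\!\P^1_-$, write $B_0=u_*[\P^1_0]$ and $B'=u_*[\P^1_+]$; then $B=B_0+\fd(B')$. The extra data is the splitting of the $k\!-\!k'$ unlabeled real marked points between $\P^1_0$ and the two nodes, and the splitting of the $l\!-\!l'$ unlabeled conjugate pairs of marked points among $\P^1_0$, $\P^1_+$, and $\P^1_-$. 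For generic $(J,\nu)$ and generic $\bh,\bp$, each such stratum~$\cS$ is cut out transversally and admits a diffeomorphism with a fiber product
$$\M^{(0)}_{k_0+1,l_0;\bh_0}(B_0;J,\nu_0)\,_{\ev_{\nod}^\R}\!\!\!\times_{\ev_{\nod}}\!
\M^{(+)}_{l_\C+1;\bh_\C}(B';J),$$
where the first factor records $u_0$ together with an extra real marked point at the node, and the second factor records $u_+$ together with a marked point at the corresponding complex node (together with constraints from $\bh$ and $\bp$ distributed by the combinatorial choice).

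I would then treat the three terms of the right-hand side of \eqref{Cdecomp_e} by analyzing the three regimes. When $B_0=0$, stability of~$u_0$ forces $k_0+2l_0\ge2$, and combined with the genericity of $\bp$ this collapses to the unique possibility \hbox{$k=k'=1$, $l_0(\Ga)=0$}, accounting for the factor $\lr{B;k}_\Ga$; the surviving count reduces via the complex divisor axiom to a product of $h_i\!\cdot_X\!B$ and $N_{B'}^X$, with the power of~$2$ coming from distributing each of the $l\!-\!l^*$ unconstrained conjugate pairs between~$\P^1_+$ and~$\P^1_-$ and from converting from the real count to a pair of conjugate complex maps. When $B'=0$, the conjugate pair collapses to a single point \hbox{$p\!\in\!X\!-\!X^\phi$}, which is constrained to lie on $h_i(H_i)$ for every \hbox{$i\!\in\!L_\C^*(\Ga)$}; genericity forces $|L_\C^*(\Ga)|=l_\C(\Ga)=2$ and yields the factor $\lr{\bh}_{l^*;\Ga}$, while the residual contribution of the real component is $N_{B,l-l^*+1}^{\phi;\fp}$ after applying divisor factors for the constraints in $L_0^*(\Ga)$. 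When $B_0,B'\!\neq\!0$, the complex factor contributes $2^{\ell_\om(B')}N_{B'}^X$ (one factor of~$2$ from distributing the point constraints $p_i^+$ and one from the divisor constraints in $L_\C^*(\Ga)$), the real factor contributes $N_{B_0,l-l^*-\ell_\om(B')}^{\phi;\fp}$ via Proposition~\ref{JakePseudo_prp}, the node evaluation contributes the intersection number $B_0\!\cdot_X\!B'$, and the binomial coefficient counts the distributions of the $l\!-\!l'$ unconstrained conjugate pairs between $\P^1_+\cup\P^1_-$ and $\P^1_0$.

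The main obstacle is getting signs and multiplicities exactly right. For each stratum the relative orientation $\fo_{\Ga;\fp;\bh}$ combines the canonical complex orientation of $\cN\Ga$ (Lemma~\ref{cNGa2_lmm}) with the $\Pin^-$-induced orientation $\fo_{\fp;l^*}$, and must be compared under the fiber-product identification above with the product of the $\Pin^-$-orientation on $\M^{(0)}$ (from Lemma~\ref{orient_lmm}) and the complex orientation on $\M^{(+)}$. This comparison will follow the pattern of the proof of \cite[Prop.~4.2]{RealEnum}, but the presence of the divisor insertions indexed by $L_0^*(\Ga)$ and $L_\C^*(\Ga)$ requires invoking property~\ref{fforient_it} of Lemma~\ref{orient_lmm} once for each constraint and the marked-point-interchange rules (\ref{Cijinter2_it}, \ref{orient1pm_it}) to move the first conjugate pair between the two factors of the fiber product. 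The powers of~$2$ arise from the symmetry of reordering conjugate points between $\P^1_+$ and $\P^1_-$ that is trivial on the complex side but visible on the real side.

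Finally, summing the three contributions over all partitions of the remaining marked points yields the displayed right-hand side of~\eqref{Cdecomp_e}; the binomial coefficient $\binom{l-l'}{\ell_\om(B')-\lr{l^*}_\Ga}$ precisely encodes the partitions in the third case, while the restrictions $\lr{B;k}_\Ga$ and $\lr{\bh}_{l^*;\Ga}$ encode the degeneracies in the first two cases. Transversality of the fiber products for generic $(J,\nu)$ will be established by the standard argument: the universal moduli space of simple $J$-holomorphic maps from each component is cut out transversally, and the node and constraint evaluations together form a submersion onto $X\!\times\!X_{k,l-l^*}$.
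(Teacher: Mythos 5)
Your overall plan mirrors the paper's: decompose the strata of $\M_{\Ga;k,l}(B;J,\nu)$ by the distribution of the unlabeled marked points, identify each stratum with a fiber product of a real moduli factor and a complex moduli factor, compare the relative orientation $\fo_{\Ga;\fp;\bh}$ against the product orientation via Lemma~\ref{cNGa2_lmm} and Lemma~\ref{orient_lmm}, and treat the three regimes $B_0,B'\!\neq\!0$; $B'\!=\!0$; $B_0\!=\!0$ to produce the three terms in~\eref{Cdecomp_e}. Your sources for the powers of~$2$, the binomial coefficient, and the role of $\lr{B;k}_\Ga$ and $\lr{\bh}_{l^*;\Ga}$ are also all correct, and the citation of \cite[Prop.~4.2]{RealEnum} as the model is the right one.

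However, there is a genuine error in the structure of the fiber product. The nodes of the domains in $\M_{\Ga;k,l}(B;J,\nu)$ form a \emph{conjugate pair} — one node joins $\P^1_0$ to $\P^1_+$, its $\si$-image joins $\P^1_0$ to $\P^1_-$ — and they do not lie on the fixed locus of the involution. Consequently the real factor should be $\M_{k,l_0+1}(B_\R;J,\nu_\R)$ (with one extra \emph{conjugate pair} of marked points recording the nodes), not a space with an extra \emph{real} marked point; and the fiber product is over evaluation to $X$ at the node $z_\R^+$, not over evaluation to $X^\phi$. Your proposed fiber product $\M^{(0)}_{k_0+1,l_0;\bh_0}(B_0;J,\nu_0)\,_{\ev^\R_\nod}\!\!\times_{\ev_\nod}\M^{(+)}_{l_\C+1;\bh_\C}(B';J)$ over $X^\phi$ would be generically empty, because for a generic real map $u_0$ the image $u_0(z_\R^+)$ lies in $X\!-\!X^\phi$. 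You appear to have transposed the setup from Proposition~\ref{Rdecomp_prp}, where the stratum $\cS$ has a single real node and the gluing is over $X^\phi$ with an extra real marked point on each side. The distinction matters: the paper's exact sequence terminates in $T_{u(\nod)}X$ (complex rank~$2$), not $T_{u(\nod)}X^\phi$, and the complex orientation of this space enters essentially in Lemma~\ref{TMcompGa_lmm} and again when identifying the sign of the node intersection with $B_0\!\cdot_X\!B'$. Similarly, there is no ``splitting of the $k\!-\!k'$ unlabeled real marked points between $\P^1_0$ and the two nodes'': reality forces every real marked point onto $\P^1_0$, so the only combinatorial freedom is in the distribution of conjugate pairs, and for the pairs sent to $\P^1_\pm$, in the binary choice of which of $z_i^\pm$ lies on $\P^1_+$ (your source of the powers of~$2$, which you otherwise identified correctly). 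Once these two corrections are made the rest of your argument goes through along the lines you sketch.
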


\begin{rmk}\label{Cdecomp_rmk}
The domain of \eref{evGadfn_e} can be completed to a Steenrod pseudocycle by adding in
the codimension~1 strata of its closure. 
This implies that the set $\ev_{\Ga;\bh}^{-1}(\bp)$ is finite for a generic choice of~$\bp$
and $|\ev_{\Ga;\bh}^{-1}(\bp)|_{\fo_{\Ga;\fp;\bh}}^{\pm}$  is the degree of this pseudocycle.
The proof of Proposition~\ref{Cdecomp_prp} in Section~\ref{Cdecomp_subs} 
instead identifies $\ev_{\Ga;\bh}^{-1}(\bp)$ with a finite subset of 
the cross product of two moduli spaces with the signed cardinality given 
by the right-hand side of~\eref{Cdecomp_e}.
\end{rmk}

Suppose $\cS$ is an open codimension~1 stratum of $\ov\M_{k,l}(B;J,\nu)$.
Let
$$K_r(\cS)\subset[k],~~L_r(\cS)\subset[l],~~L_r^*(\cS)\subset[l^*],~~
k_r(\cS),l_r(\cS),l_r^*(\cS),\ep_{l^*}(\cS)\in\Z,~~B_r(\cS)\in H_2(X)$$
be as in Section~\ref{MapSpaces_subs} and $\cS^*\!\subset\!\cS$ be the subspace of simple maps.
With $M_{\bh}$ given by~\eref{Mhdfn_e}, define 
$$\cS_{\bh}^*=
\big\{\big(\u,(y_i)_{i\in[l^*]}\big)\!\in\!\cS^*\!\times\!M_{\bh}\!:
\ev_i^+(\u)\!=\!h_i(y_i)\,\forall\,i\!\in\![l^*]\big\}.$$
The (virtual) normal bundles $\cN\cS$ of $\cS$ in $\ov\M_{k,l}(B;J,\nu)$ and  
$\cN\cS_{\bh}^*$ of $\cS_{\bh}^*$ in
$$\ov\cZ_{k,l;\bh}(B;J,\nu)\equiv\big\{\big(\u,(y_i)_{i\in[l^*]}\big)
\!\in\!\ov\M_{k,l}(B;J,\nu)\!\times\!M_{\bh}\!:
\ev_i^+(\u)\!=\!h_i(y_i)\,\forall\,i\!\in\![l^*]\big\}$$
are canonically isomorphic.

If $\u\!\in\!\cS_{\bh}^*$, an orientation~$\fo_{\cS;\u}^c$ of~$\cN_{\u}\cS$ 
determines a direction of degeneration of elements of 
the main stratum of $\cZ_{k,l;\bh}^{\st}(B;J,\nu)$ to~$\u$.
The relative orientation~$\fo_{\fp;l^*}\fo_{\bh}$ of~\eref{JakePseudo_e}
limits to a relative orientation $\fo_{\fp;\bh;\u}$ of
\BE{JakePseudo_e2}\ev_{k,l;\bh}\!:\ov\cZ_{k,l;\bh}(B;J,\nu)\lra X_{k,l-l^*}\EE
at~$\u$ obtained by approaching~$\u$ from this direction.
Along with~$\fo_{\cS;\u}^c$,  $\fo_{\fp;\bh;\u}$ determines a relative 
orientation $\prt_{\fo_{\cS;\u}^c}\fo_{\fp;\bh;\u}$ of the restriction
\BE{evcSdfn_e}\ev_{\cS;\bh}\!:\cS_{\bh}^*\lra X_{k,l-l^*}\EE
of $\ev_{k,l;\bh}$ via the first isomorphism in~\eref{lasplits_e}.

\begin{lmm}\label{orient_lmm2}
Suppose $(X,\om,\phi)$, $\fp,k,l,l^*,B$, and $(J,\nu)$ are as in Lemma~\ref{orient_lmm}
and $\bh$ as in~\eref{bhdfn_e} is a generic tuple of smooth maps from oriented manifolds. 
If~$k$ and~$B$ satisfy~\eref{BKcond_e}, $\cS$ is an open codimension~1 stratum of 
$\ov\M_{k,l}(B;J,\nu)$, and $\u\!\in\!\cS_{\bh}^*$, then
the relative orientation $\prt_{\fo_{\cS;\u}^c}\fo_{\fp;\bh;\u}$ of~\eref{evcSdfn_e} at~$\u$
does not depend on the choice of~$\fo_{\cS;\u}^c$ if and only if $\ep_{l^*}(\cS)\!\cong\!2,3$ 
mod~4. 
\end{lmm}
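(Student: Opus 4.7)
The plan is to recast the assertion as a statement about when the relative orientation $\fo_{\fp;l^*}\fo_{\bh}$ of the main stratum of~\eref{JakePseudo_e2} extends continuously across the codimension~1 stratum~$\cS_{\bh}^*$, and then to invoke the known extension criterion for $\fo_{\fp;l^*}$ itself.

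First I would carry out the formal reduction. Since $\cN_{\u}\cS$ is one-dimensional, there are exactly two choices of $\fo_{\cS;\u}^c$; denote them by $\fo_{\cS;\u}^{c,\pm}$ with $\fo_{\cS;\u}^{c,-}\!=\!-\fo_{\cS;\u}^{c,+}$, and let $\fo_{\fp;\bh;\u}^{\pm}$ be the corresponding limit relative orientations of~\eref{JakePseudo_e2} obtained by approaching~$\u$ from the two directions of degeneration. Using the first isomorphism in~\eref{lasplits_e} applied to the relative-orientation line bundle $\la(\ev_{k,l;\bh})$ along $\cS_{\bh}^*$, one sees immediately that
$$\fo_{\cS;\u}^{c,-}\fo_{\fp;\bh;\u}^- \;=\; -\,\fo_{\cS;\u}^{c,+}\fo_{\fp;\bh;\u}^-,$$
so the two induced relative orientations $\fo_{\cS;\u}^{c,\pm}\fo_{\fp;\bh;\u}^{\pm}$ of~\eref{evcSdfn_e} at~$\u$ coincide if and only if $\fo_{\fp;\bh;\u}^-\!=\!-\fo_{\fp;\bh;\u}^+$, i.e.\ if and only if $\fo_{\fp;l^*}\fo_{\bh}$ fails to extend continuously across~$\cS_{\bh}^*$ at~$\u$. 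Next, because $\fo_{\bh}$ is pulled back from the oriented factor~$M_{\bh}$ (whose points are unaffected by nodal degenerations of the domain of the $(J,\nu)$-maps) and $\cZ_{k,l;\bh}^{\st}(B;J,\nu)$ is a transverse fiber product whose normal bundle $\cN\cS_{\bh}^*$ is canonically identified with the pullback of~$\cN\cS$, the second isomorphism in~\eref{lasplits_e} shows that $\fo_{\fp;l^*}\fo_{\bh}$ extends across~$\cS_{\bh}^*$ precisely when $\fo_{\fp;l^*}$ extends across~$\cS$ in $\M_{k,l;l^*}^{\st}(B;J,\nu)$.

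It then remains to invoke the extension criterion for the $l^*$-orientation itself: the relative orientation $\fo_{\fp;l^*}$ of~\eref{whevdfn_e} extends across a codimension~1 (disk-bubbling) stratum~$\cS$ if and only if $\ep_{l^*}(\cS)\!\cong\!0,1\ \tn{mod}\ 4$. This is the defining property of the invariant $\ep_{l^*}$ -- it is exactly what determines which strata are cut in passing from $\ov\M_{k,l}(B;J,\nu)$ to $\wh\M_{k,l;l^*}(B;J,\nu)$ -- and is established by the local analysis at the node in the proof of Lemma~\ref{orient_lmm} in Section~\ref{orient_subs}, generalizing the $l^*\!=\!0$ sign computation of \cite{Jake} by tracking how Lemma~\ref{orient_lmm}\ref{fforient_it} and~\ref{orient1pm_it} modify the signs when $l^*$ of the conjugate insertions are converted into divisor constraints. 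Combining the three steps yields the lemma.

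The genuine technical content lives in the third step; the first two are formal manipulations of relative orientations in the framework of Sections~\ref{SignConv_subs}--\ref{Steenrod_subs}. The main obstacle is to carry out the comparison of $\fo_{\fp;\bh;\u}^+$ and $\fo_{\fp;\bh;\u}^-$ by a direct smoothing computation at the real node: one must express the difference as a product of sign contributions coming from the parities of $\lr{c_1(X,\om),B_2(\cS)}$, $k_2(\cS)$, and $l_2(\cS)\!-\!l_2^*(\cS)$, and verify that their sum is congruent to $\ep_{l^*}(\cS)/2$ modulo~$2$ on the even residues. This is where the interaction between the $\Pin^-$-structure~$\fp$ on $X^\phi$, the splitting of the real part of the domain, and the marking of the first $l^*$ conjugate pairs as ``divisor slots'' must be handled carefully.
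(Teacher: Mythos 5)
Your formal reduction in the first two steps is correct and in fact matches the discussion the paper gives just below the statement of the lemma: $\prt_{\fo_{\cS;\u}^c}\fo_{\fp;\bh;\u}$ is independent of the choice of co\nobreakdash-orientation precisely when $\fo_{\fp;l^*}\fo_{\bh}$ does \emph{not} extend across~$\cS_{\bh}^*$, and since $\fo_{\bh}$ comes from the even-dimensional oriented factor~$M_{\bh}$ and $\cN\cS_{\bh}^*$ is canonically identified with~$\cN\cS$, this is equivalent to~$\fo_{\fp;l^*}$ failing to extend across~$\cS$. So far this is consistent with the paper.

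The third step, however, is a genuine gap, in two respects. First, the reference is circular: the extension assertion inside Lemma~\ref{orient_lmm} is \emph{deduced} from Lemma~\ref{orient_lmm2} in Section~\ref{orient_subs}, not established independently there, and $\ep_{l^*}(\cS)$ is not ``defined'' by the extension property --- it is given by the explicit formula in Section~\ref{MapSpaces_subs}, and the mod-$4$ criterion is exactly what must be proved. Second, and more seriously, the sign bookkeeping you outline --- expressing the mismatch between $\fo_{\fp;\bh;\u}^{\pm}$ as a product of signs determined by the parities of $\lr{c_1(X,\om),B_2(\cS)}$, $k_2(\cS)$, $l_2(\cS)\!-\!l_2^*(\cS)$, with ``their sum congruent to $\ep_{l^*}(\cS)/2$ modulo~$2$'' --- cannot produce the answer, because a sum of parities is a $\Z_2$-linear invariant, whereas the criterion $\ep_{l^*}(\cS)\!\cong\!2,3$ mod~$4$ is a mod-$4$ dichotomy and is \emph{quadratic} in $\ep_{l^*}(\cS)$: as recorded in Remark~\ref{orient2_rmk}, the two limiting orientations agree iff $\binom{\ep_{l^*}(\cS)}{2}\!+\!1$ is even. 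The paper obtains this quadratic behavior by splitting the comparison, via the isomorphism~\eref{OrientSubs_e3}, into the Deligne--Mumford factor (Lemma~\ref{DMboundary_lmm}) and the determinant-line factor (Lemma~\ref{DorientComp_lmm}); it is the term $\frac{(\ell_\om(B_2(\cS))-k_2(\cS))(\ell_\om(B_2(\cS))-k_2(\cS)+1)}{2}$ appearing in $\de_D^-(\cS)$ in Lemma~\ref{DorientComp_lmm} that supplies the quadratic contribution. Without some such quadratic input your proposed computation can match the stated criterion on at most two of the four residue classes mod~$4$.
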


The relative orientation~$\fo_{\fp;l^*}\fo_{\bh}$ of the restriction of~\eref{JakePseudo_e2} to
$$\M_{k,l;\bh}(B;J,\nu)\equiv\big\{\big(\u',(y_i)_{i\in[l^*]}\big)
\!\in\!\M_{k,l}(B;J,\nu)\!\times\!M_{\bh}\!:
\ev_i^+(\u')\!=\!h_i(y_i)\,\forall\,i\!\in\![l^*]\big\}$$
extends across $\cS_{\bh}^*$ if and only if
$\prt_{\fo_{\cS;\u}^c}\fo_{\fp;\bh;\u}$ {\it depends}  on the choice of~$\fo_{\cS;\u}^c$ 
for every $\u\!\in\!\cS_{\bh}^*$.
In particular, the first statement of Lemma~\ref{orient_lmm} is an immediate consequence
of Lemma~\ref{orient_lmm2}.
If $\ov\M_{k,l}(B;J,\nu)$ is cut along~$\ov\cS$ and
$\wh\cS^*$ is the double cover of~$\cS^*$ in the~cut, then 
$\prt_{\fo_{\cS;\u}^c}\fo_{\fp;\bh;\u}$ is the boundary relative orientation 
induced by~$\fo_{\fp;l^*}\fo_{\bh}$ at one of the copies~$\wh\u$ of~$\u$ in 
\BE{pHSinc_e2wh}
\wh\cS_{\bh}^*=
\big\{\big(\wh\u',(y_i)_{i\in[l^*]}\big)\!\in\!\wh\cS^*\!\times\!M_{\bh}\!:
\ev_i^+(\wh\u')\!=\!h_i(y_i)\,\forall\,i\!\in\![l^*]\big\};\EE
we then denote it by $\prt\fo_{\fp;\bh;\wh\u}$. 
If $\ep_{l^*}(\cS)\!\cong\!2,3$ mod~4, we abbreviate $\prt_{\fo_{\cS;\u}^c}\fo_{\fp;\bh;\u}$
as~$\prt\fo_{\fp;\bh;\u}$.

\begin{rmk}\label{orient2_rmk}
While Lemma~\ref{orient_lmm2} follows readily from \cite[Prop.~5.3]{Jake},
it is also immediately implied by our Lemmas~\ref{DMboundary_lmm} and~\ref{DorientComp_lmm} 
(which are also needed to establish Proposition~\ref{Rdecomp_prp} below).
The terms $w_2(\psi(d''))$ in \cite[(17),(18)]{Jake} appear to be extra 
(they are omitted in the key invariance argument on \cite[p53]{Jake}).
The first equation in~\cite{Jake} with 
$$\mu(d'')=\ell_{\om}\big(B_2(\cS)\big), \quad k''=k_2(\cS), \quad
l''=l_2(\cS)\!-\!l_2^*(\cS)$$
compares the two possibilities for $\prt_{\fo_{\cS;\u}^c}\fo_{\fp;\bh;\u}$ 
when $k\!=\!0$ or the real marked point~$x_1$ lies on the same component of~$\u$
as the marked points~$z_1^+$; the second equation treats the remaining case.
The right-hand side of the latter reduces to the right-hand side of the former
if \eref{BKcond_e} holds.
The right-hand side of \cite[(17)]{Jake}, without the $w_2(\psi(d''))$ term,
in turn reduces to 
$$\frac{(\mu(d'')\!-\!k''\!-\!2l'')(\mu(d'')\!-\!k''\!-\!2l''\!-\!1)}{2}\!+\!1
\cong\frac{\ep_{l^*}(\cS)(\ep_{l^*}(\cS)\!-\!1)}{2}\!+\!1 \mod2;$$
the last expression vanishes (i.e.~the two orientations are the same)
if and only if $\ep_{l^*}(\cS)\!\cong\!2,3$.
\end{rmk}

The stratum $\cS$ satisfies exactly one of the following conditions:
\BEnum{($\cS\arabic*$)}

\setcounter{enumi}{-1}

\item\label{cS0_it} $K_2(\cS)\!\cap\![k']\!=\!\eset$ and $L_2(\cS)\!\cap\![l']\!=\!\eset$;

\item\label{cS1_it} $|K_2(\cS)\!\cap\![k']|\!=\!1$ and $L_2(\cS)\!\cap\![l']\!=\!\eset$;

\item\label{cS2_it} there exists a codimension~1 stratum $S\!\subset\!\ov\cM_{k',l'}^{\tau}$
such that $\ff_{k',l'}(\cS)\!\subset\!S$.

\EEnum
We call a pair $(\cS,\Ups)$ consisting of $\cS$ as above and 
a (possibly bordered) hypersurface $\Ups\!\subset\!\ov\cM_{k,l}^{\tau}$ \sf{admissible}
if one of the following conditions holds:
\BEnum{($\cS\arabic*\Ups$)}

\item\label{cSUps_it1} $K_2(\cS)\!\cap\![k']\!=\!\{i\}$, $L_2(\cS)\!\cap\![l']\!=\!\eset$,
and $\Ups$ is regular with respect to~$\ff_{k',l';i}^{\R}$;  

\item\label{cSUps_it2} there exists a codimension~1 stratum 
$S\!\subset\!\ov\cM_{k',l'}^{\tau}$
such that $\ff_{k',l'}(\cS)\!\subset\!S$ and $\Ups$ is regular with respect to~$S$.

\EEnum
The notions of $\Ups$ being \sf{regular} with respect to $\ff_{k',l';i}^{\R}$ and~$S$
are defined in Section~\ref{cMorient_subs}. 

For $\Ups\!\subset\!\ov\cM_{k',l'}^{\tau}$, define
\begin{gather}\label{pHSinc_e}
f_{\bp;\Ups}\!: \Ups\lra X_{k,l-l^*}\!\times\!\ov\cM_{k',l'}^{\tau}, \quad
f_{\bp;\Ups}(P)=(\bp,P),\\
\label{pHSinc_e2}
\cS_{\bh,\bp;\Ups}^*=\big\{\big(\u,P\big)\!\in\!\cS_{\bh}^*\!\times\!\Ups\!:
\ev_{\cS;\bh}(\u)\!=\!\bp,\,\ff_{k',l'}(\u)=P\big\}.
\end{gather}
If $(\cS,\Ups)$ is an admissible pair and $\fo_\Ups^c$ is a co-orientation on~$\Ups$, 
we denote by $\deg(\cS,\fo_\Ups^c)\!\in\!\Z$
the corresponding degree $\deg_i^{\R}(\Ups,\fo_\Ups^c)$ or $\deg_S(\Ups,\fo_\Ups^c)$
defined in Section~\ref{cMorient_subs}.

\begin{prp}\label{Rdecomp_prp}
Suppose $(X,\om,\phi)$, $\fp$, $k,k',l,l^*,l',B,\bp,\bh$ are in as
Proposition~\ref{Cdecomp_prp}, 
$$\cS\subset \ov\M_{k,l}(B;J,\nu) \qquad\hbox{and}\qquad \Ups\subset\ov\cM_{k',l'}^{\tau}$$
form an admissible pair, and $\fo_\Ups^c$ is a co-orientation on~$\Ups$.
If $\ve_{l^*}(\cS)\!=\!2$,
the elements of~$\bh$ and~$\bp$ are in general position, and 
\hbox{$(J,\nu)\!\in\!\cH_{k,l}^{\om,\phi}$} is generic, then 
$$\big(\ev_{\cS;\bh},\ff_{k',l'}\big)\!:\cS_{\bh}^*\lra X_{k,l-l^*}\!\times\!\ov\cM^\tau_{k',l'}
\quad\hbox{and}\quad
f_{\bp;\Ups}\!:\Ups \lra X_{k,l-l^*}\!\times\!\ov\cM^\tau_{k',l'}$$
are transverse maps from manifolds 
of complementary dimensions and the set~$\cS_{\bh,\bp;\Ups}^*$ is finite.
Furthermore,
\BE{Rdecomp_e}\begin{split}
&\big|\cS_{\bh,\bp;\Ups}^*\big|_{\prt\fo_{\fp;\bh},\fo_\Ups^c}^{\pm}
=-(-1)^{\dim\,\Ups}\!\deg(\cS,\fo_\Ups^c)
\bigg(\prod_{i\in L_1^*(\cS)}\!\!\!\!\!\!h_i\!\cdot_X\!\!B_1(\cS)\!\!\bigg)
\!\bigg(\prod_{i\in L_2^*(\cS)}\!\!\!\!\!\!h_i\!\cdot_X\!\!B_2(\cS)\!\!\bigg)\\
&\hspace{2.8in}
\times N_{B_1(\cS),l_1(\cS)-l_1^*(\cS)}^{\phi;\fp}N_{B_2(\cS),l_2(\cS)-l_2^*(\cS)}^{\phi;\fp}.
\end{split}\EE
\end{prp}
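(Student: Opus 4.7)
The plan is to realize $\cS^*_{\bh,\bp;\Ups}$ as a nested fiber product of lower-dimensional moduli spaces of maps from the two irreducible components of the nodal domain and then evaluate each factor via Proposition~\ref{JakePseudo_prp}. Abbreviate $k_r\!\equiv\!k_r(\cS)$, $l_r\!\equiv\!l_r(\cS)$, $l_r^*\!\equiv\!l_r^*(\cS)$, $B_r\!\equiv\!B_r(\cS)$ for $r\!=\!1,2$, and split $(\bh,\bp)$ along the partitions of $[k],[l],[l^*]$ determined by~$\cS$ into $(\bh_1,\bh_2)$ and $(\bp_1,\bp_2)$. The standard decomposition of nodal stable maps identifies
\BE{plan1_e}\cS_\bh^*\approx \cZ_1\,_{\ev_{\nod}}\!\!\times_{\ev_{\nod}}\!\cZ_2,
\qquad \cZ_r\!\equiv\!\cZ_{k_r+1,l_r;\bh_r}^{\st}(B_r;J,\nu),\EE
where the fiber product is taken via the evaluations at the extra real marked point introduced at the node on each component. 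The assumption $\ve_{l^*}(\cS)\!=\!2$ translates into the statement that the two evaluation maps $\ev'_{\bh_1}\!:\cZ_1'\!\equiv\!\cZ_{k_1,l_1;\bh_1}^{\st}(B_1;J,\nu)\!\to\!X_{k_1,l_1-l_1^*}$ and $\ev_{\bh_2}\!\equiv\!(\ev'_{\bh_2},\ev_{\nod})\!:\cZ_2\!\to\!X_{k_2+1,l_2-l_2^*}$ are between spaces of equal real dimension, so that the hypothesis of Proposition~\ref{JakePseudo_prp} is met for each.

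Next, the admissibility of $(\cS,\Ups)$ will ensure that the restriction of $\ff_{k',l'}$ to $\cS$ depends only on the first-component data: in case~\ref{cSUps_it1} the single marked point on the second component tracked by $\ov\cM^\tau_{k',l'}$ is paired with the node, rendering that component unstable after the forgetful contraction, while in case~\ref{cSUps_it2} the image of $\ff_{k',l'}|_\cS$ lies in a codimension~1 stratum~$S$ whose product decomposition~\eref{ff1dfn_e2b} pins down the second factor. In either case, $\ff_{k',l'}|_\cS = \ff_1\circ\pi_1$ for a smooth morphism $\ff_1\!:\cZ_1\!\to\!\ov\cM^\tau_{k',l'}$, giving the set-theoretic identity
\BE{plan2_e}\cS^*_{\bh,\bp;\Ups}\approx
\big((\cZ_1)\,_{(\ev'_{\bh_1},\ff_1)}\!\!\times_{f_{\bp_1;\Ups}}\!\Ups\big)\,
_{\ev_{\nod}}\!\!\times_{\ev_{\nod}}\!\ev_{\bh_2}^{-1}(\bp_2).\EE
Applying Lemma~\ref{dropfactor_lmm3} to the outer fiber product in~\eref{plan2_e} reduces $|\cS^*_{\bh,\bp;\Ups}|^\pm$ to a signed product of the inner signed count $|(\cZ_1)\,_{(\ev'_{\bh_1},\ff_1)}\!\!\times_{f_{\bp_1;\Ups}}\!\Ups|^\pm$ with $|\ev_{\bh_2}^{-1}(\bp_2)|^\pm_{\fo_{\fp;\bh_2}}$, up to an explicit parity from the dimensions involved. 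Lemma~\ref{dropfactor_lmm}\ref{dropfactor_it1} identifies the inner count with $\deg(\cS,\fo_\Ups^c)\,\deg(\ev'_{\bh_1},\fo_{\fp;\bh_1})$, by matching the co-orientation $f_2^*\fo_\Ups^c$ on $\ff_1^{-1}(\Ups)$ with the natural fiber co-orientation~$\fo^\R_{\nod}$ appearing in the definitions of $\deg^\R_i(\Ups,\fo_\Ups^c)$ and $\deg_S(\Ups,\fo_\Ups^c)$ in Section~\ref{cMorient_subs}. Proposition~\ref{JakePseudo_prp} then evaluates the two remaining degrees via~\eref{RdivRel_e}, producing the products of intersection numbers and real invariants $N^{\phi;\fp}_{B_r,l_r-l_r^*}$ on the right-hand side of~\eref{Rdecomp_e}.

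The main obstacle will be the complete sign bookkeeping: verifying that the boundary relative orientation $\prt\fo_{\fp;\bh}$ on $\cS_\bh^*$ — inherited from the cut $\wh\M_{k,l;l^*}(B;J,\nu)$ via Lemma~\ref{orient_lmm2} — matches the product relative orientation $\fo_{\fp;\bh_1}\fo_{\fp;\bh_2}$ on the right-hand side of~\eref{plan1_e}, up to the overall sign $-(-1)^{\dim\Ups}$ appearing in~\eref{Rdecomp_e}, after commuting it past the ingredients pulled in by Lemmas~\ref{dropfactor_lmm} and~\ref{dropfactor_lmm3}. This comparison combines three pieces: Lemma~\ref{DMboundary_lmm}, which records how $\fo_{k,l;l^*}$ on $\cM^\tau_{k,l}$ degenerates to the product orientation on a codimension~1 boundary stratum with respect to the two degeneration directions $\fo^{c;\pm}_{S;\cC}$ of Section~\ref{NBstrata_subs}; the structural properties of $\fo_{\fp;l^*}$ in Lemma~\ref{orient_lmm} (especially~\ref{fforient_it}, to propagate the orientation comparison across the divisor constraints~$\bh$, and~\ref{orient1pm_it}, to handle the reversal in the conjugate pair $(z_1^+,z_1^-)$ relative to the node position); and Lemma~\ref{InterOrient_lmm}, which supplies the $(-1)^{\dim\Ups}$ factor needed to commute the boundary of $\Ups$ with the boundary of the map moduli space. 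The residual minus sign in~\eref{Rdecomp_e} should trace to the discrepancy between the two degeneration directions $\fo^{c;\pm}_{S;\cC}$ relative to the outer-normal co-orientation on the cut $\wh\cS$ of $\wh\M_{k,l;l^*}(B;J,\nu)$.
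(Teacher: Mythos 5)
Your overall plan—decompose $\cS_\bh^*$ as a fiber product of $\cZ_1$ and $\cZ_2$, factor $\ff_{k',l'}|_\cS$ through $\cZ_1$ via admissibility, apply Lemmas~\ref{dropfactor_lmm} and~\ref{dropfactor_lmm3}, and evaluate the two ensuing degrees via Proposition~\ref{JakePseudo_prp}—is the route the paper takes. But your sign bookkeeping has two gaps, and you omit one case entirely.

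\textbf{Sign bookkeeping.} You correctly flag that the comparison between $\prt\fo_{\fp;\bh}$ on $\cS_\bh^*$ and the product relative orientation $\fo_{\cS;\bh}\!\equiv\!(\wt\fo_{\fp;\bh_1})_\nod\!\times_\nod\!\fo_{\fp;\bh_2}$ is the crux, but your list of ingredients is off. The overall sign $-1$ comes from a genuinely nontrivial statement — the paper's Lemma~\ref{TMcomp_lmm}, which asserts that $\prt\fo_{\fp;l^*}$ and $\fo_{\fp;l^*}^\cS$ are \emph{opposite} relative orientations of $\la(\ev|_\cS)$. Its proof is a three-square diagram chase (Figure~\ref{TMcomp_fig}, using Lemma~\ref{3by3_lmm}) that combines Lemma~\ref{DMboundary_lmm} (which you cite) with Lemma~\ref{DorientComp_lmm} (which you do not): the latter compares the limiting determinant-line orientation $\fo_{\fp}^{D;\pm}$ with the split orientation from the Pin-structure and contributes the parities $\de_D^\pm(\cS)$, $\de_\R^\pm(\cS_*)$. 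Without Lemma~\ref{DorientComp_lmm} you have no control over how $\fo_{\fp}^D$ degenerates at the node, and the overall minus sign cannot be traced. Separately, the $(-1)^{\dim\Ups}$ factor in \eref{Rdecomp_e} does \emph{not} come from Lemma~\ref{InterOrient_lmm} — that lemma is applied one level up, to derive \eref{bndsplit_e} in the proof of Theorem~\ref{SolWDVV_thm}, and its $(-1)^{\dim\Ups}$ cancels there. Inside Proposition~\ref{Rdecomp_prp}, the $(-1)^{\dim\Ups}$ arises from accumulating the dimension-parity factors of Lemma~\ref{dropfactor_lmm} parts~\ref{dropfactor_it1}--\ref{dropfactor_it3} together with the identity $\dim\Ups_1 = \dim\Ups + 1 - k_2' - 2l_2'$ of \eref{Rdecomppf_e9}; your matching of $f_2^*\fo_\Ups^c$ with $\fo_\nod^\R$ is in the right spirit but needs the explicit Lemma~\ref{fibrasign_lmm1b} to close.

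\textbf{Missing case.} Your argument implicitly uses the degeneration direction $\fo_{\cS}^{c;+}$ distinguished by $z_1^+$ and $z_{i^*}^+$, which is only defined when $L_2^*(\cS)\neq\eset$. When $l_2^*(\cS)=0$ (the second component carries no divisor-constrained conjugate pair with $i\le l^*$), the image $\ff_{k,l}(\cS)$ may not land in a codimension-one stratum of $\ov\cM_{k,l}^\tau$ at all, and the orientation comparisons of Section~\ref{NBstrata_subs} are unavailable. The paper handles this by inserting an auxiliary divisor constraint $h'$ with $h'\cdot_X B_2\neq 0$, reducing to the $l_2^*\neq 0$ case via the divisor relation (the argument surrounding \eref{Rdecomppf_e33}--\eref{Rdecomppf_e35}). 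Your proposal contains no such reduction, so as written it would establish the formula only for strata with $l_2^*(\cS)\ge 1$.
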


Due to the condition $\ve_{l^*}(\cS)\!=\!2$, 
$$k_1(\cS)=\ell_{\om}\big(B_1(\cS)\!\big)\!-\!2\big(l_1(\cS)\!-\!l_1^*(\cS)\!\big) 
\quad\hbox{and}\quad
k_2(\cS)\!+\!1=\ell_{\om}\big(B_2(\cS)\!\big)\!-\!2\big(l_2(\cS)\!-\!l_2^*(\cS)\!\big),$$
i.e.~the second irreducible component of the maps in~$\cS$ passes through an extra real point.

\begin{rmk}\label{Rdecomp_rmk}
The crucial property of $(\cS,\Ups)$ used in the proof is that the condition
\hbox{$f_{k',l'}(\u)\!\in\!\Ups$} in~\eref{pHSinc_e2} factors through
\begin{equation*}\begin{split}
\cS&\lra \ov\M_{k_1(\cS)+1,l_1(\cS)}\big(B_1(\cS);J,\nu_1\big)\!\times\!
\ov\M_{k_2(\cS)+1,l_2(\cS)}\big(B_2(\cS);J,\nu_2\big)\\
&\lra \ov\M_{k_1(\cS)+1,l_1(\cS)}\big(B_1(\cS);J,\nu_1\big)
\end{split}\end{equation*}
for a good choice of~$\nu$.
Thus, Lemma~\ref{dropfactor_lmm3} applies.
\end{rmk}

\subsection{Proof of Theorem~\ref{SolWDVV_thm}}
\label{SolWDVVpf_subs}

Fix $(X,\om,\phi)$, $\fp$, and $B$ as in Theorem~\ref{SolWDVV_thm},
take~$k$ as in~\eref{dimcond_e} and $l^*\!\in\!\{2,3\}$, and
choose generic tuples
$$\bh\!\equiv\!\big(h_i\!:H_i\!\lra\!X\big)_{i\in[l^*]} \qquad\hbox{and}\qquad
\bp\!\equiv\!\big((p_i^{\R})_{i\in[k]},(p_i^+)_{i\in[l+l^*-1]-[l^*]}\big)\in
X_{k,l-1}$$
so that each $h_i$ is a codimension~2 pseudocycle, 
\BE{h1h2inter_e} \ell_{\om}(B)=2l\!+\!k,  \qquad\hbox{and}\qquad
h_1\!\cdot_X\!h_3=0.\EE
We deduce the three relations of Theorem~\ref{SolWDVV_thm}, with 
$N_{B,l}^{\phi;\fp}(X^{\phi})$ replaced by $N_{B,l}^{\phi;\fp}$ and
the left-hand sides multiplied by $h_1\!\cdot_X\!h_2$,
from Propositions~\ref{Cdecomp_prp} and~\ref{Rdecomp_prp} and several lemmas stated so~far.

For $(k',l')\!=\!(1,2),(0,3)$ such that $k'\!\le\!k$ and $l'\!\le\!l\!+\!l^*\!-\!1$,
we denote~by
$$\ff_{k',l'}\!:\wh\M_{k,l+l^*-1;l^*}(B;J,\nu)\lra\ov\cM_{k',l'}^{\tau}$$
the composition of~\eref{ffdfn_e} and the quotient map~$q$ in~\eref{qdfn_e}
with $l$ replaced by $l\!+\!l^*\!-\!1$.
For a stratum~$\cS$ of $\ov\M_{k,l+l^*-1}(B;J,\nu)$, let
$$\wh\cS^*=q^{-1}(\cS^*)\subset\wh\M_{k,l+l^*-1;l^*}(B;J,\nu).$$
With the notation as in~\eref{whMstdfn_e}, let
\begin{gather*}
M_{\bh}=H_1\!\times\!\ldots\!\times\!H_{l^*}, \\
\wh\cZ_{k,l+l^*-1;\bh}^{\st}(B;J,\nu)=\big\{(\u,y_1,\ldots,y_{l^*})\!\in\!
\wh\M_{k,l+l^*-1;l^*}^{\st}(B;J,\nu)\!\times\!M_{\bh}\!:
\ev_i^+(\u)\!=\!h_i(y_i)~\forall\,i\!\in\![l^*]\big\}.
\end{gather*}
For $(J,\nu)\!\in\!\cH_{k,l+l^*-1}^{\om,\phi}$ generic,
the relative orientation $\wh\fo_{\fp;l^*}$ of Lemma~\ref{orient_lmm} and 
the orientation~$\fo_{\bh}$ of~$M_{\bh}$ 
determine a relative orientation~$\wh\fo_{\fp;\bh}$
of the~map
$$\ev_{k,l+l^*-1;\bh}\!: \wh\cZ_{k,l+l^*-1;\bh}^{\st}\big(B;J,\nu\big)\lra X_{k,l-1}$$
induced by~\eref{fMevdfn_e} with $l$ replaced by $l\!+\!l^*\!-\!1$.

Below we take $\Ups\!\subset\!\ov\cM_{k',l'}^{\tau}$ to be the bordered compact hypersurfaces
of Lemmas~\ref{M12rel_lmm} and~\ref{M03rel_lmm} with their co-orientations~$\fo_\Ups^c$.
For a stratum~$\cS$ of $\ov\M_{k,l+l^*-1}(B;J,\nu)$, let 
$$\cS_{\bh,\bp;\Ups}^*\subset \cS^*\!\times\!M_{\bh}\!\times\!\Ups
\quad\hbox{and}\quad
\wh\cS_{\bh}^*\equiv  
\wh\cZ_{k,l+l^*-1;\bh}^{\st}\big(B;J,\nu\big)\!\cap\!\big(\wh\cS^*\!\times\!M_{\bh}\big)$$
be as in~\eref{pHSinc_e2} and~\eref{pHSinc_e2wh}, respectively, and
$$\wh\cS_{\bh,\bp;\Ups}^*=
\big\{\big(\wh\u,P\big)\!\in\!\wh\cS_{\bh}^*\!\times\!\Ups\!:
\ev_{k,l+l^*-1;\bh}(\wh\u)\!=\!\bp,\,\ff_{k',l'}(\wh\u)\!=\!P\big\}\,.$$
We establish the next two statements at the end of this section.

\begin{lmm}\label{mainsetup_lmm}
For a generic choice of $(J,\nu)\!\in\!\cH_{k,l+l^*-1}^{\om,\phi}$, the~map
\BE{evfkl_e} \big(\ev_{k,l+l^*-1;\bh},\ff_{k',l'}\big)\!: 
\wh\cZ_{k,l+l^*-1;\bh}^{\st}(B;J,\nu)\lra X_{k,l-1}\!\times\!\ov\cM_{k',l'}^{\tau}\EE
is a bordered $\Z_2$-pseudocycle of dimension $4l\!+\!2k\!-\!2$
transverse to~\eref{pHSinc_e}.
\end{lmm}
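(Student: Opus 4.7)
The plan is to verify, for a generic $(J,\nu)\!\in\!\cH_{k,l+l^*-1}^{\om,\phi}$, the three conditions defining a bordered $\Z_2$-pseudocycle---compact image closure, codimension-2 control on the limit set, and the boundary condition---and then to establish the transversality with~$f_{\bp;\Ups}$ via parametric Sard-Smale.

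First I would verify the dimension. For generic $(J,\nu)$ the main stratum $\M_{k,l+l^*-1}(B;J,\nu)$ is a smooth manifold of real dimension $\ell_{\om}(B)\!+\!k\!+\!2(l\!+\!l^*\!-\!1)$, and imposing the $l^*$ codimension-2 incidence conditions $\ev_i^+(\u)\!=\!h_i(y_i)$ cuts the dimension down by~$2l^*$, yielding $\ell_{\om}(B)\!+\!k\!+\!2l\!-\!2\!=\!4l\!+\!2k\!-\!2$ via~\eref{dimcond_e}. Compactness of the image closure in $X_{k,l-1}\!\times\!\ov\cM_{k',l'}^\tau$ then follows from Gromov compactness applied to $\ov\M_{k,l+l^*-1}(B;J,\nu)$ together with the compactness of the closures of the images of the pseudocycles~$h_i$.

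Next I would control the limit set~$\Om(\ev_{k,l+l^*-1;\bh},\ff_{k',l'})$. A divergent sequence in $\wh\cZ_{k,l+l^*-1;\bh}^{\st}(B;J,\nu)$ either escapes to the limit set of some pseudocycle~$h_i$, which already sits in a manifold of dimension at most $\dim H_i\!-\!2$, or Gromov-converges to a stable map lying outside the simple-with-at-most-one-node locus, i.e.~one with a multiply covered irreducible component or with at least two (non-conjugate) nodes. Standard transversality for simple real $(J,\nu)$-holomorphic maps as in~\cite{Penka2} shows that, for generic~$(J,\nu)$, each stratum of such maps has codimension at least~2 in $\ov\M_{k,l+l^*-1}(B;J,\nu)$; genericity of~$\bh$ preserves this codimension after intersecting with~$M_{\bh}$ and composing with $(\ev_{k,l+l^*-1;\bh},\ff_{k',l'})$. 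The codimension-1 one-node strata with $\ve_{l^*}\!\equiv\!0,1$ mod~4 lie in the interior of $\wh\M_{k,l+l^*-1;l^*}^{\st}(B;J,\nu)$ by construction, whereas those with $\ve_{l^*}\!\equiv\!2,3$ are doubled and form the boundary $\prt\wh\cZ_{k,l+l^*-1;\bh}^{\st}(B;J,\nu)$, which plays the role of~$\cZ$ in the definition of a bordered pseudocycle.

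For transversality with $f_{\bp;\Ups}$ I would apply parametric Sard-Smale to the universal moduli space over $\cH_{k,l+l^*-1}^{\om,\phi}$ to ensure, for generic $(J,\nu)$, that (a)~$\ev_{k,l+l^*-1;\bh}$ is transverse to~$\bp$, (b)~$\ff_{k',l'}$ is transverse to~$\Ups$ and to~$\prt\Ups$, and (c)~the combined map $(\ev_{k,l+l^*-1;\bh},\ff_{k',l'})$ is transverse to $f_{\bp;\Ups}$ on the main stratum as well as on each one-node stratum of $\wh\cZ^{\st}$. The forgetful morphism $\ff_{k',l'}$ is a submersion on the main stratum of $\M_{k,l+l^*-1}(B;J,\nu)$ and restricts to a submersion onto the appropriate codim-1 stratum on each one-node stratum where marked points remain free; the regularity conditions on~$\Ups$ from Section~\ref{cMorient_subs} built into Lemmas~\ref{M12rel_lmm} and~\ref{M03rel_lmm} (disjointness from~$\ov{S}_1$, transversality to the other codim-1 strata of $\ov\cM_{k',l'}^\tau$, and, when $(k',l')\!=\!(1,2)$, regularity with respect to~$\ff_{1,2;1}^{\R}$) then guarantee that these transversality arguments respect the relevant stratifications on both source and target. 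The hardest part is the codim-2 control on the limit set when Gromov limits contain multiply covered or multi-node components that become unstable under~$\ff_{k',l'}$: contracting such components can collapse strata, so one must track how $(\ev_{k,l+l^*-1;\bh},\ff_{k',l'})$ shrinks each stratum and check that the bound $\dim\,\Om\!\le\!\dim\,\wh\cZ^{\st}\!-\!2$ survives inside $X_{k,l-1}\!\times\!\ov\cM_{k',l'}^\tau$.
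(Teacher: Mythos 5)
Your proposal follows essentially the same route as the paper's proof: dimension count on the main stratum via index, Gromov compactness for compact image closure, codimension-$2$ control of the limit set by stratifying the complement of $\wh\M_{k,l+l^*-1;l^*}^{\st}(B;J,\nu)$ into multi-node and non-simple loci, and transversality to $f_{\bp;\Ups}$ via Sard-Smale combined with the regularity hypotheses on~$\Ups$. The one technical point you flag as hard but leave open---verifying that contracting unstable components under $\ff_{k',l'}$ does not spoil the codimension-$2$ bound---is exactly what the paper does by the explicit stratum-by-stratum dimension estimates~\eref{mainsetup_e3a}--\eref{mainsetup_e3b}, and the paper also works with the unfolded target $X_{k,l+l^*-1}\!\times\!\ov\cM^\tau_{k',l'}$ and the map~$f_{\bh;\bp;\Ups}$ to keep the divisor constraints in the target rather than the source, which streamlines that bookkeeping.
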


\begin{crl}\label{mainsetup_crl}
For a generic choice of $(J,\nu)\!\in\!\cH_{k,l+l^*-1}^{\om,\phi}$, 
$$M_{(\ev_{k,l+l^*-1;\bh},\ff_{k',l'}),f_{\bp;\Ups}}
\subset \wh\cZ_{k,l+l^*-1;\bh}^{\st}(B;J,\nu)\!\!\times\!\Ups$$ 
is a compact one-dimensional manifold with boundary~\eref{bndsplit_e0}
and 
\BE{mainsetup_e2}
\big(\prt\wh\cZ_{k,l+l^*-1;\bh}^{\st}(B;J,\nu)\!\big)
_{(\ev_{k,l+l^*-1;\bh},\ff_{k',l'})}\!\!\times\!_{f_{\bp;\Ups}}\Ups
=\bigsqcup_{\ep_{l^*}(\cS)=2}\!\!\!\!\!\wh\cS_{\bh,\bp;\Ups}^*,\EE
with the union taken over the codimension~1 strata $\cS$ of $\ov\M_{k,l+l^*-1}(B;J,\nu)$
that satisfy either~\ref{cS1_it} or~\ref{cS2_it} above Proposition~\ref{Rdecomp_prp}.
\end{crl}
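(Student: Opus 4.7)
The plan is to first dispose of the structural claims and then work out the boundary identity~\eref{mainsetup_e2} via a case analysis on the codimension~1 strata of $\ov\M_{k,l+l^*-1}(B;J,\nu)$. By Lemma~\ref{mainsetup_lmm}, the map~\eref{evfkl_e} is a bordered $\Z_2$-pseudocycle of dimension $4l\!+\!2k\!-\!2$ strongly transverse to the embedding~$f_{\bp;\Ups}$. Since $\dim\Ups=k'\!+\!2l'\!-\!4$ and $\dim(X_{k,l-1}\!\times\!\ov\cM_{k',l'}^\tau)=2k\!+\!4(l\!-\!1)\!+\!(k'\!+\!2l'\!-\!3)$, a direct count gives
$$\dim\, M_{(\ev_{k,l+l^*-1;\bh},\ff_{k',l'}),f_{\bp;\Ups}}=1.$$
Compactness of this fiber product follows because the closure of the image of~\eref{evfkl_e} is compact, $f_{\bp;\Ups}$ is proper, and the limit set of the pseudocycle has codimension at least~2 in the target and therefore misses~$\Ups$ for dimensional reasons. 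The boundary decomposition~\eref{bndsplit_e0} is then the standard splitting for a strongly transverse fiber product of manifolds with boundary.

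To establish~\eref{mainsetup_e2}, I would analyze the codimension~1 strata~$\cS$ of $\ov\M_{k,l+l^*-1}(B;J,\nu)$ with $\ep_{l^*}(\cS)\!\cong\!2,3$ mod~4 (these are precisely the strata contributing double covers to $\prt\wh\M_{k,l+l^*-1;l^*}^{\st}(B;J,\nu)$) and show that only those with $\ep_{l^*}(\cS)\!=\!2$ satisfying either~\ref{cS1_it} or~\ref{cS2_it} can give nonempty $\wh\cS^*_{\bh,\bp;\Ups}$. First, if $\ep_{l^*}(\cS)\!>\!2$, then by the virtual dimension formula the restriction $\ev_{\cS;\bh}$ has image of positive codimension in $X_{k,l-1}$, so a generic~$\bp$ avoids it. Second, if $\cS$ is of type~\ref{cS0_it}, then $\ff_{k',l'}$ contracts the second component of every map in~$\cS$, so $\ff_{k',l'}(\cS)$ is contained in a codimension~$\ge\!1$ locus of $\ov\cM_{k',l'}^\tau$ recording the node position, which generically misses~$\Ups$. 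Third, any remaining stratum whose image under~$\ff_{k',l'}$ lies in $\ov S_1\!\subset\!\ov\cM_{k',l'}^\tau$ is excluded because the regularity of~$\Ups$ forces $\Ups\!\cap\!\ov S_1\!=\!\eset$ (see the end of Section~\ref{cMorient_subs}).

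What remains are exactly the strata of type~\ref{cS1_it} or~\ref{cS2_it} with $\ep_{l^*}(\cS)\!=\!2$. For such~$\cS$, the regularity of~$\Ups$ with respect to the relevant forgetful morphism~$\ff_{k',l';i}^\R$ or codimension~1 stratum~$S$ makes the pair $(\cS,\Ups)$ admissible in the sense of~\ref{cSUps_it1} or~\ref{cSUps_it2}, so each $\wh\cS^*_{\bh,\bp;\Ups}$ is cut out transversely and appears on the right-hand side of~\eref{mainsetup_e2}. The hardest step is the first elimination: tracking precisely how $\ep_{l^*}(\cS)$ controls the codimension of the image of~$\ev_{\cS;\bh}$ in $X_{k,l-1}$, since the codimension~$2$ divisor constraints~$\bh$ can absorb part of the nominal deficit. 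I would verify this by combining the virtual dimension formula for~$\cS$ with the definition of~$\ep_{l^*}(\cS)$ in Section~\ref{MapSpaces_subs}, using that each~$h_i$ is in general position and that $\bh$ imposes constraints only on the $l^*$ marked points distributed between the two components.
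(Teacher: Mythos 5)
The structural claims (dimension~$1$, compactness, boundary decomposition) are handled correctly via Lemma~\ref{mainsetup_lmm}. The case analysis for~\eref{mainsetup_e2}, however, has two genuine gaps and one incorrect step.

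First, your elimination of the $\ep_{l^*}(\cS)\!\neq\!2$ strata does not work as stated. You claim that for $\ep_{l^*}(\cS)\!>\!2$ the image of $\ev_{\cS;\bh}$ has positive codimension in $X_{k,l-1}$ by a virtual dimension count, but in fact $\dim\cS_{\bh}\!=\!\dim X_{k,l-1}\!+\!1$ for \emph{every} codimension~$1$ stratum~$\cS$, regardless of the value of $\ep_{l^*}(\cS)$, so a naive dimension count never rules out $\ev_{\cS;\bh}^{-1}(\bp)\!\neq\!\eset$. The correct mechanism (used in the paper) is that the evaluation map \emph{factors} through a product of smaller moduli spaces $\M_{k_r,l_r}(B_r;J,\nu_r')$ obtained by forgetting the node on each component, and the dimension count must be applied to each factor separately: this yields $\ell_{\om}(B_r)\!-\!2(l_r\!-\!l_r^*)\!-\!k_r\!\ge\!0$ for $r\!=\!1,2$, whose sum equals~$1$, forcing $\ep_{l^*}(\cS)\!\in\!\{1,2\}$ and hence (by the mod~$4$ hypothesis) $\ep_{l^*}(\cS)\!=\!2$ and~\eref{mainsetup_e25}. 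This factoring requires that the forgetful morphisms exist, i.e.\ $B_r\!\neq\!0$ or $2l_r\!+\!k_r\!\ge\!3$, so the unstable bubble cases $B_2\!=\!0$ with $(k_2,l_2)\!\in\!\{(0,1),(2,0)\}$ must be treated separately (they satisfy $\ep_{l^*}(\cS)\!=\!-2\!\cong\!2$ mod~$4$, so they \emph{do} appear in $\prt\wh\M^{\st}$ and your argument never addresses them, nor any of the negative values $\ep_{l^*}(\cS)\!\in\!\{-1,-2,-5,\ldots\}$). These degenerate strata are eliminated because the evaluation on the constant bubble collapses to a point or diagonal, not by dimension of a forgetful target.

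Second, your elimination of the type~\ref{cS0_it} strata is based on an incorrect claim. You assert that $\ff_{k',l'}(\cS)$ lies ``in a codimension~$\ge\!1$ locus of $\ov\cM_{k',l'}^{\tau}$ recording the node position.'' But for a type~\ref{cS0_it} stratum, all of the retained marked points (indexed by $[k']$ and $[l']$) lie on the first component; after applying $\ff_{k',l'}$, the second component has only the node and gets contracted, and the node position on the first component is \emph{forgotten} in the stabilization. So $\ff_{k',l'}(\cS)$ typically lands in the \emph{main} stratum of $\ov\cM_{k',l'}^{\tau}$, not a proper subvariety. The actual reason these strata drop out is again a dimension count: since $(\ev,\ff_{k',l'})$ restricted to $\wh\cS^*$ factors through $\M_{k_1,l_1}(B_1;J,\nu_1')$ alone, while~\eref{mainsetup_e25} makes the constrained first-component space $\cZ_1'$ exactly the same dimension as $X_{k_1,l_1-l_1^*}$, the extra codimension-$1$ condition $\ff_{k',l'}\!\in\!\Ups$ is generically missed.
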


In our case, $\Ups\!\cap\!\ov{S}_1\!=\!\eset$.
If $\cS_{\bh,\bp;\Ups}^*\!\neq\!\eset$ and $\cS$ satisfies~\ref{cS2_it}, then $S\!\!\neq\!S_1$.
Combined with the assumption that $(k',l')$ is either~$(1,2)$ or~$(0,3)$,
this implies that the pair $(\cS,\Ups)$ is admissible in the sense defined
above Proposition~\ref{Rdecomp_prp} whenever~$\cS$ contributes to
the right-hand side of~\eref{mainsetup_e2}. 

The identity~\eref{bndsplit_e} follows from Lemma~\ref{InterOrient_lmm}.
We use Corollary~\ref{mainsetup_crl} and Proposition~\ref{Rdecomp_prp} 
to express the right-hand side of this identity, 
i.e.~the signed cardinality of~\eref{mainsetup_e2}, in terms of the real invariants~$N_{B',l'}^{\phi,\fp}$.
We use Lemma~\ref{dropfactor_lmm}\ref{dropfactor_it1} and Proposition~\ref{Cdecomp_prp} 
to express the left-hand side of~\eref{bndsplit_e} in terms of 
the real invariants~$N_{B',l'}^{\phi,\fp}$ and the complex invariants~$N_{B'}^X$.
Setting the two expressions equal and dividing by~2,
we obtain the two identities of Theorem~\ref{SolWDVV_thm}.

\begin{proof}[{\bf{\emph{Proof of~\ref{Sol12rec_it}}}}]
We take $l^*\!=\!2$.
Since $k,l\!\ge\!1$ in this case, the morphism
$$\ff_{1,2}\!:  \wh\cZ_{k,l+1;\bh}^{\st}(B;J,\nu)\lra \ov\cM_{1,2}^{\tau}$$
is well-defined.
Let $P^{\pm}\!\in\!\ov\cM_{1,2}^{\tau}$, $\Ups\!\subset\!\ov\cM_{1,2}^{\tau}$,
$\fo_{P^{\pm}}^c$, and $\fo_\Ups^c$ be as in the statement of Lemma~\ref{M12rel_lmm}.
Let $\cA_1^{\R}$ (resp.~$\cA_2$) be the collection of the codimension~1 strata~$\cS$
of $\ov\M_{k,l+1}(B;J,\nu)$ with $\ep_2(\cS)\!=\!2$
such~that the irreducible component~$\P^1_1$ 
carrying $(z_1^+,z_1^-)$ (resp.~the other component~$\P^1_2$)
of the maps in~$\cS$ carries the conjugate pair $(z_2^+,z_2^-)$, but 
not the real marked point~$x_1$.
Each such stratum is doubly covered by a stratum~$\wh\cS$ of $\prt\wh\M_{k,l+1;2}^{\st}(B;J,\nu)$.

By Corollary~\ref{mainsetup_crl} and Proposition~\ref{Rdecomp_prp},
{\it half} of the right-hand side of~\eref{bndsplit_e},
{\it not} including the sign in~front, equals
\begin{equation*}\begin{split}
\sum_{\cS\in\cA_1^{\R}}\!\!\!
\big|\cS_{\bh,\bp;\Ups}^*\big|_{\prt\fo_{\fp;\bh},\fo_\Ups^c}^{\pm}
+\!\!\sum_{\cS\in\cA_2}\!\!\!\big|\cS_{\bh,\bp;\Ups}^*\big|_{\prt\fo_{\fp;\bh},\fo_\Ups^c}^{\pm}
&=\!\!\sum_{\cS\in\cA_1^{\R}}\!\!\!
\big(h_1\!\cdot_X\!B_1(\cS)\!\big)\!\big(h_2\!\cdot_X\!B_1(\cS)\!\big)
N_{B_1(\cS),l_1(\cS)-2}^{\phi;\fp}N_{B_2(\cS),l_2(\cS)}^{\phi;\fp}\\
&-\!\!\!\sum_{\cS\in\cA_2}\!\!\!\big(h_1\!\cdot_X\!B_1(\cS)\!\big)\!\big(h_2\!\cdot_X\!B_2(\cS)\!\big)
N_{B_1(\cS),l_1(\cS)-1}^{\phi;\fp}N_{B_2(\cS),l_2(\cS)-1}^{\phi;\fp}.
\end{split}\end{equation*}
Summing over all splittings of $B\!\in\!H_2(X)$ into~$B_1$ and~$B_2$,
of $l\!-\!1$ conjugate pairs of points into sets of cardinalities~$l_1$ and~$l_2$,
and of $k\!-\!1$ real points into sets of cardinalities $\ell_{\om}(B_i)\!-\!2l_i$, 
we obtain
\BE{Sol12rec_e5}\begin{split}
&\frac12\big|\big(\prt\wh\cZ_{k,l+1;\bh}^{\st}(B;J)\!\big)\,_{(\ev_{k,l+1;\bh},\ff_{k',l'})}
\!\!\times_{f_{\bp;\Ups}}\Ups\big|_{\prt\wh\fo_{\fp;\bh},\fo_\Ups^c}^{\pm}\\
&\hspace{.7in}
=\sum_{\begin{subarray}{c}B_1,B_2\in H_2(X)-\{0\}\\ B_1+B_2=B\\  
l_1+l_2=l-1,\,l_1,l_2\ge0\end{subarray}} \hspace{-.4in}
\big(h_1\!\cdot_X\!B_1\!\big)\!\big(h_2\!\cdot_X\!B_1\!\big)\binom{l\!-\!1}{l_1}
\!\binom{\ell_{\om}(B)\!-\!2l\!-\!1}{\ell_{\om}(B_1)\!-\!2l_1}
N_{B_1,l_1}^{\phi;\fp}N_{B_2,l_2}^{\phi;\fp}\\
&\hspace{1in}-\!\!\!\!\!\sum_{\begin{subarray}{c}B_1,B_2\in H_2(X)-\{0\}\\ B_1+B_2=B\\  
l_1+l_2=l-1,\,l_1,l_2\ge0\end{subarray}} \hspace{-.4in}
\big(h_1\!\cdot_X\!B_1\!\big)\!\big(h_2\!\cdot_X\!B_2\!\big)\binom{l\!-\!1}{l_1}
\!\binom{\ell_{\om}(B)\!-\!2l\!-\!1}{\ell_{\om}(B_1)\!-\!2l_1\!-\!1}
N_{B_1,l_1}^{\phi;\fp}N_{B_2,l_2}^{\phi;\fp}\,.
\end{split}\EE
We note that $l_1\!\equiv\!l_1(\cS)\!-\!2$ in the $\cA_1^{\R}$ sum above and 
$l_1\!\equiv\!l_1(\cS)\!-\!1$ in the $\cA_2$ sum,
because the subtractions from~$l_1(\cS)$ correspond to the insertions of
the divisors~$H_1,H_2$; the meaning of~$l_2$ is analogous.

By Lemma~\ref{dropfactor_lmm}\ref{dropfactor_it1} and Proposition~\ref{Cdecomp_prp}, 
{\it half} of the left-hand side of~\eref{bndsplit_e} equals
\begin{equation*}\begin{split}
\big|\ev_{P^+;\bh}^{-1}(\bp)\big|_{\fo_{P^+;\fp;\bh}}^{\pm}
&=\big(h_1\!\cdot_X\!\!h_2\big)N_{B,l}^{\phi;\fp}+2^{\ell_{\om}(B/2)-3}\lr{B}_l
\big(h_1\!\cdot_X\!B\big)\!\big(h_2\!\cdot_X\!B\big)
\!\!\!\!\!
\sum_{\begin{subarray}{c}B'\in H_2(X)\\ \fd(B')=B\end{subarray}}\!\!\!\!\!\!\!\!N_{B'}^X\\
&~~~+\!\!\!\!
\sum_{\begin{subarray}{c}B_0,B'\in H_2(X)-\{0\}\\ B_0+\fd(B')=B\end{subarray}}
\hspace{-.4in} 2^{\ell_{\om}(B')}\!\big(B_0\!\!\cdot_X\!\!B'\big)
\big(h_1\!\cdot_X\!B'\big)\!\big(h_2\!\cdot_X\!B'\big)
\binom{l\!-\!1}{\ell_{\om}(B')}N_{B'}^XN_{B_0,l-1-\ell_{\om}(B')}^{\phi;\fp}.
\end{split}\end{equation*}
Equating this expression with the negative of~\eref{Sol12rec_e5}, 
as dictated by~\eref{bndsplit_e},
we obtain the first identity in Theorem~\ref{SolWDVV_thm}
with the left-hand side multiplied by~$h_1\!\cdot_X\!h_2$.
\end{proof}

\begin{proof}[{\bf{\emph{Proof of~\ref{Sol03rec_it}}}}]
We again take $l^*\!=\!2$.
Since $l\!\ge\!2$ in this case, the morphism
$$\ff_{0,3}\!: \wh\cZ_{k,l+1;\bh}^{\st}(B;J,\nu)\lra \ov\cM_{0,3}^{\tau}$$
is well-defined.
Let $\Ga^{\pm}_i,\Ups,\ga'\!\subset\!\ov\cM_{0,3}^{\tau}$,
$\fo_{\Ga_i^{\pm}}^c$, $\fo_\Ups^c$, and~$\fo_{\ga'}^c$ 
be as in the statement of Lemma~\ref{M03rel_lmm}.
Let $\cA_2$ (resp.~$\cA_3$) be the collection of the codimension~1 strata~$\cS$
of $\ov\M_{k,l+1}(B;J,\nu)$ with $\ep_2(\cS)\!=\!2$
such~that $\P^1_1$ (resp.~$\P^1_2$)
carries the conjugate pair $(z_3^+,z_3^-)$, but not $(z_2^+,z_2^-)$.

By Corollary~\ref{mainsetup_crl} and Proposition~\ref{Rdecomp_prp},
{\it half} of the right-hand side of~\eref{bndsplit_e} equals
\begin{equation*}\begin{split}
\sum_{\cS\in\cA_2}\!\!\!
\big|\cS_{\bh,\bp;\Ups}^*\big|_{\prt\fo_{\fp;\bh},\fo_\Ups^c}^{\pm}
+\!\!\sum_{\cS\in\cA_3}\!\!\!
\big|\cS_{\bh,\bp;\Ups}^*\big|_{\prt\fo_{\fp;\bh},\fo_\Ups^c}^{\pm}
&
=\sum_{\cS\in\cA_3}\!\!\!\big(h_1\!\cdot_X\!B_1(\cS)\!\big)\!\big(h_2\!\cdot_X\!B_1(\cS)\!\big)
N_{B_1(\cS),l_1(\cS)-2}^{\phi;\fp}N_{B_2(\cS),l_2(\cS)}^{\phi;\fp}\\
&
-\!\!\sum_{\cS\in\cA_2}\!\!\!\big(h_1\!\cdot_X\!B_1(\cS)\!\big)\!\big(h_2\!\cdot_X\!B_2(\cS)\!\big)
N_{B_1(\cS),l_1(\cS)-1}^{\phi;\fp}N_{B_2(\cS),l_2(\cS)-1}^{\phi;\fp}.
\end{split}\end{equation*}
Summing over all splittings of $B\!\in\!H_2(X)$ into~$B_1$  and~$B_2$,
of $l\!-\!2$ conjugate pairs of points into sets of cardinalities~$l_1$ and~$l_2$,
and of $k$ real points into sets of the appropriate cardinalities, 
we obtain
\BE{Sol03rec_e5}\begin{split}
&\frac12\big|\big(\prt\wh\cZ_{k,l+1;\bh}^{\st}(B;J)\!\big)_{(\ev_{k,l+1;\bh},\ff_{k',l'})}
\!\times_{f_{\bp;\Ups}}\!\Ups\big|_{\prt\wh\fo_{\fp;\bh},\fo_\Ups^c}^{\pm}\\
&\hspace{.2in}=\!\sum_{\begin{subarray}{c} B_1,B_2\in H_2(X)-\{0\}\\ B_1+B_2=B\\
l_1+l_2=l-2,\,l_1,l_2\ge0\end{subarray}} \hspace{-.42in}
\big(h_1\!\cdot_X\!B_1\big)\!\big(h_2\!\cdot_X\!B_1\big)
\binom{l\!-\!2}{l_1}
\binom{\ell_{\om}(B)\!-\!2l}{\ell_{\om}(B_1)\!-\!2l_1}
N_{B_1,l_1}^{\phi;\fp}N_{B_2,l_2+1}^{\phi;\fp}\\
&\hspace{.4in}
-\!\!\!\!\!\!\sum_{\begin{subarray}{c}B_1,B_2\in H_2(X)-\{0\}\\ B_1+B_2=B\\ 
l_1+l_2=l-2,\,l_1,l_2\ge0\end{subarray}} \hspace{-.42in}
\big(h_1\!\cdot_X\!B_1\big)\!\big(h_2\!\cdot_X\!B_2\!\big)
\binom{l\!-\!2}{l_1}
\binom{\ell_{\om}(B)\!-\!2l}{\ell_{\om}(B_1)\!-\!2l_1\!-\!2}
N_{B_1,l_1+1}^{\phi;\fp}N_{B_2,l_2}^{\phi;\fp}\,.
\end{split}\EE
We note that $l_1\!\equiv\!l_1(\cS)\!-\!2$ in both sums above, because
$l_1(\cS)\!-\!1$ includes the conjugate pair $(z_3^+,z_3^-)$ in
the $\cA_2$ sum;
this pair is included into $l_2(\cS)$ in the $\cA_3$~sum.

By Lemma~\ref{dropfactor_lmm}\ref{dropfactor_it1} and Proposition~\ref{Cdecomp_prp},  
{\it half} of the {\it negative} of the left-hand side of~\eref{bndsplit_e} equals
\begin{equation*}\begin{split}
&\big|\ev_{\Ga_3^+;\bh}^{\,-1}(\bp)\big|_{\fo_{\Ga_3^+;\fp;\bh}}^{\pm}
-\big|\ev_{\Ga_2^+;\bh}^{\,-1}(\bp)\big|_{\fo_{\Ga_2^+;\fp;\bh}}^{\pm}
=\big(h_1\!\cdot_X\!h_2\big)N_{B,l}^{\phi;\fp}\\
&\hspace{.5in}+\!\!\!\!
\sum_{\begin{subarray}{c}B_0,B'\in H_2(X)-\{0\}\\ B_0+\fd(B')=B\end{subarray}}
\hspace{-.4in} 2^{\ell_{\om}(B')}\!\big(B_0\!\!\cdot_X\!\!B'\big)
\big(h_1\!\cdot_X\!B'\big)\!\big(h_2\!\cdot_X\!B'\big)
\binom{l\!-\!2}{\ell_{\om}(B')}N_{B'}^XN_{B_0,l-1-\ell_{\om}(B')}^{\phi;\fp}\\
&\hspace{.5in}-\!\!\!\!
\sum_{\begin{subarray}{c}B_0,B'\in H_2(X)-\{0\}\\ B_0+\fd(B')=B\end{subarray}}
\hspace{-.4in} 2^{\ell_{\om}(B')-1}\!\big(B_0\!\!\cdot_X\!\!B'\big)
\big(h_1\!\cdot_X\!B'\big)\!\big(h_2\!\cdot_X\!B_0\big)
\binom{l\!-\!2}{\ell_{\om}(B')\!-\!1}N_{B'}^XN_{B_0,l-1-\ell_{\om}(B')}^{\phi;\fp}\,.
\end{split}\end{equation*}
Equating this expression with the negative of~\eref{Sol03rec_e5}, 
we obtain the second identity in Theorem~\ref{SolWDVV_thm}
with the left-hand side multiplied by~$h_1\!\cdot_X\!h_2$.
\end{proof}

\begin{proof}[{\bf{\emph{Proof of~\ref{Sol03rec2_it}}}}]
We now take $l^*\!=\!3$.
Since $l\!\ge\!1$ in this case, the morphism
$$\ff_{0,3}\!: \wh\cZ_{k,l+2;\bh}^{\st}(B;J,\nu)\lra \ov\cM_{0,3}^{\tau}$$
is well-defined.
Let $\Ga^{\pm}_i,\Ups,\ga'\!\subset\!\ov\cM_{0,3}^{\tau}$,
$\fo_{\Ga_i^{\pm}}^c$, $\fo_\Ups^c$, and~$\fo_{\ga'}^c$ 
be as in the statement of Lemma~\ref{M03rel_lmm}.
Let $\cA_2$ (resp.~$\cA_3$) be the collection of the codimension~1 strata~$\cS$
of $\ov\M_{k,l+2}(B;J,\nu)$ with \hbox{$\ep_3(\cS)\!=\!2$}
such~that $\P^1_1$ (resp.~$\P^1_2$)
carries the conjugate pair $(z_3^+,z_3^-)$, but not $(z_2^+,z_2^-)$.

By Corollary~\ref{mainsetup_crl} and Proposition~\ref{Rdecomp_prp},
{\it half} of the right-hand side of~\eref{bndsplit_e} equals
\begin{equation*}\begin{split}
&\sum_{\cS\in\cA_2}\!\!
\big|\cS_{\bh,\bp;\Ups}^*\big|_{\prt\fo_{\fp;\bh},\fo_\Ups^c}^{\pm}
+\sum_{\cS\in\cA_3}\!\!
\big|\cS_{\bh,\bp;\Ups}^*\big|_{\prt\fo_{\fp;\bh},\fo_\Ups^c}^{\pm}\\
&\hspace{.5in}
=\!\!\sum_{\cS\in\cA_3}\!\!
\big(h_1\!\cdot_X\!B_1(\cS)\!\big)\!\big(h_2\!\cdot_X\!B_1(\cS)\!\big)
\big(h_3\!\cdot_X\!B_2(\cS)\!\big)
N_{B_1(\cS),l_1(\cS)-2}^{\phi;\fp}N_{B_2(\cS),l_2(\cS)-1}^{\phi;\fp}\\
&\hspace{.8in}
-\!\!\sum_{\cS\in\cA_2}\!\!
\big(h_1\!\cdot_X\!B_1(\cS)\!\big)\!\big(h_3\!\cdot_X\!B_1(\cS)\!\big)
\big(h_2\!\cdot_X\!B_2(\cS)\!\big)
N_{B_1(\cS),l_1(\cS)-2}^{\phi;\fp}N_{B_2(\cS),l_2(\cS)-1}^{\phi;\fp}.
\end{split}\end{equation*}
Summing over all splittings of $B\!\in\!H_2(X)$ into~$B_1$  and~$B_2$,
of $l\!-\!1$ conjugate pairs of points into sets of cardinalities~$l_1$ and~$l_2$,
and of $k$ real points into sets of the appropriate cardinalities, 
we obtain
\BE{Sol03rec2_e5}\begin{split}
&\frac12\big|\big(\prt\wh\cZ_{k,l+2;\bh}^{\st}(B;J)\!\big)_{(\ev_{k,l+2;\bh},\ff_{k',l'})}
\!\times_{f_{\bp;\Ups}}\!\Ups\big|_{\prt\wh\fo_{\fp;\bh},\fo_\Ups^c}^{\pm}\\
&\hspace{.2in}=\!\sum_{\begin{subarray}{c} B_1,B_2\in H_2(X)-\{0\}\\ B_1+B_2=B\\
l_1+l_2=l-1,\,l_1,l_2\ge0\end{subarray}} \hspace{-.4in}
\big(h_1\!\cdot_X\!B_1\big)\!\big(h_2\!\cdot_X\!B_1\!\big)\big(h_3\!\cdot_X\!B_2\!\big)
\binom{l\!-\!1}{l_1}\!\binom{\ell_{\om}(B)\!-\!2l}{\ell_{\om}(B_1)\!-\!2l_1}
N_{B_1,l_1}^{\phi;\fp}N_{B_2,l_2}^{\phi;\fp}\\
&\hspace{.4in}
-\!\!\!\!\!\!\sum_{\begin{subarray}{c}B_1,B_2\in H_2(X)-\{0\}\\ B_1+B_2=B\\ 
l_1+l_2=l-1,\,l_1,l_2\ge0\end{subarray}} \hspace{-.4in}
\big(h_1\!\cdot_X\!B_1\big)\!\big(h_3\!\cdot_X\!B_1\big)
\big(h_2\!\cdot_X\!B_2\!\big)\binom{l\!-\!1}{l_1}\!
\binom{\ell_{\om}(B)\!-\!2l}{\ell_{\om}(B_1)\!-\!2l_1}
N_{B_1,l_1}^{\phi;\fp}N_{B_2,l_2}^{\phi;\fp}\,.
\end{split}\EE
We note that $l_1\!\equiv\!l_1(\cS)\!-\!2$ and $l_2\!\equiv\!l_2(\cS)\!-\!1$
in both sums above, because the subtractions from~$l_1(\cS)$ and $l_2(\cS)$
correspond to the insertions of the divisors~$H_1,H_2,H_3$.

By Lemma~\ref{dropfactor_lmm}\ref{dropfactor_it1}, Proposition~\ref{Cdecomp_prp}, 
and the last equation in~\eref{h1h2inter_e}, 
{\it half} of the {\it negative} of the left-hand side of~\eref{bndsplit_e} equals
\begin{equation*}\begin{split}
&\big|\ev_{\Ga_3^+;\bh}^{\,-1}(\bp)\big|_{\fo_{\Ga_3^+;\fp;\bh}}^{\pm}
-\big|\ev_{\Ga_2^+;\bh}^{\,-1}(\bp)\big|_{\fo_{\Ga_2^+;\fp;\bh}}^{\pm}
=\big(h_1\!\cdot_X\!h_2\big)\big(h_3\!\cdot_X\!B\big)N_{B,l}^{\phi;\fp}\\
&\hspace{.5in}+\!\!\!\!
\sum_{\begin{subarray}{c}B_0,B'\in H_2(X)-\{0\}\\ B_0+\fd(B')=B\end{subarray}}
\hspace{-.4in} 2^{\ell_{\om}(B')}\!\big(B_0\!\!\cdot_X\!\!B'\big)
\big(h_1\!\cdot_X\!B'\big)\!\big(h_2\!\cdot_X\!B'\big)\big(h_3\!\cdot_X\!B_0\!\big)
\binom{l\!-\!1}{\ell_{\om}(B')}N_{B'}^XN_{B_0,l-1-\ell_{\om}(B')}^{\phi;\fp}\\
&\hspace{.5in}-\!\!\!\!
\sum_{\begin{subarray}{c}B_0,B'\in H_2(X)-\{0\}\\ B_0+\fd(B')=B\end{subarray}}
\hspace{-.4in} 2^{\ell_{\om}(B')}\!\big(B_0\!\!\cdot_X\!\!B'\big)
\big(h_1\!\cdot_X\!B'\big)\!\big(h_3\!\cdot_X\!B'\big)\big(h_2\!\cdot_X\!B_0\big)
\binom{l\!-\!1}{\ell_{\om}(B')}N_{B'}^XN_{B_0,l-1-\ell_{\om}(B')}^{\phi;\fp}\,.
\end{split}\end{equation*}
Equating this expression with the negative of~\eref{Sol03rec_e5}, 
we obtain the last identity in Theorem~\ref{SolWDVV_thm}
with the left-hand side multiplied by~$h_1\!\cdot_X\!h_2$.
\end{proof}

\begin{proof}[{\bf{\emph{Proof of Lemma~\ref{mainsetup_lmm}}}}]
It is sufficient to show that 
\BE{evfkl_e2}   \big(\ev,\ff_{k',l'}\big)\!: 
\wh\M_{k,l+l^*-1;l^*}^{\st}(B;J,\nu)\lra X_{k,l+l^*-1}\!\times\!\ov\cM_{k',l'}^{\tau}\EE
is a bordered $\Z_2$-pseudocycle of dimension $4l\!+\!2k\!+\!2(l^*\!-\!1)$
transverse~to
\begin{gather}\label{pHSinc_e4}
 f_{\bh;\bp;\Ups}\!: M_{\bh}\!\times\!\Ups\lra X_{k,l+l^*-1}\!\times\!\ov\cM_{k',l'}^{\tau},\\
\notag
f_{\bh;\bp;\Ups}(y_1,\ldots,y_{l^*},P)=\big(h_1(y_1),\ldots,h_{l^*}(y_{l^*}),\bp,P\big).
\end{gather}
Since $\dim_{\R}X\!=\!4$, $\lr{c_1(X,\om),B'}\!\ge\!1$ for every $B'\!\in\!H_2(X)\!-\!\{0\}$
which can be represented by a $J$-holomorphic map \hbox{$u\!:\P^1\!\lra\!X$}
for a generic $J\!\in\!\cJ_{\om}^{\phi}$.

For a stratum $\cS$ of $\wh\M_{k,l+l^*-1;l^*}(B;J,\nu)$, 
we denote by $\cS^*\!\subset\!\cS$ the subspace of simple maps
and by~$\fc(\cS^*)$ the number of nodes of maps in~$\cS$.
The image of~$\cS$ 
under $\ff_{k',l'}$ is contained in a stratum $\cS^{\vee}$ of $\ov\cM^\tau_{k',l'}$
with $\fc(\cS^{\vee})\!\le\!\fc(\cS)$. 
For a generic choice of~$(J,\nu)$, $\cS^*\!\subset\!\cS$ is a smooth manifold of dimension
\BE{mainsetup_e3a}
\dim\,\cS^*=\ell_{\om}(B)\!+\!2(l\!+\!l^*\!-\!1)\!+\!k-\fc(\cS)
=4l\!+\!2k\!+\!2(l^*\!-\!1)-\fc(\cS)\,.\EE
The image of $\cS\!-\!\cS^*$ under
\BE{mainsetup_e4}\big(\ev,\ff_{k',l'}\big)\!: 
\wh\M_{k,l+l^*-1;l^*}(B;J,\nu)\lra X_{k,l+l^*-1}\!\times\!\ov\cM_{k',l'}^{\tau}\EE
is covered by smooth maps from manifolds~$\cS'$ with
\BE{mainsetup_e3b}
\dim\,\cS'\le\ell_{\om}(B)\!+\!2(l\!+\!l^*\!-\!1)\!+\!k-2-\fc(\cS^{\vee})
=4l\!+\!2k\!\!+\!2(l^*\!-\!2)-\fc(\cS^{\vee});\EE
see \cite[Section~3]{RT2} and~\cite[Section~3.4]{RealRT}.

The space $\wh\M_{k,l+l^*-1;l^*}^{\st}(B;J,\nu)$ consists of the main stratum
$\M_{k,l+l^*-1;l^*}(B;J,\nu)$ and the subspaces~$\cS^*$ of the strata~$\cS$
with either one real node only or one conjugate pair of nodes only.
Such strata have disjoint open neighborhoods in $\wh\M_{k,l+l^*-1;l^*}^{\st}(B;J,\nu)$.
Thus, the gluing maps as in~\cite{LT} for these strata can be chosen so that their images
do not overlap.
Along with the smooth structure of $\M_{k,l+l^*-1;l^*}(B;J,\nu)$, these maps
then determine a smooth structure on $\wh\M_{k,l+l^*-1;l^*}^{\st}(B;J,\nu)$
with respect to which the map~\eref{evfkl_e2} is smooth.

Since the space $\wh\M_{k,l+l^*-1;l^*}(B;J,\nu)$ is compact,
\BE{mainsetup_e5}
\Om\big((\ev,\ff_{k',l'})\big|_{\wh\M_{k,l+l^*-1;l^*}^{\st}(B;J,\nu)}\big)
\subset \big\{(\ev,\ff_{k',l'})\big\}\!
\big(\wh\M_{k,l+l^*-1;l^*}(B;J,\nu)\!-\!\wh\M_{k,l+l^*-1;l^*}^{\st}(B;J,\nu)\!\big).\EE 
The complement on the right-hand side above consists of the subspaces~$\cS^*$
of the strata~$\cS$ with $\fc(\cS)\!\ge\!2$ nodes
and of the subspaces $\cS\!-\!\cS^*$ with $\fc(\cS)\!\ge\!1$.
Combining this with~\eref{mainsetup_e3a} and~\eref{mainsetup_e3b}, 
we conclude that the left-hand side of~\eref{mainsetup_e5} is covered by
smooth maps from manifolds of dimension at most $4l\!+\!2k\!+\!2(l^*\!-\!2)$.
Thus, \eref{evfkl_e2} is a bordered $\Z_2$-pseudocycle of dimension 
\hbox{$4l\!+\!2k\!+\!2(l^*\!-\!1)$}.

For a generic $(J,\nu)$, the restriction
\BE{mainsetup_e9}\big(\ev,\ff_{k',l'}\big)\!:\cS^*\lra X_{k,l+l^*-1}\!\times\!\cS^{\vee}\EE
of~\eref{mainsetup_e4} to~$\cS^*$ is transverse (in the target above) 
to $f_{\bh;\bp;\Ups'}$ for every given submanifold $\Ups'\!\subset\!\cS^{\vee}$.
Along with the smoothings of the nodes, this implies that~\eref{evfkl_e2} 
is transverse to~$f_{\bh;\bp;\Ups'}$.
Since 
$$\ff_{k',l'}\big(\prt\wh\M_{k,l+l^*-1;l^*}^{\st}(B;J,\nu)\big)\cap \prt \Ups=\eset$$
with our choices of~$\Ups$, we conclude that the restriction of~\eref{evfkl_e2}
to $\prt\wh\M_{k,l+l^*-1;l^*}^{\st}(B;J,\nu)$ is transverse to~\eref{pHSinc_e4} and 
to the restriction of~\eref{pHSinc_e4} to the boundary 
\hbox{$M_{\bh}\!\times\!\prt\Ups$} of its domain.

Since the image of $\ff_{k',l'}$ is disjoint from $\prt\ov\cM_{k',l}^{\tau}$, 
it is also disjoint from the limit~set
$$\Om(\Ups)\equiv \ov\Ups\!-\!\Ups$$
of~$\Ups$ ($\Om(\Ups)$ is empty in the case of Lemma~\ref{M12rel_lmm}
and consists of~8 points in $\prt\ov\cM_{0,3}^{\tau}$ in the case of Lemma~\ref{M03rel_lmm}).
It remains to show that smooth maps from manifolds of dimensions at most 
\hbox{$4l\!+\!2k\!+\!2(l^*\!-\!2)$}
covering the right-hand side of~\eref{mainsetup_e5} can be chosen so that they are transverse
to~\eref{pHSinc_e4} and 
to the restriction of~\eref{pHSinc_e4} to \hbox{$M_{\bh}\!\times\!\prt\Ups$}.
If $\fc(\cS)\!\ge\!2$ and $\cS$ is not a stratum of $\wh\M_{k,l+l^*-1;l^*}^{\st}(B;J,\nu)$,
i.e.~the maps in~$\cS$ do not just contain a conjugate pair of nodes, 
then the transversality of~$\Ups$ to every stratum of $\ov\cM_{k',l'}^{\tau}\!-\!\prt \Ups$,
the transversality of~\eref{mainsetup_e9} to $f_{\bh;\bp;\Ups'}$ for every 
given submanifold $\Ups'\!\subset\!\cS^{\vee}$, and \eref{mainsetup_e3a} imply~that 
\BE{mainsetup_e11}\big\{(\ev,\ff_{k',l'})\big\}(\cS^*)\cap f_{\bh;\bp;\Ups}
\big(M_{\bh}\!\times\!\Ups\big) =\eset.\EE
For any stratum~$\cS$ of $\wh\M_{k,l+l^*-1;l^*}(B;J,\nu)$,
the image of $\cS\!-\!\cS^*$ under~\eref{evfkl_e2} is
covered by smooth maps 
$$h_{\cS'}\!:\cS'\lra X_{k,l+l^*-1}\!\times\!\cS^{\vee}$$
satisfying~\eref{mainsetup_e3b};
these maps are transverse (in the target above) 
to $f_{\bh;\bp;\Ups'}$ for every given submanifold $\Ups'\!\subset\!\cS^{\vee}$
for a generic~$(J,\nu)$.
This implies that \eref{mainsetup_e11} 
holds with $\cS^*$ replaced by $\cS\!-\!\cS^*$.
Thus, the bordered $\Z_2$-pseudocycle~\eref{evfkl_e2} 
is transverse to~\eref{pHSinc_e4}.
\end{proof}

\begin{proof}[{\bf{\emph{Proof of Corollary~\ref{mainsetup_crl}}}}]
For a codimension~1 stratum~$\cS$ of $\ov\M_{k,l+l^*-1}(B;J,\nu)$ and $r\!=\!1,2$, let
\BE{JakePseudo_e8}k_r\equiv k_r(\cS), \qquad l_r\equiv l_r(\cS), \qquad
l_r^*\equiv l_r^*(\cS), \qquad B_r\equiv B_r(\cS)\EE
be as in Section~\ref{MapSpaces_subs}\,.
Suppose that $\wh\cS^*$ is a stratum of $\prt\wh\M_{k,l+l^*-1;l^*}^{\st}(B;J,\nu)$,
i.e.
$$\ep_{l^*}(\cS)\equiv \blr{c_1(X,\om),B_2}\!-\!2(l_2\!-\!l_2^*)\!-\!k_2$$
is congruent to~2 or 3 modulo~4.

Since $\Ups\!\cap\!S_1\!=\!\eset$, $(l_1,k_1)\!\neq\!(1,0)$ if $\wh\cS_{\bh,\bp;\Ups}^*\!\neq\!\eset$.
If $B_2\!=\!0$, $l_2,l_2^*\!=\!1$, and $k_2\!=\!0$, then \hbox{$\ep_{l^*}(\cS)\!=\!0$},
contrary to the assumption on~$\cS$ above.
Suppose $B_2\!=\!0$, $l_2\!=\!1$, and $l_2^*,k_2\!=\!0$.
For good choices of~$\nu$ (still sufficiently generic), the restriction to~$\wh\cS^*$
of~\eref{whfMevdfn_e} with $l$ replaced by $l\!+\!l^*\!-\!1$ then factors~as 
$$\wh\cS^*\lra  \M_{k+1,l+l^*-2}\big(B;J,\nu_1\big)\!\times\!\M_{1,1}(0;J,0)
\lra X_{k,l+l^*-2}\!\times\!X^{\phi} \lra X_{k,l+l^*-1}\,.$$
Thus, $\wh\cS_{\bh,\bp;\Ups}^*\!=\!\eset$ for generic choices of~$\bh$ and~$\bp$.
Suppose $B_2\!=\!0$, $l_2\!=\!0$, and $k_2\!=\!2$.
For good choices of~$\nu$, the restriction to~$\wh\cS^*$
of~\eref{whfMevdfn_e} with $l$ replaced by $l\!+\!l^*\!-\!1$ then factors~as 
$$\wh\cS^*\lra  \M_{k-1,l+l^*-1}\big(B;J,\nu_1\big)\!\times\!\M_{3,0}(0;J,0)
\lra X_{k-2,l+l^*-1}\!\times\!\De_X^{\phi} \lra X_{k,l+l^*-1}\,,$$
where $\De_X^{\phi}\!\subset\!(X^{\phi})^2$ is the diagonal.
Thus, $\wh\cS_{\bh,\bp;\Ups}^*\!=\!\eset$ for generic choices of~$\bh$ and~$\bp$.

We can thus assume that either $B_r\!\neq\!0$ or $2l_r\!+\!k_r\!\ge\!3$ for $r\!=\!1,2$.
For good choices of~$\nu$, the restriction to~$\wh\cS^*$
 of~\eref{whfMevdfn_e} 
with $l$ replaced by $l\!+\!l^*\!-\!1$ then factors~as 
\begin{equation*}\begin{split}
\wh\cS^*&\lra  \M_{k_1+1,l_1}\!\big(B_1;J,\nu_1\big)\!\times\!
\M_{k_2+1,l_2}\!\big(B_2;J,\nu_2\big)\\
&\lra\M_{k_1,l_1}\!\big(B_1;J,\nu_1'\big)\!\times\!
\M_{k_2,l_2}\big(B_2;J,\nu_2'\big)
\lra X_{k_1,l_1-l_1^*}\!\times\!X_{k_2,l_2-l_2^*} \lra X_{k,l-1}\,.
\end{split}\end{equation*}
Thus, $\wh\cS_{\bh,\bp;\Ups}^*\!=\!\eset$ for generic choices of $\bh$, $\bp$, and $(J,\nu)$ unless
$$\ell_{\om}\big(B_r\big)\!+\!2\big(l_r\!-\!l_r^*\!\big)\!+\!k_r\ge 
4\big(l_r\!-\!l_r^*\!\big)\!+\!2k_r
\qquad\forall\,r\!=\!1,2.$$
Since 
\begin{gather*}
\ell_{\om}(B_1)\!+\!\ell_{\om}(B_2)= \ell_{\om}(B)\!-\!1=2l\!+\!k\!-\!1, \quad
k_1\!+\!k_2=k, \quad l_1\!+\!l_2=l\!+\!l^*\!-\!1, \quad l_1^*\!+\!l_2^*=l^*,
\end{gather*}
and $\ep_{l^*}(\cS)\!\cong\!2,3$ mod~4, it follows that $\ep_{l^*}(\cS)\!=\!2$ and 
\BE{mainsetup_e25}
\ell_{\om}(B_1)=2\big(l_1\!-\!l_1^*\!\big)\!+\!k_1.\EE
If $\cS$ satisfies~\ref{cS0_it} above Proposition~\ref{Rdecomp_prp},
the restriction to~$\wh\cS^*$ 
of the composition of~\eref{evfkl_e2} with the projection to the product
\hbox{$X_{k_1,l_1}\!\times\!\ov\cM_{k',l'}^{\tau}$} factors~as 
\begin{equation*}\begin{split}
\wh\cS^*&\lra  \M_{k_1+1,l_1}\!\big(B_1;J,\nu_1\big)\!\times\!\M_{k_2+1,l_2}\!\big(B_2;J,\nu_2\big)
\lra\M_{k_1,l_1}\!\big(B_1;J,\nu_1'\big)\lra X_{k_1,l_1}\!\times\!\ov\cM_{k',l'}^{\tau}.
\end{split}\end{equation*}
Since the restriction of~\eref{evfkl_e2} to~$\wh\cS^*$ is transverse to~\eref{pHSinc_e4}
and $\Ups$ is a real hypersurface, \eref{mainsetup_e25} then implies that 
$\wh\cS_{\bh,\bp;\Ups}^*\!=\!\eset$.
\end{proof}

\section{Proofs of structural statements}
\label{proofs_sec}

\subsection{Orienting the linearized $\dbar$-operator}
\label{CRdet_subs}

For $\u$ as in~\eref{udfn_e}, let 
\begin{equation*}\begin{split}
D_{J,\nu;\u}^{\phi}\!:  \Ga(\u)
\equiv&\big\{\xi\!\in\!\Ga(\Si;u^*TX)\!:\,\xi\!\circ\!\si\!=\!\nd\phi\!\circ\!\xi\big\}\\
&\lra
\Ga^{0,1}(\u)\equiv
\big\{\ze\!\in\!\Ga(\Si;(T^*\Si,\fj)^{0,1}\!\otimes_{\C}\!u^*(TX,J))\!:\,
\ze\!\circ\!\nd\si=\nd\phi\!\circ\!\ze\big\}
\end{split}\end{equation*}
be the linearization of the $\{\dbar_J\!-\!\nu\}$-operator on the space of 
real maps from~$(\Si,\si)$ with its complex structure~$\fj$.
We define
\begin{alignat*}{2}
&\la_{\u}^{\C}(X)=\bigotimes_{i=1}^l\la\big(T_{u(z_i^+)}X\big), &\qquad
&\la_{\u}^{\R}(X)=\la\bigg(\bigoplus_{i=1}^kT_{u(x_i)}X^{\phi}\!\bigg)
=\bigotimes_{i=1}^k\la\big(T_{u(x_i)}X^{\phi}\big),\\
&\la_{\u}\big(D_{J,\nu}^{\phi}\big)=\det D_{J,\nu;\u}^{\phi}\,, &\qquad
&\wt\la_{\u}\big(D_{J,\nu}^{\phi},X\big)=
\la_{\u}^{\R}(X)^*\!\otimes\!\la_{\u}^{\C}(X)^*\!\otimes\!\la_{\u}\big(D_{J,\nu}^{\phi}\big);
\end{alignat*}
the summands and the factors in the definition of $\la_{\u}^{\R}(X)$ 
are {\it not} ordered.
By~\cite[Appendix]{RealGWsI}, the projection
\BE{Dorient_e} \wt\la\big(D_{J,\nu}^{\phi},X\big)\equiv
\bigcup_{\u\in\ov\M_{k,l}(B;J,\nu)}\hspace{-.32in}
\big\{\u\}\!\times\!\wt\la_{\u}\big(D_{J,\nu}^{\phi},X\big)\lra \ov\M_{k,l}(B;J,\nu)\EE
is a line orbi-bundle with respect to a natural topology on its domain.

For $i\!\in\![k]$ and $\u\!\in\!\M_{k,l}(B;J,\nu)$ with the associated marked curve~$\cC$
as in~\eref{cCdfn_e}, let
$$j_i(\u)=j_i(\cC)\in[k]$$
be as in Section~\ref{cMstrata_subs}.
The next statement is a consequence of the orienting construction of \cite[Prop.~3.1]{Jake},
a more systematic perspective of which appears in the proof 
of \cite[Thm.~7.1]{SpinPin}.

\begin{lmm}\label{Dorient_lmm} 
Suppose $(X,\om,\phi)$ is a real symplectic fourfold, $l\!\in\!\Z^+$,
$k\!\in\!\Z^{\ge0}$ with $k\!+\!2l\!\ge\!3$,
$B\!\in\!H_2(X)$, and $(J,\nu)\!\in\!\cH_{k,l}^{\om,\phi}$.
If~$k$ and~$B$ satisfy~\eref{BKcond_e}, then a $\Pin^-$-structure~$\fp$ on~$X^{\phi}$
determines an orientation~$\fo_{\fp}^D$ on the restriction of~\eref{Dorient_e} to $\M_{k,l}(B;J,\nu)$
with the following properties:  
\BEnum{($\fo_{\fp}^D\arabic*$)}

\item\label{MijRch_it}  the interchange of two real points $x_i$ and $x_j$ preserves $\fo_{\fp}^D$;

\item\label{M1Rch_it} if $\u\!\in\!\M_{k,l}(B;J,\nu)$,
the interchange of the real points $x_1$ and $x_{j_i(\u)}$ with $2\!\le\!i\!\le\!k$
preserves~$\fo_{\fp}^D$ at~$\u$ if and only if $(k\!-\!1)(i\!-\!1)\!\in\!2\Z$;

\item\label{DorientCij_it} if $\u\!\in\!\M_{k,l}(B;J,\nu;\wch X^\phi)$ and 
the marked points $z_i^+$ and $z_j^+$ are not separated by the fixed locus~$S^1$
of the domain of~$\u$, then
the interchange of the conjugate pairs $(z_i^+,z_i^-)$ and $(z_j^+,z_j^-)$
preserves~$\fo_{\fp}^D$ at~$\u$; 

\item\label{DorientCipm_it}  
the interchange of the points in a conjugate pair $(z_i^+,z_i^-)$ with $1\!<\!i\!\le\!l$
preserves~$\fo_{\fp}^D$;

\item\label{DorientC1pm_it} the interchange of the points in the conjugate pair $(z_1^+,z_1^-)$ preserves
$\fo_{\fp}^D$ if and only~if 
$$k\neq0~~\hbox{and}~~\ell_{\om}(B)\cong2,3~\tn{mod~4} \qquad\hbox{or}\qquad
k=0~~\hbox{and}~~\ell_{\om}(B)\cong0~\tn{mod}~4;$$

\item\label{DB0_it} if $k,l,l^*\!=\!1$, $B\!=\!0$, and $\nu$ is small, 
then $\fo_{\fp}^D$ is the orientation induced
by the evaluation at~$x_1$.

\EEnum
\end{lmm}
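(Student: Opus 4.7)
The plan is to construct $\fo^D_{\fp}$ by reducing the real linearized $\dbar$-operator to a Cauchy-Riemann problem on a disk with totally real boundary condition, then orienting its determinant via the $\Pin^-$-procedure of \cite[Prop.~3.1]{Jake} (recast more systematically in the proof of \cite[Thm.~7.1]{SpinPin}), and finally verifying the six listed properties. For $\u$ as in~\eref{udfn_e} with smooth domain $(\Si,\si)$, the fixed locus $\Si^\si$ is a single circle separating $\Si$ into two conjugate disks; let $\D_+\!\subset\!\Si$ be the one carrying $z_1^+$. The involution $\si$ identifies $\Ga(\u)$ with smooth sections of $(u|_{\D_+})^*TX$ satisfying the totally-real boundary condition $(u|_{\Si^\si})^*TX^{\phi}$, and under this identification $D^\phi_{J,\nu;\u}$ becomes the associated real CR-operator. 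Pulling $\fp$ back along $u|_{\Si^\si}$ yields a $\Pin^-$-structure on the boundary bundle; Solomon's procedure then produces an orientation on $\la_\u(D^\phi_{J,\nu})$, and together with the canonical complex orientations of $T_{u(z_i^+)}X$ and the $\fp$-induced orientations of $T_{u(x_i)}X^{\phi}$ (tensored in an unordered fashion) this assembles into an orientation $\fo^D_{\fp;\u}$ of $\wt\la_\u(D^\phi_{J,\nu},X)$. Continuity in $\u$ and extension across the non-simple locus of $\M_{k,l}(B;J,\nu)$ follow from the families formulation of this construction in~\cite{SpinPin}.

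Next I would verify \ref{MijRch_it}--\ref{DorientCipm_it} by naturality with respect to the relevant permutation symmetries. Property \ref{MijRch_it} holds because a swap of $x_i,x_j$ with $i,j\!\ge\!2$ leaves the unordered factors of $\la^\R_\u(X)$ invariant and, with the base point $x_1$ fixed, preserves the $\Pin^-$-determinant orientation. Property \ref{DorientCij_it} reflects that a swap of two conjugate pairs on the same side of $\Si^\si$ is realized by a biholomorphism of $\D_+$ fixing the boundary, hence preserves both $\la^\C_\u(X)$ and the disk CR-orientation. Property \ref{DorientCipm_it} holds because conjugating $z_i^+\!\leftrightarrow\!z_i^-$ for $i>1$ swaps the two complex factors $T_{u(z_i^+)}X,T_{u(z_i^-)}X$ (orientation-preserving in even real dimension) without altering which disk is $\D_+$. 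Property \ref{M1Rch_it} requires tracking the reordering of marked points along $\Si^\si$ when $x_1$ is swapped with $x_{j_i(\u)}$; the sign of this reordering on the $\Pin^-$-determinant orientation, computed via the cyclic-type formulas in the proof of \cite[Thm.~7.1]{SpinPin}, is $(k\!-\!1)(i\!-\!1)$ mod~2.

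The main obstacle will be \ref{DorientC1pm_it}, which tracks the effect of swapping $z_1^+\!\leftrightarrow\!z_1^-$: this interchanges $\D_+$ with its conjugate disk and reverses the induced orientation of $\Si^\si$. The resulting sign is the product of three contributions: the (automatically trivial) reversal of the complex factor $T_{u(z_1^+)}X$; the reordering of the $k$ real marked points along the reversed boundary circle (contributing a sign that is absent when $k\!=\!0$); and the transformation of the $\Pin^-$-determinant orientation under orientation reversal of the boundary circle, which by the $\Pin^-$-versus-$\Pin^+$ analysis of \cite{SpinPin} depends on $\ell_{\om}(B)$ mod~4. Assembling these contributions yields precisely the dichotomy stated in~\ref{DorientC1pm_it}. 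Finally, \ref{DB0_it} is the normalization of the entire construction: for $k\!=\!l\!=\!l^*\!=\!1$, $B\!=\!0$, and $\nu$ small, the operator $D^\phi_{J,\nu;\u}$ is a small deformation of the standard $\dbar$ on a trivial bundle whose kernel is canonically $T_{u(x_1)}X^\phi$, under which both the $\Pin^-$-orientation and the evaluation-induced orientation reduce to the $\fp$-induced orientation of this kernel; the required agreement is built into the normalization in \cite{Jake,SpinPin}.
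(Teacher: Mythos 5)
Your proposal takes essentially the same route as the paper: fix the distinguished disk $\D_+$ containing $z_1^+$, invoke the $\Pin^-$-orientation construction of~\cite[Prop.~3.1]{Jake} as systematized in~\cite[Thm.~7.1]{SpinPin} to orient $\la_{\u}^{\R}(X)^*\!\otimes\!\la_{\u}(D_{J,\nu}^{\phi})$, tensor with the complex orientations to get $\fo_{\fp}^D$, and then verify the six properties from the structure of that construction. The paper's proof delegates the verifications of \ref{MijRch_it}, \ref{M1Rch_it}, \ref{DorientC1pm_it}, and \ref{DB0_it} to the named CROrient~$1\fp$, 5b, and 6b properties of \cite[Section~7.2]{SpinPin}, whereas you sketch the underlying sign-bookkeeping directly; those sketches (especially for \ref{M1Rch_it} and \ref{DorientC1pm_it}) would need to be tightened to the level of the cited properties, but the content is the same.
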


\begin{proof}
Let $\u$ be as in~\eref{udfn_e}.
For the purposes of applying \cite[Thm.~7.1]{SpinPin},
we take the distinguished half-surface \hbox{$\D^2\!\subset\!\P^1$} to be the disk 
so that $\prt\D^2$ is the fixed locus~$S^1$ of~$\tau$ and $z_1^+\!\in\!\D^2$.
A $\Pin^-$-structure~$\fp$ on~$X^{\phi}$ then determines an orientation~$\fo_{\fp;\R}^D$
on the line  \hbox{$\la_{\u}^{\R}(X)^*\!\otimes\!\la_{\u}(D_{J,\nu}^{\phi})$}
varying continuously with~$\u$.
Since $\fo^D_{\fp;\R}$ does not depend on the conjugate pairs of marked points, 
except for~$z^\pm_1$ which determines~$\D^2$, 
$\fo^D_{\fp;\R}$ satisfies~\ref{DorientCij_it} and~\ref{DorientCipm_it}.
By the CROrient~1$\fp$ property in \cite[Section~7.2]{SpinPin},
$\fo_{\fp;\R}^D$ satisfies \ref{MijRch_it}, \ref{M1Rch_it}, and~\ref{DorientC1pm_it}.
By the CROrient~5b and~6b properties in \cite[Section~7.2]{SpinPin},
it also satisfies~\eref{DB0_it}. 
Along with the symplectic orientations of $T_{u(z_i^+)}X$, $\fo_{\fp;\R}^D$
determines an orientation~$\fo_{\fp}^D$ on $\wt\la_{\u}(D_{J,\nu}^{\phi},X)$
varying continuously with~$\u$.
Since the complex dimension of~$X$ is even, $\fo_{\fp}^D$~also satisfies all six properties.
\end{proof}

Suppose now that $l\!\in\!\Z^+$ and $\cS$ is an open codimension~1 stratum of $\ov\M_{k,l}(B;J,\nu)$.
Define
$$r(\cS)=\begin{cases}1,&\hbox{if}~k\!=\!0~\hbox{or}~1\!\in\!K_1(\cS);\\
2,&\hbox{if}~1\!\in\!K_2(\cS).\end{cases}$$
An orientation~$\fo_{\cS;\u}^c$ of~$\cN_{\u}\cS$ determines a direction of degeneration 
of elements of $\M_{k,l}(B;J,\nu)$ to~$\u$.
The orientation~$\fo_{\fp}^D$ on~\eref{Dorient_e} limits to an orientation 
$\fo_{\fp;\u}^D$ of $\wt\la_{\u}(D_{J,\nu}^{\phi},X)$ by approaching~$\u$ from this direction.
The orientation $\fo_{\fp;\u}^D$ is called the \sf{limiting} orientation induced by~$\fp$ 
and~$\fo_{\cS;\u}^c$ in \cite[Section~7.3]{SpinPin}.
If in addition $l_2^*(\cS)\!\ge\!1$,  the possible orientations~$\fo_{\cS;\u}^{c;\pm}$ of~$\cN_{\u}\cS$ 
are distinguished as above Lemma~\ref{DMboundary_lmm}. 
We denote by $\fo_{\fp;\u}^{D;\pm}$ the limiting orientation of $\wt\la_{\u}(D_{J,\nu}^{\phi},X)$
induced by~$\fp$ and~$\fo_{\cS;\u}^{c;\pm}$.

The domain of each element $\u\!\in\!\cS$ consists of an irreducible component~$\P^1_1$
carrying the marked points~$z_1^{\pm}$ with fixed locus~$S^1_1$ 
and another irreducible component~$\P^1_2$ with fixed locus~$S^1_2$.
The fixed locus $S^1_r$ splits~$\P^1_r$ into two disks.
Let $\cS_*\!\subset\!\cS$ be the subspace of all maps with fixed distributions 
of the marked points~$z_i^+$ with $i\!\in\![l]$ between the four disks and 
with fixed orderings of the marked points~$x_i$ with $i\!\in\![k]$ and the nodal points
on the two fixed loci.
We call such a subspace a \sf{substratum} of~$\cS$.
If $k_2(\cS)\!+\!2l_2(\cS)\!\ge\!2$, i.e.~the marked domain~\eref{cCdfn_e} of every element 
$\u\!\in\!\cS$ is stable, then the image of~$\cS$ under the forgetful morphism
$$\ff_{k,l}\!:\ov\M_{k,l}(B;J,\nu)\lra \ov\cM_{k,l}^{\tau}$$
is contained in a codimension~1 stratum~$\cS^{\vee}$.
In such a case, a substratum~$\cS_*$ of~$\cS$ is given~by
$$\cS_*=\cS\cap f_{k,l}^{-1}(\cS^{\vee}_*)$$
for some topological component~$\cS^{\vee}_*$ of~$\cS^{\vee}$.

For good choices of~$\nu$, there are a natural embedding
\BE{cSsplit_e} \cS_* \lhra{~~~} \M_1\!\times\!\M_2
\subset\M_{k_1(\cS)+1,l_1(\cS)}\big(B_1(\cS);J,\nu_1\big)\!\times\!
\M_{k_2(\cS)+1,l_2(\cS)}\big(B_2(\cS);J,\nu_2\big)\EE
for some unions $\M_1$ and $\M_2$ of topological components of the moduli spaces
on the right-hand side above and forgetful morphisms
\BE{cSsplit_e2}\begin{split}
\ff_{\nod}\!:\M_{k_1(\cS)+1,l_1(\cS)}\big(B_1(\cS);J,\nu_1\big)
&\lra \M_{k_1(\cS),l_1(\cS)}\big(B_1(\cS);J,\nu_1'\big),\\
\ff_{\nod}\!:\M_{k_2(\cS)+1,l_2(\cS)}\big(B_2(\cS);J,\nu_2\big)
&\lra \M_{k_2(\cS),l_2(\cS)}\big(B_2(\cS);J,\nu_2'\big)
\end{split}\EE
dropping the real marked points corresponding to the nodal points~$\nod$ on the two components.
We choose the embedding in~\eref{cSsplit_e} so that 
it satisfies~\ref{SsplitC_it} and~\ref{SsplitR_it} in Section~\ref{NBstrata_subs}.
For an element $\u\!\in\!\cS$, we denote~by  
$$\u_1\in\M_{k_1(\cS)+1,l_1(\cS)}\big(B_1(\cS);J,\nu_1\big)
\quad\hbox{and}\quad 
\u_2\in\M_{k_2(\cS)+1,l_2(\cS)}\big(B_2(\cS);J,\nu_2\big)$$
the pair of maps corresponding to~$\u$ via~\eref{cSsplit_e}.
Let
$$\u_1'\in\M_{k_1(\cS),l_1(\cS)}\big(B_1(\cS);J,\nu_1'\big)
\quad\hbox{and}\quad 
\u_2'\in\M_{k_2(\cS),l_2(\cS)}\big(B_2(\cS);J,\nu_2'\big)$$
be the images of~$\u_1$ and~$\u_2$ under the forgetful morphisms in~\eref{cSsplit_e2}.

\noindent
Suppose $k$ and $B$ satisfy~\eref{BKcond_e}, $l_2^*(\cS)\!\ge\!1$, and $i^*\!\in\!L_2(\cS_*)$ 
is as above Lemma~\ref{DMboundary_lmm}.
For each $\u\!\in\!\cS_*$, the exact sequences
\begin{alignat}{2}\label{Dses_e}
0&\lra D_{J,\nu;\u}^{\phi}\lra 
D_{J,\nu_1';\u_1'}^{\phi}\!\oplus\!D_{J,\nu_2;\u_2}^{\phi}\lra T_{u(\nod)}X^{\phi}\lra0, &\quad 
\big(\xi_1,\xi_2\big)&\lra \xi_2(\nod)\!-\!\xi_1(\nod),\\
\notag
0&\lra D_{J,\nu;\u}^{\phi}\lra 
D_{J,\nu_1;\u_1}^{\phi}\!\oplus\!D_{J,\nu_2;\u_2'}^{\phi}\lra T_{u(\nod)}X^{\phi}\lra0, &\quad 
\big(\xi_1,\xi_2\big)&\lra \xi_2(\nod)\!-\!\xi_1(\nod),
\end{alignat}
of Fredholm operators determine isomorphisms 
\BE{Du0isom_e}\begin{split} 
\la_{\u}\big(D_{J,\nu}^{\phi}\big)\!\otimes\!\la\big(T_{u(\nod)}X^{\phi}\big)
&\approx \la_{\u_1'}\big(D_{J,\nu_1'}^{\phi}\big)\!\otimes\!\la_{\u_2}\big(D_{J,\nu_2}^{\phi}\big)\,,\\
\la_{\u}\big(D_{J,\nu}^{\phi}\big)\!\otimes\!\la\big(T_{u(\nod)}X^{\phi}\big)
&\approx \la_{\u_1}\big(D_{J,\nu_1}^{\phi}\big)\!\otimes\!\la_{\u_2'}\big(D_{J,\nu_2'}^{\phi}\big).
\end{split}\EE
If $\ep_{l^*}(\cS)\!\in\!2\Z$ (for any $l^*\!\in\![l]$), 
a $\Pin^-$-structure~$\fp$ on~$X^{\phi}$ determines homotopy classes of isomorphisms
$$\la_{\u_1'}\big(D_{J,\nu_1'}^{\phi}\big)\lra 
\la_{\u_1'}^{\R}(X)\!\otimes\!\la_{\u_1'}^{\C}(X) \quad\hbox{and}\quad
\la_{\u_2}\big(D_{J,\nu_2}^{\phi}\big)\lra 
\la_{\u_2}^{\R}(X)\!\otimes\!\la_{\u_2}^{\C}(X);$$
see Lemma~\ref{Dorient_lmm}.
Combining these isomorphisms with the first isomorphism in~\eref{Du0isom_e}, 
we obtain a homotopy class of isomorphisms
\BE{Dsplitisom_e}\begin{split}
\la_{\u}\big(D_{J,\nu}^{\phi}\big)\!\otimes\!\la\big(T_{u(\nod)}X^{\phi}\big)
&\approx 
\la_{\u_1'}^{\R}(X)\!\otimes\!\la_{\u_1'}^{\C}(X)\!\otimes\!
\la_{\u_2}^{\R}(X)\!\otimes\!\la_{\u_2}^{\C}(X)\\
&\approx \la_{\u_1'}^{\R}(X)\!\otimes\!\la_{\u_1'}^{\C}(X)
\!\otimes\!\la\big(T_{u(\nod)}X^{\phi}\big)
\!\otimes\!\la_{\u_2'}^{\R}(X)\!\otimes\!\la_{\u_2'}^{\C}(X)\\
&\approx \la_{\u}^{\R}(X)\!\otimes\!\la_{\u}^{\C}(X)\!\otimes\!
\la\big(T_{u(\nod)}X^{\phi}\big)\,.
\end{split}\EE
If $\ep_{l^*}(\cS)\!\not\in\!2\Z$, a $\Pin^-$-structure~$\fp$ on~$X^{\phi}$
similarly determines a homotopy class of isomorphisms
\begin{equation*}\begin{split}
\la_{\u}\big(D_{J,\nu}^{\phi}\big)\!\otimes\!\la\big(T_{u(\nod)}X^{\phi}\big)
&\approx 
\la_{\u_1}^{\R}(X)\!\otimes\!\la_{\u_1}^{\C}(X)\!\otimes\!
\la_{\u_2'}^{\R}(X)\!\otimes\!\la_{\u_2'}^{\C}(X)\\
&\approx \la\big(T_{u(\nod)}X^{\phi}\big)\!\otimes\!\la_{\u_1'}^{\R}(X)\!\otimes\!\la_{\u_1'}^{\C}(X)
\!\otimes\!\la_{\u_2'}^{\R}(X)\!\otimes\!\la_{\u_2'}^{\C}(X)\\
&\approx \la_{\u}^{\R}(X)\!\otimes\!\la_{\u}^{\C}(X)\!\otimes\!
\la\big(T_{u(\nod)}X^{\phi}\big).
\end{split}\end{equation*}
In either case, we denote the associated orientation on $\wt\la_{\u}(D_{J,\nu}^{\phi},X)$
by~$\fo_{\fp;\u}^D$.

If $l_2^*(\cS)\!\ge\!1$, 
we choose the embedding~\eref{cSsplit_e} so that
the real marked points of the tuples of~$\u_1$ and~$\u_2$ corresponding 
to $\u\!\in\!\cS_*$ are ordered by their position on $S^1_1\!\subset\!\P^1_1$
and $S^1_2\!\subset\!\P^1_2$, respectively, starting from the node
in the counterclockwise
direction with respect to $z_1^+\!\in\!\P^1_1$ and $z_{i^*}^+\!\in\!\P^1_2$.
We define $\de_{\R}^{\pm}(\cS_*)\!\in\!\Z$ as above Lemma~\ref{DMboundary_lmm} and set
\begin{equation*}\begin{split}
\de_D^+(\cS)=k_2(\cS)\lr{w_2(X),B_1(\cS)}, ~~
\de_D^-(\cS)&=\de_D^+(\cS)\!+\!
\frac{(\ell_{\om}(B_2(\cS))\!-\!k_2(\cS))(\ell_{\om}(B_2(\cS))\!-\!k_2(\cS)\!+\!1)}{2}\,.
\end{split}\end{equation*}

\begin{lmm}\label{DorientComp_lmm}
Suppose $(X,\om,\phi)$, $\fp$, $k,l,B$, and $(J,\nu)$
are as in Lemma~\ref{Dorient_lmm}, 
the pair $(k,B)$ satisfies~\eref{BKcond_e},
and $\cS_*$ is a substratum of a codimension~1 stratum~$\cS$ of $\ov\M_{k,l}(B;J,\nu)$ with 
$l_2^*(\cS)\!\ge\!1$.
The orientations~$\fo_{\fp}^{D;\pm}$ and~$\fo_{\fp}^D$ on 
$\wt\la(D_{J,\nu}^{\phi},X)|_{\cS_*}$ are the same  
if and only~if $\de_D^{\pm}(\cS)\!\cong\!\de_{\R}^{\pm}(\cS_*)$ mod~2.
\end{lmm}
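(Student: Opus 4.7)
The plan is to track the sign discrepancy between the splitting construction of $\fo_{\fp}^D$ via~\eref{Dsplitisom_e} and the limiting orientations $\fo_{\fp}^{D;\pm}$ obtained by approaching $\cS_*$ from the two smoothing directions. I would proceed in stages parallel to the proof of Lemma~\ref{DMboundary_lmm}: first treat a base case of small~$k$ by directly invoking the CR-determinant degeneration formula of~\cite{SpinPin}, and then bootstrap to general $(k,l,l^*)$ using the interchange properties of~$\fo_{\fp}^D$ listed in Lemma~\ref{Dorient_lmm}.

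For the base case, fix $\u\!\in\!\cS_*$ and compare the two orientations on $\wt\la_{\u}(D_{J,\nu}^{\phi},X)$ term by term. The orientation~$\fo_{\fp;\u}^D$ is built from the splitting~\eref{Dsplitisom_e} via the exact sequence~\eref{Dses_e}, using the $\Pin^-$-orientations of $D_{J,\nu_1';\u_1'}^{\phi}$ and $D_{J,\nu_2;\u_2}^{\phi}$ afforded by Lemma~\ref{Dorient_lmm}. The limiting orientation~$\fo_{\fp;\u}^{D;+}$ is instead obtained by gluing a nearby smooth map and taking the $\Pin^-$-orientation there. The CROrient degeneration property of \cite{SpinPin} (the CR-analogue of \cite[Prop.~5.3]{Jake}) compares the two: they agree up to a correction encoding which disk of the smoothed domain is used as the distinguished half for~$\fp$. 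In the $+$~case, the smoothed distinguished disk (containing~$z_1^+$) matches the one used on~$\u_2$ (which naturally contains~$z_{i^*}^+$), and the correction collapses to $(-1)^{k_2(\cS)\lr{w_2(X),B_1(\cS)}}\!=\!(-1)^{\de_D^+(\cS)}$; the $w_2$~factor records that changing the distinguished disk on the~$\u_2$ side flips the $\Pin^-$-orientation by $\lr{w_2(X),B_1(\cS)}$ per real insertion on $\P^1_2$. In the $-$~case, an additional factor $(-1)^{\binom{\ell_{\om}(B_2(\cS))-k_2(\cS)+1}{2}}$ appears from reversing the distinguished half on~$\u_2$, producing~$(-1)^{\de_D^-(\cS)}$.

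To pass from the base case to general $(k,l,l^*)$, I would align the ordering of real marked points used by~$\fo_{\fp;\u}^{D;\pm}$ (induced by the cyclic order on~$S^1$ starting from~$x_1$ in the direction of~$z_1^+$) with the orderings used on the two factors of~\eref{cSsplit_e} (starting at the respective nodes in the directions of~$z_1^+$ and~$z_{i^*}^+$). Using properties~\ref{MijRch_it} and~\ref{M1Rch_it} of Lemma~\ref{Dorient_lmm}, the sign of this permutation contributes exactly the combinatorial discrepancy computed in the final paragraphs of the proof of Lemma~\ref{DMboundary_lmm}: $(-1)^{(k-1)j_1'(\cS_*)+(r(\cS)-1)k_1(\cS)k_2(\cS)}$ in the $+$~case and an additional $(-1)^{\binom{k_2(\cS)+1}{2}+(r(\cS)-1)(k-1)}$ in the $-$~case. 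Conjugate-pair rearrangements are handled analogously via~\ref{DorientCij_it}--\ref{DorientC1pm_it}, which mirror the corresponding properties of~$\fo_{k,l;l^*}$. Multiplying the base-case CR-determinant correction by the permutation sign then yields the claimed equivalence.

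The principal obstacle is the base-case identification of the $\lr{w_2(X),B_1(\cS)}$ correction. The reordering signs are straightforward transcriptions of the argument in Lemma~\ref{DMboundary_lmm}, but the $\Pin^-$-dependence of the distinguished-disk switch requires carefully unwinding the CROrient~5,6 properties from \cite{SpinPin}. In particular, one must verify that the orientation of $\la(T_{u(\nod)}X^{\phi})$ appearing on both sides of~\eref{Dsplitisom_e} cancels consistently with the degeneration direction recorded by~$\fo_{\cS;\u}^{c;\pm}$, so that no spurious sign enters from the auxiliary factor at the node.
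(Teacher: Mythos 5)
Your overall strategy---reduce the comparison to a $\Pin^-$-orientation degeneration result from~\cite{SpinPin} and bookkeep the re-ordering of the real marked points separately---is the right idea and the same idea the paper uses, but your proposed factorization of the total sign is not the one the paper isolates, and one piece of it is stated incorrectly.

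The paper's proof is much shorter: it relates $\fo_\fp^D$ (the split-exact-sequence orientation defined just before the lemma, using the distinguished disks through $z_1^+$ and $z_{i^*}^+$) to the ``split orientation'' of \cite[Section~7.4]{SpinPin} via Lemma~\ref{Dorient_lmm}\ref{M1Rch_it}, and the only permutation sign that is isolated at this stage is $(k_{r_\ep(\cS)}-1)j_\ep'(\u)$. Crucially, $r_\ep(\cS)$ is determined by the \emph{parity of $\ep_{l^*}(\cS)$}, not by which component carries~$x_1$, and $j_\ep'(\u)$ is measured from the node to the marked point of smallest index on $S^1_{r_\ep(\cS)}$, not to~$x_1$. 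This is not the quantity $\de_\R^\pm(\cS_*)$, which uses $r(S)$ and $j_1'$. The remaining contribution --- which absorbs the $w_2$-correction $k_2(\cS)\lr{w_2(X),B_1(\cS)}$, the $\binom{\ell_{\om}(B_2)-k_2+1}{2}$ term, the factors $(r(S)-1)k_1k_2$ and $(r(S)-1)(k-1)$, and the mismatch between $j_\ep'$ and $j_1'$ --- is produced all at once by citing \cite[Cor.~7.5]{SpinPin}. So the clean split you propose into a ``degeneration sign $=\de_D^\pm$'' and a ``permutation sign $=\de_\R^\pm$'' is the \emph{output} of the argument, not a factorization that one establishes term by term. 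If you insist on reconstructing \cite[Cor.~7.5]{SpinPin} yourself (as your ``base case'' step attempts), you are not simplifying the proof, you are redoing a nontrivial chunk of~\cite{SpinPin}; and your own acknowledgment that the $w_2$-identification is the ``principal obstacle'' is precisely the content of the cited corollary.

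A concrete sign problem: you say the $-$ case carries ``an additional $(-1)^{\binom{k_2(\cS)+1}{2}+(r(\cS)-1)(k-1)}$'' on top of the $+$ case. Multiplying this with your $+$ case sign $(-1)^{(k-1)j_1'+(r(\cS)-1)k_1k_2}$ gives total exponent $(k-1)j_1'+(r(\cS)-1)k_1k_2+\binom{k_2+1}{2}+(r(\cS)-1)(k-1)$, whereas $\de_\R^-(\cS_*)=(k-1)j_1'+\binom{k_2+1}{2}+(r(\cS)-1)(k-1)$. The stray $(r(\cS)-1)k_1k_2$ does not cancel when $r(\cS)=2$ and $k_1k_2$ is odd. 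Either the $-$-case sign should \emph{replace} the $(r(\cS)-1)k_1k_2$ term rather than be added on top of it, or there is a compensating term you have not accounted for; as written, the formula is wrong for those substrata.

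Finally, your ``bootstrap by $k$'' framing is not what happens. Lemma~\ref{Dorient_lmm}\ref{MijRch_it}--\ref{M1Rch_it} compare $\fo_\fp^D$ at different labelings of the \emph{same} $(k,l)$, so what they buy is a single-step re-indexing sign, not an induction on $k$. The paper's proof uses exactly one such re-indexing, producing $(k_{r_\ep}-1)j_\ep'$, and then cites. The last sentence of the paper's proof---that the conjugate pairs affect both $\fo_\fp^{D;\pm}$ and $\fo_\fp^D$ in the same way and hence cancel---agrees with what you sketch via~\ref{DorientCij_it}--\ref{DorientC1pm_it}.
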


\begin{proof} 
Let $\u\!\in\!\cS_*$.
We define $r_{\ep}(\cS)$ to be~1 if $\ep_{l^*}(\cS)\!\in\!2\Z$ and 
2 if $\ep_{l^*}(\cS)\!\not\in\!2\Z$.
Let $j_{\ep}'(\u)\!\in\!\Z^{\ge0}$ be the number of real marked points that lie  
on the oriented arc of~$S_{r_{\ep}(\cS)}^1$ between the node  and the real marked point 
$x_i\!\in\!S^1_{r_{\ep}(\cS)}$ with the smallest value of~$i$;
if $k_{r_{\ep}(\cS)}(\cS)\!=\!0$, we take $j_{\ep}'(\u)\!=\!0$.
The marked points $z_1^+\!\in\!\P^1_1$ and $z_{i^*}^+\!\in\!\P^1_2$
determine the distinguished disks as in the proof of Lemma~\ref{Dorient_lmm}.
By Lemma~\ref{Dorient_lmm}\ref{M1Rch_it}, the orientation~$\fo_{\fp}^D$  
at~$\u$ agrees with the \sf{split} orientation of \cite[Section~7.4]{SpinPin}
if and only if $(k_{r_{\ep}(\cS)}\!-\!1)j_{\ep}'(\u)$ is even.
Thus, \cite[Cor.~7.5]{SpinPin} implies the claim for 
\hbox{$\la_{\u}^{\R}(X)^*\!\otimes\!\la_{\u}(D_{J,\nu}^{\phi})$}.
Since the conjugate pairs of marked points have the same effect on
$\fo_{\fp}^{D;\pm}$ and~$\fo_{\fp}^D$, the claim follows.
\end{proof}

\subsection{Proofs of Lemmas~\ref{orient_lmm} and~\ref{orient_lmm2} and 
Proposition~\ref{JakePseudo_prp}}
\label{orient_subs}

Suppose $(X,\om,\phi)$, $\fp,k,l,l^*,B$, and $(J,\nu)$ are as in Lemma~\ref{orient_lmm},
the pair $(k,B)$ satisfies~\eref{BKcond_e}, and $(J,\nu)$ is generic.
The exact sequences
$$0\lra\ker D_{J,\nu;\u}^{\phi}\lra T_{\u}\M_{k,l}^*(B;J,\nu)\lra
T_{\ff_{k,l}(\u)}\cM_{k,l}^{\tau}\lra0$$
with \hbox{$\u\in\M_{k,l}^*(B;J,\nu)$} induced by the forgetful morphism~$\ff_{k,l}$ 
determine an isomorphism
\BE{OrientSubs_e3}\begin{split}
\la\big(\ev\big|_{\M_{k,l}^*(B;J,\nu)}\big)&\equiv
\ev^*\la^{\R}(X)^*\!\otimes\!\ev^*\la^{\C}(X)^*\!\otimes\!
\la\big(\M_{k,l}^*(B;J,\nu)\big)\\
&\approx \wt\la\big(D_{J,\nu}^{\phi},X\big)\!\otimes\!
\ff_{k,l}^*\la\big(\cM_{k,l}^{\tau}\big)
\end{split}\EE
of line bundles over $\M_{k,l}^*(B;J,\nu)$.
By Lemma~\ref{Dorient_lmm}, the $\Pin^-$-structure~$\fp$ on~$X^{\phi}$ induces 
an orientation~$\fo_{\fp}^D$ on the first factor on the right-hand side above.
Along with the orientation~$\fo_{k,l;l^*}$ on the second factor defined in 
Section~\ref{cMstrata_subs}, it determines a relative orientation~$\fo_{\fp;l^*}$  
on the restrictions of~\eref{orienlmm_e} to $\M_{k,l}^*(B;J,\nu)$
via~\eref{OrientSubs_e3}.

\begin{proof}[{\bf{\emph{Proofs of Lemmas~\ref{orient_lmm} and~\ref{orient_lmm2}}}}]
By Lemmas~\ref{cMorient_lmm} and~\ref{Dorient_lmm}, the relative orientation~$\fo_{\fp;l^*}$ 
above satisfies all properties listed in Lemma~\ref{orient_lmm} wherever it is defined.
Every (continuous) extension of~$\fo_{\fp;l^*}$ to subspaces of 
the domains of the maps in~\eref{orienlmm_e} satisfies the same properties.
The relative orientation~$\fo_{\fp;l^*}$ automatically extends over all strata 
of codimension~2 and higher.
By Lemma~\ref{orient_lmm2}, it extends over the codimension~1 strata of 
the two domains as well.
Lemma~\ref{orient_lmm2} in turn follows immediately from 
Lemmas~\ref{DMboundary_lmm} and~\ref{DorientComp_lmm}.
\end{proof}
 
The next observation, which is used in the proof of Proposition~\ref{JakePseudo_prp},
is straightforward.

\begin{lmm}\label{3by3_lmm}
Suppose $A_{ij}$ with $i,j\!\in\![3]$ are oriented finite-dimensional vector spaces,
the rows and columns in the diagram in Figure~\ref{3by3_fig} are exact sequences
of vector-space homomorphisms, and this diagram commutes.
The total number of rows and columns in this diagram which (do not)
respect the orientations is congruent to
$\dim(A_{13})\dim(A_{31})$ mod~2.
\end{lmm}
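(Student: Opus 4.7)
The plan is to compute the orientation of $\la(A_{22})$ in two different ways using the diagram, and to read off the sign discrepancy between them.

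First, recall that an exact sequence $0\to A\to B\to C\to 0$ of oriented finite-dimensional vector spaces induces a canonical isomorphism $\la(B)\approx\la(A)\otimes\la(C)$; declare the sequence to \emph{respect orientations} if this isomorphism sends the given orientation of $\la(B)$ to the product orientation on $\la(A)\otimes\la(C)$. For each of the six rows and columns in the diagram, let $\ep_i\in\{\pm 1\}$ (for rows, $i=1,2,3$) and $\de_j\in\{\pm 1\}$ (for columns, $j=1,2,3$) be $+1$ exactly when that sequence respects orientations. What must be shown is
\[
\ep_1\ep_2\ep_3\de_1\de_2\de_3=(-1)^{\dim(A_{13})\dim(A_{31})},
\]
since the left-hand side is $(-1)^N$, where $N$ is the total number of rows and columns that do not respect orientations.

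Next I would apply the six isomorphisms to $\la(A_{22})$ in two different orders. Using row $2$ and then columns $1$ and $3$, one obtains an isomorphism
\[
\la(A_{22})\xrightarrow{\ep_2\de_1\de_3}
\la(A_{11})\otimes\la(A_{31})\otimes\la(A_{13})\otimes\la(A_{33}),
\]
where the scalar records the combined sign defect of the three sequences used. Using column $2$ and then rows $1$ and $3$, one instead obtains
\[
\la(A_{22})\xrightarrow{\de_2\ep_1\ep_3}
\la(A_{11})\otimes\la(A_{13})\otimes\la(A_{31})\otimes\la(A_{33}).
\]
Since the $3\times3$ diagram commutes, both compositions arise from exact-sequence identifications of $\la(A_{22})$ with the (unordered) top exterior power of the outer-corner vector spaces; the compositions therefore differ only by the reordering of the middle two tensor factors.

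Finally, I would invoke the standard graded-commutativity identity
\[
\la(V)\otimes\la(W)\xrightarrow{(-1)^{\dim V\cdot\dim W}}\la(W)\otimes\la(V)
\]
applied to $V=A_{13}$ and $W=A_{31}$ to compare the two targets. Equating the two computations yields
\[
\ep_2\de_1\de_3=(-1)^{\dim(A_{13})\dim(A_{31})}\,\de_2\ep_1\ep_3,
\]
which rearranges to the desired identity. The main obstacle is purely bookkeeping: one must be consistent about the ordering convention used in the isomorphism $\la(B)\approx\la(A)\otimes\la(C)$ and about the orientation on a tensor product of oriented lines, so that the two compositions produce genuinely the same target up to precisely the transposition of $\la(A_{13})$ and $\la(A_{31})$. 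Once this is set up carefully, the remaining computation is a single application of the Koszul sign rule.
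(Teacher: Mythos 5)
The paper offers no proof of Lemma~\ref{3by3_lmm}, merely labeling it ``straightforward,'' so there is no canonical argument to compare against; your approach — tracing the orientation of $\la(A_{22})$ along the middle row then the outer columns, versus along the middle column then the outer rows, and comparing — is the natural one and does yield the stated identity. The one place where the proposal is thin is exactly where you flag it: the claim that the two iterated exact-sequence isomorphisms
\[
\la(A_{22})\lra \la(A_{11})\!\otimes\!\la(A_{31})\!\otimes\!\la(A_{13})\!\otimes\!\la(A_{33})
\quad\hbox{and}\quad
\la(A_{22})\lra \la(A_{11})\!\otimes\!\la(A_{13})\!\otimes\!\la(A_{31})\!\otimes\!\la(A_{33})
\]
coincide after the Koszul swap of the middle two factors. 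Calling this ``purely bookkeeping'' undersells it — it is the entire mathematical content of the lemma, and ``since the diagram commutes'' does not by itself explain why it holds. The cleanest way to discharge it is to choose a direct-sum decomposition $A_{22}=V_{11}\!\oplus\!V_{13}\!\oplus\!V_{31}\!\oplus\!V_{33}$ compatible with the diagram (so that $A_{12},A_{21}$ correspond to $V_{11}\!\oplus\!V_{13}$ and $V_{11}\!\oplus\!V_{31}$, etc.); such a splitting exists by exactness and commutativity. Fixing ordered bases for the $V_{ij}$ and computing the two compositions on the resulting generator $\om_{11}\!\wedge\!\om_{13}\!\wedge\!\om_{31}\!\wedge\!\om_{33}$ of $\la(A_{22})$, the row-then-columns route produces a factor $(-1)^{\dim A_{13}\dim A_{31}}$ (from the reordering needed to put the generator in sub-then-quotient form for the middle row), while the column-then-rows route produces no such factor; this is exactly the discrepancy you invoke. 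Once this is written out, your computation $\ep_2\de_1\de_3=(-1)^{\dim A_{13}\dim A_{31}}\de_2\ep_1\ep_3$, hence $\ep_1\ep_2\ep_3\de_1\de_2\de_3=(-1)^{\dim A_{13}\dim A_{31}}$, is correct.
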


\begin{figure}
$$\xymatrix{& 0\ar[d] & 0\ar[d] & 0\ar[d]  \\
0\ar[r] & A_{11}\ar[r]\ar[d] & A_{12}\ar[r]\ar[d] & A_{13}\ar[r]\ar[d] & 0 \\
0\ar[r] & A_{21}\ar[r]\ar[d] & A_{22}\ar[r]\ar[d] & A_{23}\ar[r]\ar[d] & 0 \\
0\ar[r] & A_{31}\ar[r]\ar[d] & A_{32}\ar[r]\ar[d] & A_{33}\ar[r]\ar[d] & 0 \\
& 0 & 0 & 0}$$ 
\caption{Commutative square of vector spaces with
exact rows and columns for the statement of Lemma~\ref{3by3_lmm}}
\label{3by3_fig}
\end{figure}

\begin{proof}[{\bf{\emph{Proof of Proposition~\ref{JakePseudo_prp}}}}]
We continue with the notation in the proof of Lemma~\ref{mainsetup_lmm},
but apply it to the strata~$\cS$ of $\ov\M_{k,l}(B;J,\nu)$.
Let 
\BE{JakePseudo_e1}
\ev_{[l^*]}=\prod_{i=1}^{l^*}\!\ev^+_i\!: \ov\M_{k,l}(B;J,\nu)\lra X^{l^*}\EE
and \hbox{$h\!:Z\!\lra\!X^{l^*}$} be a smooth map from a manifold of dimension
$2l^*\!-\!2$ that covers~$\Om(f_{\bh})$.
Let
\begin{alignat}{2}
\label{JakePseudo_e3}
\ev_{k,l;\bh}\!:\ov\cZ_{k,l;\bh}(B;J,\nu)&\!\equiv\!\big\{
(\u,y)\!\in\!\ov\M_{k,l}(B;J,\nu)\!\times\!M_{\bh}\!:
\ev_{[l^*]}(\u)\!=\!f_{\bh}(y)\big\} \lra\! X_{k,l-l^*},\\
\notag
\ev_{k,l;h}\!:\ov\cZ_{k,l;h}(B;J,\nu)&\!\equiv\!\big\{
(\u,z)\!\in\!\ov\M_{k,l}(B;J,\nu)\!\times\!Z\!:\ev_{[l^*]}(\u)\!=\!h(z)\big\}
\lra X_{k,l-l^*}
\end{alignat}
be the maps induced by~\eref{fMevdfn_e}.

For each stratum~$\cS$ of $\ov\M_{k,l}(B;J,\nu)$, define
$$\cS_{\bh}=  \ov\cZ_{k,l;\bh}(B;J,\nu)\cap\big(\cS\!\times\!M_{\bh}\big), \quad
\cS^*_{\bh}=\ov\cZ_{k,l;\bh}(B;J,\nu)\cap\big(\cS^*\!\times\!M_{\bh}\big).$$
For a generic $(J,\nu)$, the subspace~$\cS^*$ of simple maps in~$\cS$ is a smooth manifold
of dimension
\BE{JakePseudo_e4a}
\dim\,\cS^*=\ell_{\om}(B)\!+\!2l\!+\!k-\fc(\cS)=4l\!-2l^*\!+\!2k\!-\!\fc(\cS)\EE
and the restriction of~\eref{JakePseudo_e1} to~$\cS^*$ is transverse to~$f_{\bh}$
and to~$h$.
Along with~\eref{JakePseudo_e4a}, the first transversality property implies~that 
$\cS^*_{\bh}$ is a smooth manifold of dimension
\BE{JakePseudo_e3a}
\dim\,\cS^*_{\bh}=\dim\,\cS^*\!-\!2l^*
=4(l\!-\!l^*)\!+\!2k\!-\!\fc(\cS)\,.\EE

For every stratum $\cS$ of $\ov\M_{k,l}(B;J,\nu)$, there is a smooth manifold~$\cS'$
and smooth maps
$$\ev_{[l^*]}\!:\cS'\lra X^{l^*} \qquad\hbox{and}\qquad
\ev_{k,l;l^*}\!:\cS'\lra X_{k,l-l^*}$$
such that $\ev_{[l^*]}$ is transverse to $f_{\bh}$ and to~$h$,
\BE{JakePseudo_e4b}
\ev_{[l^*]}(\cS\!-\!\cS^*)\subset \ev_{[l^*]}(\cS'),
\quad\hbox{and}\quad 
\dim\,\cS'\le\ell_{\om}(B)\!+\!2l\!+\!k-2=4l\!-2l^*\!+\!2k\!-\!2\,;\EE
see \cite[Section~3]{RT2} and~\cite[Section~3.4]{RealRT}.
In particular, the map 
$$\ev_{k,l;\bh}\!:\cS'_{\bh}\!\equiv\!\big\{(\u,y)\!\in\!\cS'\!\times\!M_{\bh}\!:
\ev_{[l^*]}(\u)\!=\!f_{\bh}(y)\big\}\lra X_{k,l-l^*}$$
induced by $\ev_{k,l;l^*}$ is smooth,
\BE{JakePseudo_e3b}\ev_{k,l;\bh}\big(\cS_{\bh}\!-\!\cS_{\bh}^*\big)
\subset\ev_{k,l;\bh}(\cS_{\bh}'), 
\quad\hbox{and}\quad \dim\,\cS'_{\bh}\le 4(l\!-\!l^*)\!+\!2k\!-\!2\,.\EE

By the reasoning in the proof of Lemma~\ref{mainsetup_lmm} applied
to the space $\M_{k,l;l^*}^{\st}(B;J,\nu)$ instead of $\wh\M_{k,l+l^*-1;l^*}^{\st}(B;J,\nu)$, 
$\M_{k,l;l^*}^{\st}(B;J,\nu)$ is a smooth manifold.
Along with the first transversality property after~\eref{JakePseudo_e4a}, 
this implies that~\eref{JakePseudo_e} is a smooth map between 
smooth manifolds of the same dimension.
The relative orientation~$\fo_{\fp;l^*}$ of the first map in~\eref{orienlmm_e}
and the orientation~$\fo_{\bh}$ determine a relative orientation~$\fo_{\fp;l^*}\fo_{\bh}$
of~\eref{JakePseudo_e}.

Since the space $\ov\M_{k,l}(B;J,\nu)$ is compact,
\BE{JakePseudo_e5}\begin{split}
\Om\big(\ev_{k,l;\bh}\big|_{\cZ_{k,l;\bh}^{\st}(B;J,\nu)}\big)
\subset \ev_{k,l;\bh}\big(\ov\cZ_{k,l;\bh}(B;J,\nu)
\!-\!\cZ_{k,l;\bh}^{\st}(B;J,\nu)\big) \qquad&\\
\cup  \ev_{k,l;h}\big(\ov\cZ_{k,l;h}(B;J,\nu)\big)&\,.
\end{split}\EE
By~\eref{JakePseudo_e4a}, \eref{JakePseudo_e4b}, and the transversality of
the maps $\ev_{[l^*]}$ on~$\cS^*$ and~$\cS'$ to~$h$,
the last set above is covered by smooth maps from finitely many manifolds
of dimension at most 
\BE{JakePseudo_e7}\dim\,\cZ_{k,l;\bh}^{\st}(B;J,\nu)\!-\!2=4(l\!-\!l^*)\!+\!2k-2.\EE
The~set
$$\ov\cZ_{k,l;\bh}(B;J,\nu)\!-\!\cZ_{k,l;\bh}^{\st}(B;J,\nu)\subset
\ov\M_{k,l}(B;J,\nu)\!\times\!M_{\bh}$$
consists of the subspaces~$\cS_{\bh}^*$ corresponding to the strata~$\cS$ 
of $\ov\M_{k,l}(B;J,\nu)$ with 
either $\fc(\cS)\!\ge\!2$ nodes or $\ep_{l^*}(\cS)\!\cong\!2,3$ mod~2
and of the subspaces $\cS_{\bh}\!-\!\cS_{\bh}^*$ with $\fc(\cS)\!\ge\!1$.
By~\eref{JakePseudo_e3a} and~\eref{JakePseudo_e3b}, 
a smooth map from manifold of dimension~\eref{JakePseudo_e7} 
covers $\ev_{k,l;\bh}(\cS_{\bh}^*)$ if $\fc(\cS)\!\ge\!2$ and 
$\ev_{k,l;\bh}(\cS_{\bh}\!-\!\cS_{\bh}^*)$ for any stratum~$\cS$ of $\ov\M_{k,l}(B;J,\nu)$.
We show in the next two paragraphs that a smooth map from manifold of dimension~\eref{JakePseudo_e7} 
also covers $\ev_{k,l;\bh}(\cS_{\bh}^*)$ if
$\cS$ is a stratum of $\ov\M_{k,l}(B;J,\nu)$ with $\fc(\cS)\!=\!1$ and 
 $\ep_{l^*}(\cS)\!\cong\!2,3$ mod~4.
This will conclude the proof of the first claim of the proposition.

Suppose $\cS$ is a stratum of $\ov\M_{k,l}(B;J,\nu)$ with $\fc(\cS)\!=\!1$
and $k_i,l_i,l_i^*,B_i$ are as in~\eref{JakePseudo_e8}.
Let
$$f_{\bh_1}\!:M_{\bh_1}\lra X^{l_1^*}  \qquad\hbox{and}\qquad 
f_{\bh_2}\!:M_{\bh_2}\lra X^{l_2^*}$$
be the pseudocycles determined by the maps $h_1,\ldots,h_{l^*}$ corresponding
to the conjugate pairs of marked points indexed by $i\!\in\![l^*]$ 
that are carried by the first and second components of the maps in~$\cS$,
respectively.
If
$$\ep_{l^*}(\cS)\equiv \blr{c_1(X,\om),B_2}\!-\!2(l_2\!-\!l_2^*)\!-\!k_2
\equiv \big(\ell_{\om}(B_2)\!-\!2(l_2\!-\!l_2^*)\!-\!k_2\big)+1 $$
is congruent to~2 or 3 modulo~4, then 
$$(B_1,l_1,k_1)\neq(0,1,0)  \qquad\hbox{and}\qquad
(B_2,l_2,l_2^*,k_2)\neq(0,1,1,0).$$
Suppose $B_2\!=\!0$, $l_2\!=\!1$, and $l_2^*,k_2\!=\!0$.
For good choices of~$\nu$ (still sufficiently generic), 
the restriction of~\eref{JakePseudo_e3} to~$\cS_{\bh}^*$ then factors~as 
$$\cS_{\bh}^*\lra  \cZ_{k+1,l-1;\bh}^{\st}(B;J,\nu_1)\!\times\!\M_{1,1}(0;J,0)
\lra X_{k,l-1-l^*}\!\times\!X^{\phi} \lra X_{k,l-l^*}\,.$$
Thus,  $\ev_{k,l;\bh}(\cS_{\bh}^*)$ is contained in a smooth manifold of dimension 
\hbox{$4(l\!-\!l^*)\!+\!2k\!-\!2$}.
Suppose $B_2\!=\!0$, $l_2,l_2^*\!=\!0$, and $k_2\!=\!2$.
For good choices of~$\nu$, the restriction of~\eref{JakePseudo_e3} to~$\cS_{\bh}^*$
then factors~as 
$$\cS_{\bh}^*\lra  \cZ_{k-1,l;\bh}^{\st}(B;J,\nu_1)\!\times\!\M_{3,0}(0;J,0)
\lra X_{k-2,l-l^*}\!\times\!\De_X^{\phi} \lra X_{k,l-l^*}\,,$$
where $\De_X^{\phi}\!\subset\!(X^{\phi})^2$ is the diagonal.
Thus,  $\ev_{k,l;\bh}(\cS_{\bh}^*)$ is again contained in a smooth manifold of dimension 
\hbox{$4(l\!-\!l^*)\!+\!2k\!-\!2$}.

We can thus assume that either $B_i\!\neq\!0$ or $2l_i\!+\!k_i\!\ge\!3$ for $i\!=\!1,2$.
For good choices of~$\nu$, 
the restriction of~\eref{JakePseudo_e3} to~$\cS_{\bh}^*$ then factors~as 
$$\xymatrix{\cS^*_{\bh}\ar[r] &
\cZ_{k_1,l_1;\bh_1}^{\st}(B_1;J,\nu_1')\times\cZ_{k_2,l_2;\bh_2}^{\st}(B_2;J,\nu_2')
\ar@<-7ex>[d]_{\ev_{k_1,l_1;\bh_1}}\ar@<7ex>[d]^{\ev_{k_2,l_2;\bh_2}}\\
&X_{k_1,l_1-l_1^*}\times X_{k_2,l_2-l_2^*} \ar[r]& X_{k,l-l^*}.}$$
Thus, $\ev_{k,l;\bh}(\cS_{\bh}^*)$ is covered by a smooth map from a manifold
of dimension
\begin{equation*}\begin{split}
\dim\,\cZ_{k_i,l_i;\bh_i}^{\st}(B_i;J,\nu_i)\!+\!
\dim\,X_{k_{3-i},l_{3-i}-l_{3-i}^*}
&=\ell_{\om}(B_i)\!+\!2(l_i\!-\!l_i^*)\!+\!k_i
+4(l_{3-i}-l_{3-i}^*)\!+\!2k_{3-i}\\
&=4(l\!-\!l^*)\!+\!2k+\big(\ell_{\om}(B_i)\!-\!2(l_i\!-\!l_i^*)\!-\!k_i\big)
\end{split}\end{equation*}
for $i\!=\!1,2$.
Since
$$\big(\ell_{\om}(B_1)\!-\!2(l_1\!-\!l_1^*)\!-\!k_1\big)
+\big(\ell_{\om}(B_2)\!-\!2(l_2\!-\!l_2^*)\!-\!k_2\big)
=\ell_{\om}(B)\!-\!1\!-\!2(l\!-\!l^*)\!-\!k=-1,$$
it follows that $\ev_{k,l;\bh}(\cS^*)$ is covered by a smooth map from a manifold
of dimension~\eref{JakePseudo_e7} unless 
$\ep_{l^*}(\cS)$ is either~0 or~1.
Along with the previous paragraph, this confirms the claim at the end of 
the paragraph containing~\eref{JakePseudo_e7}.

It remains to establish~\eref{RdivRel_e}.
We can assume that $B\!\neq\!0$ and can be represented by a $J$-holomorphic map;
thus, $\lr{\om,B}\!\neq\!0$.
Let $H\!\in\!H^2(X;\Z)$ be such $\phi^*H\!=\!-H$ and $\lr{H,B}\!\neq\!0$;
such a class~$H$ can be obtained by slightly deforming~$\om$ so that it represents a rational class,
taking a multiple of the deformed class that represents an integral class,
and then taking the anti-invariant part of the multiple.
Let $h_1$ and~$h_2$ be two pseudocycles as in the statement of the proposition 
representing the Poincare dual of~$H$.
By definition,
$$N_{B,l-l^*}^{\phi;\fp}=\frac{1}{\lr{H,B}^2}
\deg\!\big(\ev_{k,l-l^*+2;(h_1,h_2)},\fo_{\fp;2}\fo_{(h_1,h_2)}\big)\,.$$
An implicit implication of a similar definition in \cite[Section~4]{Jake}
is that $N_{B,l-l^*}^{\phi;\fp}$ does not depend on the choices of~$H$, $h_1$, and~$h_2$.
This follows from~\eref{JakePseudo_e9} below, which also implies~\eref{RdivRel_e}. 

Let $k,l,l^*,B,\bh$ be as in the statement of the proposition and $h'\!:H'\!\lra\!X$ be
another codimension~2 pseudocycle in general position.
We denote by $\bh h'$ the tuple $(h_1,\ldots,h_{l^*},h')$ and show below~that
\BE{JakePseudo_e9}
\deg\!\big(\ev_{k,l+1;\bh h'},\fo_{\fp;l^*+1}\fo_{\bh h'}\big)
=\big(h'\!\cdot_X\!B\big)\deg\!\big(\ev_{k,l;\bh},\fo_{\fp;l^*}\fo_{\bh}\big),\EE
with $\ev_{k,l;\bh}$ as in~\eref{JakePseudo_e} and 
$$\ev_{k,l+1;\bh h'}\!: \cZ_{k,l+1;\bh h'}^{\st}\big(B;J,\ff_{k,l+1;l^*+1}^{\,*}\nu\big)\lra
X_{k,(l+1)-(l^*+1)}\!=\!X_{k,l-l^*}\,.$$

The second forgetful morphism in~\eref{ffMdfn_e} with $(l,i)$ replaced by $(l\!+\!1,l^*\!+\!1)$
induces a morphism
$$\ff\!:\cZ_{k,l+1;\bh h'}^{\st}\big(B;J,\ff_{k,l+1;l^*+1}^{\,*}\nu\big)\lra
 \cZ_{k,l;\bh}^{\st}\big(B;J,\nu\big)$$
so that $\ev_{k,l+1;\bh h'}\!=\!\ev_{k,l;\bh}\!\circ\!\ff$.
The relative orientations $\fo_{\fp;l^*+1}\fo_{\bh h'}$ of $\ev_{k,l+1;\bh h'}$
and $\fo_{\fp;l^*}\fo_{\bh}$ of $\ev_{k,l;\bh}$ determine a relative orientation~$\fo_{\fp}$
of~$\ff$.
The number of the preimages 
$$\wt\u\equiv (\u,(z_{l^*+1}^+,z_{l^*+1}^-),y,y')$$ 
of a generic point 
$$(\u,y)\in \cZ_{k,l;\bh}^{\st}(B;J,\nu)
\cap \big(\M_{k,l}(B;J,\nu)\!\times\!M_{\bh}\big)$$
under~$\ff$ is finite.
For such a preimage~$\wt\u$, $\nd_{\wt\u}\ff$ is an isomorphism. 
With $\u$ as in~\eref{udfn_e}, the homomorphism 
\BE{JakePseudo_e10}
T_{z_{l^*+1}^+}\P^1\!\oplus\!T_{y'}H'\lra T_{u(z_{l^*+1}^+)}X\!=\!T_{h'(y')}X, \quad
(v,w)\lra \nd_{z_{l^*+1}^+}u(v)\!+\!\nd_{y'}h'(w),\EE
is an isomorphism.
Its domain and target are oriented by the complex orientation of~$\P^1$
(i.e.~the vertical orientation~$\fo_{l^*+1}^+$ in the notation of Lemma~\ref{orient_lmm}\ref{fforient_it}),
the given orientation~$\fo_{h'}$ of~$H'$, and the symplectic orientation~$\fo_{\om}$ of~$X$.
We set $\fs_{\wt\u}$ to be $+1$ if this isomorphism 
is orientation-preserving and to be $-1$ if it is orientation-reversing.
We show below that $\fs_{\wt\u}(\fo_{\fp})\!=\!\fs_{\wt\u}$.
Since
\BE{JakePseudo_e11} \sum_{\wt\u\in\ff^{-1}(\u,y)}\!\!\!\!\!\!\!\fs_{\wt\u}=h'\!\cdot_X\!B,\EE
the desired identity~\eref{JakePseudo_e9} then follows from~\eref{fsfoprod_e}.

\begin{figure}
$$\xymatrix{&& 0\ar[d]& 0\ar[d]\\
&0 \ar[r]\ar[d]& T_{\wt\u}\cZ_{\bh h'} \ar[r]^{\nd_{\wt{u}}\ff}\ar[d]& T_{(\u,y)}\cZ_{\bh}\ar[r]\ar[d]& 0\\
0\ar[r]&  T_{z_{l^*+1}^+}\P^1\!\oplus\!T_{y'}H' \ar[r]\ar[d]& 
T_{\wt\u}(\M_{l+1}\!\times\!M_{\bh h'}) \ar[r]\ar[d]& 
T_{(\u,y)}(\M_l\!\times\!M_{\bh})\ar[r]\ar[d]& 0\\
0\ar[r]&  \cN\De \ar[r]\ar[d]& \cN\De^{l^*+1}\ar[r]\ar[d]& 
\cN\De^{l^*}\ar[r]\ar[d]& 0\\  
& 0& 0& 0}$$
\caption{Commutative square of vector spaces with exact rows and columns 
for the proof of~\eref{RdivRel_e}.}
\label{RdivRel_fig}
\end{figure}

Let $\De\!\subset\!X^2$ and $\De^{l^*}\!\subset\!(X^{l^*})^2$ denote the diagonals.
The orientation~$\fo_{\om}$ of~$X$ induces an orientation~$\fo_{\De}$ on 
the normal bundle~$\cN\De$ of~$\De$ and an orientation~$\fo_{\De}^{l^*}$ on 
the normal bundle~$\cN\De^{l^*}$ of~$\De^{l^*}$.
Define 
\begin{alignat*}{2}
\cZ_{\bh}&=\cZ_{k,l;\bh}^{\st}\big(B;J,\nu\big), &\quad
\M_l&=\M_{k,l}\big(B;J,\nu\big),\\
\cZ_{\bh h'}&=\cZ_{k,l+1;\bh h'}^{\st}\big(B;J,\ff_{k,l+1;l^*+1}^{\,*}\nu\big), &\quad
\M_{l+1}&=\M_{k,l+1}\big(B;J,\ff_{k,l+1;l^*+1}^{\,*}\nu\big).
\end{alignat*}
Let $(\u,y)$ and $\wt\u$ be as above.
Fix an orientation~$\fo$ on $T_{\ev(\u)}X_{k,l}$.
The differentials of the obvious maps induce a commutative square in Figure~\ref{RdivRel_fig}
with exact rows and columns.
Since the dimensions of~$X$ and~$H'$ are even, the sign~$\fs_{\wt\u}$ of~\eref{JakePseudo_e10}
is the sign of the isomorphism in the left column with respect to the orientations
$\fo_{l^*+1}^+$, $\fo_h'$, and~$\fo_{\De}$.
Along with the relative orientation $\fo_{\fp;l^*}$ (resp.~$\fo_{\fp;l^*+1}$)
and the orientation~$\fo_{\bh}$ (resp.~$\fo_{\bh h'}$),
$\fo$ induces an orientation~$\fo_{(\u,y)}$ on $T_{(\u,y)}(\M_l\!\times\!M_{\bh})$
(resp.~$\fo_{\wt\u}$ on \hbox{$T_{\wt\u}(\M_{l+1}\!\!\times\!M_{\bh h'})$}).
Since the dimension of~$H'$ is even, Lemma~\ref{orient_lmm}\ref{fforient_it} implies
that the middle row respects the orientations.
Along with the orientation~$\fo_{\De}^{l^*}$ on~$\cN\De^{l^*}$
(resp.~$\fo_{\De}^{l^*+1}$ on~$\cN\De^{l^*+1}$),
$\fo_{(\u,y)}$ (resp.~$\fo_{\wt\u}$) induces an orientation $\fo_{(\u,y)}'$ on~$T_{(\u,y)}\cZ_{\bh}$
(resp.~$\fo_{\wt\u}'$ on $T_{\wt\u}\cZ_{\bh h'}$) so that the right (resp.~middle) column
of the diagram respects the orientations.
The bottom row respects the orientations.
Lemma~\ref{3by3_lmm} then implies that $\nd_{\wt{u}}\ff$ is 
orientation-preserving with respect to~$\fo_{\wt\u}'$ and $\fo_{(\u,y)}'$ if and only if
the isomorphism in the left column~is.
The latter is the case if and only if \hbox{$\fs_{\wt\u}\!=\!+1$}.
These two statements imply that $\fs_{\wt\u}(\fo_{\fp})\!=\!\fs_{\wt\u}$.
\end{proof}

\subsection{Proof of Proposition~\ref{Rdecomp_prp}}
\label{Rdecomp_subs}

For $k',l'\!\in\Z^{\ge0}$ with $k'\!+\!2l'\!\le\!2$,
we denote by $\cH_{k',l'}^{\om,\phi}$ the set of pairs $(J,0)$ with $J\!\in\!\cJ_{\om}^{\phi}$.
We continue with the notation in the statement of this proposition and just above.
Let $k_r,l_r,l_r^*,B_r$ be as in~\eref{JakePseudo_e8} and
$$\M^{\st}=\M_{k,l;l^*}^{\st}(B;J,\nu).$$
Since $(\cS,\Ups)$ is admissible, \hbox{$k_1\!+\!2l_1\!\ge\!3$} and 
either $k_2\!\ge\!1$ or $l_2\!\ge\!1$. 
We assume that there exist \hbox{$\nu_1'\!\in\!\cH_{k_1,l_1}^{\om,\phi}$} and 
$\nu_2\!\in\!\cH_{k_2+1,l_2}^{\om,\phi}$
so that every substratum $\cS_*\!\subset\!\cS$ admits an embedding as in~\eref{cSsplit_e}
with $\nu_1\!=\!\ff_{k_1+1,l_1;\nod}^*\nu_1'$ subject to the conditions specified below~\eref{cSsplit_e2}
and above Lemma~\ref{DorientComp_lmm}.

We first assume that $l_2^*\!\neq\!0$  and
take $i^*\!\in\!L_2^*(\cS)$ to be the smallest element of this set.
By this assumption, the image of~$\cS$ under the forgetful morphism~$\ff_{k,l}$
is contained in a codimension~1 stratum~$\cS^{\vee}$ of~$\ov\cM_{k,l}^{\tau}$.
By Lemma~\ref{orient_lmm2}, we can assume that the orientation~$\fo_{\cS}^c$
of~$\cN\cS$ used to define the relative orientation
$\prt\fo_{\fp;l^*}\!\equiv\!\prt_{\fo_{\cS}^c}\fo_{\fp;l^*}$ of~\eref{evcSdfn_e}
is~$\fo_{\cS}^{c;+}$ in the notation of Lemma~\ref{DorientComp_lmm}.
 
For $\u\in\cS$, let 
\begin{gather*}\begin{aligned}
\u_1\in  \M_1&\equiv\!\M_{k_1+1,l_1}\big(B_1;J,\nu_1\big), &\qquad
\u_1'\in \M_1'&\equiv\!\M_{k_1,l_1}\big(B_1;J,\nu_1'\big),\\
\u_2\in \M_2&\equiv\!\M_{k_2+1,l_2}\big(B_2;J,\nu_2\big), &\qquad
\nod&\in\P^1_1,\P^1_2,~~S^1_1\subset\P^1_1,
\end{aligned}\\
D_{\u}^{\phi}=D_{J,\nu;\u}^{\phi}, \qquad 
D_{\u_1}^{\phi}=D_{J,\nu_1;\u_1}^{\phi}=D_{J,\nu_1';\u_1'}^{\phi},\qquad
D_{\u_2}^{\phi}=D_{J,\nu_2;\u_2}^{\phi}\
\end{gather*}
be as above Lemma~\ref{DorientComp_lmm} and in Section~\ref{cMorient_subs}.
We denote by
\begin{alignat*}{2}
\cC&\equiv\!\ff_{k,l}(\u)\in\cS^{\vee}\subset\ov\cM\!\equiv\!\ov\cM_{k,l}^{\tau}, &\qquad
\cC_1&\equiv\!\ff_{k_1+1,l_1}(\u_1)\in\cM_1\!\equiv\!\cM_{k_1+1,l_1}^{\tau}, \\
\cC_1'&\equiv\!\ff_{k_1,l_1}(\u_1')\in\cM_1'\!\equiv\!\cM_{k_1,l_1}^{\tau},&\qquad
\cC_2&\equiv\!\ff_{k_2+1,l_2}(\u_2)\in\!\cM_2\!\equiv\!\cM_{k_2+1,l_2}^{\tau} 
\end{alignat*}
the marked domains of the maps $\u$, $\u_1$, $\u_1'$, and $\u_2$, respectively.

The exact sequence
\BE{TcSses_e}
0\lra T_{\u}\cS\lra T_{\u_1}\M_1\!\oplus\!T_{\u_2}\M_2\lra T_{u(\nod)}X^{\phi}\lra0, ~~ 
\big(\xi_1,\xi_2\big)\lra \xi_2(\nod)\!-\!\xi_1(\nod),\EE
of vector spaces determines an isomorphism 
\BE{Rdecomppf_e3}
\la_{\u}(\cS)\!\otimes\!\la\big(T_{u(\nod)}X^{\phi}\big)
\approx \la_{\u_1}(\M_1)\!\otimes\!\la_{\u_2}(\M_2).\EE
Since $\ep_{l^*}(\cS)\!\in\!2\Z$, 
the $\Pin^-$-structure~$\fp$ on~$X^{\phi}$ determines homotopy classes~$\fo_{\fp;l_1^*}$
and~$\fo_{\fp;l_2^*}$ of isomorphisms
\BE{Rdecomppf_e3b}\la_{\u_1'}(\M_1')\lra \la_{\u_1'}^{\R}(X)\!\otimes\!\la_{\u_1'}^{\C}(X) 
\quad\hbox{and}\quad
\la_{\u_2}(\M_2)\lra  \la_{\u_2}^{\R}(X)\!\otimes\!\la_{\u_2}^{\C}(X),\EE
respectively; see Lemma~\ref{orient_lmm}.
Combining the first homotopy class of isomorphisms above with 
the first $S^1$-fibration in~\eref{cSsplit_e2}
and the orientation~$\fo_{\nod}^{\R}$ on its vertical tangent bundle 
\hbox{$T_{\u_1}\M_1^v\!=\!T_{\nod}S^1_1$},
we obtain a homotopy class $\wt\fo_{\fp;l_1^*}\!\equiv\!\fo_{\nod}^{\R}\fo_{\fp;l_1^*}$ 
of isomorphisms
\BE{Rdecomppf_e5a}
\la_{\u_1}(\M_1) \approx \la_{\u_1'}(\M_1')\!\otimes\!T_{\nod}S^1_1
\approx  \la_{\u_1'}^{\R}(X)\!\otimes\!\la_{\u_1'}^{\C}(X)\,.\EE
Along with~\eref{Rdecomppf_e3} and the second homotopy class of isomorphisms in~\eref{Rdecomppf_e3b},
it determines a homotopy class of isomorphisms
\BE{Rdecomppf_e5}\begin{split}
\la_{\u}(\cS)\!\otimes\!\la\big(T_{u(\nod)}X^{\phi}\big)
&\approx \la_{\u_1'}^{\R}(X)\!\otimes\!\la_{\u_1'}^{\C}(X)\!\otimes\!
\la_{\u_2}^{\R}(X)\!\otimes\!\la_{\u_2}^{\C}(X)\\
&\approx \la_{\u_1'}^{\R}(X)\!\otimes\!\la_{\u_1'}^{\C}(X)\!\otimes\!
\la\big(T_{u(\nod)}X^{\phi}\big)\!\otimes\!
\la_{\u_2'}^{\R}(X)\!\otimes\!\la_{\u_2'}^{\C}(X)\\
&\approx \la_{\u}^{\R}(X)\!\otimes\!\la_{\u}^{\C}(X)\!\otimes\!\la\big(T_{u(\nod)}X^{\phi}\big)\,.
\end{split}\EE
We denote by~$\fo_{\fp;l^*;\u}^{\cS}$ the homotopy class of isomorphisms
$$\la_{\u}(\cS)\lra \la_{\u}^{\R}(X)\!\otimes\!\la_{\u}^{\C}(X)$$
determined by~\eref{Rdecomppf_e5}.
The next lemma is deduced from Lemmas~\ref{DMboundary_lmm}  and~\ref{DorientComp_lmm} 
at the end of this section.

\begin{lmm}\label{TMcomp_lmm}
The orientations $\prt\fo_{\fp;l^*}$ and $\fo_{\fp;l^*}^{\cS}$ of $\la(\ev|_{\cS})$
are opposite.
\end{lmm}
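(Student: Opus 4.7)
The plan is to express both orientations on $\la(\ev|_\cS)$ at $\u\in\cS_*$ in terms of the same split data on the two irreducible components of the domain of~$\u$, and then verify that the two expressions differ by a factor of $-1$.

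For $\prt\fo_{\fp;l^*}$, the hypothesis $l_2^*(\cS)\ge 1$ ensures that the marked domain of $\u$ is stable, so $\ff_{k,l}(\cS)$ is contained in a codimension~1 stratum $\cS^\vee\subset\ov\cM_{k,l}^\tau$ and the canonical identification $\cN\cS \approx \ff_{k,l}^*\cN\cS^\vee$ allows~\eref{OrientSubs_e3} to restrict to
$$\la(\ev|_\cS) \approx \wt\la(D_\u^\phi, X)|_\cS \otimes \ff_{k,l}^*\la(\cS^\vee).$$
Under this identification, $\prt\fo_{\fp;l^*}$ corresponds by construction to the tensor product of the limiting orientations $\fo_{\fp;\u}^{D;+}$ and $\fo_{S;l^*}^+$. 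I will apply Lemma~\ref{DorientComp_lmm} and Lemma~\ref{DMboundary_lmm} to replace these with the split orientations $\fo_\fp^D$ and $\fo_{S;l^*}$, accumulating a sign of $(-1)^{\de_D^+(\cS) + k + \de_{\C;l^*}^+(S)}$; since $\de_{\C;l^*}^+(S) = 1$, the common $\de_\R^+(\cS_*)$ contributions cancel and the total sign is $(-1)^{\de_D^+(\cS)+k+1}$.

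For $\fo_{\fp;l^*;\u}^\cS$, I will unfold the chain of isomorphisms in~\eref{Rdecomppf_e5}. The split orientation $\fo_\fp^D$ at $\u$ in Section~\ref{CRdet_subs} is built from the $\Pin^-$ orientations on the two components via the same exact sequence~\eref{Dses_e} that appears in~\eref{Dsplitisom_e}; and the split orientation $\fo_{S;l^*}$ on $\la(\cS^\vee)$ is defined via~\eref{Ssplit_e} from the orientations $\fo_{k_r+1, l_r; l_r^*}$ on $\la(\cM_r)$, which in turn relate via $\fo_\nod^\R$ and properties~\ref{cMorientR_it},\ref{cM1Rch_it} of Lemma~\ref{cMorient_lmm} to the orientations $\fo_{k_r, l_r; l_r^*}$ on $\la(\cM_r')$ used implicitly via~\eref{OrientSubs_e3} in defining $\fo_{\fp;l_r^*}$. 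The remaining comparison requires reconciling the exact sequence~\eref{TcSses_e} of tangent spaces with the analogous splittings of the $D$-operators and domain moduli; I will apply the $3\times 3$ Lemma~\ref{3by3_lmm} to the commutative diagram whose rows are~\eref{TcSses_e} on the level of $\la(\M_r)$ factors and whose columns are~\eref{OrientSubs_e3} on the two components, and then shuffle the $\la(T_{u(\nod)}X^\phi)$ factors in~\eref{Rdecomppf_e5} to arrive at the expression $(-1)^{\de_D^+(\cS)+k}\,\fo_\fp^D \otimes \fo_{S;l^*}$.

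The difference in signs, $(-1)^{(\de_D^+(\cS)+k+1)-(\de_D^+(\cS)+k)} = -1$, then yields the lemma. The main obstacle will be the careful bookkeeping of the permutation signs that arise when the nodal marked point---labeled as the first marked point in the conventions of~\eref{cSsplit_e} and~\ref{SsplitR_it}---is moved relative to the other real marked points on each of the two components. These signs enter both through~\ref{cM1Rch_it} of Lemma~\ref{cMorient_lmm} on the $\cM_r$ side and through~\ref{M1Rch_it} of Lemma~\ref{Dorient_lmm} on the $D^\phi$ side, and they must cancel against the $\de_\R^+(\cS_*)$ contributions of Lemmas~\ref{DMboundary_lmm} and~\ref{DorientComp_lmm} for the final sign to come out to $-1$ exactly. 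The remaining case $l_2^*(\cS)=0$ is handled by the parallel construction (with the roles of the two components interchanged via the second exact sequence of~\eref{Dses_e}), whose analysis differs only in that the mod~4 classification of $\ep_{l^*}(\cS)$ enters through $\fo_\fp^{D;-}$ rather than $\fo_\fp^{D;+}$.
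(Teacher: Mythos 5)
Your proposal correctly identifies the overall structure of the argument -- compare both orientations against a common reference using the $3\times 3$ Lemma~\ref{3by3_lmm}, with Lemmas~\ref{DorientComp_lmm} and~\ref{DMboundary_lmm} handling the transitions between limiting and split orientations -- and your step~1 sign computation is correct: with $\de_\R^+(\cS_*)=\de_\R^+(\oS_*)$ and $\de_{\C;l^*}^+(S)=1$, the conversion gives $\prt\fo_{\fp;l^*}=(-1)^{\de_D^+(\cS)+k+1}\fo_\fp^D\otimes\fo_{S;l^*}$. Your choice of the split orientations $\fo_\fp^D\otimes\fo_{S;l^*}$ as the common reference (rather than the limiting orientations $\fo_\fp^{D;+}\otimes\fo_{\cS^\vee;l^*}^+$, which is what the paper compares against) is a legitimate alternative packaging of the same argument.

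However, step~2 is not carried out. You assert that unfolding~\eref{Rdecomppf_e5} via the $3\times 3$ Lemma will yield $\fo_{\fp;l^*}^\cS=(-1)^{\de_D^+(\cS)+k}\fo_\fp^D\otimes\fo_{S;l^*}$, but you do not perform the computation; your final paragraph explicitly names this as the ``main obstacle'' and leaves it open. This is precisely the content of the paper's second and third exact squares of Figure~\ref{TMcomp_fig}, and the bookkeeping there is not routine: the middle row of the third square respects orientations iff $k_1\!+\!k_1k_2\in 2\Z$, and this parity must be derived from the second square using \ref{cMorientR_it}, \ref{cM1Rch_it} of Lemma~\ref{cMorient_lmm} and \ref{M1Rch_it} of Lemma~\ref{Dorient_lmm} -- the very permutation signs you flag as ``the main obstacle'' but never compute. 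Moreover, your claimed target $(-1)^{\de_D^+(\cS)+k}$ contains $\de_D^+(\cS)=k_2\lr{w_2(X),B_1(\cS)}$; a ``direct'' $3\times 3$ comparison of split orientations should produce a combinatorial sign, and to recover this particular form one must invoke $\ep_{l^*}(\cS)=2$ (which forces $\lr{w_2(X),B_1(\cS)}\equiv k_1+1$ mod~2, giving $(-1)^{k_1k_2+k_1}$) -- a reduction you do not make. As written, the proposal is a plan with the hardest step left as an ``I will'', not a proof.

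Two smaller issues. First, you assert ``by construction'' that $\prt\fo_{\fp;l^*}$ corresponds to $\fo_{\fp;\u}^{D;+}\otimes\fo_{S;l^*}^+$, but this equality is the conclusion of the paper's first exact square and its $3\times 3$ argument -- one must verify that the middle and right columns of that square (involving the normal-bundle orientations $\fo_\cS^{c;+}$, $\fo_{\cS^\vee}^{c;+}$) cooperate, and the paper does so. Second, your closing remark about the case $l_2^*(\cS)=0$ is out of scope: Lemma~\ref{TMcomp_lmm} is stated and used only under the standing assumption $l_2^*(\cS)\neq 0$ made at the start of Section~\ref{Rdecomp_subs}, and the $l_2^*(\cS)=0$ case is handled separately in the proof of Proposition~\ref{Rdecomp_prp} by a reduction to $l_2^*\neq 0$, not by a mirror of this lemma.
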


We take $\bh_1$ and~$\bh_2$ to be the components of~$\bh$ as in the proof 
of Proposition~\ref{JakePseudo_prp} and
$$\bp_1\in X_{k_1,l_1-l_1^*} \qquad\hbox{and}\qquad \bp_2\in X_{k_2,l_2-l_2^*} $$
to be the components of $\bp\!\in\!X_{k,l-l^*}$ defined analogously.
Let
\begin{gather*}
\cZ_1=\cZ_{k_1+1,l_1;\bh_1}^{\st}(B_1;J,\nu_1)\!\cap\!\big(\M_1\!\times\!M_{\bh_1}\big), 
\quad
\cZ_1'=\cZ_{k_1,l_1;\bh_1}^{\st}(B_1;J,\nu_1')\!\cap\!\big(\M_1'\!\times\!M_{\bh_1}\big),\\
\cZ_2=\cZ_{k_2+1,l_2;\bh_2}^{\st}(B_2;J,\nu_2)\!\cap\!\big(\M_2\!\times\!M_{\bh_2}\big).
\end{gather*}
The first forgetful morphism in~\eref{cSsplit_e2} induces a fibration~$\ff_{\cZ_1}$
so that the diagram 
$$\xymatrix{ \cZ_1 \ar[rr]^{\ff_{\cZ_1}} \ar[d]_{\pi_{\cZ}}&& \cZ_1'\ar[d]^{\pi_{\cZ'}}\\
\M_1 \ar[rr]^{\ff_{\nod}} && \M_1'}$$
commutes.
Since $\pi_{\cZ}$ induces an isomorphism between the vertical tangent bundles
$T\cZ_1^v$ of~$\ff_{\cZ_1}$ and $T\M_1^v$ of~$\ff_{\nod}$, 
it pulls back $\fo_{\nod}^{\R}$ to an orientation~$\fo_{\cZ_1}^v$ on the fibers of~$\ff_{\cZ_1}$.
The relative orientations $\fo_{\nod}^{\R}\fo_{\fp;l_1^*}$, $\fo_{\fp;l_1^*}$, and $\fo_{\fp;l_2^*}$ 
on
$$\ev\!:\M_1\lra X_{k_1,l_1}, \quad \ev'\!:\M_1'\lra X_{k_1,l_1}, 
\quad\hbox{and}\quad \ev\!:\M_2\lra X_{k_2+1,l_2}\,,$$
respectively, the orientations $\fo_{h_i}$ of $H_i$, and 
the symplectic orientation~$\fo_{\om}$ on~$X$ determine 
relative orientations $\wt\fo_{\fp;\bh_1}$, $\fo_{\fp;\bh_1}$, and $\fo_{\fp;\bh_2}$
of
$$\ev_{\bh_1}\!:\cZ_1\lra X_{k_1,l_1-l_1^*}, \quad \ev_{\bh_1}'\!:\cZ_1'\lra X_{k_1,l_1-l_1^*}, 
\quad\hbox{and}\quad \ev_{\bh_2}\!:\cZ_2\lra X_{k_2+1,l_2-l_2^*}\,,$$
respectively.
Since the dimensions of $X$ and $H_i$ are even, 
\BE{Rdecomppf_e8} \wt\fo_{\fp;\bh_1}=\fo_{\cZ_1}^v\fo_{\fp;\bh_1}\equiv
\big(\pi_{\cZ}^*\fo_{\nod}^{\R}\big)\fo_{\fp;\bh_1}\,.\EE

For $\wt\u\!\in\!\cS_{\bh}^*$, we denote by 
$$\wt\u_1\in\cZ_1, \qquad  \wt\u_1'\in\cZ_1', \qquad  \wt\u_2\in\cZ_2$$
the images of~$\wt\u$ under the projections induced by the embedding~\eref{cSsplit_e},
the first forgetful morphism in~\eref{cSsplit_e2},  and the decomposition 
$$M_{\bh}\approx M_{\bh_1}\!\times\!M_{\bh_2}\,.$$
The exact sequence
$$0\lra T_{\wt\u}\cS_{\bh}^*\lra T_{\wt\u_1}\cZ_1\!\oplus\!T_{\wt\u_2}\cZ_2\lra 
T_{u(\nod)}X^{\phi}\lra0, ~~ 
\big(\xi_1,\xi_2\big)\lra \xi_2(\nod)\!-\!\xi_1(\nod),$$
of vector spaces determines an isomorphism 
$$\la_{\wt\u}(\cS_{\bh}^*)\!\otimes\!\la\big(T_{u(\nod)}X^{\phi}\big)
\approx \la_{\wt\u_1}(\cZ_1)\!\otimes\!\la_{\wt\u_2}(\cZ_2).$$
Along with the relative orientations $\wt\fo_{\fp;\bh_1}$ and $\fo_{\fp;\bh_2}$ above,
this isomorphism determines a homotopy class of isomorphisms
\begin{equation*}\begin{split}
\la_{\wt\u}(\cS_{\bh}^*)\!\otimes\!\la\big(T_{u(\nod)}X^{\phi}\big)
&\approx \la_{\wt\u_1'}^{\R}(X)\!\otimes\!\la_{\wt\u_1'}^{\C}(X)\!\otimes\!
\la_{\wt\u_2}^{\R}(X)\!\otimes\!\la_{\wt\u_2}^{\C}(X)\\
&\approx \la_{\wt\u_1'}^{\R}(X)\!\otimes\!\la_{\wt\u_1'}^{\C}(X)\!\otimes\!
\la\big(T_{u(\nod)}X^{\phi}\big)\!\otimes\!
\la_{\wt\u_2'}^{\R}(X)\!\otimes\!\la_{\wt\u_2'}^{\C}(X)\\
&\approx \la_{\wt\u}^{\R}(X)\!\otimes\!\la_{\wt\u}^{\C}(X)\!\otimes\!
\la\big(T_{u(\nod)}X^{\phi}\big).
\end{split}\end{equation*}
We denote the associated relative orientation of~\eref{evcSdfn_e} by 
\BE{Rdecomppf_e11a}\fo_{\cS;\bh}\equiv(\wt\fo_{\fp;\bh_1})_{\nod}\!\!\times\!_{\nod}\fo_{\fp;\bh_2}.\EE
Since the dimensions of $X$ and $H_i$ are even, 
Lemma~\ref{TMcomp_lmm} implies~that 
\BE{Rdecomppf_e11} \big|\cS_{\bh,\bp;\Ups}^*\big|_{\prt\fo_{\fp;\bh},\fo_\Ups^c}^{\pm}
=-\big|\cS_{\bh,\bp;\Ups}^*\big|_{\fo_{\cS;\bh},\fo_\Ups^c}^{\pm}\,.\EE

If $\cS$ and $\Ups$ satisfy~\ref{cSUps_it1} above Proposition~\ref{Rdecomp_prp}
with $i\!\in\![k']$ as in~\ref{cSUps_it1}, let
$$k_1'=k'\!-\!1, \quad l_1'=l', \quad k_2'=1, \quad l_2'=0, \quad \Ups_1=\Ups,
\quad \nod=i\,.$$
If $\cS$ and $\Ups$ satisfy~\ref{cSUps_it2} and $S\!\subset\!\ov\cM_{k',l'}^{\tau}$ 
as in~\ref{cSUps_it2}, let
$$k_1'=k_1(S), \quad l_1'=l_1(S), \quad k_2'=k_2(S), \quad l_2'=l_2(S)$$
and denote by 
$$\pi_1\!: S\!\approx\!\ov\cM_{k_1'+1,l_1'}^{\tau}\!\times\!\ov\cM_{k_2'+1,l_2'}^{\tau}
\lra \ov\cM_{k_1'+1,l_1'}^{\tau}$$
the projection to the first component in the second identification in~\eref{Ssplit_e0}.
In this case,
$$\Ups\!\cap\!\ov{S}\approx\Ups_1\!\times\!\ov\cM_{k_2'+1,l_2'}^{\tau}
\subset \ov\cM_{k_1'+1,l_1'}^{\tau}\!\times\!\ov\cM_{k_2'+1,l_2'}^{\tau}$$
for some $\Ups_1\!\subset\!\ov\cM_{k_1'+1,l_1';\nod}^{\tau;\st}$.
The co-orientation $\fo_{\Ups\cap S}^c$ on 
$\Ups\!\cap\!\ov{S}$ in~$\ov{S}$ induced by~$\fo_{\Ups}^c$ is the pullback by~$\pi_1$
of a co-orientation $\fo_{\Ups_1}^c$ on $\Ups_1$ in~$\ov\cM_{k_1'+1,l_1'}^{\tau}$.
Let
$$\ff_{k_1'+1,l_1'}\!=\!\pi_1\!\circ\!\ff_{k',l'}\!:
\cS_{\bh}^*\lra S\lra \ov\cM_{k_1'+1,l_1'}^{\tau}\,.$$
In both cases, 
\BE{Rdecomppf_e9}\dim\,\Ups_1=\dim\,\Ups\!+\!1\!-\!k_2'\!-\!2l_2'\EE
and the forgetful morphism $\ff_{k_1'+1,l_1'}$ factors as 
$$\cS_{\bh}^* \lhra{~~~} \cZ_1\!\times\!\cZ_2\lra \cZ_1 
\xlra{\ff_{k_1'+1,l_1'}}\ov\cM_{k_1'+1,l_1'}^{\tau}\,.$$
We define
$$\ff_{\cM}\!=\!\ff_{k_1'+1,l_1';\nod}\!:
\ov\cM_{k_1'+1,l_1'}^{\tau}\lra\ov\cM_{k_1',l_1'}^{\tau}\,.$$

If $\cS$ and $\Ups$ satisfy~\ref{cSUps_it2}, 
\ref{dropfactor_it2} and~\ref{dropfactor_it3} in Lemma~\ref{dropfactor_lmm} give
\BE{Rdecomppf_e21}\begin{split}
\big|\cS_{\bh,\bp;\Ups}^*\big|_{\fo_{\cS;\bh},\fo_\Ups^c}^{\pm}
&=-\big|M_{(\ev_{\cS;\bh},\ff_{k',l'}),f_{\bp;\Ups}|_{\Ups\cap\ov{S}}}
\big|_{\fo_{\cS;\bh},\pi_1^*\fo_{\Ups_1}^c|_{\Ups\cap\ov{S}}}^{\pm}\\
&=-(-1)^{k_2'}\big|M_{(\ev_{\cS;\bh},\ff_{k_1'+1,l_1'}),f_{\bp;\Ups_1}}
\big|_{\fo_{\cS;\bh},\fo_{\Ups_1}^c}^{\pm};
\end{split}\EE
the signed fiber products in the second and third expressions above are taken 
with respect to
$X_{k,l-l^*}\!\times\!S$ and \hbox{$X_{k,l-l^*}\!\times\!\ov\cM_{k_1'+1,l_1'}^{\tau}$}, 
respectively.
The first and last expressions in~\eref{Rdecomppf_e21} are the same if 
$\cS$ and $\Ups$ satisfy~\ref{cSUps_it1}.
By~\eref{Rdecomppf_e11a} and Lemma~\ref{dropfactor_lmm3}, 
\BE{Rdecomppf_e23}
\big|M_{(\ev_{\cS;\bh},\ff_{k_1'+1,l_1'}),f_{\bp;\Ups_1}}
\big|_{\fo_{\cS;\bh},\fo_{\Ups_1}^c}^{\pm}
=\big|M_{(\ev_{\bh_1},\ff_{k_1'+1,l_1'}),f_{\bp_1;\Ups_1}}
\big|_{\wt\fo_{\fp;\bh_1},\fo_{\Ups_1}^c}^{\pm}
\!\!\deg\!\big(\ev_{\bh_2},\fo_{\fp;\bh_2}\!\big).\EE
By Lemma~\ref{dropfactor_lmm}\ref{dropfactor_it1},
$$\big|M_{(\ev_{\bh_1},\ff_{k_1'+1,l_1'}),f_{\bp_1;\Ups_1}}
\big|_{\wt\fo_{\fp;\bh_1},\fo_{\Ups_1}^c}^{\pm}=
(-1)^{\dim\,\Ups_1}\deg\!\big(\ev_{\bh_1}|_{\ff_{k_1'+1,l_1'}^{-1}\!(\Ups_1)},
(\ff_{k_1'+1,l_1'}^*\fo_{\Ups_1}^c)\wt\fo_{\fp;\bh_1}\big)\,.$$
By the first identity in~\eref{Rdecomppf_e8} and~\eref{fsfoprod_e}, 
$$\deg\!\big(\ev_{\bh_1}|_{\ff_{k_1'+1,l_1'}^{-1}\!(\Ups_1)},
(\ff_{k_1'+1,l_1'}^*\fo_{\Ups_1}^c)\wt\fo_{\fp;\bh_1}\big)
=\deg\!\big(\ff_{\cZ_1}|_{\ff_{k_1'+1,l_1'}^{-1}\!(\Ups_1)},
(\ff_{k_1'+1,l_1'}^*\fo_{\Ups_1}^c)\fo_{\cZ_1}^v\big)
\deg\!\big(\ev_{\bh_1}',\fo_{\fp;\bh_1}\big).$$
By the second identity in~\eref{Rdecomppf_e8} and Lemma~\ref{fibrasign_lmm1b},
\begin{equation*}\begin{split}
\fs_{\wt\u}\!\big((\ff_{k_1'+1,l_1'}^*\fo_{\Ups_1}^c)\fo_{\cZ_1}^v\big)
=\fs_{\wt\u}\!\big(\ff_{k_1'+1,l_1'},\pi_{\cZ}^*\fo_{\nod}^{\R},\fo_{\nod}^{\R}\big)
\fs_{\ff_{k_1'+1,l_1'}(\wt\u)}\!\big(\fo_{\Ups_1}^c\fo_{\nod}^{\R}\big)
=\fs_{\ff_{k_1'+1,l_1'}(\wt\u)}\!\big(\fo_{\Ups_1}^c\fo_{\nod}^{\R}\big)
\end{split}\end{equation*}
for a generic $\wt\u\!\in\!\ff_{k_1'+1,l_1'}^{-1}\!(\Ups_1)$.

Combining the last three equations with~\eref{Rdecomppf_e9} and
Lemma~\ref{fibrasign_lmm1a}\ref{fibisom_it2}, we obtain
\begin{equation*}\begin{split}
\big|M_{(\ev_{\bh_1},\ff_{k_1'+1,l_1'}),f_{\bp_1;\Ups_1}}
\big|_{\wt\fo_{\fp;\bh_1},\fo_{\Ups_1}^c}^{\pm}
&=-(-1)^{\dim\,\Ups+k_2'}\deg\!\big(\ff_{\cM}|_{\Ups_1},\fo_{\Ups_1}^c\fo_{\nod}^{\R}\big)
\deg\!\big(\ev_{\bh_1}',\fo_{\fp;\bh_1}\big)\\
&=-(-1)^{\dim\,\Ups+k_2'}\deg_S\!\big(\Ups,\fo_{\Ups}^c\big)
\deg\!\big(\ev_{\bh_1}',\fo_{\fp;\bh_1}\big).
\end{split}\end{equation*}
Along with~\eref{Rdecomppf_e21} and~\eref{Rdecomppf_e23}, this gives
$$\big|\cS_{\bh,\bp;\Ups}^*\big|_{\fo_{\cS;\bh},\fo_\Ups^c}^{\pm}
=(-1)^{\dim\,\Ups}\deg_S\!\big(\Ups,\fo_{\Ups}^c\big)
\deg\!\big(\ev_{\bh_1}',\fo_{\fp;\bh_1}\big)
\deg\!\big(\ev_{\bh_2},\fo_{\fp;\bh_2}\!\big)\,.$$
Combining this equation with~\eref{Rdecomppf_e11} and~\eref{RdivRel_e}, 
we obtain~\eref{Rdecomp_e}.

Suppose $l_2^*\!=\!0$.
Choose another codimension~2 pseudocycle $h'\!:H'\!\lra\!X$  in general position
with $h'\!\cdot_X\!B_2\!\neq\!0$. 
Let
$$\cS_{\bh h'}^*\subset \cZ_{k,l+1;\bh h'}^{\st}\big(B;J,\ff_{k,l+1;l^*+1}^{\,*}\nu\big)$$
be the codimension~1 stratum so that $\ff_{k,l+1;l^*+1}(\cS_{\bh h'}^*)\!=\!\cS_{\bh}^*$
and the irreducible component~$\P^1_2$ of the maps in this stratum carries the marked points
$z_{l^*+1}^{\pm}$.
The vertical tangent bundle of the projection 
$$\ff_{k,l+1;l^*+1}\!: \cZ_{k,l+1;\bh}^{\st}\big(B;J,\ff_{k,l+1;l^*+1}^{\,*}\nu\big)
\lra \cZ_{k,l;\bh}^{\st}\big(B;J,\nu\big)$$
is oriented by the position of $z_{l^*+1}^{\pm}$; 
we denote this orientation by~$\fo_{l^*}^+$.

By the $l_2^*\neq\!0$ case above,
\BE{Rdecomppf_e33}
\big|\cS_{\bh h',\bp;\Ups}^*\big|_{\prt\fo_{\fp;\bh h'},\fo_\Ups^c}^{\pm}
=-(-1)^{\dim\,\Ups}\deg_S\!\big(\Ups,\fo_{\Ups}^c\big)
\deg\!\big(\ev_{\bh_1}',\fo_{\fp;\bh_1}\big)
\big(h'\!\cdot_X\!B_2\big)N_{B_2,l_2}^{\phi;\fp}\,.\EE
Since the dimensions of~$X$ and~$H'$ are even,
$$\prt\fo_{\fp;\bh h'}=\prt\big(\fo_{l^*}^+\fo_{\fp;\bh}\fo_{h'}\big)
=\fo_{l^*}^+\big(\prt\fo_{\fp;\bh}\big)\fo_{h'}\,.$$
By the reasoning in the proof of~\eref{RdivRel_e}, this implies that 
\BE{Rdecomppf_e35}
\big|\cS_{\bh h',\bp;\Ups}^*\big|_{\prt\fo_{\fp;\bh h'},\fo_\Ups^c}^{\pm}
=\big(h'\!\cdot_X\!B_2\big)
\big|\cS_{\bh,\bp;\Ups}^*\big|_{\prt\fo_{\fp;\bh},\fo_\Ups^c}^{\pm}.\EE
Combining~\eref{Rdecomppf_e33} and~\eref{Rdecomppf_e35} with~\eref{RdivRel_e},
we again obtain~\eref{Rdecomp_e}.

\begin{proof}[{\bf{\emph{Proof of Lemma~\ref{TMcomp_lmm}}}}]
Fix orientations of $T_{u(\nod)}X^{\phi}$ and $T_{u(x_i)}X^{\phi}$ for all $i\!\in\![k]$.
We can then view all relevant relative orientations as orientations in the usual sense.
Let $\cS_*\!\subset\!\cS$ be the substratum containing~$\u$.

The differential of the forgetful morphism $\ff_{k,l}$ induces 
the first exact square of Figure~\ref{TMcomp_fig}.
The two spaces in the bottom row are oriented by~$\fo_{\cS}^{c;+}$ and~$\fo_{\cS^{\vee}}^{c;+}$
with the isomorphism between them being orientation-preserving.
These orientations and the orientations~$\fo_{\fp}^D$, $\fo_{\fp;l^*}$, and $\fo_{k,l;l^*}$
determine the limiting orientations $\fo_{\fp}^{D;+}$ on $\ker D_{\u}^{\phi}$,
$\fo_{\fp;l^*}^+$ on $T_{\u}\M^{\st}$, and $\fo_{k,l;l^*}^+$ on $T_{\cC}\ov\cM$, respectively.
By~\eref{OrientSubs_e3}, the middle row respects these orientations.
The middle (resp.~right) column respects the orientations 
$\prt\fo_{\fp;l^*}$ on~$T_{\u}\cS$, $\fo_{\fp;l^*}^+$ on $T_{\u}\M^{\st}$, and
$\fo_{\cS}^{c;+}$ on~$\cN_{\u}\cS$
(resp.~$\fo_{\cS^{\vee};l^*}^+$ on~$T_{\u}\cS^{\vee}$, 
$\fo_{k,l;l^*}^+$ on $T_{\cC}\ov\cM$, and
$\fo_{\cS^{\vee}}^{c;+}$ on~$\cN_{\u}\cS^{\vee}$).
Lemma~\ref{3by3_lmm} then implies that the top row in 
the first exact square of Figure~\ref{TMcomp_fig}
{\it respects} the orientations
$\fo_{\fp}^{D;+}$ on $\ker D_{\u}^{\phi}$,
$\prt\fo_{\fp;l^*}$ on~$T_{\u}\cS$, and $\fo_{\cS^{\vee};l^*}^+$ on~$T_{\u}\cS^{\vee}$.

\begin{figure}
$$\xymatrix{ &0\ar[d] &0\ar[d] &0\ar[d]\\
0\ar[r]& \ker D_{\u}^{\phi} \ar[r]\ar@{=}[d]&
T_{\u}\cS\ar[r]\ar[d]& T_{\cC}\cS^{\vee}\ar[r]\ar[d]& 0\\
0\ar[r]& \ker D_{\u}^{\phi} \ar[r]\ar[d]&
T_{\u}\M^{\st}\ar[r]\ar[d]& T_{\cC}\ov\cM\ar[r]\ar[d]& 0\\
&0\ar[r]& \cN_{\u}\cS \ar[r]\ar[d]& \cN_{\cC}\cS^{\vee}\ar[r]\ar[d]& 0\\
&&0&0}$$
$$\xymatrix{ &&0\ar[d] &0\ar[d]\\
&0\ar[d]\ar[r]& T_{\nod}S^1_1\ar[d]\ar[r]& T_{\nod}S^1_1\ar[d]\ar[r]& 0\\
0\ar[r]& \ker D_{\u_1}^{\phi} \ar[r]\ar@{=}[d]&T_{\u_1}\M_1 \ar[r]\ar[d]& 
T_{\cC_1}\cM_1 \ar[r]\ar[d]& 0\\
0\ar[r]& \ker D_{\u_1}^{\phi} \ar[r]\ar[d]&T_{\u_1'}\M_1' \ar[r]\ar[d]& 
T_{\cC_1'}\cM_1' \ar[r]\ar[d]& 0\\
&0&0&0}$$
$$\xymatrix{ &0\ar[d] &0\ar[d] &0\ar[d]\\
0\ar[r]& \ker D_{\u}^{\phi} \ar[r]\ar[d]&
T_{\u}\cS\ar[r]\ar[d]& T_{\cC}\cS^{\vee}\ar[r]\ar[d]& 0\\
0\ar[r]& \ker D_{\u_1}^{\phi}\!\oplus\!\ker D_{\u_2}^{\phi} \ar[r]\ar[d]&
T_{\u_1}\M_1\!\oplus\!T_{\u_2}\M_2 \ar[r]\ar[d]& 
T_{\cC_1}\cM_1\!\oplus\!T_{\cC_2}\cM_2 \ar[r]\ar[d]& 0\\
0\ar[r]& T_{u(\nod)}X^{\phi} \ar@{=}[r]\ar[d]& T_{u(\nod)}X^{\phi}\ar[r]\ar[d]& 0\\
&0&0}$$
\caption{Commutative squares of vector spaces with
exact rows and columns for the proof of Lemma~\ref{TMcomp_lmm}}
\label{TMcomp_fig}
\end{figure}

The differentials of forgetful morphisms induce the second exact square of Figure~\ref{TMcomp_fig}.
The two spaces in the top row are oriented by~$\fo_{\nod}^{\R}$ as in Section~\ref{cMorient_subs} 
with the isomorphism between them being orientation-preserving.
The first real marked point of~$\u_1$ is the node, while the second one (if $k_1\!\neq\!0$)
is the next marked point on the fixed locus $S^1_1\!\subset\!\P^1_1$ in the counterclockwise
direction with respect to~$z_1^+$.
By~\ref{cMorientR_it} and~\ref{cM1Rch_it} in Lemma~\ref{cMorient_lmm}, 
the right column thus does not respect the orientations
$\fo_{\nod}^v$ on $T_{\nod}S^1_1$, $\fo_{k_1+1,l_1;l_1^*}$ on~$T_{\cC_1}\cM_1$,
and $\fo_{k_1,l_1;l_1^*}$ on~$T_{\cC_1'}\cM_1'$ because 
$$k_1 +\dim\,\cM_1'=2k_1\!+\!2l_1\!-\!3 \not\in2\Z.$$
By~\eref{OrientSubs_e3},
the bottom row respects the orientations $\fo_{\fp}^D$ on $\ker D_{\u_1}^{\phi}$,
$\fo_{\fp;l_1^*}$ on $T_{\u_1'}\M_1'$, and $\fo_{k_1,l_1;l_1^*}$ on~$T_{\cC_1'}\cM_1'$.
By~\eref{Rdecomppf_e5a}, 
the middle column respects the orientations $\fo_{\nod}^v$ on $T_{\nod}S^1_1$,
$\wt\fo_{\fp;l_1^*}$ on $T_{\u_1}\M_1$, and $\fo_{\fp;l_1^*}$ on $T_{\u_1'}\M_1'$ because 
$$\dim\,\M_1'=\ell_{\om}(B_1)\!+\!2l_1\!+\!k_1$$
is even by the assumption that $\ep_{l^*}(\cS)\!=\!2$.
Lemma~\ref{3by3_lmm} then implies that the middle row respects the orientations
$\fo_{\fp}^D$ on $\ker D_{\u_1}^{\phi}$,
$\wt\fo_{\fp;l_1^*}$ on $T_{\u_1}\M_1$, and $\fo_{k_1+1,l_1;l_1^*}$ on~$T_{\cC_1}\cM_1$
if and only~if
$$1+\dim \ker D_{\u_1}^{\phi}=1\!+\!2\!+\!\lr{c_1(X,\om),B_1}$$
is even. 
Since $\ep_{l^*}(\cS)\!=\!2$, we conclude that the middle row 
in the second exact square of Figure~\ref{TMcomp_fig} respects the orientations
$\fo_{\fp}^D$ on $\ker D_{\u_1}^{\phi}$,
$\wt\fo_{\fp;l_1^*}$ on $T_{\u_1}\M_1$, and $\fo_{k_1+1,l_1;l_1^*}$ on~$T_{\cC_1}\cM_1$
if and only if $k_1$ is even.

The short exact sequences~\eref{Dses_e} and~\eref{TcSses_e} and 
the differential of the forgetful morphism~$\ff_{k,l}$
induce the third exact square of Figure~\ref{TMcomp_fig}.
By~\eref{OrientSubs_e3}, the short exact sequence of the second summands in the middle row respects
the orientations $\fo_{\fp}^D$ on $\ker D_{\u_2}^{\phi}$,
$\fo_{\fp;l_2^*}$ on $T_{\u_2}\M_2$, and $\fo_{k_2+1,l_2;l_2^*}$ on~$T_{\cC_2}\cM_2$.
Along with the conclusion of the previous paragraph and Lemma~\ref{3by3_lmm},
this implies that the middle row respects the orientations 
$\fo_{\fp}^D\!\oplus\!\fo_{\fp}^D$,  $\wt\fo_{\fp;l_1^*}\!\oplus\!\fo_{\fp;l_2^*}$,
and $\fo_{k_1+1,l_1;l_1^*}\oplus\!\fo_{k_2+1,l_2;l_2^*}$ if and only~if 
$$k_1+\big(\dim\ker D_{\u_2}^{\phi}\big)\big(\dim\,\cM_1\big)
=k_1\!+\!\big(2\!+\!\lr{c_1(X,\om),B_2}\big)\big(k_1\!+\!2l_1\!-\!2\big)$$
is even.
Since $\ep_{l^*}(\cS)\!=\!2$, this is the case if and only if $k_1\!+\!k_1k_2\!\in\!2\Z$.
By Lemma~\ref{DorientComp_lmm}, the left column respects the orientations 
$\fo_{\fp}^{D;+}$, $\fo_{\fp}^D\!\oplus\!\fo_{\fp}^D$, and
the chosen orientation~$\fo_{\nod}^X$ on~$T_{u(\nod)}X^{\phi}$ if and only~if
\hbox{$\de_D^+(\cS)\!\cong\!\de_{\R}^+(\cS_*)$} mod~2.
Since $\ep_{l^*}(\cS)\!=\!2$, this is the case if and only~if the number
\hbox{$k_2\!+\!k_1k_2\!+\!\de_{\R}^+(\cS_*)$} is even.
By Lemma~\ref{DMboundary_lmm}, the non-trivial isomorphism in the right column respects 
the orientations $\fo_{\cS^{\vee};l^*}^+$ and $\fo_{k_1+1,l_1;l_1^*}\oplus\!\fo_{k_2+1,l_2;l_2^*}$ 
if and only~if $\de_{\R}^+(\cS_*)\!\cong\!k\!+\!1$ mod~2.
By~\eref{Rdecomppf_e5}, the middle column respects the orientations $\fo_{\fp;l^*}^{\cS}$, 
$\wt\fo_{\fp;l_1^*}\!\oplus\!\fo_{\fp;l_2^*}$, and~$\fo_{\nod}^X$.
Combining these statements with Lemma~\ref{3by3_lmm},
we conclude that the top row does {\it not} respect the orientations 
$\fo_{\fp}^{D;+}$, $\fo_{\fp;l^*}^{\cS}$, and $\fo_{\cS^{\vee};l^*}^+$ because
$$(k_1\!+\!k_1k_2)\!+\!\big(k_2\!+\!k_1k_2\!+\!\de_{\R}^+(\cS_*)\big)\!+\!
\big(k\!+\!1\!+\!\de_{\R}^+(\cS_*)\big)\!+\!\big(\dim\,\cS^{\vee}\big)\big(\dim\,X^{\phi}\big)
=1 \mod2.$$
Comparing this conclusion with the conclusion concerning the top row  
in the first exact square of Figure~\ref{TMcomp_fig} above, we obtain the claim.
\end{proof}

\subsection{Proof of Proposition~\ref{Cdecomp_prp}}
\label{Cdecomp_subs}

We denote by $\cH_{0,m}^{\om}$ the space of pairs $(J,\nu')$ 
consisting of $J\!\in\!\cJ_{\om}$ and a Ruan-Tian perturbation~$\nu'$ of 
the $\dbar_J$-equation if $m\!\ge\!3$ and take $\cH_{0,2}^{\om}$ to be the set of
pairs $(J,0)$ with $J\!\in\!\cJ_{\om}$.
For $B'\!\in\!H_2(X)$ and $\nu'\!\in\!\cH_{0,m}^{\om}$, we denote by $\M^\C_m(B';J,\nu')$
the moduli space of (complex) genus~0 degree~$B'$ $(J,\nu')$-holomorphic maps 
from smooth domains with $m$~marked points and~by
$$\ev_i\!: \M^\C_m(B';J,\nu')\lra X, \qquad i\!\in\![m],$$
the evaluation maps at the marked points.
For $I\!\subset\![m]$, let $\fo_{\C;I}$ be the orientation of $\M^\C_m(B';J,\nu')$ 
obtained by twisting the standard complex orientation by~$(-1)^{|I|}$.
Define 
\begin{alignat*}{2}
\Th_i^I\!:X&\lra X,&\quad \Th_i^I&=\begin{cases}\id_X,&\hbox{if}~i\!\not\in\!I;\\
\phi,&\hbox{if}~i\!\in\!I;\end{cases}\\
\ev^I\!:\M^\C_m(B';J,\nu')&\lra X^m, &\quad 
\ev^I(\u)&=\big(\big(\Th^I_i(\ev_i(\u))\big)_{i\in[m]}\big).
\end{alignat*}

We continue with the notation in the statement of Proposition~\ref{Cdecomp_prp} 
and just above.
The co-orientation~$\fo_{\Ga}^c$ of~$\Ga$ in~$\ov\cM_{k',l'}^{\tau}$ and
the relative orientation~$\fo_{\fp;l^*}$ of Lemma~\ref{orient_lmm} induce
a relative orientation $(\ff^*_{k',l'}\fo^c_\Ga)\fo_{\fp;l^*}$ of the restriction 
$$\ev_{\Ga}\!:\M_{\Ga;k,l}(B;J,\nu)\lra X_{k,l}$$
of ~\eref{whevdfn_e}.

Fix a stratum $\cS\!\subset\!\M_{\Ga;k,l}(B;J,\nu)$. 
Let $B_\R$ be the degree of the restrictions of the maps in~$\cS$ to the real component $\P^1_0$ 
of the domain and $B_\C$ be the degree of their restrictions to the component~$\P^1_+$ 
of the domain carrying the marked point~$z^+_1$. 
Denote by $L_0,L_\C\!\subset\![l]$ the subsets indexing the conjugate pairs 
of marked points carried by~$\P^1_0$ and $\P^1_+$, respectively. 
Let $L^*_-\!\subset\!L^*_\C$ be the subset indexing the conjugate pairs 
of marked points $(z^+_i,z^-_i)$ of curves in~$\Ga$ so that  
$z^-_i$ lies on~$\P^1_+$.
Define
$$L^*_0=L^*_0(\Ga),\quad L^*_\C= L^*_\C(\Ga), \quad l_0=|L_0|,\quad
l_\C=|L_\C|, \quad l^*_0=l^*_0(\Ga), \quad l^*_\C\equiv l^*\!-\!l^*_0.$$
By~\eref{Cdecomp_e0}, 
\BE{Cdecomp_e3} (l_0\!-\!l_0^*)\!+\!(l_{\C}\!-\!l_{\C}^*)=l\!-\!l^*, \quad
\big(\ell_{\om}(B_{\R})\!-\!(k\!+\!2(l_0\!-\!l_0^*)\!\big)\!\big)
+2\big(\ell_{\om}(B_{\C})\!-\!(l_{\C}\!-\!l_{\C}^*)\!\big)=0.\EE

For a good choice of $\nu$, there exist \hbox{$\nu_{\R}\!\in\!\cH_{k,l_0+1}^{\om,\phi}$},
\hbox{$\nu_{\C}\!\in\!\cH_{l_{\C}+1}^{\om}$}, and a natural embedding
\BE{Gasplit_e1} 
\io_{\cS}\!:\cS\lhra{~~~} \M_\R\times\!\M_\C\!\equiv\!
\M_{k,l_0+1}\big(B_\R;J,\nu_\R\big)\!\times\!\M^\C_{l_\C+1}\big(B_\C;J,\nu_\C\big)\EE
satisfying~\ref{GasplitC_it}~-~\ref{GasplitIndex_it} in Section~\ref{NBstrata_subs}.
If $B_{\R}\!\neq\!0$, we also assume that there exists \hbox{$\nu_{\R}'\!\in\!\cH_{k,l_0}^{\om,\phi}$}
so that the forgetful morphism
\BE{Gasplit_e2a}\ff_{\nod}\!:\M_\R
\lra \M_\R'\equiv\!\M_{k,l_0}\big(B_\R;J,\nu'_\R\big)\EE
dropping the conjugate pair corresponding to the node~$\nod$ (i.e.~the first one)
is defined.
If $B_{\C}\!\neq\!0$, we similarly assume that there exists $\nu_{\C}'\!\in\!\cH_{l_{\C}}^{\om}$
so that the analogous forgetful morphism
\BE{Gasplit_e2b}\ff_{\nod}\!:\M_\C
\lra \M_\C'\equiv\!\M^\C_{l_\C}\big(B_\C;J,\nu'_\C\big)\EE
is defined.
Denote by $I\!\subset\![l_\C+1]$ (resp.~$I'\!\subset\![l_\C]$) the subset indexing 
the marked points of a map in $\M_\C$ (resp.~$\M_\C'$) 
corresponding to the marked points on the left-hand side of~\eref{Gasplit_e1}
indexed by~$L^*_-$ under~\eref{Gasplit_e1} (resp.~\eref{Gasplit_e1} and~\eref{Gasplit_e2b}). 

For an element $\u\!\in\!\cS$, we denote~by $\u_0\!\in\!\M_\R$ and $\u_+\!\in\!\M_\C$
the pair of maps corresponding to~$\u$ via~\eref{Gasplit_e1}.
Let $\u_0'\!\in\!\M'_\R$ and $\u_+'\!\in\!\M'_\C$
be the image of~$\u_0$ under~\eref{Gasplit_e2a} if $B_{\R}\!\neq\!0$ and 
the image of~$\u_+$ under~\eref{Gasplit_e2b} if $B_{\C}\!\neq\!0$, respectively.
The exact sequence
$$0\lra T_{\u}\cS\lra T_{\u_0}\M_\R\!\oplus\!T_{\u_+}\M_\C\lra T_{u(\nod)}X\lra0,~~
\big(\xi_1,\xi_2\big)\lra \xi_2(\nod)\!-\!\xi_1(\nod),$$
of vector spaces determines an isomorphism 
$$\la_{\u}(\cS)\!\otimes\!\la\big(T_{u(\nod)}X\big)
\approx \la_{\u_0}(\M_\R)\!\otimes\!\la_{\u_+}(\M_\C).$$
The $\Pin^-$-structure~$\fp$ on~$X^{\phi}$ determines a homotopy class~$\fo_{\fp;l_0^*}$ of isomorphisms
$$\la_{\u_0}(\M_\R)\lra  \la_{\u_0}^{\R}(X)\!\otimes\!\la_{\u_0}^{\C}(X);$$
see Lemma~\ref{orient_lmm}.
Combining the above two homotopy classes of isomorphisms with the complex orientations of 
$\la(T_{u(\nod)}X)$ and~$\la_{\u_+}^{\C}(X)$ and the orientation~$\fo_{\C;I}$ of $\la_{\u_+}(\M_\C)$,
we obtain a homotopy class~$\fo_{\fp;l_0^*;\u}^{\Ga}$ of isomorphisms
$$\la_{\u}(\cS)\lra \la_{\u}^{\R}(X)\!\otimes\!\la_{\u}^{\C}(X).$$

\begin{lmm}\label{TMcompGa_lmm}
The relative orientations $(\ff^*_{k',l'}\fo^c_\Ga)\fo_{\fp;l^*}$ 
and $\fo_{\fp;l_0^*}^{\Ga}$ of $\la(\ev_\Ga)$ are the same.
\end{lmm}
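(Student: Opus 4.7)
The plan is to adapt the strategy used for Lemma~\ref{TMcomp_lmm}, now applied to a codimension~2 primary stratum rather than a codimension~1 boundary stratum. Fix a topological component $\cC^{\vee}$ of $\oGa$ containing $\ff_{k,l}(\cS)$ and restrict the isomorphism~\eref{OrientSubs_e3} to~$\cS$. This reduces the lemma to checking three compatibilities: (a)~the limiting orientation of $\wt\la(D_{J,\nu}^\phi,X)$ along $\cS$ induced from $\fo_\fp^D$ on $\M_{k,l}^*(B;J,\nu)$ and from $\ff_{k',l'}^*\fo_\Ga^c$ agrees, up to a tracked sign, with the product orientation coming from the $\Pin^-$-structure on the real component and the complex orientation on the bubble; (b)~the canonical co-orientation $\fo_\Ga^c$ coincides with the complex orientation of $\cL_\Ga$, which is Lemma~\ref{cNGa2_lmm}; and (c)~the orientation $\fo_{\Ga;l^*}$ on $\cC^{\vee}$ from Section~\ref{NBstrata_subs} agrees with the pullback via~\eref{Gasplit_e} of $\fo_{k,l_0+1;l_0^*+1}$ on the first factor, twisted by $(-1)^{l_-^*(\Ga)}$, together with the complex orientation on $\ov\cM_{0,l_\C+1}$. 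Part~(c) is immediate from the definitions.

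For part~(a), I would establish a codimension~2 analog of Lemma~\ref{DorientComp_lmm} using the Mayer--Vietoris exact sequence
\[
0 \lra D_\u^\phi \lra D_{\u_0}^\phi \oplus D_{\u_+}^\C \lra T_{u(\nod_+)}X \lra 0,
\]
where $D_{\u_+}^\C$ is the complex linearization of $\dbar_J\!-\!\nu$ on~$\P^1_+$ and $\nod_+\!\in\!\P^1_+$ is the node attached to~$\P^1_0$; the other node is forced by conjugation. Combining the $\Pin^-$-induced orientation on $D_{\u_0}^\phi$ (via Lemma~\ref{Dorient_lmm}, noting that $\P^1_+$ plays the role of the distinguished half-surface for~$\u_0$'s first conjugate pair $z_\R^\pm$) with the complex orientations on $D_{\u_+}^\C$ and $T_{u(\nod_+)}X$ produces a natural ``split'' orientation on $\wt\la(D_\u^\phi,X)$. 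The split orientation orders the marked points of the complex bubble by their positions on~$\P^1_+$, whereas the orientation constructed from~$\fo_\fp^D$ uses the positions of~$z_i^+$ for $i\!\in\!L_\C$. These differ by $(-1)^{l_-^*(\Ga)}$, precisely because the $l_-^*(\Ga)$ conjugate pairs indexed by $L_-^*\!\subset\!L_\C^*$ have $z_i^+\!\in\!\P^1_-$, not~$\P^1_+$, and the comparison requires swapping $z_i^+\leftrightarrow z_i^-$ for each such~$i$. This swap is the content of Lemma~\ref{Dorient_lmm}\ref{DorientCipm_it}, which asserts that such swaps preserve~$\fo_\fp^D$; the $(-1)^{l_-^*(\Ga)}$ arises entirely from reordering complex factors of~$\la_\u^\C(X)$ relative to the positions on $\P^1_+$ versus~$\P^1_-$.

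The final step is to assemble (a), (b), (c) into a single $3\times3$ diagram of vector spaces (rows: the three short exact sequences for $D_\u^\phi$, for $T_\u\cS \!\to\! T_{\u_0}\M_\R\!\oplus\!T_{\u_+}\M_\C$, and for $\cN\Ga\!\approx\!\cL_\Ga$; columns: the orientation splittings from~\eref{OrientSubs_e3} applied to $\M$, $\M_\R$, and $\M_\C$) and invoke Lemma~\ref{3by3_lmm}. Because the complex factors $T_{u(\nod_+)}X$, $\la_{\u_+}^\C(X)$, and $T_{\u_+}\M_\C$ all have even real dimension, the parity contribution from Lemma~\ref{3by3_lmm} vanishes, so the two $(-1)^{l_-^*(\Ga)}$ signs (from~(a) and from the twist in~$\fo_{\Ga;l^*}$) cancel exactly. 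The main obstacle will be bookkeeping this cancellation carefully: it requires being precise about which of the two conjugate spheres $\P^1_\pm$ is designated as the distinguished half-surface for the $\Pin^-$-orientation, and keeping track of the fact that on~$\u_0$ the marked point $z_\R^+$ (which separates $\P^1_0$ from $\P^1_+$) plays the role of~$z_1^+$ in Lemma~\ref{Dorient_lmm}\ref{DorientC1pm_it}, so no extra sign arises from that convention in the dimensions relevant here.
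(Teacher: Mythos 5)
Your proposal takes essentially the same route as the paper: adapt the proof of Lemma~\ref{TMcomp_lmm}, use Lemma~\ref{cNGa2_lmm} to identify $\fo_\Ga^c$ with the complex orientation of $\cL_\Ga$, and use a codimension-2 analogue of Lemma~\ref{DorientComp_lmm} for the determinant-line comparison, assembled via Lemma~\ref{3by3_lmm}. The paper's actual proof is much terser — a single sentence deferring the determinant-line step to \cite[Cor.~7.3]{SpinPin} and noting an alternative via \cite[Lemma~5.2, Remark~3.5]{RealEnum} — whereas you sketch how that analogue would go, including the mechanism by which the $(-1)^{l_-^*(\Ga)}$ twist built into $\fo_{\Ga;l^*}$ is absorbed; this is a reasonable elaboration of what the paper leaves implicit.
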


\begin{proof}
The proof of Lemma~\ref{TMcomp_lmm} readily adapts using Lemma~\ref{cNGa2_lmm}.
The relevant analogue of Lemma~\ref{DorientComp_lmm} follows readily 
from~\cite[Cor.~7.3]{SpinPin}.
In light of Lemma~\ref{cNGa2_lmm}, the $(k',l')=(0,3)$ case of Lemma~\ref{TMcompGa_lmm}
also follows from Lemma~5.2 and Remark~5.3 in~\cite{RealEnum}; 
the proof in~\cite{RealEnum} extends to arbitrary $(k',l')$. 
\end{proof}

We denote by 
$$\bh_\R\!:M_{\bh_\R}\lra X^{l^*_0}, \quad \bh_\C\!:M_{\bh_\C}\lra X^{l^*_\C}, \quad
\bp_\R\in X_{k,l_0-l^*_0}, \quad\hbox{and}\quad \bp_\C\in X^{l_\C-l^*_\C}$$
the components of $\bh$ and $\bp$ corresponding to the marked points on $\P^1_0$ and $\P^1_\pm$ 
for the maps~$\u$ in~$\cS$. 
With the notation as in~\eref{Mhdfn_e}, let
$$\cS_\bh={\cS}{}_{\ev}\!\!\times_{f_{\bh}}\!M_{\bh},\quad
\cZ_\R=({\M_\R})_{\ev}\!\!\times_{f_{\bh_\R}}\!M_{\bh_\R}, \quad
\cZ_\C=(\M_\C)_{\ev^I}\!\!\times_{f_{\bh_\C}}\!M_{\bh_\C}$$
be the corresponding spaces cut out by~$\bh$ and 
$$\ev_{\C}\!:\cZ_\C\lra X^{l_\C-l^*_\C}$$
be the evaluation map induced by~$\ev^I$. 
The relative orientations~$\fo_{\fp;l_0^*+1}$ and~$\fo_{\C;I}$ of
$$\ev\!:\M_\R\lra X_{k,l_0+1} \qquad\hbox{and}\qquad
\ev^I\!:\M_\C\lra X^{l_{\C}+1}\,,$$
respectively, the orientations $\fo_{h_i}$ of $H_i$, and the symplectic 
orientation~$\fo_{\om}$ on~$X$ determine relative orientations~$\fo_{\fp;\bh_\R}$  
and~$\fo_{\bh_{\C};I}$ of the induced maps
$$\ev_{\R}\!:\cZ_\R\lra X_{k,l_0-l^*_0}  \qquad\hbox{and}\qquad 
\ev_{\C}\!:\cZ_\C\lra X^{l_\C-l^*_\C}\,$$
respectively.

For $\wt\u\!\in\!\cS_\bh$, let 
$\wt\u_0\!\in\!\cZ_\R$ and $\wt\u_+\!\in\!\cZ_\C$ be the components 
of $\wt\u$ in the corresponding spaces. 
The exact sequence 
\BE{Cdecomppf_e4}0\lra T_{\wt\u}\cS_\bh\lra T_{\wt\u_0}\cZ_\R\!\oplus\!T_{\wt\u_+}\cZ_\C
\lra T_{u(\nod)}X\lra0\EE
of vector spaces determines an isomorphism
$$\la_{\wt\u}(\cS_\bh)\!\otimes\!\la\big(T_{u(\nod)}X\big)
\approx \la_{\wt\u_0}(\cZ_\R)\!\otimes\!\la_{\wt\u_+}(\cZ_\C).$$
Along with $\fo_{\fp;\bh_\R}$ and~$\fo_{\bh_{\C};I}$, 
these isomorphisms determine a relative orientation
\BE{Cdecomppf_e6}
\fo_{\fp;\bh}^{\Ga}\equiv \big(\fo_{\fp;\bh_\R}\big){}_{\nod}\!\times\!_{\nod}\fo_{\bh_{\C};I}\EE
of the restriction of~\eref{evGadfn_e} to~$\cS_{\bh}$.
Since the dimensions of $H_i$ and $X$ are even, 
Lemma~\ref{TMcompGa_lmm} implies~that 
\BE{Cdecomppf_e11}\big|\ev_{\Ga;\bh}^{-1}(\bp)\!\cap\!\cS_\bh\big|_{\fo_{\Ga;\fp;\bh}}^{\pm}
=\big|\ev_{\Ga;\bh}^{-1}(\bp)\!\cap\!\cS_\bh\big|_{\fo_{\fp;\bh}^{\Ga}}^{\pm}\,.\EE
We denote by $\fo_{\cS}^c$ the co-orientation of $\cS$ in \hbox{$\cZ_{\R}\!\times\!\cZ_{\C}$}
determined by the symplectic orientation~$\fo_{\om}$ of~$X$ via~\eref{Cdecomppf_e4}.

If $B_{\R}\!\neq\!0$ (resp.~$B_{\C}\!\neq\!0$), we also define
$$\cZ'_\R=(\M'_\R)_{\ev}\!\!\times_{\bh_\R}\!M_{\bh_\R}
\qquad\big(\hbox{resp.}~\cZ'_\C=(\M'_\C)_{\ev^{I'}}\!\!\times_{\bh_\C}\!M_{\bh_\C}\big)$$
and denote by $\wt\u'_0\!\in\!\cZ'_\R$ (resp.~$\wt\u'_+\!\in\!\cZ'_\C$)
the image of~$\wt\u_0$ (resp.~$\wt\u_+$) under the forgetful morphism
\BE{cZGasplit_e1c}\ff_{\R}\!:\cZ_{\R}\lra \cZ_{\R}' \qquad
\big(\hbox{resp.}~\ff_{\C}\!:\cZ_{\C}\lra \cZ_{\C}'\big)\EE
dropping the marked points corresponding to the nodes.
If $B_{\R}\!\neq\!0$ and $l_0^*\!\neq\!0$, $\fo_{\fp;l_0^*}$ 
determines a relative orientation~$\fo_{\fp;\bh_\R}'$ of the evaluation~map
\BE{cZGasplit_e2a}\ev_{\R}'\!:\cZ_\R'\lra X_{k,l_0-l^*_0}\EE
induced by~$\ev$.
If $B_{\C}\!\neq\!0$, $\fo_{\C;I}$ determines a relative orientation~$\fo_{\bh_{\C};I'}'$ 
of the evaluation~map
\BE{cZGasplit_e2b}\ev_{\C}'\!:\cZ_\C'\lra X^{l_\C-l^*_\C}\EE
induced by~$\ev^{I'}$.

Since the projections $\pi_{\R}$ and~$\pi_{\C}$ in the commutative diagrams
$$\xymatrix{ \cZ_{\R} \ar[rr]^{\ff_{\R}} \ar[d]_{\pi_{\R}}&& \cZ_{\R}'\ar[d]^{\pi_{\R}'}&& 
\cZ_{\C} \ar[rr]^{\ff_{\C}} \ar[d]_{\pi_{\C}}&& \cZ_{\C}'\ar[d]^{\pi_{\C}'}\\
\M_{\R} \ar[rr]^{\ff_{\nod}} && \M_{\R}'&&
\M_{\C} \ar[rr]^{\ff_{\nod}} && \M_{\C}'}$$
induce isomorphisms between the vertical tangent bundles of $\ff_{\R}$, $\ff_{\C}$,
and $\ff_{\nod}$, they pull back the relative orientations~$\fo_{\nod}^+$ 
of~\eref{Gasplit_e2a} and~\eref{Gasplit_e2b}, respectively, to 
relative orientations~$\fo_{\nod}^+$ of~\eref{cZGasplit_e1c}.
Since the dimensions of $X$ and $H_i$ are~even, 
\BE{Cdecomppf_e8} 
\fo_{\fp;\bh_\R}=\fo_{\R}^+\fo_{\fp;\bh_\R}'\equiv
\big(\pi_{\R}^*\fo_{\nod}^+\big)\fo_{\fp;\bh_\R}' \quad\hbox{and}\quad
\fo_{\bh_{\C};I}=\fo_{\C}^+\fo_{\bh_{\C};I'}'\equiv
\big(\pi_{\C}^*\fo_{\nod}^+\big)\fo_{\bh_{\C};I'}'\,,\EE
whenever $B_{\R}\!\neq\!0$ and $B_{\C}\!\neq\!0$, respectively.

Suppose $B_\R,B_\C\!\neq\!0$.
By~\eref{Cdecomp_e3}, we can assume~that  
\BE{Cdecomppf_e10} 
\ell_{\om}(B_{\R})=k\!+\!2(l_0\!-\!l_0^*) \qquad\hbox{and}\qquad
\ell_{\om}(B_{\C})=l_{\C}\!-\!l_{\C}^*;\EE
otherwise, either $\ev_\R^{-1}(\bp_{\R})\!=\!\eset$ or 
$\ev_\C^{-1}(\bp_{\C})\!=\!\eset$ for a generic $\bp\!\in\!X_{k,l-l^*}$.
By the first statement above and Lemma~\ref{orient_lmm}\ref{orient1pm_it}, 
the interchange of the marked points of the elements in~$\cZ_{\R}$ 
reverses the orientation~$\fo_{\fp;\bh_\R}$.
This interchange also reverses the vertical orientation~$\fo_{\R}^+$ of
the fibration~$\ff_{\R}$.
Thus, the orientation~$\fo_{\fp;\bh_\R}'$ on~$\cZ_{\R}'$ can be defined by 
the first equation in~\eref{Cdecomppf_e8} if $l_0^*\!=\!0$.
By the second equation in~\eref{Cdecomppf_e10} and the assumption that 
$\phi_*[h_i]\!=\!-[h_i]$ for every $i\!\in\![l^*]$,
\BE{CdivRel_e} \big|\ev_{\C}'^{-1}(\bp_\C)\big|^\pm_{\fo_{\bh_\C}}
=\big|(\M'_\C)_{\ev^I}\!\!\times_{(\bh_\C,\bp_\C)}\!
M_{\bh_\C}\big|^\pm_{\fo_{\C;I},\fo_{\bh_{\C}}}
=\prod_{i\in L^*_\C}\!\!(h_i\cdot_X\!B_\C)N^X_{B_\C}\,;\EE
see \cite[Prop.~4.3]{RealEnum}. 

\begin{figure}
$$\xymatrix{&&&&0\ar[d]\\
&&&&T_{\nod}\P^1_0\!\oplus\!T_{\nod}\P_+^1\ar[d]\ar[rd]|{\nd_{\nod}u_+-\nd_{\nod}u_0}\\ 
0\ar[r]&T_{\wt\u}\cS_\bh\ar[d]_{\nd_{\wt\u}\ev_{\Ga;\bh}}\ar[rrr]^{\nd_{\wt\u}\io_{\cS}}&&&
T_{\wt\u_0}\cZ_\R\!\oplus\!T_{\wt\u_+}\cZ_\C \ar[r]
\ar[d]^{\nd_{\wt\u_0}\ff_{\R}\oplus\nd_{\wt\u_+}\ff_{\C}} &  T_{u(\nod)}X\ar[r] & 0 \\
& T_{\ev(\wt\u)}X_{k,l-l^*} &&&
T_{\wt\u'_0}\cZ'_\R\!\oplus\!T_{\wt\u'_+}\cZ'_\C\ar[d]
\ar[lll]_{(\nd_{\wt\u'_0}\ev_{\R}',\nd_{\wt\u'_+}\ev_\C')}  \\
&&&& 0}$$
\caption{Computation of~\eref{Cdecomppf_e11} in the $B_\R,B_\C\!\neq\!0$ case}
\label{Ga1_fig}
\end{figure}

Since the real dimension of~$X$ is~4, the homomorphism 
$$\nd_\nod u_+\!-\!\nd_\nod u_0\!:
T_{\nod}\P^1_0\!\oplus\!T_{\nod}\P_+^1\lra T_{u(\nod)}X\!=\!\cN_{\wt\u}\cS_\bh$$
in the commutative diagram of Figure~\ref{Ga1_fig} is an isomorphism 
for a generic element $\u\!\in\!\cS_\bh$.
So is the bottom row in this diagram.
The number of the preimages $(\wt\u_0,\wt\u_+)$ of a generic point~$(\u_0,\u_+)$
of \hbox{$\{\cZ_{\R}'\!\times\!\cZ_{\C}'\}\!\circ\!\io_{\cS}$}
under $\ff_{\R}\!\times\!\ff_{\C}$ is finite.
Since the dimensions of $T_{\nod}\P^1_0$ and $T_{\nod}\P_+^1$ are even,
$$\fo^v_{\R\C}\equiv \fo_{\R}^+\!\oplus\!\fo_{\C}^+$$ 
is the vertical orientation~of the fibration $\ff_{\R}\!\times\!\ff_{\C}$.
By Lemma~\ref{fibrasign_lmm1a}\ref{fibisom_it} and the reasoning 
in the proof of~\eref{RdivRel_e} at the end of Section~\ref{orient_subs},
$$\fs_{(\wt\u_0,\wt\u_+)}(\fo_{\cS}^c\fo_{\R\C}^v)\in\big\{\pm1\big\}$$ 
is the sign of the intersection
of $u_0$ and~$u_1$ at~$\nod$ and the number of such preimages counted with sign is
$B_\R\!\cdot_X\!B_\C$.
Along with the commutativity of the square in Figure~\ref{Ga1_fig}
and \eref{fsfoprod_e}, this implies~that  
$$\big|\ev_{\Ga;\bh}^{-1}(\bp)\!\cap\!\cS_\bh\big|_{\fo_{\fp;\bh}^{\Ga}}^{\pm}
=\big(B_\R\!\cdot_X\!\!B_\C\big) \big|\ev_{\R}'^{-1}(\bp_\R)\big|^\pm_{\fo_{\fp;\bh_\R}'}
\big|\ev_{\C}'^{-1}(\bp_\C)\big|^\pm_{\fo_{\bh_\C;I'}'}\,.$$
Combining this statement with~\eref{Cdecomppf_e11}, \eref{RdivRel_e},
and~\eref{CdivRel_e}, we conclude~that
$$\big|\ev_{\Ga;\bh}^{-1}(\bp)\!\cap\!\cS_\bh\big|_{\fo_{\Ga;\fp;\bh}}^{\pm}
=\big(B_\R\!\cdot_X\!\!B_\C\big) \!\bigg(\prod_{i\in L^*_0}\!\!h_i\!\cdot_X\!\!B_\R\!\!\bigg)
\!\bigg(\prod_{i\in L^*_\C}\!\!h_i\!\cdot_X\!\!B_\C\!\!\bigg)
N^{\phi;\fp}_{B_\R,l_0-l_0^*}N^X_{B_\C}\,.$$
Summing this over all possibilities for~$\cS$ with $B_{\R},B_{\C}\!\neq\!0$ that 
satisfy~\eref{Cdecomppf_e10}, we obtain the $(B_0,B')$ sum in~\eref{Cdecomp_e}.

Suppose $B_{\C}\!=\!0$ and thus $B_{\R}\!=\!B$.
We can assume that $l_{\C}^*\!=\!l_{\C}\!=\!2$; otherwise, $\cZ_{\C}\!=\!\eset$ 
for generic~$\bh$ and~$\bp$.
Thus, $\cZ_{\C}$ is a finite collection of points, while 
the dimensions of $\cZ_{\R}$ and \hbox{$X\!\times\!X_{k,l-l^*}$} are the same
by~\eref{Cdecomp_e3} and~\eref{Cdecomp_e0}.
By~\eref{Cdecomppf_e6} and Lemma~\ref{dropfactor_lmm3} with $\Ups,\cM\!=\!\pt$ 
applied to the left diagram in Figure~\ref{Ga2_fig},
\BE{Cdecomppf_e21}
\big|\ev_{\Ga;\bh}^{-1}(\bp)\!\cap\!\cS_\bh\big|_{\fo_{\fp;\bh}^{\Ga}}^{\pm}
=\big|\cZ_{\C}\big|_{\fo_{\C;I}}^{\pm}
\big|\big\{\ev_{\nod}\!\times\!\ev_{\R}\!\big\}^{-1}(\pt,\bp)\big|^\pm_{\fo_{\fp;\bh_\R}\fo_{\om}}.
\EE
By the proof of~\eref{RdivRel_e},
$$\big|\big\{\ev_{\nod}\!\times\!\ev_{\R}\!\big\}^{-1}(\pt,\bp)\big|^\pm_{\fo_{\fp;\bh_\R}\fo_{\om}}
=\bigg(\prod_{i\in L_0^*}\!\!\!h_i\!\cdot_X\!\!B\!\!\bigg)
N_{B,l_0-l_0^*+1}^{\phi;\fp}$$
Combining this statement with~\eref{Cdecomppf_e11} and \eref{Cdecomppf_e21},
we obtain the second term on the right-hand side of~\eref{Cdecomp_e}.

\begin{figure}
$$\xymatrix{ &\cZ_\C\ar[rr]^{\ev_{\C}} \ar[dl]_{\ev_\nod} && \pt &&
&\cZ_\R\ar[rr]^{\ev_{\R}} \ar[dl]_{\ev_\nod} && X^{\phi}\\
X & \cS_\bh \ar[u]\ar[d]\ar[rr]^{\ev_{\Ga;\bh}}&& X_{k,l-l^*} \ar[u]\ar@{=}[d] &&
X & \cS_\bh \ar[u]\ar[d]\ar[rr]^{\ev_{\Ga;\bh}}&& X_{1,l-l^*} \ar[u]\ar[d]\\
&\cZ_\R\ar[rr] \ar[ul]^{\ev_\nod}\ar[rr]^{\ev_\R} && X_{k,l-l^*} &&
&\cZ_\C\ar[rr] \ar[ul]^{\ev_\nod}\ar[rr]^{\ev_\C} && X^{l-l^*}}$$
\caption{Computation of~\eref{Cdecomppf_e11} in the $B_\C\!=\!0$ and $B_\R\!=\!0$ cases}
\label{Ga2_fig}
\end{figure}

Suppose $B_{\R}\!=\!0$ and thus $\fd(B_{\C})\!=\!B$.
We can assume that $k',k\!=\!1$ and $l_0\!=\!0$;
otherwise, $\cZ_{\R}\!=\!\eset$ for generic~$\bh$ and~$\bp$.
Thus, $\lr{B;k}_{\Ga}\!=\!1$, the dimensions of $\cZ_{\R}$ and~$X^{\phi}$ are the same,
and so are the dimensions of $\cZ_{\C}$ and \hbox{$X\!\times\!X_{l-l^*}$}
by~\eref{Cdecomp_e3} and~\eref{Cdecomp_e0}.
By~\eref{Cdecomppf_e6} and Lemma~\ref{dropfactor_lmm3} with $\Ups,\cM\!=\!\pt$ 
applied to the right diagram in Figure~\ref{Ga2_fig},
$$\big|\ev_{\Ga;\bh}^{-1}(\bp)\!\cap\!\cS_\bh\big|_{\fo_{\fp;\bh}^{\Ga}}^{\pm}
=\big|\ev_{\R}^{-1}(p_1^{\R})\big|^{\pm}_{\fo_{\fp;1}}
\big|\big\{\ev_{\nod}\!\times\!\ev_{\C}\!\big\}^{-1}(\pt,\bp_{\C})\big|^\pm_{\fo_{\C;I}\fo_{\om}}.$$
Combining this statement with~\eref{Cdecomppf_e11}, Lemma~\ref{orient_lmm}\ref{orient0_it}, 
and~\eref{CdivRel_e}, we conclude~that
$$\big|\ev_{\Ga;\bh}^{-1}(\bp)\!\cap\!\cS_\bh\big|_{\fo_{\Ga;\fp;\bh}}^{\pm}
=\bigg(\prod_{i\in[l^*]}\!\!\!h_i\!\cdot_X\!\!B_{\C}\!\!\bigg)\!N^X_{B_{\C}}
=2^{-l^*}\!\bigg(\prod_{i\in[l^*]}\!\!\!h_i\!\cdot_X\!\!B\!\!\bigg)\!N^X_{B_{\C}}\,.$$
Summing this over all possibilities for~$\cS$ with $B_{\R}\!=\!0$, 
we obtain the first term on the right-hand side of~\eref{Cdecomp_e}.


\noindent
{\it Department of Mathematics, Stony Brook University, Stony Brook, NY 11794\\
xujia@math.stonybrook.edu}

\end{document}